\newcommand\disj{{\textstyle\bigvee}}
\definecolor{darkgreen}{rgb}{0.0, 0.7, 0.0}
\DeclareSymbolFont{symbolsC}{U}{txsyc}{m}{n}
\DeclareMathSymbol{\boxright}{\mathrel}{symbolsC}{128}
\declaretheorem[style=definition,name=Theorem,numberwithin=section]{theorem}
\declaretheorem[style=definition,sibling=theorem]{corollary,definition,lemma,fact,example}
\newcommand{\bi}{\begin{itemize}}
\newcommand{\ei}{\end{itemize}}
\newcommand{\cond}{>}
\newcommand{\stal}{\textsf{C2}\xspace}
\newcommand{\stalp}{\textsf{C2}\xspace}
\newcommand{\vanf}{\textsf{C2.FS}\xspace}
\newcommand{\vanfp}{\textsf{C2.FS}\xspace}
\newcommand{\vflat}{\textsf{C2.F}\xspace}
\newcommand{\vflatp}{\textsf{C2.F}\xspace}
\definecolor{cobalt}{RGB}{0,71,171}
\definecolor{brickred}{RGB}{203, 65, 84}
\newcommand{\seq}[1]{\langle {#1} \rangle}
\newcommand{\set}[1]{\{ {#1} \}}
\newcommand{\sem}[2][]{\mbox{$\llbracket{#2}\rrbracket^{#1}$}}
\newcommand{\negate}[1]{\overline{#1}}
\newcommand{\snegate}[1]{\kern1pt\overline{#1}}
\newcommand{\makeseq}[1]{\mathord{\uparrow}#1}
\newcommand{\makelongseq}[1]{\mathord{\uparrow}^+#1}
\newcommand{\makesentence}[1]{\underline{#1}}
\newcommand{\ourtag}[1]{\tag*{\textbf{#1}}}
\newcommand{\restrict}[2]{#1\mathord\upharpoonright#2}
\newcommand{\slice}[1]{^{[#1]}}
\newcommand{\elem}[1]{^{#1}}
\newcommand{\boldtag}[1]{\tag*{\textbf{#1}}}
\newcommand{\firstdeg}{(\mathcal{B}>\mathcal{B})}
\newcommand{\firstdegconj}{\seq{\mathcal{B}>\mathcal{B}}_{\wedge}}
\tikzset{every arrow subpath/.style={{Circle[length=0.6pt,sep=-0.4pt,black]}-, draw, very thin, gray}}
\newcommand{\tails}[1]{\operatorname{tails}(#1)}
\newcommand{\eglist}{\tau}
\newcommand{\chooselist}{\Phi}
\newcommand{\pq}{pq}
\newcommand{\length}[1]{\ell(#1)}
\newcommand{\append}[1]{\mathbin{::}#1}
\renewcommand{\lozenge}{\Diamond}
\renewcommand{\gg}
{\mathbin{\vcenter{\hbox{\scalebox{0.5}{$\mathrel{\boldsymbol{<}}$}}}\mkern-11mu\mathrel{>}}}
\newcommand{\setcond}{\mathbin{\boldsymbol{>}}}
\newcommand{\omicron}{o}
\newcommand{\shift}[1]{s^*_{#1}}
\renewcommand{\complement}[1]{#1^{c}}
\newcommand{\prob}[2][\pi]{#1(#2)}
\newcommand{\cprob}[3][\pi]{#1(#2\mid #3)}
\title{Sequence and Consequence}
\author{Cian Dorr and Matt Mandelkern\thanks{Thanks to two anonymous referees for this journal for extensive and meticulous comments; to Melissa Fusco, Wesley Holliday,  Calum McNamara,  Nick Ramsey, Robert Stalnaker, and especially Snow Zhang, and to audiences at UConn, UC-Irvine, Stanford and Berkeley for discussion and invaluable and patient help.}}
\begin{document} 

\maketitle 

\begin{abstract}
In the course of proving a tenability result about the probabilities of conditionals, \citet{Fraassen:1976} introduced a semantics for conditionals based on $\omega$-sequences of worlds, which amounts to a particularly simple special case of ordering semantics for conditionals. On that semantics, `If p, then q' is true at an $\omega$-sequence just in case q is true at the first tail of the sequence where p is true (if such a tail exists). This approach has become increasingly popular in recent years. However, its logic has never been explored. We axiomatize the logic of $\omega$-sequence semantics, showing that it is the result of adding two new axioms to Stalnaker's logic \stalp: one, \textsf{Flattening}, which is prima facie attractive, and a second, \textsf{Sequentiality}, which is complex and difficult to assess, but, we argue, likely invalid.  But we show that when sequence semantics is generalized from $\omega$-sequences to arbitrary (transfinite) ordinal sequences, the result is a more attractive logic that adds only \textsf{Flattening} to \stalp. 
We identify two further interesting classes of order frame that are sound and complete for this logic: a class of \emph{flat} frames, which model-theoretically corresponds to it, and a class of \emph{linear} frames where the orderings induced by particular worlds are all restrictions of one grand ordering. 
We  explore the logics of a few interesting restrictions of ordinal sequence semantics. Finally, we address the question of whether sequence semantics is motivated by probabilistic considerations, answering, pace van Fraassen, in the negative.
\end{abstract}

\section{Introduction}

Stalnaker's \citeyearpar{Stalnaker:1968} `A theory of conditionals' launched the modern study of the conditional with a semantics for natural-language conditionals and a description of the corresponding logic \stalp, laying the groundwork for the rich subsequent literature on conditionals in philosophy, linguistics, and logic.
One subject of lively debate in this literature concerns the probabilities of conditionals. Our topic is the logic corresponding to an intriguing semantics for conditionals, based on $\omega$-sequences,  developed by \citet{Fraassen:1976} in the course of that debate.

\citet{Stalnaker:1970a} developed a theory of probability---treated as a property of sentences, and equated with ``degree of rational belief''---for a language with a binary conditional connective $>$ standing for `if$\dots$then' (on an indicative interpretation). The theory includes the following characteristic principle:
\begin{quote} 
    \emph{Stalnaker's Thesis}: $Pr(p > q)=Pr(q\mid p)$ (provided the right-hand-side is defined).\footnote{See \cite[p. 75]{Stalnaker:1970a}.  Like \citet{Popper:1959}, Stalnaker himself sets things up in such a way that the right-hand-side is always defined---e.g., $Pr(q\mid p\wedge\neg p) = 1$ for all $p$ and $q$.}
\end{quote}
There is robust empirical motivation for thinking that there is some important truth in the vicinity of Stalnaker's Thesis (see \citealt{DouvenVerbrugge} and many citations therein). For instance, if David is holding a fair die, the probability that if he rolls even, then he'll roll two is intuitively 1/3---equal to the probability that he rolls a two, conditional on rolling even.  

\citet{lewis:1976} showed that Stalnaker's Thesis was in tension with the assumption that we update our credences by conditionalization: in particular, no non-trivial class of probability functions closed under conditionalization satisfies Stalnaker's Thesis for a single interpretation of $>$. This left it open, however, that there is some way of co-ordinating different interpretations of the conditional with different interpretations of `probability'---e.g., whether we are talking about credences before or after some update---in such a way that Stalnaker's Thesis always holds \emph{within a single context}. However, a striking result in \citealt{StalnakerBas} showed that even this is impossible given the background logic \stal, except in certain trivial cases.

At the same time, however, \citet{Fraassen:1976} showed that models of \stal \emph{can} be equipped with non-trivial probability functions satisfying a restricted form of Stalnaker's Thesis.\footnote{Publication dates in this literature are confusing. To our knowledge, Lewis's was the first triviality result. Stalnaker's 1974 letter \citep{StalnakerBas} was a response to a draft of van Fraassen's paper, which, in turn, was a response (in part) to Lewis's result. Van Fraassen's published 1976 paper  appears to leave open whether his construction validates Stalnaker's Thesis for the whole language, a possibility which Stalnaker's letter  rules out; our understanding is that the published version was in fact the one Stalnaker's letter was responding to, despite the later publication date. Thanks to Bas van Fraassen for correspondence about this.} 
In van Fraassen's models, the probability function on non-conditional sentences can be freely specified, and Stalnaker's Thesis holds for conditionals whose antecedents do not themselves contain conditionals.%
\footnote{It also holds for some conditionals whose antecedents \emph{do} contain conditionals: see \cref{probsagain} for details.}

Ours is not, however, a paper about the probabilities of conditionals (a topic we return to only briefly, in \cref{probsagain}), but rather about the construction which van Fraassen developed in the course of modeling the restricted version of Stalnaker's Thesis. In that construction, van Fraassen used a semantics for conditionals with the following form. Start with a set $W$ of ``worlds'' and a valuation that specifies which atomic sentences are true at elements of $W$. Now consider the set of $\omega$-\emph{sequences} over $W$: that is, functions from the natural numbers to $W$. These sequences will serve as indices in a model for a language containing the conditional connective $>$.  In this model, an atom is true at a sequence $\sigma=\seq{w_0,w_1, w_2, \dots}$ just in case it is true at $w_0$ according to the old valuation.  The clauses for negation and conjunction are classical. Finally, a conditional $p>q$ is true at a sequence $\sigma$ just in case either $\sigma$ has a tail at which $p$ is true, and $q$ is true at the first such tail, or $\sigma$ has no tail at which $p$ is true (the \emph{tails} of $\seq{w_0,w_1, w_2, w_3, w_4 \dots}$ are $\seq{w_0,w_1,w_2,w_3,w_4\dots}, \seq{w_1,w_2,w_3,w_4\dots}, \seq{w_2,w_3,w_4,\dots}$, and so on).

Sequence semantics has become increasingly popular in recent years.\footnote{See e.g. 
\cite{Kaufmann:2009b,Kaufmann:2015a, bacon:2015,Schultheis:2020,Santorio:2021,Santorio:2021a,KhooBook}.} But, surprisingly, some basic questions about the semantics have never been answered, including what its logic is. The goal of this paper is to axiomatize the logic of $\omega$-sequence models, as well as some interesting generalizations and restrictions of that approach that base the same semantics on different classes of ordinal sequences (that is, functions from arbitrary ordinals, possibly larger or smaller than $\omega$, to an underlying set). Along the way, we will explore some other, closely related semantic systems.

We have a few motivations for this project. One is its intrinsic interest: $\omega$-sequence semantics is an intriguing, and in some ways very simple, semantics for conditionals. So we should understand it, and part of understanding the semantics is knowing the logic it gives rise to. 
Besides being of intrinsic interest, this will help us assess the viability of sequence semantics for modeling conditionals in natural language.\footnote{See \citealt{HollidayIcard2018} on the methodological importance of axiomatization in semantics.}

 Another motivation comes from particularities of the logic that arises from sequence semantics. As we will show, sequence semantics can be viewed as a special case of Stalnaker's semantics for conditionals---a special case which, it turns out, strictly strengthens Stalnaker's logic \stalp. This is of special interest to both authors, who believe that all the principles of \stal are plausible as far as natural language conditionals go.
 
This is a controversial position. The commitment of \stal to the validity of  Conditional Excluded Middle (CEM) has historically been rather unpopular, due to influential criticism by Lewis.  In fact, essentially every commitment of \stal has been rejected somewhere in the subsequent literature. However, our commitment to the correctness of a logic at least as strong as \stal makes us particularly interested in strengthenings of \stalp. (We will not do anything here to defend \stalp, but see \citealt{DorrBook} for extensive discussion.)
 
To our knowledge, however, no logics which are stronger than \stal but weaker than Materialism have ever been explored. Materialism is the logic which collapses the natural language conditional $p>q$ to the material conditional $p\to q$, that is, the logic which simply adds to classical logic the principle $(p>q)\leftrightarrow (p\to q)$.  There exist powerful arguments against this equivalence \citep{Edgington:1995}. 
Famously, however, \citet{Dale:1974,Dale:1979,Gibbard:1981,McGee:1985} showed that the gap between  \stal and Materialism is surprisingly small: in particular, it is fully closed by the \textsf{Import-Export} principle (which we discuss in \cref{evalflat}). To our knowledge, no logics residing in the gap between \stal and Materialism have ever been studied, perhaps because of these famous results.
But it will turn out that the logic of $\omega$-sequences is strictly intermediate between \stal and Materialism. In fact, in the course of exposition, we will explore two such logics. We will show that the logic of $\omega$-sequences is the logic we call \vanf, comprising  \stal plus every instance of the following two axiom schemas.
\begin{gather*}
    \ourtag{Flattening}
    (p > ((p\wedge q) > r )) \leftrightarrow ((p\wedge q) > r)
\\
    \ourtag{Sequentiality}
    \begin{multlined}[t][0.7\textwidth]
    \Box(p \to (\neg p > r)) \wedge \Box(q \to (\neg q > r)) \\\to ((p\vee q) \to (\neg(p \vee q) > r))
    \end{multlined}
\end{gather*}
where $\Box p$ is defined as $\neg p > p$.  
We will argue that Flattening is at least prima facie appealing for conditionals in natural language, while Sequentiality is, at best, too complex to reasonably assess, and, at worst, invalid. This suggests that $\omega$-sequence semantics is not a strong contender for a logic of the natural language conditional. But  we will show that the logic of a semantics based on  \emph{ordinal sequences} in general, rather than just $\omega$-sequences, is the more attractive logic \vflat comprising \stal together with just Flattening.
We show that this same logic is also sound and complete for two different classes of order frames: first, a class we call the \emph{flat} frames, which corresponds to \vflat in the frame-theoretic sense; second, a class we call  \emph{linear} order frames, where the world-sensitive ordering induced at any given world $w$ can be derived from one grand ordering on all worlds, by   finding $w$ in the grand ordering and then restricting it to the worlds that follow $w$ in that ordering.
We also explore the logics of a few other interesting restrictions of ordinal sequence semantics, and, finally, argue that, \emph{pace} van Fraassen, sequence semantics cannot be motivated by indirect considerations about the probabilities of conditionals.

\section{\stal and its semantics}
We will begin with some important background, reviewing Stalnaker's \citeyearpar{Stalnaker:1968} conditional logic \stal, and a class of models corresponding to that logic. (Cognoscenti may wish to skip to the next section.)

The language of \stal and all the logics we will be considering is a standard propositional language $\mathcal{L}_>$ equipped with a binary conditional connective $>$.  So, where $At = \{p_0, p_1,\ldots\}$ is a countably infinite set of atomic sentences:\footnote{\citet{StalnakerThomasonSACL} extend \stal to a language with quantifiers, but here we are concerned only with the propositional fragment from \citealt{Stalnaker:1968}.} 

\[  
    p   ::= p_k\in At \mid \neg p \mid (p \wedge p) \mid( p>p)
\]
We use $\to$, $\leftrightarrow$, and $\vee$ as abbreviations for the material conditional, material biconditional, and disjunction defined as usual.
For brevity, we sometimes use a compact notation for negation and conjunction applied to atoms and metavariables: $\negate p$ for $\neg p$ and $pq$ for $(p\wedge q)$ (and $\neg pq$ is $\neg(p\wedge q)$). We also sometimes omit parentheses: where we do, the order of operations is negation, then $>$, then $\wedge$ and $\vee$, and finally $\to$ and $\leftrightarrow$, so for instance $p\cond q \to \neg r\cond s\wedge t$ is to be read as $(p\cond q) \to ((\neg r\cond s)\wedge t)$.%
\footnote{The defined unary operators $\Box$ and $\lozenge$ which we introduce below also take highest priority in the same way as $\neg$.} 

\stal is the closure of the following set of axiom schemas:\footnote{Reciprocity is often called CSO, but the source of that name is lost (to us, at least), so we will use the more mnemonic name.}
\begin{align*} 
    \ourtag{PC} &\text{Every theorem of classical propositional logic} \\
    \ourtag{Identity} &p>p \\
    \ourtag{Reciprocity} 
    &(p\cond q)\wedge (q\cond p)\wedge (p\cond r)\rightarrow q\cond r \\
    \ourtag{MP} &p > q\rightarrow (p\rightarrow q) \\
    \ourtag{CEM} &p\cond q\vee p\cond\neg q \\
\intertext{under the following two inference rules:}
    \ourtag{Detachment} &\vdash p\rightarrow q\text{ and }\vdash p\text{ together imply }\vdash q \\
    \ourtag{Normality} &\vdash(p \wedge q)\rightarrow r\text{ implies }\vdash(( s\cond  p) \wedge (s\cond q))\rightarrow s\cond r
\end{align*}
When $p$ is a theorem of \stal we write $\vdash_{\stalp}p$. 

Stalnaker's own axiomatization of \stal is somewhat different, and uses two further abbreviations, which will be useful in what follows and hence worth noting here: 
\begin{itemize}
    \item[] $\Box$, defined by $\Box p :=  \neg p\cond p$; and
\item[]  $\lozenge$, defined by $\lozenge p :=  \neg\Box\neg p$.%
\footnote{$\lozenge p$ could equivalently be defined as $\neg(p\cond \neg p)$.}
\end{itemize}
Stalnaker then defines \stal with the axioms PC, MP, and Reciprocity plus four further axioms (the first of which is just the familiar \textsf{K} axiom for $\Box$):
\begin{align*} 
    &\Box(p\to q)\to (\Box p\to \Box q) \\
    &\Box(p\to q)\to p\cond q \\
    &\lozenge p\to (p\cond q\to \neg (p\cond\neg q)) \\
    & p\cond (q\vee r)\to (p\cond q\vee p\cond r)
\end{align*}
and closing the result under Detachment together with the rule:
\begin{align*} 
    \ourtag{Necessitation} &\vdash p \text{ implies }\vdash \Box p
\end{align*}
It is a good exercise to show that these two axiomatizations of \stal  are indeed equivalent. Deriving our axiomatization from Stalnaker's is easy, using the definition of $\Box$ via $>$. For the other direction, the key principle to derive is:
\[
    \ourtag{MOD} \Box p\to q\cond p
\]

Stalnaker's axiomatization brings out the fact that there is a sense in which \stal \emph{contains} (though is not reducible to) a  modal logic. The modal logic \textsf{KT} is the logic containing every instance of the PC and K schemas as above, along with the further axiom schema:
\[
    \ourtag{T} \Box p\to p
\]
closed under the rules of Necessitation and Detachment. We will show below that the theorems of \stal expressible using atoms, Boolean connectives, and $\Box$ (that is, the theorems in \emph{the modal fragment} of $\mathcal{L}_>$) are exactly the theorems of \textsf{KT}. 

While $\Box$ in the formal language is simply a shorthand, one might also think there are indeed close connections between necessity and conditionals, in particular between, on the one hand, epistemic modals and indicative conditionals; and, on the other, circumstantial modals and subjunctive conditionals.%
\footnote{See \citealt{DorrBook} for one defense of such a position.} 
Such connections would make the modal logics of various conditional logics especially interesting. Even in the absence of such connections, however, the modal fragments of our conditional logics are well worth studying on a purely logical basis.

\subsection{Order models for \stal}

There are many model-theoretic semantics for conditional connectives in the literature, generalizing  Kripke's possible-worlds semantics for modal logic to conditional languages. Our focus in this paper will be on \emph{order models} for conditionals, which were introduced by \citet{Lewis:1973} as models for his logic (which is strictly weaker than \stal). Order models turn out to be particularly intuitive for the study of \stal and its strengthenings.
In particular, we can model \stal with order models where each world is associated with a \emph{well-ordering} of worlds. As we will see,  sequence models can be  naturally viewed as a special case of well-order models, making it easy to see that their logic includes \stal.

A Kripke model equips a set of possible worlds with a binary accessibility relation $R$, representing relative necessity and possibility: $p$ is necessary at $w$ just in case $p$ is true throughout the worlds accessible from $w$, which we write $R(w)$, and $p$ is possible at $w$ just in case $p$ is true somewhere in $R(w)$. An order model is like a Kripke model but with additional structure: in addition to a set of worlds $R(w)$, an order model associates each world $w$ with an \emph{ordering} $<_w$ of $R(w)$.  We pronounce $u <_w v$ as `$u$ is closer to $w$ than $v$'.  In such a model, assuming there are some \emph{closest} $p$-worlds to $w$, $p>q$ is true at $w$ just in case all of these worlds are $q$-worlds. $p>q$ is also (`vacuously') true at $w$ when there aren't any $p$-worlds in $R(w)$.\footnote{In Lewis's generalization of order models, we also need to say something about the case where $R(w)$ contains some $p$-worlds, but for each one of them, there is another that is closer.  However this case will conveniently not arise in the models we deal with.}

Lewis conceived of closeness in terms of \emph{similarity}: $x<_w y$ means that $x$ is more similar to $w$ than $y$ is (in whatever respects turn out to be relevant).  But using order models does not commit us to a similarity-based interpretation of the order functions, any more than Kripke semantics for the modal operators commits us to any particular theory of necessity and possibility.  Thus, skeptics of similarity-based approaches to conditionals (a group in which we include ourselves) have no special reason to object to the use of order models.  Indeed, in modelling strengthenings of \stalp, we will need to impose conditions on order models which would be completely implausible if closeness had to be interpreted as similarity, so insofar as the strengthenings are well-motivated, they will add to the already strong case against similarity-based approaches (see \cref{evalflat}).

In the order models characteristic of \stal, each ordering $<_w$ is a \emph{well-order}: that is, a transitive, connected, asymmetric, well-founded relation on $R(w)$. This means that we can restate the order semantics for conditionals in terms of the \emph{unique} closest antecedent world, if there is one: $p>q$ is true at $w$ just in case $q$ is true at the first $p$-world in $<_w$, or there are no $p$-worlds in $R(w)$. This uniqueness assumption guarantees that the controversial CEM axiom holds in the model; logics without CEM (like Lewis's) can be obtained by relaxing this assumption. However, we will not further discuss such models in this paper, so by `order model' we will always mean a well-order model.\footnote{\label{fn:selection}This order semantics for conditionals is equivalent to the semantics based on \emph{selection functions} given in \citealt{Stalnaker:1968}. A selection function is a function $f$ which takes a set of worlds $\varphi$ and world $w$ and returns a set $f(\varphi,w)$.  We can use a selection function to evaluate conditionals, by defining $\sem{p>q}$ to be $\{w:f(\sem{p},w)\subseteq\sem{q}\}$.  
In the case of interest for \stal, $f$ is required to obey constraints corresponding to MP, Reciprocity, Identity, and CEM:
\begin{enumerate}
    \item If $w\in \varphi$, $w\in f(\varphi,w)$.
    \item If $f(\varphi,w) \subseteq \psi$ and $f(\psi,w) \subseteq \varphi$, $f(\varphi,w) = f(\psi,w)$.  
    \item $f(\varphi,w) \subseteq \varphi$. 
    \item $f(\varphi,w)$ has at most one element.
\end{enumerate}
Given these constraints, we can move freely between selection functions and order functions.  Given a selection function $f$, we define an isomorphic order function $<$ by saying that $x<_w y$ just in case $f(\{x,y\},w)=\{x\}$ and $f(\{y\},w)=\{y\}$. 
Conversely, given an order function $<$, we define an isomorphic selection function $f$ by saying that $f(\varphi,w)$ is $\{w\}$ when $w\in \varphi$; otherwise the singleton of the first $\varphi$-world ordered by $<_w$, if there is one; and otherwise the empty set. (Reflection on the respective properties of order and selection functions show that these defined constructions are, indeed, order and selection functions, respectively.) 
So there is no deep difference between these two kinds of model. However, order models lend themselves more naturally to the study of sequence semantics, as we shall see.}

Let us lay this all out more formally, and introduce some standard terminology which will be helpful:
\begin{definition}
    An \emph{order frame} is a pair $\seq{W,<}$, where $W$ is a non-empty set and $<$ is a  function which takes any $w \in W$ to a strict well-order $<_w$ on some subset of $W$ such that whenever $x <_w y$, $w=x$ or $w<_wx$.
\end{definition}
\noindent We can read off an accessibility relation from an order frame: the worlds accessible from $w$ are those that $w$ strictly precedes in the ordering induced at $w$, together with $w$ itself:\footnote{Accessibility relations are sometimes specified as independent parameters, but as this shows, they needn't be.} 
\[R(w)=\{w\}\cup\{v: w<_w v\}\] As we will see, $R$ plays the same role with respect to the defined $\Box$ as accessibility relations usually do in Kripke models. 
When it is clear which order frame is under discussion, we will always use $R$ for the corresponding accessibility relation. 

\begin{definition} An \emph{order model} is a triple $\seq{W,<, V}$ where $\seq{W,<}$ is an order frame and $V: At\times W \to \{0,1\}$ is a valuation function. 
\end{definition}
We will write $V(p_k)$ for $\{w:V(p_k,w) = 1\}$ and $V(w)$ for $\{p_k:V(p_k,w) = 1\}$.  
\begin{definition}
    When $\seq{W,<,V}$ is an order model, its denotation function is the function $\sem{\seq{W,<,V}}{\cdot} : \mathcal{L}_>\rightarrow\mathcal{P}(W)$ such that for any atom $p_k$ and sentences $p,q$:
    \begin{align*}
    \sem{p_k} &= V(p_k) \\
    \sem{\neg p} &= W\setminus \sem{p} \\
    \sem{p\wedge q} &= \sem{p} \cap \sem{ q } \\
    \sem{p>q} &= \parbox[t]{0.65\textwidth}{$\{w\in W:$ either $R(w)\cap\sem{p}=\varnothing$, or for some $y\in R(w)\cap \sem{p} \cap \sem{q}$, $x\notin \sem{p}$ whenever $x<_w y\}$}
    \end{align*}
\end{definition}
For readability, relativization of \sem{\cdot} to a model is usually left implicit. 

As usual, we can define a \emph{pointed} order model as a pair of an order model with a world from its set of worlds, i.e. a pair $\seq{w,\seq{W,<,V}}$ such that $w\in W$ and $\seq{W,<,V}$ is an order model; when $\seq{w,\seq{W,<,V}}$ is a pointed order model, we say that it is \emph{based on} $\seq{W,<,V}$.
$p$ is \emph{true in} a pointed order model $\seq{w,\mathcal{M}}$  just in case $w\in \sem{p}^\mathcal{M}$. 
When $\Gamma\subseteq\mathcal{L}_>$, we can also speak of $\Gamma$ being true at a pointed model to mean that all its elements are. 
We also write $w,\mathcal{M}\Vdash p$ when $p$ is true in $\seq{w,\mathcal{M}}$; when $\mathcal{M}$ is implicit from the context, we write simply $w\Vdash p$. For brevity we sometimes talk about $p$ being true at every model in a given class; by this we mean true at every pointed model based on a model in that class. 
Two pointed models are \emph{equivalent}
just in case they verify exactly the same sentences of $\mathcal{L}_>$. 

A standard induction on formulae shows:
\begin{theorem} \label{C2oundness}
    \stal is sound for order models: that is, $\vdash_{\stal} p$ implies that $p$ is true in every pointed order model.
\end{theorem}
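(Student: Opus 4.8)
The plan is to run the standard induction on the length of a derivation in \stal: check that each axiom schema is true at every pointed order model, and that Detachment and Normality preserve this property. The one piece of setup that makes every case short is an equivalent reformulation of the clause for $>$. Since each $<_w$ is a strict total well-order on $R(w)$, every nonempty subset of $R(w)$ has a unique $<_w$-least element; so whenever $R(w)\cap\sem{}{p}\neq\varnothing$ there is a unique $<_w$-minimal $p$-world, which I will write $\mu_w(p)$, and reading the denotation clause off directly shows that $w\Vdash p>q$ iff either $R(w)\cap\sem{}{p}=\varnothing$ or $\mu_w(p)\in\sem{}{q}$ (the existential ``first $p$-world'' in the clause is forced to be $\mu_w(p)$ by totality and well-foundedness). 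I would establish this reformulation first, since it is used in every subsequent case.

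Given it, PC is immediate from the classical Boolean clauses for $\neg$ and $\wedge$, and the remaining axioms are short. For \textsf{Identity}, $\mu_w(p)\in\sem{}{p}$ by definition. For \textsf{MP}, the frame condition forces $w$ itself to be $<_w$-least in $R(w)$ (every other element of $R(w)$ strictly follows $w$), so if $w\Vdash p$ then $\mu_w(p)=w$, and if moreover $w\Vdash p>q$ the reformulation gives $w\in\sem{}{q}$, hence $(p>q)\to(p\to q)$ holds at $w$ whatever the case. For \textsf{CEM}, if $R(w)\cap\sem{}{p}=\varnothing$ both disjuncts hold vacuously, and otherwise $\mu_w(p)$ lies in exactly one of $\sem{}{q}$, $\sem{}{\neg q}$, verifying the corresponding disjunct. \textsf{Detachment} is immediate from the Boolean clause; for \textsf{Normality}, I assume (inductive hypothesis on the premise) that $\sem{}{p\wedge q}\subseteq\sem{}{r}$ in every model, and then from $w\Vdash s>p$ and $w\Vdash s>q$, whenever $R(w)\cap\sem{}{s}\neq\varnothing$, I get $\mu_w(s)\in\sem{}{p}\cap\sem{}{q}\subseteq\sem{}{r}$, so $w\Vdash s>r$.

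The only case that takes real work is \textsf{Reciprocity}, so I expect that to be the main obstacle, though it remains elementary. Assuming $w$ verifies $p>q$, $q>p$, and $p>r$, I want $w\Vdash q>r$; the nonvacuous case is $R(w)\cap\sem{}{q}\neq\varnothing$. Then $q>p$ gives $\mu_w(q)\in\sem{}{p}$, so $R(w)\cap\sem{}{p}\neq\varnothing$ and $\mu_w(p)\leq_w\mu_w(q)$ (as $\mu_w(q)$ is a $p$-world and $\mu_w(p)$ is the least one); but $p>q$ gives $\mu_w(p)\in\sem{}{q}$, and since $\mu_w(q)$ is the $<_w$-least $q$-world, $\mu_w(q)\leq_w\mu_w(p)$ as well, so by antisymmetry of $\leq_w$ (which follows from asymmetry and connectedness of $<_w$) we get $\mu_w(p)=\mu_w(q)$. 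Finally $p>r$ gives $\mu_w(p)\in\sem{}{r}$, hence $\mu_w(q)\in\sem{}{r}$, which by the reformulation is exactly $w\Vdash q>r$. Chaining the three ``least-world'' facts and using totality and antisymmetry of the well-order to collapse $\mu_w(p)$ and $\mu_w(q)$ is the crux; everything else is bookkeeping.
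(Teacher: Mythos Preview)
Your proposal is correct and is precisely the standard argument the paper gestures at with ``A standard induction on formulae shows''; you have simply filled in the details (induction on derivations, verifying each axiom schema via the selection-of-least-world reformulation $\mu_w(p)$, and checking the two rules), with the Reciprocity case handled exactly as one would expect. There is no substantive difference in approach.
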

We also have a corresponding completeness result: every sentence that is true in every pointed order model is a theorem of \stal.  Equivalently, whenever $p$ is \emph{consistent} in \stal (that is, $\nvdash_\stalp \neg p$) it is true in some pointed order model.  In fact, we can show something stronger: \stal is  complete with respect to the class of \emph{finite} order models:

\begin{restatable}[]{theorem}{golcompleteness}
\label{thm:golcompleteness}
  If $ p $ is true in every  finite pointed order model,  then $\vdash_{\stalp} p $.
\end{restatable}

One corollary is that \stal is \emph{decidable}.  Since every non-theorem is false in some finite pointed order model, and we can effectively enumerate all the finite pointed order models (up to isomorphism), we can test for non-theoremhood by searching through the finite pointed order models for a countermodel; this provides an effective decision-procedure when run in parallel with a proof search.  

This soundness and completeness theorem also allows us to prove our earlier assertion about the modal fragment of \stalp:
\begin{theorem} 
    When $ p $ is a sentence in the modal fragment of $\mathcal{L}_>$, $\vdash_{\stal} p $ iff $\vdash_{\textsf{KT}} p $.  
\end{theorem}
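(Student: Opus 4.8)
The plan is to prove the two inclusions separately, leaning on soundness and completeness on the \stal side and on the finite model property on the \textsf{KT} side. For the direction $\vdash_{\textsf{KT}}p\Rightarrow\vdash_{\stal}p$ (which in fact holds for arbitrary $p$, not just modal-fragment sentences), it is enough to check that each axiom and rule of \textsf{KT} is available in \stal. Via Stalnaker's own axiomatization, \stal already contains every instance of PC and of the \textsf{K} schema $\Box(p\to q)\to(\Box p\to\Box q)$, and is closed under Detachment and Necessitation, so the only thing to verify is the \textsf{T} schema $\Box p\to p$. Unfolding $\Box p$ as $\neg p\cond p$: the MP axiom gives $\neg p\cond p\to(\neg p\to p)$, and $(\neg p\to p)\to p$ is an instance of PC, so $\Box p\to p$ is a \stal-theorem. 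Hence every \textsf{KT}-theorem is a \stal-theorem.

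For the converse, restricted to $p$ in the modal fragment, I would argue by contraposition. Suppose $\nvdash_{\textsf{KT}}p$. By the well-known finite model property of \textsf{KT} with respect to reflexive Kripke frames, there is a finite Kripke model $\mathcal{K}=\seq{W,R,V}$ with $R$ reflexive and a world $w_0\in W$ at which $p$ fails under the usual modal semantics. Turn $\mathcal{K}$ into an order model $\mathcal{M}=\seq{W,<,V}$ with the same $W$ and $V$ by choosing, for each $w\in W$, any strict linear order $<_w$ on $R(w)$ having $w$ as its $<_w$-least element; this is possible because $w\in R(w)$ by reflexivity. Since $W$ is finite, each $<_w$ is automatically well-founded, hence a well-order, and since $w$ is $<_w$-least the order-frame condition ``$x<_w y$ implies $w=x$ or $w<_w x$'' holds trivially, so $\mathcal{M}$ is a genuine (finite) order model. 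Moreover the accessibility relation it induces, $R_{\mathcal{M}}(w)=\{w\}\cup\{v:w<_w v\}$, is exactly the field of $<_w$, namely $R(w)$.

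The key lemma is then that $\mathcal{M}$ and $\mathcal{K}$ agree on every modal-fragment sentence at every world, proved by induction. Atoms hold by the shared valuation and the Boolean cases are routine. For $\Box q$, recall $\Box q$ abbreviates $\neg q\cond q$; since $\sem{\mathcal{M}}{\neg q\wedge q}=\varnothing$, the existential disjunct in the order-model clause for $\cond$ can never be satisfied, so $\sem{\mathcal{M}}{\Box q}=\{w:R_{\mathcal{M}}(w)\cap\sem{\mathcal{M}}{\neg q}=\varnothing\}=\{w:R(w)\subseteq\sem{\mathcal{M}}{q}\}$. By the induction hypothesis $\sem{\mathcal{M}}{q}$ is the set of worlds where $q$ is Kripke-true, so $w\in\sem{\mathcal{M}}{\Box q}$ iff $\Box q$ is Kripke-true at $w$, completing the induction. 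In particular $p$ is false at the pointed order model $\seq{w_0,\mathcal{M}}$, so by soundness of \stal for order models, $\nvdash_{\stal}p$. (Note that \cref{c2completeness} is not strictly needed for this half; all that is used is soundness together with the finite model property of \textsf{KT}.)

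The only ingredient drawn from outside the excerpt is the finite model property of \textsf{KT} with respect to reflexive frames, which is entirely standard; everything else is bookkeeping. The one place that deserves a second look is the $\Box q$ step of the agreement lemma: the whole argument hinges on the fact that the ``first antecedent-world'' clause for $\neg q\cond q$ degenerates to the simple universal statement $R(w)\subseteq\sem{\mathcal{M}}{q}$ precisely because $\neg q\wedge q$ is unsatisfiable — which is exactly what makes the defined $\Box$ behave like the Kripke box and the induced $R$ play its advertised accessibility role.
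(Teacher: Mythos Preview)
Your proof is correct and follows essentially the same strategy as the paper: derive K and T syntactically for the easy direction, and for the harder direction convert a reflexive Kripke countermodel into an order model with the same accessibility so that the defined $\Box$ coincides with the Kripke box. The one technical difference is that you invoke the finite model property of \textsf{KT} to guarantee that any per-world linear order on $R(w)$ is well-founded, whereas the paper works with an arbitrary reflexive model and instead fixes a single global strict well-ordering $\prec$ on $W$, setting $x<_w y$ iff $wRx$, $wRy$, and either $x=w$ or ($x\neq w\neq y$ and $x\prec y$); this trades the appeal to the FMP for a choice of global well-ordering, but the argument is otherwise the same.
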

\begin{proof}
We rely on the well-known fact that \textsf{KT} is sound and complete with respect to modal modals with a reflexive accessibility relation.   

\bi\item[$\Leftarrow$]  Normality for $>$ gives the K axiom for $\Box$, and MP for $>$ gives T for $\Box$.  

\item[$\Rightarrow$] Suppose we have a modal model $W$ with a reflexive accessibility relation $R$; we can extend this to an order model that respects $R$ by fixing a strict well-ordering $<$ on $W$, and for $x\neq y$, let $x <_w y$ iff $wRx$ and $wRy$ and either $x=w$, or $x\neq w$ and $y\neq w$ and $x<y$.  If  $\vdash_{\stal} p $ then by soundness $p$ is true in every order model, and hence in every reflexive modal model, and so by completeness of \textsf{KT},  $\vdash_{\textsf{KT}} p $.

\qedhere

\ei
\end{proof}

\subsection{The failure of strong completeness\label{notecomp}}
\cref{thm:golcompleteness} is about individual sentences: it is equivalent to the claim that whenever $p$ is \emph{consistent} in \stalp (meaning that $\neg p$ is not a theorem of \stalp), $p$ is true in some pointed order model.  This kind of result is sometimes called a \emph{weak} completeness theorem.  By contrast, a \emph{strong} completeness theorem would say that for every \emph{set} of sentences that is consistent in a certain logic, there is a model based on a frame in the relevant class in which \emph{every} sentence in that set is true. 
We define consistency for sets of sentences as usual: $\Gamma$ is consistent relative to $\vdash$ when there is no finite conjunction $p$ of elements of $\Gamma$ such that $\vdash p\to \bot$, where $\bot$ abbreviates $p_0\wedge \neg p_0$.
Somewhat surprisingly, we do not have a strong completeness theorem analogous to \cref{thm:golcompleteness}.\footnote{This point is noted very briefly by
\citet[Footnote 12]{Fine:2012}, and developed in much more detail in
\citealt{Kocurek:2025}. Thanks to Arc for drawing our attention to his paper.}  
\begin{theorem} 
    There are \stalp-consistent sets $\Gamma\subseteq \mathcal{L}_>$ which are not true in any pointed order model.\label{notstrong}
\end{theorem}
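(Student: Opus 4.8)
The plan is to exhibit an explicit \stal-consistent set $\Gamma$ whose truth at a world $w$ would force an infinite $<_w$-descending chain inside $R(w)$, contradicting the well-foundedness built into order frames. Fix distinct atoms $q_0,q_1,q_2,\dots$ and let $\Gamma$ be
\[ \{\lozenge q_0\}\cup\{\Box\neg(q_n\wedge q_{n+1}):n\in\mathbb{N}\}\cup\{(q_n\vee q_{n+1}) > q_{n+1}:n\in\mathbb{N}\}. \]
The first sentence starts a chain of ``possible'' worlds, the second family keeps consecutive $q$'s incompatible, and the third family says that at each stage the closest $(q_n\vee q_{n+1})$-world is a $q_{n+1}$-world.

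\emph{Consistency.} It suffices to show every finite $\Gamma_0\subseteq\Gamma$ is consistent, and, enlarging $\Gamma_0$ if necessary, I may take $\Gamma_0=\{\lozenge q_0\}\cup\{\Box\neg(q_n\wedge q_{n+1}),\ (q_n\vee q_{n+1}) > q_{n+1}:n<N\}$ for some $N$. By \cref{c2completeness} it then suffices to satisfy $\Gamma_0$ in a finite order model, and for that I would take $W=\{w,v_0,\dots,v_N\}$, define $<_w$ to be the well-order $w<_w v_N<_w\dots<_w v_0$ on $W$ (so $R(w)=W$), take the orders at the other worlds trivial, and set $V(q_n)=\{v_n\}$ for $n\le N$ and $V(q_k)=\varnothing$ otherwise. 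Then $\lozenge q_0$ holds because $v_0\in R(w)$ is a $q_0$-world; each $\Box\neg(q_n\wedge q_{n+1})$ holds because the $q_n$ are pairwise disjoint; and each $(q_n\vee q_{n+1}) > q_{n+1}$ holds because the only $(q_n\vee q_{n+1})$-worlds are $v_n$ and $v_{n+1}$, of which $v_{n+1}$ is $<_w$-closer and is a $q_{n+1}$-world.

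\emph{No order model.} Suppose $\M=\seq{W,<,V}$ is an order model and $w\Vdash\Gamma$. For each $n$ with $R(w)\cap\sem{}{q_n}\neq\varnothing$, let $a_n$ be its $<_w$-least element, which exists since $<_w$ is well-founded. I claim by induction that $a_n$ is defined for every $n$ and $a_{n+1}<_w a_n$. For $n=0$, $\lozenge q_0$ gives $R(w)\cap\sem{}{q_0}\neq\varnothing$. For the step: $a_n\in\sem{}{q_n\vee q_{n+1}}$, so the truth of $(q_n\vee q_{n+1}) > q_{n+1}$ at $w$ supplies a $y\in R(w)\cap\sem{}{q_{n+1}}$ having no $<_w$-smaller $(q_n\vee q_{n+1})$-world; by totality $y\le_w a_n$, and since $\Box\neg(q_n\wedge q_{n+1})$ rules out $y\in\sem{}{q_n}$ while $a_n\in\sem{}{q_n}$, we get $y\neq a_n$, hence $y<_w a_n$. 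As $y$ witnesses $R(w)\cap\sem{}{q_{n+1}}\neq\varnothing$, $a_{n+1}$ is defined and $a_{n+1}\le_w y<_w a_n$. Thus $\seq{a_0,a_1,a_2,\dots}$ is an infinite strictly $<_w$-descending sequence in $R(w)$, contradicting well-foundedness of $<_w$; so no such $w,\M$ can exist.

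\emph{Main obstacle.} The delicate point is securing a \emph{strictly} descending chain: without the incompatibility sentences $\Box\neg(q_n\wedge q_{n+1})$ the argument would only deliver $a_{n+1}\le_w a_n$, which a well-founded order can perfectly well accommodate by being eventually constant, and no contradiction would follow. Those clauses are exactly what blocks this. A secondary thing to keep straight is that well-foundedness is used twice --- once to extract each witness $a_n$, and once at the end to derive the contradiction --- so the argument is best organized as: assume a model, build the $a_n$ using well-foundedness, then observe that they violate it.
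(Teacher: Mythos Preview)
Your proof is correct. The underlying idea matches the paper's---force an infinite strictly $<_w$-descending pattern among the closest atom-worlds---but the packaging and the structure of the no-model argument differ in ways worth noting. The paper uses the single family $\Gamma=\{\neg((p_i\vee p_{i+1})>p_i):i\in\mathbb{N}\}$; each such sentence already encodes both non-vacuity (so $\lozenge(p_i\vee p_{i+1})$) and the fact that the closest $(p_i\vee p_{i+1})$-world is \emph{not} a $p_i$-world, which is exactly the strictness you secure separately via $\Box\neg(q_n\wedge q_{n+1})$. For the impossibility direction, rather than inductively building a descending chain $a_0>_w a_1>_w\cdots$, the paper takes the union $\varphi=\bigcup_i V(p_i)$, picks its $<_w$-least element $x$, notes that some $p_k$ holds at $x$, and observes that then $(p_k\vee p_{k+1})>p_k$ is true at $w$ since nothing in $\varphi$ precedes $x$---contradicting $\Gamma$. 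This uses well-foundedness once, on a single set, instead of twice as in your argument. Your chain construction is perfectly valid and perhaps more transparent about \emph{why} well-foundedness fails; the paper's least-element trick is more economical. The consistency halves are essentially identical.
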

\begin{proof}
   One such $\Gamma$ is the following:
    \[
        \Gamma = \{\neg((p_i\vee p_{i+1})>p_{i}) \mid i\in\mathbb{N}\}
    \]
    Suppose all the members of $\Gamma$ were true in some pointed order model $\seq{w,\seq{W,<,V}}$.  Consider the set of worlds $\varphi$ which verify some atom in this model, i.e., $\varphi=\bigcup_i V(p_i)$. We must have $\varphi\cap R(w)\neq \varnothing$, for otherwise the elements of $\Gamma$ would all be false at $w$ (e.g., if $p_1$ and $p_2$ are nowhere true in $R(w)$, then $(p_1\vee p_2)>p_1$ is vacuously true at $w$). Since $<_w$ is a well-order, $\varphi$ must have a least element $x$ in $<_w$. Some atom $p_k$ is true at $x$, by definition of $\varphi$. Now consider $(p_k\vee p_{k+1})>p_k$. This is true at $w$, since the first world in $<_w$ where $p_k\vee p_{k+1}$ is true must be $x$ (since $x$ is the first world in $<_w$ where \emph{any} atom is true), where $p_k$ is true. Hence its negation is false at $w$, contrary to the assumption that $w$ verifies all the elements of $\Gamma$.

    Nevertheless, $\Gamma$ is consistent in \stalp. If it were not, then by definition, it would have some  inconsistent finite subset. But every non-empty finite subset $\Delta\subset \Gamma$ is true in a pointed order model, which given soundness (\cref{C2oundness}) shows that every finite subset of $\Gamma$ is consistent. To see this, let $p_k$ be the atom with the highest index of any atom which appears in $\Delta$. Consider any set $U$ with $k+1$ members, which we label $w_0, w_{1}, \dots w_k$. 
    $\Delta$ is true at the pointed order model $\seq{w_k,\seq{U,<,V}}$, where $<$ is any order function with $w_n<_{w_k}w_{m}$ just in case $n > m$, and $V$ any valuation such that $V(p_i)=\{w_i\}$ for $i\leq k$. 
\end{proof}
The same reasoning shows that no extension of \stal in which $\Gamma$ remains consistent is strongly complete for any class of order models.

It is possible, however, to formulate a notion of a ``general'' order frame, and hence order model, relative to which we do have strong completeness (cf.\ \citealt{Segerberg:1989}). 
The idea is to add to our frames an extra ``propositional domain'' parameter---a set of subsets of worlds, representing the allowable denotations for sentences---and only require that our orders are well-founded  relative to the elements of that parameter, rather than relative to all subsets of the set of worlds.

In more detail, let a generalized order frame be a triple $\seq{W,\mathcal{B},<}$, where $W$ is any non-empty set; $\mathcal{B}$ is a set of subsets of $W$; and $<$ is a function which takes any $w\in W$ to a  linear order $<_w$ on a subset of $W$, such that whenever $<_w$ orders any element of $\varphi\in \mathcal{B}$, $\varphi$ has a  first element in $<_w$, and where $<$ and $\mathcal{B}$ are coordinated so that $\mathcal{B}$ is closed under the set-theoretic operations corresponding to $\wedge, \neg$, and $>$ in order semantics (relative to $<$). A generalized order model is a generalized order frame $\seq{W,\mathcal{B},<}$ equipped with a valuation $V:At\to \mathcal{B}$.  The definition of $\sem{\cdot}$ that worked for order models still works in generalized models, and yields a function from $\mathcal{L}_>$ to $\mathcal{B}$.  \stal is sound and \emph{strongly} complete with respect to generalized order models.  For example, the consistent set $\Gamma$, which is not true in any pointed order model, can be true in a generalized pointed order model where the set $\bigcup_i V(p_i)$ of worlds that verify some atom is not in the propositional domain, and does not have a minimal element in $<_w$.%
\footnote{Consider the generalized pointed order model where $W$ is the set $[0,\infty)$ of nonnegative reals, with designated world $0$; $V(p_i)=[1/(i+1),\infty)$; $x<_w y$ iff $w\leq x < y$; and $\mathcal{B}$ the set of finite unions of intervals of the form $[x,y)$ or $[x,\infty)$.}  
Completeness can be shown with a standard canonical model construction, as in \cite{Lewis:1971}.%
\footnote{Instead of a primitive propositional domain, Lewis get the same result by treating the denotations of sentences as primitive rather than defining them inductively in the customary way, so that the set of all denotations of sentences plays the role of the propositional domain.  Note too that while Lewis only states weak completeness theorems, his method immediately establishes strong completeness.}

A generalized order frame is \emph{full} if $\mathcal{B}=\wp(W)$; full generalized frames are equivalent to order frames.%
\footnote{The selection function models described in \cref{fn:selection} can be ``generalized'' in an analogous way to order models.  In a generalized order model, the selection function $f$ is only defined on pairs $w$, $\varphi$ where $\varphi$ belongs to the propositional domain; as with order models, we also require the propositional domain to be closed under all the operations on sets corresponding to the semantic clauses. 

In both generalized order models and generalized selection models, elements of the propositional domain that happen not to be denoted by any sentence in $\mathcal{L}_>$ are logically irrelevant: restricting the propositional domain of a model to the sets that are in fact denoted by sentences of $\mathcal{L}_>$ (in the original model) will not change the truth value of any sentence at any world.  This is worth noting because it brings us back to the kind of models developed by Stalnaker and Thomason \citep{Stalnaker:1968,StalnakerThomasonSACL} in which the selection functions are defined not on pairs of worlds and sets of worlds, but for pairs of worlds and sentences. Given the constraints they place on such selection functions, such models are equivalent to generalized order models as we have defined them here.}

Presumably because of facts along these lines, \citet{Segerberg:1989} writes that 
`in modal logic it seems quite natural to restrict one's interest---at least
initially---to full frames. In conditional logic, studied in the present vein, this is
not so.' 
Nevertheless, our interest in this paper will be  full order frames, since our main goal is to identify the logics of various kinds of sequence frames, which (as we will shortly see) can be viewed as special cases of (full) order frames.  Hence the completeness results we prove will be weak rather than strong completeness results.

\section{Omega-sequence semantics}

With this set-up in hand, 
we are now in a  position to give a more rigorous presentation of van Fraassen's \parencite*{Fraassen:1976} $\omega$-sequence models. We will present these models in a way which brings out the fact that these are in fact a special case of order models. This will let us show immediately that the logic of $\omega$-sequence models is at least as strong as \stal.

First, we will introduce some general terminology for talking about sequences.  Although for van Fraassen's models the sequences of interest are $\omega$-sequences, which can be understood as functions from the natural numbers to an underlying set, for the sake of later generalizations we will consider these as a special case of ``ordinal sequences'', whose domain can be any arbitrary ordinal.  
\begin{definition}  
    Given a non-empty set $P$ and an ordinal $\alpha$, an \emph{$\alpha$-sequence} over $P$ is a function $\sigma:\alpha\rightarrow P$. A function is an \emph{ordinal sequence} just in case it is an $\alpha$-sequence for some ordinal $\alpha$. 

    When $\sigma$ is an $\alpha$-sequence and $\beta<\alpha$, we write $\sigma\elem{\beta}$ for the value of $\sigma$ at $\beta$, that is, $\sigma(\beta)$. When $\beta\leq \alpha$, we write $\sigma\slice{\beta:}$ for the  \emph{$\beta$th tail} of $\sigma$, i.e., the length $\alpha-\beta$ sequence $\seq{\sigma\elem{\beta},\sigma\elem{\beta+1},\sigma\elem{\beta+2},\ldots}$.
    
    We write $\tails{\sigma}$ for the set of all nonempty tails of $\sigma$, i.e.\ $\{\sigma\slice{\beta:}:\beta<\alpha\}$.  When $\tau \in \tails{\sigma}$, the \emph{rank} of $\tau$ in $\sigma$ is the least $\beta$ such that $\tau = \sigma\slice{\beta:}$. 
\end{definition} 
Any set of ordinal sequences can be endowed with an order function in a natural way:
\begin{definition}\label{seqdef}
 The \emph{tail order function} $\prec^S$ on any set $S$ of sequences has $\tau\prec^S_\sigma\rho$ iff  $\tau,\rho,\sigma\in S$, $\tau$ and $\rho$ are tails of $\sigma$, and the rank of $\tau$ in $\sigma$ is less than the rank of $\rho$ in $\sigma$.
\end{definition}

\begin{definition} 
    An order frame $\seq{W,<}$ is an \emph{$\omega$-sequence frame} iff $W$ is a set of $\omega$-sequences on some underlying set $P$ (which we call the ``protoworlds''); $W$ is \emph{closed under tailhood} (that is, if $\sigma \in W$ then $\tails{\sigma} \subseteq W$); and $<$ is the tail order function $\prec^W$ as in \cref{seqdef}.  A [pointed] \emph{$\omega$-sequence model} is a [pointed] order model based on an $\omega$-sequence frame.  
\end{definition}
\noindent For brevity, we write $\seq{\sigma, W,V}$ for the pointed $\omega$-sequence model $\seq{\sigma, \seq{W,\prec^W,V}}$, since the tail order function is determined by $W$. For even more brevity we sometimes will simply specify a sequence and a valuation $\seq{\sigma, V}$; in that case, $W$ is (implicitly) the set of all and only $\sigma$'s non-empty tails.

It should be clear how this notion of model, together with the semantics given above for arbitrary order models, is equivalent to the (more standard) presentation of van Fraassen's models we gave in the introduction: on this semantics, $p>q$ is true at a sequence $\sigma$ just in case either $\sigma$ has a tail at which $p$ is true and $q$ is true at the first such tail, or else $\sigma$ has no tails at which $p$ is true.  

Van Fraassen's models have two further particular properties: they are \emph{full} and \emph{categorical}:
\begin{definition}
    An $\omega$-sequence frame $\seq{W,\prec^W}$ is \emph{full} when $W$ is the set of \emph{all and only} $\omega$-sequences over some non-empty set $P$.  
\end{definition}
\begin{definition}
    An $\omega$-sequence model $\seq{W,\prec^W,V}$ is \emph{categorical} when for any sequences $\sigma$ and $\rho$ with the same first element, $V(\sigma) = V(\rho)$. 
\end{definition}
As we will see, however, these two restrictions on models are logically immaterial.

Van Fraassen called the members of the underlying set $P$ `worlds'.  We call them `protoworlds' to avoid confusion---after all, it is not elements of $P$, but sequences over $P$, that play the standard model-theoretic role of worlds in assigning truth values to sentences.  The choice to call them ``worlds'' might go along with a metaphysically ambitious take on the significance of the models, on which the contrast between subsets of $W$ that do not divide sequences with the same first element and those that do
is taken to model a non-model-relative contrast between ``factual''/``objective''/``heavyweight'' questions on the one hand and ``non-factual''/``subjective''/``lightweight'' questions on the other, with categorical (conditional-free) propositions depending only on the answers to the former kind of question, and conditionals typically depending on the latter kind of questions.  The former questions might be supposed to be settled by \emph{how things are}, whereas the latter are in some sense mere expressions of the way we think, or artifacts of the way we talk (see \citealt{KhooBook} for a recent picture along these lines).  A proponent of this metaphysical distinction might think of ``worlds'' as things that merely answer all \emph{factual} questions; in that case, `world' will seem a good name for elements of $P$, since, given van Fraassen's assumptions, a single element of $P$ is enough to determine the truth-value of any conditional-free sentence.  But we will set aside these questions of metaphysical interpretation here, since they are irrelevant to our logical concerns. Even if you hold the contrast between  categorical and  conditional to be metaphysically chimerical, you could still  accept the logic of sequence models; conversely, you could reject that logic while still maintaining that there is an important metaphysical distinction between the categorical and conditional.

\subsection{Some variations on omega-sequence models}
\label{incon}
In the previous section we introduced both $\omega$-sequence models and the more specific class of full, categorical models.  \emph{Prima facie}, since not every $\omega$-sequence model is a full and categorical model, one might expect that some sentences that hold in all full and categorical models do not hold in all $\omega$-sequence models.  But it turns out that this is not the case: the restriction to full and categorical models makes no difference to the logic.  To see why this is the case, the following fact will be useful:
\begin{fact} \label{canon} 
    Pointed $\omega$-sequence models $\seq{\sigma,W,V}$ and $\seq{\tau,W',V'}$ are equivalent      whenever $V(\sigma\slice{k:}) = V'(\tau\slice{k:})$ for all $k\in \mathbb{N}$.
\end{fact}
The proof is a routine induction on the complexity of formulae. 
The intuition is that all that matters in assessing the truth of a sentence at the distinguished sequence in a pointed $\omega$-sequence model is how the tails of that sequence are valued. Just as the actual identity of worlds doesn't matter in Kripke semantics, likewise the actual identity of sequences doesn't matter in sequence semantics.


As an immediate consequence of \cref{canon}, we have:
\begin{fact} \label{canon1}
    Any pointed $\omega$-sequence model $\mathcal{M}=\seq{\sigma,W,V}$ is equivalent to the pointed $\omega$-sequence model $\mathcal{M}_\omega=\seq{\vec{\omega},\tails{\vec{\omega}}, V'}$ where $\vec{\omega}$ is the sequence $\seq{0,1,2,3\dots}$ of the natural numbers in their standard order (i.e.\ $\operatorname{id}_{\mathbb{N}}$), and $V'(\vec{\omega}^{[k:]}) = V(\sigma^{[k:]})$.
\end{fact}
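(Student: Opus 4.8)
The plan is to get this as an immediate corollary of \cref{canon}. First I would check that $\mathcal{M}_\omega=\seq{\vec{\mathbb{N}},\Omega,V'}$ is a bona fide pointed $\omega$-sequence model. Since $\Omega$ is by definition the set of non-empty tails of the $\omega$-sequence $\vec{\mathbb{N}}=\seq{0,1,2,\dots}$ over $\mathbb{N}$, it is closed under tailhood and contains the distinguished sequence $\vec{\mathbb{N}}$, so $\seq{\Omega,\prec^\Omega}$ is an $\omega$-sequence frame (hence an order frame, with $\prec^\Omega$ a well-order of order type $\le\omega$ at each point, as already holds for all $\omega$-sequence frames). The one point that needs a word is that the displayed stipulation really does determine a valuation $V':At\to\mathcal{P}(\Omega)$.

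This is the only place the particular choice of target sequence is used. Because the natural numbers are pairwise distinct, the tails $\vec{\mathbb{N}}^{[0:]},\vec{\mathbb{N}}^{[1:]},\vec{\mathbb{N}}^{[2:]},\dots$ are pairwise distinct, so $j\mapsto\vec{\mathbb{N}}^{[j:]}$ is a bijection from $\mathbb{N}$ onto $\Omega$; hence for each element of $\Omega$ there is exactly one index $j$ at which to evaluate the clause ``$\vec{\mathbb{N}}^{[j:]}\in V'(p_k)$ iff $\sigma^{[j:]}\in V(p_k)$'', and $V'(p_k):=\{\vec{\mathbb{N}}^{[j:]}:\sigma^{[j:]}\in V(p_k)\}$ is well defined. (No symmetric worry arises on the $\sigma$ side, even if $\sigma$ is eventually periodic and so has coinciding tails: we only ever read truth values off the tails of $\sigma$, never try to assign values to them.)

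With $\mathcal{M}_\omega$ thus in place, the equivalence is just an application of \cref{canon} with $\tau=\vec{\mathbb{N}}$ and $W'=\Omega$: the hypothesis of \cref{canon}, that $\sigma^{[j:]}\in V(p_k)$ iff $\vec{\mathbb{N}}^{[j:]}\in V'(p_k)$ for all $j,k\in\mathbb{N}$, is literally the defining clause for $V'$, so \cref{canon} gives that $\seq{\sigma,W,V}$ and $\seq{\vec{\mathbb{N}},\Omega,V'}$ verify exactly the same sentences of $\mathcal{L}$. I do not expect any real obstacle; the only thing requiring care is the well-definedness of $V'$ just noted, which is exactly why the canonical sequence is taken to be the injective $\seq{0,1,2,\dots}$ rather than an arbitrary $\omega$-sequence.
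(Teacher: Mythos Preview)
Your proposal is correct and follows exactly the paper's approach: the paper simply states that \cref{canon1} is ``an immediate consequence of \cref{canon}'' without further elaboration. Your additional care in verifying that $V'$ is well-defined (via the injectivity of $j\mapsto\vec{\mathbb{N}}^{[j:]}$) is a nice point the paper leaves implicit, but the underlying argument is the same.
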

\noindent We can think of $\mathcal{M}_\omega$ as a kind of minimal representation of $\mathcal{M}$.  Its underlying frame is obviously isomorphic to the order frame with domain $\mathbb{N}$ in which $j<_ik$ iff $i\leq j < k$.  Thus the logic of that order frame, the logic of $\seq{\tails{\vec{\omega}}, \prec}$, and the logic of the class of all $\omega$-sequence frames are all the same.

The minimal representation $\mathcal{M}_\omega$ is always categorical, since no two sequences in its domain share an initial element.  \Cref{canon1} thus shows that requiring categoricity makes no difference to the logic.  

The underlying frame of the minimal representation $\mathcal{M}_\omega$ also has another noteworthy property that is worth giving a name to:
\begin{definition} \label{lineardef}
   An order frame $\seq{W,<}$ (or model $\seq{W,<,V}$) is \emph{linear} iff there is a strict partial order $\prec$ on $W$ such that for all $w,x,y: x<_w y$ iff $w\preceq x \prec y$.
\end{definition}
Given the definition of order frames, any such $\prec$ will also be a strict well-order on any principal upper set of $W$ (any set of elements $\succeq w$ for any $w\in W$).

$\omega$-sequence models need not be linear: consider, for example, the $\omega$-sequence model with domain $\{\seq{0,1,0,1,\ldots},\seq{1,0,1,0,\ldots}\}$.  But \cref{canon1} implies that the logic of $\omega$-sequence frames is at least as strong as the logic of linear order frames, since $\mathcal{M}_\omega$ is linear.  We will see later that this inclusion is proper.%
\footnote{Linear order models are strikingly similar to standard models of temporal logic, a connection to which a reviewer has helpfully drawn our attention.
In particular, \citet{Kamp:1968} considers a language with a binary operator $U$, with $pUq$ standing intuitively for `$p$ will be true until $q$ is true'  (see \citealt{Goranko:2025} for a helpful overview).
The standard semantics for $U$ uses frames $\seq{W,\prec}$ where $\prec$ is a transitive, irreflexive relation on $W$, with $t\Vdash p Uq$ iff there exists $s \succ t$ such that $s\Vdash q $ and $u\Vdash p$ whenever $t\prec u\prec s$. 

This semantics naturally suggests a definition of $>$ in terms of $U$: 
\begin{equation*}
    p>q \coloneqq pq \vee (\negate{p} \wedge \neg ( \negate{p} U p\negate{q}))
\end{equation*}
That is, either $pq$ is actually true, or there is no $p\negate{q}$-world such that all worlds between it and actuality, as well as the actual world, are $\negate{p}$-worlds.  If $\prec$ strictly well-orders every set of the form $\{x:x\succ w\}$, so that we can turn $\seq{W,\prec}$ into a linear order model, the $>$ defined in terms of $U$ will be interchangeable with the primitive one.  However, an embedding in the other direction isn't possible. Even setting aside past-directed operators, this temporal logic can express `at the next world, $\varphi$' as $\bot U \varphi$, but there is no way to express this in terms of $>$ in linear order models. To see this, consider the sequence model $\sigma$ where $p_0$ is true at $\sigma$ and at its first tail, and false at every tail thereafter, and every other atom is false at every tail. A simple induction shows that $\sigma$ and its first tail verify exactly the same sentences of the conditional language; by contrast, $\bot U \neg p$ is false at $\sigma$ but true at the first tail of $\sigma$.

Some of the literature in temporal logic (especially in computer science) uses a different semantics for $U$, where $t\Vdash pUq$ iff there exists $s\succeq t$ such that $s\Vdash q$ and $u\Vdash p$ whenever $t \preceq u \prec s$.
  Given this  clause for $U$, $>$ and $U$ are interdefinable over linear order models (again, assuming that  $\prec$ strictly well-orders every set of the form $\{x:x\succ w\}$):
\begin{align*}
    p>q &\coloneqq \neg (\negate{p} \mathbin{U} p\negate{q}) \\
    p\mathbin{U}q &\coloneqq \neg ((q\vee \negate{p})>\negate{q})
\end{align*}
However, the most common logical system using the reflexive $U$, the linear temporal logic LTL  \citep{Pnueli:1977}, also includes a primitive ``at the next time'' operator $X$, which cannot be defined in terms of $>$.}

From \cref{canon1} a number of interesting invariance facts immediately follow. First, we can extend any pointed $\omega$-sequence model to a \emph{full} pointed $\omega$-sequence model in which $W$ includes all $\omega$-sequences over $P$, extending the valuation to the new sequences however we please, without making any difference to what's true in the model.  The logic of full $\omega$-sequence frames is thus the same as the logic of all $\omega$-sequence frames. From the other end, we can prune any pointed $\omega$-sequence model back to the \emph{generated} model in which $W$ is just the set of all non-empty tails of the designated sequence without making a difference to what's true in the model.  Thus the logic of $\omega$-sequence frames whose domain is $\tails{\sigma}$ for some $\sigma$ is also the same as the logic of all $\omega$-sequence frames.  

We can also use \cref{canon1} to identify some further conditions which we could, if we wished, impose on $\omega$-sequence models without making a difference to the logic.  In our models, we allow the domain to include \emph{eventually cyclic} sequences some of whose tails are identical: for instance, $\seq{1,2,1,2,\ldots}$ is the first, third, fifth, \ldots tail of $\seq{3,1,2,1,2,\dots}$.%
\footnote{$\sigma$ is eventually cyclic iff for some $n$ and for some $m>0$, for all $j: \sigma^{[n+jm:]} = \sigma^{[n:]}$.}  
However, it would not matter if we ruled out eventually cyclic sequences, since none of the sequences in the minimal representation are eventually repeating.%
\footnote{It turns out that we also get the same logic if we \emph{require} the sequences to be eventually cyclic.  This follows from our completeness theorem for \vanf, which works by generating models all of whose sequences are eventually cyclic.}
For the same reason, it would make no difference if we ruled out \emph{all} repetition of protoworlds within a sequence, so that (e.g.) we cannot have a sequence beginning $\seq{1,2,1,\dots}$.

With all this in hand, it is worth noting from the opposite direction that some approaches which bear a close resemblance to $\omega$-sequence semantics have logics that are very different from the logics we consider, and indeed are orthogonal to \stal, rather than strengthening \stal. Two noteworthy recent examples are the approach of \citet{bacon:2015}, who develops a version of sequence semantics which gives up Reciprocity; and that of \citet{Santorio:2021a}, which marries finite sequence semantics with the domain semantics from \citet{Yalcin:2007}. We will set aside these approaches, as well as other variants whose logic is orthogonal to \stal, focusing instead on semantics corresponding to logics that extend \stal.

\section{Flattening\label{flattening1}}

We are now in a position to begin answering our  main question: what is the logic of $\omega$-sequence models?   

Since we were able to present $\omega$-sequence models as a special case of order models, it is immediate from the soundness of \stal with respect to order models that the logic of $\omega$-sequence models includes \stalp.  But it includes more as well.  For an obvious example of how it goes beyond \stalp, consider the following modal schema:
\begin{equation}
    \ourtag{4}
    \Box p \to \Box \Box p
\end{equation}
4 is valid on a modal frame just in case its accessibility relation is transitive.  It is consequently valid on $\omega$-sequence frames: $\rho$ is accessible from $\sigma$ just in case $\rho$ is a tail of $\sigma$, and any tail of a tail of $\sigma$ is a tail of $\sigma$.  But 4 is not part of \stal, whose frames need not be transitive; again, the  modal logic of \stal is \textsf{KT}, which does not include 4.

Another modal schema that is not part of \stal is
\begin{equation}
    \ourtag{H} 
    (\lozenge p \wedge\lozenge q) \to (\lozenge (p\wedge\lozenge q)\vee \lozenge (q\wedge\lozenge p))
\end{equation}
H is valid on a modal frame just in case it's \emph{connected}: whenever $wRv$ and $wRu$, either $vRu$ or $uRv$.  The accessibility relations of $\omega$-sequence frames are connected: if $\tau$ and $\rho$ are distinct tails of $\sigma$, whichever of them has greater rank is a tail of the other and hence accessible from it.  But H is not part of \stal or even of \stal extended with the 4 axiom, since order frames with transitive accessibility relations need not have connected accessibility relations.  

In addition to 4 and H, the logic of $\omega$-sequence frames also includes schemas which essentially involve the conditional. We will discuss two such schemas, which together yield an axiomatization of the logic of $\omega$-sequence models. We begin with a schema we call \textsf{Flattening}: 
\[
    \ourtag{Flattening} 
    p \cond (pq \cond r )\leftrightarrow pq \cond r
\]
Let \vflat be the result of adding Flattening to \stal (i.e., the smallest extension of \stal including every instance of Flattening and closed under Detachment and Normality).  

It is easy to see that Flattening is valid on $\omega$-sequence frames.  The right-hand side is false at a sequence just in case it has a $pq$-tail and $r$ is false at its first $pq$-tail.  The left-hand side is false just in case it has a $p$-tail with a $pq$-tail, and $r$ is false at the first such $pq$-tail.  But any sequence with a $pq$-tail thereby has a $p$-tail, and in any sequence with a $pq$-tail, its first $pq$-tail is identical to the first $pq$-tail of its first $p$-tail. So the two sides of Flattening  have the same truth-value in any pointed $\omega$-sequence model. 

Indeed, to foreshadow a bit, note that this reasoning depends just on the structure of the tailhood relation, not on the ordinal structure of $\omega$-sequences. That means that Flattening is valid on sequence semantics \emph{whatever the domain of the underlying sequence}. Indeed, we will see that \vflat is the logic of ordinal sequence semantics---a variant of $\omega$-sequence semantics where the underlying sequences can take any ordinal as their domain---whereas the logic of $\omega$-sequences is strictly stronger than \vflat.

On the other hand, there are pointed order models in which instances of Flattening are false, such as in the model in \cref{notflat}.

\begin{figure}
\begin{center}
\begin{tikzpicture}[->,>=stealth',shorten >=1pt,shorten <=1pt, auto,node
distance=2.5cm,semithick]
\tikzstyle{every state}=[fill=gray!20,draw=none,text=black]
\node[circle,draw=black!100, fill=black!100, label=below:$1$,inner sep=0pt,minimum size=.1cm] (1) at (0,0) {{}};
\node[circle,draw=black!100, fill=white!100, label=below:$2_{p}$,inner sep=0pt,minimum size=.1cm] (2) at (2,0) {{}};
\node[circle,draw=black!100, fill=white!100, label=below:$3_{pq}$,inner sep=0pt,minimum size=.1cm] (3) at (4,0) {{}};

\node[circle,draw=black!100, fill=black!100, label=below:$2_p$,inner sep=0pt,minimum size=.1cm] (1) at (0,-1.3) {{}};
\node[circle,draw=black!100, fill=white!100, label=below:$4_{pqr}$,inner sep=0pt,minimum size=.1cm] (3) at (2,-1.3) {{}};
\node[circle,draw=black!100, fill=black!100, label=below:$3_{pq}$,inner sep=0pt,minimum size=.1cm] (1) at (0,-2.6) {{}};
\node[circle,draw=black!100, fill=black!100, label=below:$4_{pqr}$,inner sep=0pt,minimum size=.1cm] (3) at (0,-3.9) {{}};

\end{tikzpicture} 
\end{center}
\caption{\small A countermodel to Flattening. Each horizontal line represents the order induced at the left-most (shaded) world, with a world appearing to the left of another just in case the first precedes the second in the relevant ordering, so, e.g., $<_3$ is the empty order, while $<_2$ is the order $\{\seq{2,4}\}$. Subscripts indicate atomic valuations. Thus $1\nVdash pq>r $ while $1\Vdash p>(pq>r)$. For a counterexample to the opposite direction of the Flattening biconditional, make $r$ true at 3 but false at 4.}\label{notflat}
\end{figure}
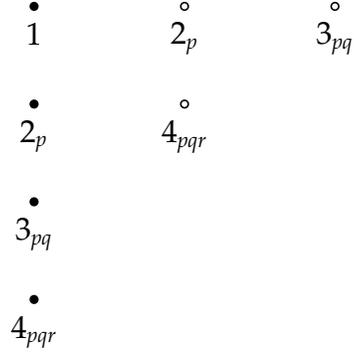

It is worth noting a few alternative axiomatizations of this logic. 

 First, \vflat is equivalent to the logic obtained by instead adding to \stal a corresponding schema, $\gg$-Flattening, formulated not with $>$ but with the strong conditional connective $\gg$ defined as:
 \[ p\gg q :=  \neg(p > \neg q))\]
 (or equivalently, $p\gg q := \lozenge p\wedge (p > q)$). That schema is:
\begin{equation*}
    \ourtag{$\gg$-Flattening}
    p \gg (pq \gg r) \leftrightarrow pq \gg r
\end{equation*}
$\gg$-Flattening is valid in \vflat, since   $p \gg (pq \gg r) \leftrightarrow pq \gg r$ follows from the Flattening-instance  $p > (pq > \neg r) \leftrightarrow pq > \neg r$ by negating both sides. 
Conversely, the logic comprising \stal plus every instance of $\gg$-Flattening proves every instance of Flattening. 
We reason by cases. First assume $\lozenge(pq)$. Note that the instance $p \gg (pq \gg \top) \leftrightarrow pq \gg \top$ of $\gg$-Flattening is equivalent to $ p\gg \lozenge(pq)\leftrightarrow \lozenge(pq)$, so we also have $ p\gg \lozenge (pq)$. By $\gg$-Flattening we have   $pq\gg r \leftrightarrow p\gg (pq \gg r)$, which together with  $\lozenge(pq)\wedge p\gg \lozenge (pq)$ entails  $pq>r\leftrightarrow p>(pq>r)$.
Next assume $\neg \lozenge(pq)$, so we also have $\neg (p\gg \lozenge (pq))$. But 
$\neg \lozenge(pq) \wedge \neg (p\gg \lozenge (pq))$ \stal-entails  $(pq>r) \wedge( p>(pq>r))$ and hence $(pq>r) \leftrightarrow( p>(pq>r))$ .

Second, we can break up Flattening into its two directions
\begin{align*}
    \ourtag{Cautious Importation} 
    p>(pq\cond r)&\to pq \cond r \\
    \ourtag{Cautious Exportation} (pq\cond r) &\to p> (pq \cond r)
\end{align*}
The special cases of these principles where $r$ is a contradiction are interesting; these can be written using $\lozenge$ as
\begin{align*}
    \ourtag{Crashing Cautious Importation} 
    p>\neg\lozenge \pq &\to \neg\lozenge \pq \\
    \ourtag{Crashing Cautious Exportation} \neg\lozenge \pq &\to p> \neg\lozenge \pq
\end{align*}
It turns out that given the validity of either one of Cautious Importation and Cautious Exportation, we only need the `Crashing' restriction of the other to get back the full strength of Flattening.  For example, to derive Cautious Exportation from Cautious Importation plus Crashing Cautious Exportation, suppose $pq > r$.  If $\neg\lozenge \pq$, then $p > \neg\lozenge \pq$ and hence $p > (pq >r)$.  Otherwise, $\neg(pq > \neg r)$, so by Cautious Importation $\neg(p > (pq > \neg r))$, so by CEM $p > \neg(pq > \neg r)$, so by CEM and Normality, $p > (pq > r)$.  The other derivation is analogous.  

Third, it is often convenient (especially in working with natural language examples) to use ``rule'' forms of these axioms, where the conjunction $pq$ is replaced by any $q$ for which we have $\vdash q \to p$.  For example, we can also characterise \vflat as the result of closing \stal under the following rule:
\begin{equation*}
    \ourtag{Flattening Rule}
    \text{If }\vdash q\to p \text{ then }\vdash p > (q > r) \leftrightarrow q > r
\end{equation*}
This implies Flattening since $\vdash pq \to p$, and follows from Flattening by the substitution of logical equivalents (since from $\vdash q \to p$ it follows  that $q$ is logically equivalent to $pq$). 

\vflat turns out to include many---though not all---of the distinctive principles that hold in $\omega$-sequence models but not all order models.  For example, it includes both the modal axioms 4 and H.  4 is actually equivalent to Crashing Cautious Exportation in \stal. From the latter we derive 4 by applying the rule form of Crashing Cautious Exportation to the \stal-theorem $\neg p \to \neg\Box p$ to get
\begin{equation*}
    \neg\lozenge\neg p \to \neg\Box p > \neg\lozenge\neg p
\end{equation*}
which simplifies to $\Box p \to \Box\Box p$. For the other direction, start with $\Box \neg (pq) \to \Box\Box \neg (pq)$, which is an instance of 4; since $\Box q$ entails $p>q$ in \stal, this entails $\Box \neg (pq) \to p > \Box \neg (pq)$ as needed.

For H, we use Crashing Cautious Importation (in rule form, applied to the tautology $p \to (p\vee q)$) to get:
\begin{equation*}
    (p\vee q) > \neg\lozenge p \to \neg\lozenge p
\end{equation*}
Contraposing and applying CEM, this implies $\lozenge p \to (p\vee q) > \lozenge p$.  
By parallel reasoning we also have $\lozenge q \to (p\vee q) > \lozenge q$, and hence by Normality
\begin{equation*}
    \lozenge p \wedge \lozenge q \to 
    (p\vee q) > (\lozenge p \wedge \lozenge q)
\end{equation*}
But CEM, Normality, and Reciprocity together yield the entailment from $(p\vee q)>r$ to $p>r \vee q>r$; applying this to the above, we have:
\begin{equation*}
    \lozenge p \wedge \lozenge q \to 
    (p > (\lozenge p \wedge \lozenge q)) \vee (q > (\lozenge p \wedge \lozenge q))
\end{equation*}
Since $\lozenge p$ and $p>q$ entail $\lozenge(p\wedge q)$ in \stalp, this implies H:
\begin{equation*}
    \lozenge p \wedge \lozenge q \to 
    \lozenge(p \wedge \lozenge q) \vee \lozenge(q \wedge \lozenge p)
\end{equation*}

\subsection{Evaluating Flattening\label{evalflat}}

As we mentioned above, one way to interpret order models is as representing relative similarity between worlds. From the point of view of that interpretation, it is no accident that Flattening fails in those models: Flattening is in clear tension with that interpretation of order models.  

Schematically, similarity-based theories of the conditional predict Flattening can fail because the most similar $pq$-world(s) to actuality need not be the most similar $pq$-world(s) to the $p$-world(s) most similar to actuality.
For a simple concrete example of this, consider a variation on a  toy example of Lewis's involving a line L. Lewis's similarity-based intuition was that, given a world $w$ where L has length $n$, if $x$ and $y$ are otherwise exactly alike, except that in $x$ the length of L is  closer to $n$ than it is in $y$, then $x$ is more similar to $w$ than $y$ is.
Now suppose that L is in fact 10 inches long, and compare \ref{linea} and \ref{lineb}:

\ex. 
    \a. If L hadn't been strictly between 8--11 inches, then if it hadn't been strictly between 8--13 inches, it would have been 13 inches.\label{linea}
    \b. If L hadn't been strictly between 8--13 inches, then it would have been 13 inches.\label{lineb}

\ref{linea} and \ref{lineb} instantiate the two sides of Flattening (in the rule-based formulation from the last section) since not being strictly between 8--13 inches entails not being strictly between 8--11 inches. But, if we interpret conditionals via similarity, in particular with Lewis's simple assumption above, then \ref{linea} should be true while \ref{lineb} is false. The world $x$ most similar to actuality where the line isn't strictly between 8 and 11 inches is one where it's  11 inches. The world $y$ most similar to $x$ where the line isn't strictly between 8--13 inches is one where it's 13 inches. So \ref{linea} is true. By contrast, the world most like actuality where the line isn't between 8 and 13 inches is one where it's 8 inches, so \ref{lineb} is false. (For a counterexample in the opposite direction, change `it would have been 13 inches' to `it would have been 8 inches'.)

Is this counterexample convincing? We find it difficult to detect a clear divergence between \ref{linea} and \ref{lineb}, except by doggedly holding in mind the Lewisian interpretation of `if $p$\ldots' as a proxy for `in the world most similar to actuality where $p$$\dots$'. 
Of course, a defender of a similarity-based view could claim that we simply fail to clearly see a contrast which does exist here. But they would need a story about why we make an error here, whereas we have clear intuitions about many other subtle judgments recorded in the literature on conditionals. Barring such a theory, the apparent validity of Flattening might provide a new argument in the battery of well-known arguments against  similarity theories of conditionals.

Setting aside the baggage of similarity, we can try to evaluate Flattening on its own terms, by considering pairs of sentences that would be logically equivalent according to Flattening and seeing whether they in fact seem equivalent.  It seems to us that the results of this exercise speak in favor of Flattening. For instance, compare these pairs:
\ex. 
\a. If Mark and Sue are both at the party, there will be a conflagration.
\b. If Mark is at the party, then if Mark and Sue are both at the party, there will be a conflagration. 

\ex. \a. If he had gotten an espresso and it had been overextracted, he would have had a fit.  
\b. If he had gotten an espresso, then if he had gotten an espresso and it had been overextracted, he would have had a fit.  

These feel pairwise equivalent. At best, the (b)-variants feel \emph{redundant}; the first antecedent feels like it's doing nothing. This is not explained by order semantics, according to which the two variants have logically orthogonal meanings. But this intuition is explained (given standard theories of redundancy) if Flattening is valid, since then the (b)-variants are equivalent to their consequents. 

By relying on the rule form of Flattening, we can formulate test pairs that feel somewhat less clunky: 

\ex. \a. If he had gotten an espresso and it had been overextracted, he would have had a fit.  
\b. If he had gotten an espresso, then he would have had a fit if he'd gotten an overextracted one.

\ex. \a. If he had been in  the south of France, he'd have had a great time.  
\b. If he had been in France, he'd have had a great time if he had been in the south of France. 

Again, these feel pairwise equivalent. 
We have checked many instances of Flattening, in both the indicative and subjunctive mood, and have not found clear counterexamples.  

To be sure, there are superficial counterexamples to Flattening involving tense and anaphora:

\ex. \a. If John wins, then if John and Sue win, John will have won twice.
\b. If John and Sue win, John will have won twice. 

\ex. \a. If a man came in, then if a man came in and a man came in, then three men came in.
\b. If a man came in and a man came in, then three men came in. 

But it seems implausible that these are  counterexamples to Flattening, and more plausible that the felt inequivalence in these pairs arises from different indexing of tense/anaphora in the two pairs. This is a somewhat delicate issue, involving questions about the representation of context-sensitivity that are beyond our scope. But it is worth noting that if we accept these as counterexamples to Flattening, then we also have to accept that there are counterexamples to the very widely accepted `Contraction' principle $p\cond (p\cond q) \leftrightarrow p\cond q$.  For the following also feel pairwise inequivalent:

\ex. \a. If a man came in, then if a man came in, then two men came in.
\b. If a man came in, then two men came in. 

Thus Flattening seems, from the point of view of natural language, in at least as good \emph{prima facie} standing as Contraction (a theorem of \stalp as well as many weaker  logics of conditionals, though one that must notoriously be rejected by non-classical logicians attempting to maintain a na\"ive theory of truth).  This is a strong position to be in.

However, there are reasons to be wary about these appearances.  Flattening is a ``cautious'' cousin of the following well-known axiom schema: 
\[
    \ourtag{Import-Export (IE)} 
    p \cond (q \cond r )\leftrightarrow pq \cond r
\]
The only difference between Flattening and Import-Export is that, in Flattening, $p$ recurs in the antecedent of the conditional consequent on the left-hand side, so we have $p\cond (pq\cond r)$, rather than $p\cond (q\cond r)$ as in IE. From a logical point of view, this small difference is crucial, for, as \citet{Dale:1974,Dale:1979,Gibbard:1981}, showed,  adding  IE to \stal (or, indeed, to many weaker conditional logics) collapses  $>$ to the material conditional, that is, results in a logic that validates: 
\[
    \ourtag{Materialism} 
    p\cond q \leftrightarrow (p\to q)
\]
Materialism is, however, widely rejected in the literature on conditionals, as we noted above; see \citealt{Edgington:1995} for many arguments against it. For one brief argument, consider the claim that no tree is deciduous if it keeps its leaves through the winter. According to materialism this entails that every tree keeps its leaves through the winter, since the negation of $p\to q$ entails $p$. But this is obviously wrong.\footnote{A reviewer notes that one could respond to this argument with a decompositional approach to `no', where it decomposes into $\forall $ and $\neg$ with the `if'-clause coming in between. While this is an interesting possibility, such a decomposition oviously threatens overgeneration, so something would have to be said about how to constrain this syntactic possibility.} For another argument, note that $(p\to q) \vee (\neg p\to r)$ is a theorem, but disjunctions of conditionals don't always strike us that way; there is little temptation to think that \ref{earth} is true:

\ex.\label{earth} The earth is flat  if it's Tuesday or the earth is flat if it's not Tuesday. 

To be sure, there are some defenders of Materialism \parencite[e.g.][]{jackson:1979,Williamson:2019}. But most view it as an untenable hypothesis. That makes the Dale/Gibbard collapse result a challenge to the co-tenability of Import-Export with \stal, and it is important to ask  whether this challenge extends to Flattening.

The answer is that it does not. For we have already seen that Flattening is valid in $\omega$-sequence models, while Materialism is not. For instance, in the $\omega$-sequence model generated from $\sigma=\seq{1,2,1,2,\dots}$, where $p$ is false at $\sigma$ and true at $\seq{2,1,2,1,\dots}$, while $q$ is false at both sequences, the material implication $p\to q$ is true at $\sigma$ while the conditional $p\cond q$ is false. 
Nor does Flattening lead to any other troubling form of triviality, as $\omega$-sequence models show. 
Moreover, at least one error theory of the apparent validity of IE precisely relies, in part, on the validity of Flattening \citep{Bounds}.  So validating Flattening may turn out to be a key stepping stone towards explaining the \emph{apparent} validity of IE.

A final relevant observation is that there are compelling counterexamples to IE in the case of subjunctive conditionals: e.g. the sentences in \ref{matcha}  can intuitively diverge in meaning \citep{Etlin:2008}.

\ex.\label{matcha}\a. If the match had lit and it had been soaked in water, then it would have lit.
\b. If the match had lit, then it would have lit if it had been soaked in water. 

A standard desideratum in the theory of conditionals is to give a unified theory of indicative and subjunctive conditionals: there is one word `if' which can express both conditionals, depending on the mood of the rest of the sentence. So we have positive reason \emph{not} to validate IE as a matter of the logic of `if', and instead to explain its apparent validity for indicatives, and lack thereof for subjunctives, as arising from the interaction of the meaning of `if' with mood. But matters are different for Flattening, which appears valid for both indicatives and subjunctives.  Indeed, the pairs which look like counterexamples to IE for subjunctive conditionals still look equivalent when we change them to instantiate Flattening:

\ex.\label{matchb}\a. If the match had lit and it had been soaked in water, then it would have lit.
\b. If the match had lit, then it would have lit if it had been soaked in water and it had lit.

In sum, despite their superficial similarity, Flattening and IE have very different statuses vis-\`a-vis the theory of conditionals; reasonable arguments against IE do not extend to Flattening. 

Stepping back, 
we think the relationship of Flattening to Import-Export is reminiscent of the relationship of the following pairs of principles:
\begin{align*}
    \ourtag{Cautious Transitivity}
    (p > q) \wedge (pq > r) &\to (p \cond r)
\\
\ourtag{Transitivity} 
   ( p\cond q) \wedge (q\cond r) &\to (p\cond r)
   \end{align*}
\begin{align*}   
    \ourtag{Cautious Monotonicity}
    (p \cond qr) &\to (pr \cond q)
\\
    \ourtag{Monotonicity} 
   ( p\cond q) &\to (pr\cond q)
\end{align*}
The ``incautious'' principles Transitivity and Monotonicity are widely rejected \citep{Stalnaker:1968}; and, just as for Import-Export, adding either of them to \stal results in Materialism.  By contrast, the ``cautious'' principles are widely accepted, and are theorems of \stal and of many other popular conditional logics.  In this light, we might naturally regard Flattening as a plausible \emph{cautious} cousin of the implausible, incautious Import-Export principle.

Nevertheless, we do not want to suggest that the case for the validity of Flattening is anything like watertight.  The strongest reason for worry we see involves the fact that, as we noted, Flattening implies the 4 and H principles for the $\Box$ defined in terms of $>$.  This is a potential warning sign, since there are well known arguments against the \textsf{4} principle for many seemingly relevant interpretations of $\Box$, and many of these arguments also extend to the \textsf{H} principle.  \citet{Williamson2000}  and \citet{DorrGoodHaw} argue against 4 on an interpretation where $\Box$ means `$a$ is in a position to know that\ldots'.  This suggests that 4 might also fail for the epistemic `must' if its meaning is related to that of `know'; and the arguments can also be adapted to directly use epistemic `must'.   Insofar as the $\Box$ defined in terms of $>$ interpreted as an indicative conditional is equivalent to, or otherwise intimately connected to, the epistemic modal, these considerations may also threaten the 4 axiom for that $\Box$.  Meanwhile, when we turn to the notion of \emph{nomic} necessity---which might be thought to be identical to the defined $\Box$ on some counterfactual interpretations of $>$---we find influential forms of Humeanism which motivate rejection of at least H and perhaps also 4.  On the `best system' theory of laws \citep{Lewis:1994}, being a nomic necessity is being entailed by whatever collection of true axioms achieves the best balance of simplicity and strength.  On this picture, there could be a complex world where there are two jointly inconsistent simple propositions both of which are false but nomically possible, and (because of their simplicity) such that necessarily, if they are true, they are nomically necessary.  This is inconsistent with the connectedness of nomic accessibility, and thus with \textsf{H}.  (It is also arguable that Humeans should reject 4 for nomic necessity, though we will not go into that here.)  While it is not so plausible that nomically necessary truths are counterfactually necessary on \emph{every} interpretation of counterfactuals, one might think that there are \emph{some} salient interpretations of counterfactuals that involve ``holding the laws fixed'' in such a way that these Humean worries would carry over to 4 and H for the defined $\Box$.  There are also potential reasons for doubting both 4 and H for \emph{metaphysical} modality, which many take to be equivalent to $\Box$ defined in terms of counterfactuals: \citet{SalmonRE2} rejects 4 for metaphysical necessity in order to solve certain puzzles of Tolerance (though see \cite{DorrHawthorneYliVakkuri} for an alternative approach to those puzzles which preserves 4); meanwhile, \citet{BaconLC} and \citet{BaconDorrClassicism} endorse 4, but are sympathetic to versions of ``combinatorialism'' on which metaphysical modality would not obey H.%
\footnote{On these views, we can have propositions $p$ and $q$---e.g., the results of predicating two different fundamental properties of some fundamental individual---such that it is metaphysically possible both that $p=q$ and that $p=\neg q$.  But $p=q$ entails $\neg\lozenge(p=\neg q)$ and $p=\neg q$ entails $\neg\lozenge(p=q)$, so this violates H.}

We will not here undertake to evaluate these arguments against 4 and H for various familiar interpretations of necessity, or  adjudicate the question to what extent they carry over to the $\Box$ defined in terms of `if'.  If we accept these arguments (for that defined $\Box$), we will have to reject Flattening, and develop an error theory of its apparent validity.  But the appearances favoring Flattening are quite strong, so the difficulty of this task should not be underestimated. In any case, we think it's clear that the logic \vflat has strong  prima facie appeal as (at least part of) the logic of the natural language conditional.
 
If we do accept \vflat, then we might naturally hope to develop some intuition about its semantics which might help to \emph{explain} its validity. As we saw above, similarity-based approaches will not work here.  Indeed, even the more abstract terminology of ``closeness'' may be rather misleading, since it suggests an underlying \emph{metric} structure, and no ordinary metric space has the property that when $X\subseteq Y$, the closest element of $X$ to a given point is the closest element of $X$ to the closest element of $Y$ to that point.  Of course, if ``closeness'' is just a label for whatever plays the role of the order function in order semantics, a closeness-based semantics is consistent with the validity of \vflat; but this does not suggest anything we could think of as an \emph{explanation}.   
We think this is an important challenge, to which we do not have a satisfactory answer; we leave it as an open problem for proponents of \vflat.%
    \footnote{Thanks to Robert Stalnaker and an anonymous referee for pushing us on this question.}

\subsection{The logic \vflat} 
As we have already asserted, the logic of $\omega$-sequence frames is not exhausted by \vflat.  To see why this is the case, and to get some intuition for what is missing, it will be useful to introduce a different class of order frames with respect to which \vflat is complete as well as sound: the \emph{flat} order frames.  
\begin{definition} Order frame $\seq{W,<}$ is 
\begin{itemize}
 \item
 \emph{semi-flat} iff whenever $x<_w y$, $y\in R(x)$ and for all $z$ such that $y <_w z$ or $z \in R(x)\setminus R(w)$, $y <_x z$.
 \item  
 \emph{flat} iff it is  semi-flat and transitive.
\end{itemize}
\end{definition}
(Note that in a transitive frame, the case  where $x <_w y$ and $z\in R(x)\setminus R(w)$ cannot arise, since $x <_w y$ guarantees $x\in R(w)$ and hence $R(x)\subseteq R(w)$.) 
In other words: a flat order frame is a transitive frame where, whenever $x$ is accessible from $w$, $<_x$ orders all the worlds that come \emph{after} $x$ in $<_w$ in just the same way they were ordered in $<_w$.  By contrast, worlds that came \emph{before} $x$ in $<_w$ may occur at any position in $<_x$, or not be accessible from $x$ at all.  See \cref{flatorders1} for an illustration of a flat and non-flat order frame.

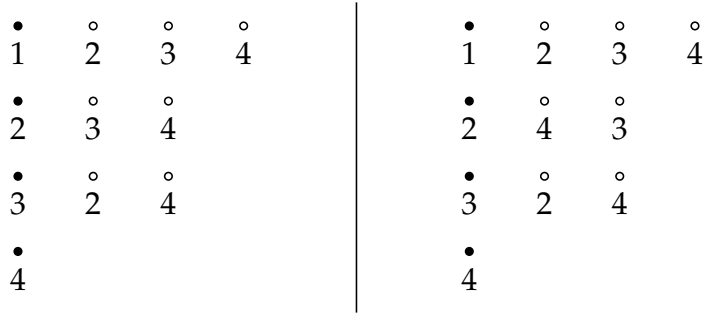
\begin{figure}
\begin{center}
\tikzset{every loop/.style={min distance=10mm,looseness=10}}
\begin{tikzpicture}[->,>=stealth',shorten >=1pt,shorten <=1pt, auto,node
distance=2.5cm,semithick]
\tikzstyle{every state}=[fill=gray!20,draw=none,text=black]

\node[circle,draw=black!100, fill=black!100, label=below:$1$,inner sep=0pt,minimum size=.1cm] (1) at (0,0) {{}};
\node[circle,draw=black!100, fill=white!100, label=below:$2$,inner sep=0pt,minimum size=.1cm] (2) at (1,0) {{}};
\node[circle,draw=black!100, fill=white!100, label=below:$3$,inner sep=0pt,minimum size=.1cm] (3) at (2,0) {{}};
\node[circle,draw=black!100, fill=white!100, label=below:$4$,inner sep=0pt,minimum size=.1cm] (4) at (3,0) {{}};

\node[circle,draw=black!100, fill=black!100, label=below:$2$,inner sep=0pt,minimum size=.1cm] (1) at (0,-1) {{}};
\node[circle,draw=black!100, fill=white!100, label=below:$3$,inner sep=0pt,minimum size=.1cm] (2) at (1,-1) {{}};
\node[circle,draw=black!100, fill=white!100, label=below:$4$,inner sep=0pt,minimum size=.1cm] (3) at (2,-1) {{}};

\node[circle,draw=black!100, fill=black!100, label=below:$3$,inner sep=0pt,minimum size=.1cm] (1) at (0,-2) {{}};
\node[circle,draw=black!100, fill=white!100, label=below:$2$,inner sep=0pt,minimum size=.1cm] (2) at (1,-2) {{}};
\node[circle,draw=black!100, fill=white!100, label=below:$4$,inner sep=0pt,minimum size=.1cm] (3) at (2,-2) {{}};

\node[circle,draw=black!100, fill=black!100, label=below:$4$,inner sep=0pt,minimum size=.1cm] (1) at (0,-3) {{}};

\node (10) at (4.5,0.5) {{}};
\node (11) at (4.5,-4) {{}};
\path (10) edge[-] node {{}} (11);

\node[circle,draw=black!100, fill=black!100, label=below:$1$,inner sep=0pt,minimum size=.1cm] (1) at (6,0) {{}};
\node[circle,draw=black!100, fill=white!100, label=below:$2$,inner sep=0pt,minimum size=.1cm] (2) at (7,0) {{}};
\node[circle,draw=black!100, fill=white!100, label=below:$3$,inner sep=0pt,minimum size=.1cm] (3) at (8,0) {{}};
\node[circle,draw=black!100, fill=white!100, label=below:$4$,inner sep=0pt,minimum size=.1cm] (3) at (9,0) {{}};

\node[circle,draw=black!100, fill=black!100, label=below:$2$,inner sep=0pt,minimum size=.1cm] (1) at (6,-1) {{}};
\node[circle,draw=black!100, fill=white!100, label=below:$4$,inner sep=0pt,minimum size=.1cm] (2) at (7,-1) {{}};
\node[circle,draw=black!100, fill=white!100, label=below:$3$,inner sep=0pt,minimum size=.1cm] (3) at (8,-1) {{}};

\node[circle,draw=black!100, fill=black!100, label=below:$3$,inner sep=0pt,minimum size=.1cm] (1) at (6,-2) {{}};
\node[circle,draw=black!100, fill=white!100, label=below:$2$,inner sep=0pt,minimum size=.1cm] (2) at (7,-2) {{}};
\node[circle,draw=black!100, fill=white!100, label=below:$4$,inner sep=0pt,minimum size=.1cm] (3) at (8,-2) {{}};

\node[circle,draw=black!100, fill=black!100, label=below:$4$,inner sep=0pt,minimum size=.1cm] (1) at (6,-3) {{}};
\end{tikzpicture} 

\end{center}
\caption{\small  Illustrations of a flat order function (left) and a non-flat (because not semi-flat) order function (right) in frames with worlds \{1,2,3,4\}.}\label{flatorders1}
\end{figure}

\emph{Linear} order frames, whose order function can be derived from some unrelativized $\prec$ via $x <_w y$ iff $w\preceq x \prec y$, are always flat: they are obviously transitive, and if $x<_w y<_w z$, then $w\preceq x \prec y \prec z$ and so $y\in R(x)$ and $y <_x z$.  But not all flat frames are linear: for example, the one on the left of \cref{flatorders1} is not.

We can show that Flattening is valid on all flat order frames.  In fact, flatness \emph{characterizes} Flattening, in the sense that the order frames on which Flattening is valid are exactly the flat ones.  Since we already know that Flattening is equivalent to the combination of Cautious Importation with Crashing Cautious Exportation, and that the latter is equivalent to \textsf{4} (which is characterized by transitivity), it suffices to prove the following lemma:

\begin{lemma}\label{absorpcon} 
    Cautious Importation is valid on an order frame $\seq{W,<}$ iff it is semi-flat.
\end{lemma}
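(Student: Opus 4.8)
The plan is to establish the two directions separately. For the easier direction---from semi-flatness to the validity of Cautious Importation---I would argue contrapositively: fix a semi-flat frame $\seq{W,<}$, a valuation, and a world $w$ with $w\nVdash pq\cond r$, and show $w\nVdash p\cond(pq\cond r)$. Since $<_w$ is a well-order, $w\nVdash pq\cond r$ gives a $<_w$-least $pq$-world $z\in R(w)$ with $z\nVdash r$; as $z$ is also a $p$-world in $R(w)$, there is a $<_w$-least $p$-world $y\in R(w)$, and $y\leq_w z$. If $y=z$, then $y$ is itself a $pq$-world falsifying $r$, so $y\nVdash pq\cond r$ by MP and we are done; so suppose $y<_w z$. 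The crux is then to show that $z$ is the $<_y$-least $pq$-world of $R(y)$, which---since $z\nVdash r$---yields $y\nVdash pq\cond r$, hence $w\nVdash p\cond(pq\cond r)$.

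To show $z\in R(y)$, I would apply semi-flatness to $(y,z,z,w)$ in the roles of $(x,y,z,w)$: from $y<_w z$ and $z\leq_w z$ it delivers $z\leq_y z$, i.e. $z\in R(y)$. To show that every $pq$-world $z'\in R(y)$ satisfies $z\leq_y z'$, I would split on whether $z'\in R(w)$. If so, then $z\leq_w z'$ since $z$ is $<_w$-least among $pq$-worlds of $R(w)$, and semi-flatness applied to $(y,z,z',w)$ via its \emph{first} disjunct (the two relevant hypotheses being $y<_w z$ and $z\leq_w z'$) gives $z\leq_y z'$. If not, then $z'\in R(y)\setminus R(w)$, and semi-flatness applied to $(y,z,z',w)$ via its \emph{second} disjunct gives $z\leq_y z'$. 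This is exactly where the shape of the semi-flatness condition is used: passing from $<_w$ to $<_y$ both reshuffles worlds already accessible from $w$ and may introduce genuinely new accessible worlds, and the two disjuncts of semi-flatness cover these two phenomena respectively.

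For the converse I would argue contrapositively as well: assume $\seq{W,<}$ is not semi-flat, fix witnesses with $x<_w y$, $y\not\leq_x z$, and ($y\leq_w z$ or $z\in R(x)\setminus R(w)$), and build a valuation falsifying Cautious Importation at $w$. In all cases I let $p$ be true exactly at the worlds of $R(w)$ that do not $<_w$-precede $x$, together with $z$; then $x$ is the $<_w$-least $p$-world of $R(w)$, so $w\Vdash p\cond(pq\cond r)$ reduces to $x\Vdash pq\cond r$, and it remains to choose $\sem{}{q}$ and $\sem{}{r}$ so that some $pq$-world of $R(w)$ has its $<_w$-least representative falsify $r$ while $x\Vdash pq\cond r$. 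The failure $y\not\leq_x z$ splits into three cases. (i) If $z<_x y$ (so $y,z\in R(x)$), let $q$ be true exactly at $\{y,z\}$ and $r$ true exactly at $z$: the first $pq$-world from $x$ is $z$, which verifies $r$, while the first $pq$-world of $R(w)$ is $y$ (using $y\leq_w z$, or else $z\notin R(w)$), which falsifies $r$. (ii) If $y\notin R(x)$, let $q$ be true only at $y$ and $r$ false there: $R(x)$ then contains no $pq$-world, so $pq\cond r$ is vacuously true at $x$, while $y\in R(w)$ is a $pq$-world falsifying $r$. (iii) If $z\notin R(x)$, then the semi-flatness hypothesis can only be met by its first disjunct $y\leq_w z$, which forces $z\in R(w)$; let $q$ be true only at $z$ and $r$ false there: again $R(x)$ has no $pq$-world, so $pq\cond r$ is vacuous at $x$, while $z\in R(w)$ falsifies $r$.

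The genuinely routine part is verifying the semantic clauses in each of these cases. The main obstacle is the bookkeeping in the converse---in particular recognizing that $y\not\leq_x z$ is not merely ``$z<_x y$'' but also allows $y$ or $z$ to drop out of $R(x)$ altogether, that these possibilities need different valuations, and that $z\in R(w)$ in case (iii) has to be extracted from the hypothesis via the fact that $y\leq_w z$ presupposes $z\in R(w)$. On the other side, the one subtlety in the forward direction is the point already flagged: both clauses of semi-flatness are needed, and it is the innocuous-looking instance $(y,z,z,w)$ that first locates $z$ inside $R(y)$.
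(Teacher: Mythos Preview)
Your proposal is correct and follows essentially the same approach as the paper: the $\Leftarrow$ direction is the contrapositive of the paper's direct argument (with your $y,z$ playing the roles of the paper's $x,y$), and your $\Rightarrow$ direction uses the same three-way case split on how $y\nleq_x z$ can fail, just organized around that failure rather than around which disjunct of the semi-flatness hypothesis holds. The only cosmetic difference is that you make $p$ true on a large set (all of $R(w)$ from $x$ onward, plus $z$) whereas the paper uses the minimal $\{x,y\}$, $\{x,z\}$, or $\{x,y,z\}$; both choices work.
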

\begin{proof}\leavevmode
\bi \item[$\Rightarrow$] Suppose $x<_w y$.  
\bi
 \item[(a)] If $y \notin R(x)$, let $V(p)=\{x,y\}, V(q)=\{y\}, V(r)=\varnothing$.
 \item[(b)] If $y \in R(x)$ and for some $z$, $y <_w z$ but $y \nless_x z$, then:
 \bi
\item if $z\notin R(x)$, let $V(p)=\{x,z\}, V(q)=\{z\}, V(r)=\varnothing$; 
\item if $z<_x y$, let $V(p)=\{x,y,z\}, V(q)=\{y,z\}, V(r)=\{z\}$.
\ei
\item[(c)] if  $y \in R(x)$ and for some $z$, $z \in R(x)\setminus R(w)$ but $y\nless_x z$, let $V(p)=\{x,z,y\}, V(q)=\{y,z\}, V(r)=\{z\}$. 
\ei
In each case, Cautious Importation will fail at $w$.
\item[$\Leftarrow$] 
Suppose $\seq{W,<}$ is semi-flat, and consider any $V$ and $w\in W$ such that $w\Vdash p>(pq\cond r)$.  
If there is no $pq$-world in $R(w)$, or the first $p$-world in $<_w$ is a $pq$-world, then $w\Vdash pq > r$ and we are done. Otherwise, let $x$ be the first $p$-world in $<_w$ and let $y$ be the first $pq$-world in $<_w$.  Since $x<_w y$, $y\in R(x)$, so $R(x)$ contains a $pq$-world; let $u$ be the first $pq$-world according to $<_x$. We know $u\Vdash r$, so it suffices to show $y=u$.  We cannot have $u<_w y$ since then $y$ would not be the first $pq$-world in $<_w$.  And we also cannot have $y<_wu$ or $u\notin R(w)$, since in either case semi-flatness would yield $y<_xu$, meaning that $u$ would not be the first $pq$-world in $<_x$.  The only remaining possibility is that $y=u$.  
\qedhere
\ei
\end{proof}

Since Flattening is equivalent (modulo \stal) to 4 together with Cautious Importation, it follows that Flattening is characterized by flatness:
\begin{theorem}\label{flatt}
Flattening is valid on an order frame $\seq{W,<}$ iff $\seq{W,<}$ is flat. 
\end{theorem}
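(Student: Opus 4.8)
The plan is to chain together three correspondence facts that are all either proved above or completely standard. The crucial preliminary, established in the discussion preceding the theorem, is that \emph{over \stal} the schema Flattening is interderivable with the pair of schemas \{Cautious Importation, Crashing Cautious Exportation\}: Flattening trivially yields both (Crashing Cautious Exportation being a substitution instance of Cautious Exportation), and the displayed derivation using CEM, Normality, and Reciprocity recovers Flattening from that pair. Since the axioms and rules of \stal preserve validity on any \emph{fixed} order frame --- Detachment does so trivially, and Normality does so because the semantic clause for $>$ guarantees that whenever $\sem{}{p\wedge q}\subseteq\sem{}{r}$ in a model we also have $\sem{}{(s>p)\wedge(s>q)}\subseteq\sem{}{s>r}$ --- it follows that an order frame validates Flattening iff it validates both Cautious Importation and Crashing Cautious Exportation.

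First I would invoke \cref{absorpcon} directly: Cautious Importation is valid on $\seq{W,<}$ iff $\seq{W,<}$ is semi-flat. Next I would treat Crashing Cautious Exportation. As noted above, over \stal this schema is equivalent to the modal axiom \textsf{4}: applying the rule form of Crashing Cautious Exportation to the \stal-theorem $\neg p\to\neg\Box p$ yields \textsf{4}, and the converse follows from \textsf{4} together with MOD. Hence $\seq{W,<}$ validates Crashing Cautious Exportation iff it validates \textsf{4}. Now, under order semantics the denotation of $\Box p$ is $\{w: R(w)\subseteq\sem{}{p}\}$ --- the existential disjunct in the clause for $>$ is vacuous when antecedent and consequent are jointly contradictory --- so $\Box$ behaves exactly as the Kripke box for the derived accessibility relation $R$; since atoms may denote arbitrary subsets of $W$, the familiar Sahlqvist-style correspondence applies, giving: \textsf{4} is valid on $\seq{W,<}$ iff $R$ is transitive, i.e.\ iff the frame is transitive.

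Combining the pieces, $\seq{W,<}$ validates Flattening iff it is both semi-flat and transitive, which is precisely the definition of flatness; note that this single biconditional chain yields both the soundness direction (flat frames validate Flattening) and the characterization direction at once. The step to watch is the transfer move in the first paragraph --- that interderivability of schema sets over \stal really does carve out the same class of order frames --- since this rests on each \stal rule preserving frame-validity; Detachment is immediate and Normality is the routine semantic computation sketched above, so no real obstacle remains. Everything else is a direct appeal to \cref{absorpcon} and to the standard modal correspondence for \textsf{4}.
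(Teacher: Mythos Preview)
Your proposal is correct and follows essentially the same approach as the paper's own proof: both reduce the characterization of Flattening to the conjunction of the characterization of Cautious Importation (via \cref{absorpcon}, giving semi-flatness) and the characterization of Crashing Cautious Exportation/\textsf{4} (giving transitivity). Your version is somewhat more careful than the paper's in explicitly justifying the ``transfer move''---that interderivability of schema sets over \stal yields coincidence of validating order frames, because the \stal rules preserve frame-validity---which the paper leaves implicit; otherwise the arguments are the same.
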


We can also formulate frame conditions that characterize Cautious Exportation and Crashing Cautious Importation; the combination of these conditions is also equivalent to flatness, since the conjunction of the axioms is equivalent to Flattening.%
\footnote{Cautious Exportation is valid on  $\seq{W,<}$ iff it is transitive and for all $w,x,y,z$, if $x<_w y<_w z$ and $z\in R(x)$, then $ y<_x z$.
Crashing Cautious Importation is valid on $\seq{W,<}$ iff whenever $x <_w y$, $y\in R(x)$.}

With this characterization result in hand, we can turn to soundness and completeness results for \vflat.
The right-to-left direction of \cref{flatt} says that \vflat is sound for flat order frames: that is, all the theorems of \vflat are valid on every flat order frame. However, completeness is another matter: a characterization result like \cref{flatt} does not entail a completeness result. Abstractly, a characterization result for a logic \textsf{L} against a background class of frames $\mathcal{F}$ specifies the  subset $\mathcal{F}_\textsf{L}$ of $\mathcal{F}$ such that for all $F\in \mathcal{F}$, $F\in \mathcal{F}_\textsf{L}$ iff \textsf{L} is valid on $F$. However, it is possible that \textsf{L} is not complete with respect to $\mathcal{F}_\textsf{L}$, when there is some sentence $p$ which is valid on every $F\in \mathcal{F}_\textsf{L}$ but is not a theorem of \textsf{L}: i.e., when the set $\mathcal{F}_\textsf{L}$ is ``too small'' to find a countermodel to every non-theorem of \textsf{L}.

And indeed, it is well known that there are normal modal logics which are not complete with respect to the class of modal frames that characterize them: in other words, certain formulae that are not theorems of the logic are nevertheless valid on every Kripke frame (\citealt{Fine:1974,Thomason:1974,vanBenthem1978-VANTSI-4}; see \citealt{Holliday:2019} for a helpful recent discussion). So we cannot simply assume that a characterization result yields a corresponding completeness result. Moreover, the fact that \vflat, like many extensions of \stal, is not strongly complete for \emph{any} class of order frames 
(which we can show analogously to \cref{notstrong}) means that the standard canonical model method for proving completeness will not work for these logics. 

With that said, we do in fact have a completeness result for \vflat: 
in the appendix, we show that \vflat is weakly complete for flat order frames. Indeed, we show that it is weakly complete for \emph{finite} such frames: 
\begin{theorem}\label{flattcom}
    \vflat is weakly complete with respect to finite flat order frames. 
\end{theorem}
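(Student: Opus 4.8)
The plan is to establish the finite model property: every sentence $\phi$ that is consistent in \vflat is true at some world of a finite flat order model. Since \vflat extends \stalp, \cref{c2completeness} already yields \emph{some} finite order model of $\phi$, but flatness is not preserved by the reductions used there, so one cannot simply massage an arbitrary \stal model into shape; instead the model must be built directly out of \vflat-consistent sets of sentences, so that the \vflat axioms are available to force flatness.

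I would use a filtration-style finite canonical model. First fix a finite $\Sigma \ni \phi$ closed under subsentences and single negations and enlarged to contain $\Box\psi$ (i.e.\ $\neg\psi > \psi$) for the relevant $\psi$ and the conditionals $pq > r$ witnessing the instances of Flattening whose antecedents and consequents already lie in $\Sigma$; a finite number of closure rounds suffices. Let the \emph{states} be the maximal \vflat-consistent subsets of $\Sigma$. For each state $w$ put $R(w) = \{w\} \cup \{v : \{\psi : \Box\psi \in w\} \subseteq v\}$, and let atoms denote the sets of states containing them. Using the \vflat-theorems \textsf{4} and \textsf{H} (derived in \cref{flattening1}), one checks on the submodel generated by a state containing $\phi$ that $R$ is transitive and connected.

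The heart of the proof is the choice of the order functions $<_w$, which must be strict total well-orders on $R(w)$ (with $w$ least) satisfying two things at once: (i) a \textbf{truth lemma} for conditionals --- for $p>q \in \Sigma$, $p>q \in w$ iff $R(w)$ contains no $p$-state or the $<_w$-least $p$-state is a $q$-state --- and (ii) \textbf{flatness}, i.e.\ transitivity together with semi-flatness, $x<_w y \leq_w z \Rightarrow y \leq_x z$. I would read $<_w$ off from the conditional sentences in $w$ --- roughly, letting a state $x$ precede a state $y$ at $w$ when some antecedent true at both is one whose $w$-conditional is already settled by $x$ --- and pass to a quotient by chosen representatives so that $<_w$ becomes a genuine order rather than a preorder. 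That the resulting frame is flat is exactly where \cref{absorpcon,flatt} earn their keep: a semi-flatness violation at states $x,y,z$ would, under the very valuations constructed in the proof of \cref{absorpcon}, yield at $w$ a failure of Cautious Importation, contradicting membership in $w$ of all instances of Flattening; transitivity comes the same way from \textsf{4} (equivalently Crashing Cautious Exportation). Granting (i), the truth lemma follows by the usual induction on complexity, and since $\phi$ lies in some state, $\phi$ is satisfied in this finite flat order model.

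The main obstacle is precisely the simultaneous fit demanded of the order functions: one needs a single rule for $<_w$, uniform in $w$, that computes all conditionals in $\Sigma$ correctly \emph{and} respects the cross-world semi-flatness constraint linking $<_w$ to $<_x$ for $x \in R(w)$. All the leverage is in the \vflat axioms, and \cref{absorpcon,flatt} are what convert ``every instance of Flattening is in every state'' into ``the extracted frame is flat''; the remaining delicacy --- ensuring each $<_w$ is a total well-order on the right set, that the quotient is compatible with both the truth lemma and flatness, and that everything meshes with the derived accessibility relation --- is standard but fiddly.
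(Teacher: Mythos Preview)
Your approach differs substantially from the paper's. Rather than building a filtration-style canonical model and arguing that its frame is flat, the paper constructs, for each depth-$n$ state description $s$ over a finite atom set, a finite \emph{ordinal sequence} model $\mathcal{M}_s$ in which $s$ is true. Since ordinal sequence frames are flat by construction, flatness is free; completeness for finite flat order frames is then an immediate corollary of completeness for finite ordinal sequence models (\cref{ordseqcompleteness}). The work lies in a recursive function $\makeseq{\cdot}$ that converts an ``orderly'' list $\tau$ of lower-depth state descriptions into a (possibly transfinite, but finitely-tailed) sequence, via a case split on whether initial segments of $\tau$ are ``direct'' or ``circuitous''; Flattening enters not via the frame-characterization lemmas but through \cref{backflattening} and \cref{endapproach}, which let conditionals be pulled back along segments of the constructed sequence and guarantee that the needed orderly rearrangements exist.

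Your proposal has a genuine gap at precisely the place you flag as the main obstacle. The appeal to \cref{absorpcon,flatt} cannot do what you want: those results say that a non-semi-flat frame admits \emph{some} valuation falsifying Cautious Importation, but in your filtration model the valuation is fixed by the states. To turn a semi-flatness violation at states $x,y,z$ into a contradiction you would need sentences already in $\Sigma$ whose canonical denotations play the roles of $p,q,r$ from the proof of \cref{absorpcon} --- sentences holding at exactly the right states --- and nothing in your closure conditions on $\Sigma$ guarantees such sentences exist. More basically, your rule for $<_w$ (``$x$ precedes $y$ when some antecedent true at both has its $w$-conditional settled by $x$'') does not determine a total order, and the claim that a quotient can be chosen making the orders simultaneously flat \emph{and} truth-lemma-correct is the entire content of the theorem, not something that can be waved off as ``standard but fiddly.''
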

Because of its restriction to finite frames (and the fact that flatness is a decidable property of finite frames), this result has the corollary that \vflat is decidable.

Since all linear order frames are flat, the fact that \vflat is sound for flat order frames immediately implies that it is sound for linear order frames.  Since not all flat order frames are linear, one might guess that the logic of linear order frames would be stronger than \vflat.  But in fact the logics turn out to be the same:
\begin{restatable}[]{theorem}{linearcom} \label{linearcom}
    \vflat is sound and weakly complete with respect to linear order frames.
\end{restatable} \noindent
Both \cref{flattcom} and \cref{linearcom} follow from a third completeness result for \vflat which we come to in \cref{generalized} (namely, \cref{ordseqcompleteness}).
That result is based on a generalization of $\omega$-sequence frames which replaces $\omega$-sequences with sequences of arbitrary length, while keeping the other definitions fixed. 

That generalization is natural  to explore when we start from $\omega$-sequence semantics. However, for fans of \vflat but not \vanf, the resulting semantics might be a hard sell: it is at least somewhat difficult to motivate the explanatory force of a highly theoretical notion like transfinite sequences for modeling the meaning of `if'. Similarly, our characterization of \vflat yielded a complex frame condition (flatness) which is not terribly easy to get an independent grip on.
 By contrast, the definition of linear order frames is extremely simple, and, we think, provides the clearest semantic picture of the difference between \vflat and \stal. 
 \stal is sound and complete for order frames whose order functions endow every world with a separate well-orders which need have no relation to that of any other world. By contrast, \vflat is sound and complete for a semantics based on a single master order: each world's order is just the result of deleting all its predecessors from the master order. The less constrained order semantics for \stal has, in our view, more prima facie philosophical plausibility, so this new perspective does not yet explain in some deeper sense why \vflat would be the correct logic for the conditional.  But it may help get a grip on what that question amounts to.

One noteworthy corollary of the soundness direction of \cref{flatt} is that the two modal schemas 4 and H we mentioned in \cref{flattening1} in fact \emph{exhaust} the purely modal content of \vflat.  Here are the two axioms again:
\begin{align*}
    \ourtag{4}  
    \Box p &\to \Box\Box p 
    \\
    \ourtag{H} 
    (\lozenge p \wedge\lozenge q)&\to (\lozenge (p\wedge\lozenge q)\vee \lozenge (q\wedge\lozenge p))
\end{align*}
The modal logic that adds these two axiom schemas to \textsf{KT} is called \textsf{S4.3}, and it is sound and complete (indeed strongly complete) for modal frames with a reflexive, transitive, and connected accessibility relation.  It is also weakly complete for \emph{finite} such frames \citep{Bull:1966}, which  we use to show: 

\begin{theorem}
     When $ p $ belongs to the modal fragment of $\mathcal{L}_>$, $\vdash_{\text{\vflat}} p $ iff $\vdash_{\text{\textsf{S4.3}}} p $.  
\end{theorem}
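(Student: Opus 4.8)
The plan is to imitate the proof of the earlier theorem identifying the modal fragment of \stal with \textsf{KT}, now with \textsf{S4.3} in place of \textsf{KT} and \emph{flat} order frames in place of arbitrary order frames, leaning on the soundness of \vflat for flat order frames (\cref{flatt}) in one direction and on Bull's finite completeness of \textsf{S4.3} in the other. For the direction $\vdash_{\text{\vflat}}p \Leftarrow \vdash_{\text{\textsf{S4.3}}}p$ (which in fact holds for every sentence of the modal fragment, not just theorems), I would simply check that $\textsf{S4.3}\subseteq\text{\vflat}$ as logics. Since \vflat extends \stal, every instance of PC, K and T is already a theorem of \vflat (K by Normality, T by MP), and \vflat is closed under Detachment and --- via Normality --- Necessitation; and we showed in \cref{flattening1} that every instance of the 4 and H schemas is a theorem of \vflat. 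As \textsf{S4.3} is exactly the closure of PC, K, T, 4 and H under Detachment and Necessitation, $\vdash_{\text{\textsf{S4.3}}}p$ implies $\vdash_{\text{\vflat}}p$.

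For the converse, suppose $\vdash_{\text{\vflat}}p$ with $p$ in the modal fragment. By the weak completeness of \textsf{S4.3} for finite reflexive--transitive--connected modal frames \citep{Bull:1966}, it suffices to show that $p$ is valid on every such frame $\seq{W,R}$. Given such a frame I would equip $W$ with an order function making it a flat order frame whose induced accessibility relation is exactly $R$: fix a strict well-order $<$ on $W$ such that $x<y$ whenever $xRy$ and $\neg yRx$ (such a $<$ exists since $W$ is finite and, by transitivity of $R$, the displayed relation is a strict partial order), and set $x<_w y$ iff $wRx$, $wRy$, $x\neq y$, and either $x=w$ or ($x\neq w$, $y\neq w$ and $x<y$) --- the same recipe as in the \stal/\textsf{KT} proof, but with the extra constraint on $<$ that is needed for flatness. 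One verifies routinely that each $<_w$ is a strict total well-order on $R(w)=\{v:wRv\}$ and that the induced accessibility relation recovers $R$. The one point requiring care is semi-flatness (transitivity is immediate, since $x\in R(w)$ forces $R(x)\subseteq R(w)$, so the clause involving $R(x)\setminus R(w)$ is vacuous): given $x<_w y\leq_w z$, connectedness of $R$ together with the fact that $<$ respects the order induced by $R$ places $y$ and $z$ in $R(x)$ with either $y=z$ or $y<z$, whence $z\not<_x y$ and so $y\leq_x z$.

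With flatness in hand, $\vdash_{\text{\vflat}}p$ and the soundness of \vflat for flat order frames (\cref{flatt}) give that $p$ is valid on $\seq{W,<}$. Since $p$ lies in the modal fragment and $\Box$ is interpreted in order models precisely as the Kripke box for the induced accessibility relation, a routine induction on modal-fragment formulae shows that $p$ has the same truth value, under every valuation, at every world of $\seq{W,<}$ and of $\seq{W,R}$. Hence $p$ is valid on $\seq{W,R}$, as required.

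The main obstacle is the verification of semi-flatness for the constructed frame; everything else is bookkeeping already rehearsed in the \textsf{KT} case. One could instead bypass the construction and argue the converse contrapositively via \cref{flattcom}: a non-theorem of \vflat is falsified in some finite flat order model, whose accessibility reduct is a finite reflexive--transitive--connected Kripke model --- reflexivity and transitivity holding automatically, and connectedness following from semi-flatness (from any $v<_w u$ in $R(w)$, instantiating the semi-flatness condition with $z:=u$ yields $u\in R(v)$) --- that falsifies $p$, so $p$ is not a theorem of \textsf{S4.3} by its soundness for such frames.
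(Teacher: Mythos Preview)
Your proof is correct and follows essentially the same strategy as the paper: for $\Leftarrow$, check that the \textsf{S4.3} axioms are \vflat-theorems; for $\Rightarrow$, take a finite reflexive--transitive--connected Kripke frame (via Bull), equip it with a flat order function agreeing with $R$, and invoke soundness of \vflat for flat frames. Your condition on the auxiliary linear order (``$x<y$ whenever $xRy$ and $\neg yRx$'') is the contrapositive of the paper's (``$x\prec y$ implies $xRy$'') under connectedness, so the constructions coincide; the paper additionally passes to the generated submodel first, but as you implicitly note this is inessential since your semi-flatness check only uses connectedness locally at each $R(w)$. Your alternative route via \cref{flattcom} is also mentioned by the paper in a footnote.
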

\begin{proof}
We already established the right-to-left direction in \cref{flattening1}.%
\footnote{By the completeness theorem \ref{flattcom}, we can also give a simple model-theoretic proof: it is easy to see that every flat order model has an accessibility relation that is reflexive, transitive, and connected; thus if $p$ is true in every reflexive transitive connected modal model, it is also true in every flat order model, and thus a theorem of \vflat by completeness.}
For the left-to-right direction,
suppose $p$ is false in a finite reflexive, transitive, and connected pointed model $\seq{v,\seq{W,R,V}}$. Then (by a standard induction, relying on the transitivity of $R$) $p$ is also false in the generated submodel $\seq{v,\seq{R(v),R_v,V_v}}$ (where $R_v$ and $V_v$ are respectively $R$ and $V$ restricted to $R(v)$). Next we give a recipe to transform $\seq{v,\seq{R(v),R_v,V_v}}$ into an equivalent pointed flat order model $\seq{v,\seq{R(v), <,V_v}}$, yielding a flat order model where $p$ is also false.

To construct $<$,  choose a strict linear order $\prec$ on $R(v)$ such that $x\prec y$ implies  $xRy$ (the existence of which is guaranteed by the connectedness of $R$). Then set  $x<_w y$ iff $y\neq x$, $y\neq w$, $wRxRy$, and either  $x=w$ or $x\prec y$.

The accessibility relation induced by $<$ obviously agrees with $R_v$ (relying on the transitivity of $R$ and the fact that $x\prec y$ implies $xRy$). It is obviously irreflexive. It is well-founded by the finitude of $R(v)$.  Transitivity of $<_w$ follows from the transitivity of $\prec$ and $R$. 
And it is connected: when $w\in R(v)$: for any distinct $x,y\in R(w)$, if $x=w$ or $y=w$ then $x<_w y $ or $y<_w x$ respectively; otherwise by the totality of $\prec$ either $x\prec y$ or $y\prec x$ and hence $x<_w y$ or $y<_w x$, respectively. 

Finally, $<$ is also semi-flat.  We know that $xRy$ whenever $x<_w y$, since either $x=w$ and $wRy$, or $x\prec y$.  So we just need to show that whenever $x <_w y <_w z$, $y <_x z$. (Since $R$ is transitive, the case where $z \in R(x)\setminus R(w)$ cannot arise.)  If $x=w$ this holds trivially.  Otherwise, 
 $x<_wy<_w z$ implies $x\prec y \prec z$ and hence  $y<_x z$.

\end{proof}
A corollary of this result is that \vflat cannot be axiomatized by adding any purely modal principles (such as \textsf{4} or \textsf{H}) to \stal.  For it is easy to see that there are non-flat order frames that are reflexive, transitive and connected: the non-flat frame in \cref{flatorders1} is an example.  Such frames do not validate all of \vflat, but do validate all its purely modal theorems.  

\section{Sequentiality}
\label{sect:sequentiality}
\vflat does not include all the sentences that are valid on every $\omega$-sequence frame.  To see this, we can appeal to the fact that \vflat is sound for flat order frames (the right-to-left direction of \cref{flatt}).  Consider the  flat order model in \cref{nonsequential}.
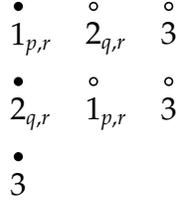
\begin{figure}[t]
\begin{center}\tikzset{every loop/.style={min distance=10mm,looseness=10}}
\begin{tikzpicture}[->,>=stealth',shorten >=1pt,shorten <=1pt, auto,node
distance=2.5cm,semithick]
\tikzstyle{every state}=[fill=gray!20,draw=none,text=black]

\node[circle,draw=black!100, fill=black!100, label=below:$1\rlap{$_{p,r}$}$,inner sep=0pt,minimum size=.1cm] (1) at (0,0) {{}};
\node[circle,draw=black!100, fill=white!100, label=below:$2\rlap{$_{q,r}$}$,inner sep=0pt,minimum size=.1cm] (2) at (1,0) {{}};
\node[circle,draw=black!100, fill=white!100, label=below:$3$,inner sep=0pt,minimum size=.1cm] (3) at (2,0) {{}};

\node[circle,draw=black!100, fill=black!100, label=below:$2\rlap{$_{q,r}$}$,inner sep=0pt,minimum size=.1cm] (1) at (0,-1) {{}};
\node[circle,draw=black!100, fill=white!100, label=below:$1\rlap{$_{p,r}$}$,inner sep=0pt,minimum size=.1cm] (2) at (1,-1) {{}};
\node[circle,draw=black!100, fill=white!100, label=below:$3$,inner sep=0pt,minimum size=.1cm] (3) at (2,-1) {{}};

\node[circle,draw=black!100, fill=black!100, label=below:$3$,inner sep=0pt,minimum size=.1cm] (1) at (0,-2) {{}};

\end{tikzpicture} 
\caption{\small A flat order model which is not equivalent to any $\omega$-sequence model, verifying (a)--(c) but not (d).}\label{nonsequential}
\end{center}
\end{figure}
It is easy to see that (a)--(c) are true at 1 while (d) is false at 1:
\begin{align*}
\text{(a)}\quad &\Box(p \to (\neg p > r)) \\
   \text{(b)}\quad &\Box(q \to (\neg q > r)) \\
   \text{(c)}\quad &p \vee q \\
   \text{(d)}\quad &\neg(p\vee q) > r
\end{align*}
But in any $\omega$-sequence model where (a)--(c) are true, (d) must be true as well.  Suppose for contradiction that (a)--(c) are true at $\sigma$ but (d) is false there. Then $\sigma$ has a first $\neg(p\vee q)$-tail, at which $r$ is false; let $n$ be the rank of that tail. Since (c) is true at $\sigma$, $n$ cannot be 0, so $\sigma^{[n-1:]}$ exists.  Either $p$ or $q$ is true at $\sigma^{[n-1:]}$.  If $p$ is, then $\sigma^{[n:]}$ is the first $\neg p$-tail of $\sigma^{[n-1:]}$, so $\neg p > r$ is false at $\sigma^{[n-1:]}$, contradicting (a).  Similarly, if $q$ is true at $\sigma^{[n-1:]}$, $\sigma^{[n:]}$ is the first $\neg q$ tail of $\sigma^{[n-1:]}$, so $\neg q > r$ is false at $\sigma^{[n-1:]}$, contradicting (b). 

The following axiom schema, which says that (d) follows from (a)--(c), is thus valid on $\omega$-sequence models:
\begin{equation}
    \ourtag{Sequentiality}
    \Box(p \to (\neg p > r)) \wedge \Box(q \to (\neg q > r)) \to ((p\vee q) \to \neg(p \vee q) > r)
\end{equation}
Let \vanf be the result of adding Sequentiality to \vflat (i.e., the smallest extension of \vflat including every instance of Sequentiality and closed under Detachment and Normality). We have just shown that \vanf is sound for $\omega$-sequence frames. It will also turn out to be complete for such frames. Hence \vanf is our sought-after logic of $\omega$-sequences.

Note that  given the validity of the 4 and T axioms in \vflat, the necessitation of Sequentiality is equivalent to the following variant which turns the final material conditional into a strict conditional:
\begin{equation*}
    \Box(p \to (\neg p > r)) \wedge \Box(q \to (\neg q > r)) \to \Box((p\vee q) \to \neg(p \vee q) > r)
\end{equation*}
This principle hence gives an equivalent axiomatization of \vflat, and shows that Sequentiality can be seen as saying that a certain property of propositions---the property of being a proposition $p$ such that $\Box(p \to (\neg p > r))$---is closed under finite disjunction.  

Since Sequentiality will play an important role in what follows, it is instructive to consider a special case of it,  where $r$ is simply the disjunction $p\vee q$.  Given our definition of $\Box$ and Normality, this is equivalent to:

\begin{equation}
    \ourtag{Restricted Sequentiality}
    \Box(p \to (\neg p > q)) \wedge \Box(q \to (\neg q > p)) \to ((p\vee q) \to \Box(p \vee q))
\end{equation}
Surprisingly, Restricted Sequentiality turns out to be equivalent to the full strength Sequentiality schema against the background of \vflat: see \cref{app:restrictedseq} for a proof.  Restricted Sequentiality can be false in flat models for the same reason as Sequentiality: in fact since $r$ is necessarily equivalent to $p\vee q$ in the model from \cref{nonsequential}, this model also demonstrates the invalidity of Restricted Sequentiality in \vflat.  And seeing why Restricted Sequentiality holds in $\omega$-sequence models  brings out  in a simple way the intuition for why these axiom schemas are valid in these models. Suppose the antecedent and $p\vee q$ are both true at $\sigma$; without loss of generality,  suppose in particular that $p$ is true at $\sigma$.  Then as we pop worlds off the beginning of $\sigma$ one at a time, if we ever reach a tail where $p$ is false, $q$ will have to be true there; as we continue popping, if we ever reach a tail where $q$ is false, $p$ will have to be true \emph{there}, and so on.  Since all the tails of $\sigma$ will eventually be reached in this process, either $p$ or $q$ (or both) is true at all of them, meaning that $\Box(p \vee q)$ is true at $\sigma$.

\subsection{Ancestral order frames}
To better understand what property of $\omega$-sequence models is responsible for the validity of Sequentiality, it is helpful to characterize the class of order frames on which \vanf is valid.  In the reasoning we just gave explaining why Restricted Sequentiality is valid over $\omega$-sequence frames, the only fact about these frames (apart from their flatness) that we relied on is  that every sequence accessible from a given sequence can be reached from it in finitely many steps, where at each step we go from a sequence to its first tail. Likewise, in the reasoning we gave explaining why Sequentiality is valid, the only fact we relied on is that every sequence accessible from a given sequence has an immediate predecessor, which, in light of the well-foundedness of ordinals, is equivalent to the first property. 
In other words, the key property of $\omega$-sequence frames which (together with flatness) explains why they validate Sequentiality, is that every accessible world from any given world can be reached from the starting world via a succession of single steps along each world's closeness ordering. More carefully:
\begin{definition}
Given an order frame $\seq{W,<}$:
\begin{itemize}
    \item
    The \emph{successor} of $w$
    is $w$ itself if $R(w)=\{w\}$, and otherwise the first world after $w$ in $<_w$.
     
\item 
$x$ is \emph{reachable} from $w$ iff $w$ is related to $x$ by the transitive, reflexive closure of the successor relation.  

\item Finally, the frame is \emph{ancestral} iff every world that is accessible from a world is also reachable from that world.

\end{itemize}
\end{definition}
It is easy to see that every $\omega$-sequence frame is ancestral as well as flat.  And while of course not every flat ancestral order frame is an $\omega$-sequence frame, we can show that every flat ancestral frame is isomorphic to an $\omega$-sequence frame, so the logic of $\omega$-sequence frames is the logic of flat ancestral frames.  

To construct an $\omega$-sequence frame isomorphic to a given flat ancestral frame $\seq{W,<}$, we will replace each world in $W$ with its \emph{successor-sequence}, defined as follows:
\begin{definition}
In an order frame $\seq{W,<}$, the \emph{successor-sequence} $\alpha_w$ of $w$ is the $\omega$-sequence starting with $w$ where each element after the first is the successor of the previous element.  
\end{definition}
The set $\{\alpha_w \mid w\in W\}$ is obviously closed under non-empty tailhood, and thus is an $\omega$-sequence frame.  A few examples should give an intuitive sense for why, when $\seq{W,<}$ is flat and ancestral, the function mapping each world to its successor-sequence is an isomorphism from the starting frame to the corresponding $\omega$-sequence frame.  Consider the frame $\seq{\{1,2,3\}, <^2}$ illustrated in the center of \cref{flatnvinduct}: replacing each member with its successor-sequence yields an $\omega$-sequence frame comprising the three sequences $\seq{1,2,3,3,3\dots}$, $\seq{2,3,3,3,3\dots}$, $\seq{3,3,3\dots}$, which is indeed isomorphic to the one we started with.  Likewise, if we start with the frame $\seq{\{1,2,3\}, <^3}$ on the right of \cref{flatnvinduct}, we get the isomorphic $\omega$-sequence frame with sequences $\seq{1,2,3,1,2,3\dots}$, $\seq{2,3,1,2,3,1\dots}$, $\seq{3,1,2,3,1,2\dots}$. 

By contrast, this procedure \emph{won't} work if we start with a non-ancestral order frame. To get a feel for why not, consider the flat but non-ancestral order frame $<^1$ in \cref{flatnvinduct}. The successor-sequence of 1 in this order frame is $\seq{1,2,1,2\dots}$, since 1 and 2 are each other's successors. The successor sequence of 3 is $\seq{3,3,3,3\dots}$, since 3 is its own successor. $\alpha_3$ is hence not accessible from (i.e., not a tail of) $\alpha_1$, even though 3 \emph{is} accessible from 1. Hence the sequence frame constructed this way is not isomorphic to the starting frame under the successor sequence function. More generally, whenever $v$ is accessible  but not reachable from $w$, $\alpha_v$ will not be accessible from $\alpha_w$.

To show that flat ancestral frames are always isomorphic to the corresponding successor-sequence frame, consider any flat ancestral frame $F=\seq{W,<}$ and $w\in W$. First we show that $u\in R(w)$ iff $\alpha_u\in R(\alpha_w)$. The fact that $F$ is ancestral means that every world accessible from $w$ is reachable from it. And the fact that $F$ is flat, and hence transitive, means every world reachable from $w$ is accessible from it. So $u\in R(w)$ just in case $u$ is reachable from $w$. But now note that, likewise, $\alpha_u$ is a tail of $\alpha_w$ iff $u$ is reachable from $w$: if $u$ is reachable from $w$, $u$ will appear in $\alpha_w$, and the truncation of $\alpha_w$ at $u$ is obviously $\alpha_u$; and if $u$ is not reachable from $w$, $u$ will never appear in $\alpha_w$, but $\alpha_u$ always begins with $u$.

Now we need to show that $x<_w y$ iff $\alpha_x\prec_{\alpha_w}\alpha_y$.
To show this, it suffices to show that $x<_w y$ iff the rank of $x$ from $w$ is less than the rank of $y$ from $w$, where we define the \emph{rank} of $x$ from $w$ as the least $n$ such that $x$ is the $n$th element of $\alpha_w$.
\bi \item[$\Rightarrow$] Suppose $x <_w y$. If $x=w$, the rank of $x$ from $w$ is 0, so the claim holds trivially.  If $x\ne w$, then by flatness $x$ also precedes $y$ according to $w$'s successor, and so on until we reach $x$.  Since $x$ cannot precede $y$ according to $<_y$, that means that as we take successor steps from $w$, we must reach $x$ before we reach $y$. 
\item[$\Leftarrow$] 
Suppose $x \nless_w y$. Then by connectedness, $y<_w x$, so by the first part of the proof, the rank of $y$ from $w$ is less than the rank of $x$ from $w$.  Thus the rank of $x$ from $w$ is not less than the rank of $y$ from $w$.
\ei

\begin{figure}[t]
\begin{center}
\tikzset{every loop/.style={min distance=10mm,looseness=10}}
\begin{tikzpicture}[->,>=stealth',shorten >=1pt,shorten <=1pt, auto,node
distance=2.5cm,semithick]
\tikzstyle{every state}=[fill=gray!20,draw=none,text=black]

\node[circle,draw=black!100, fill=black!100, label=below:$1_{}$,inner sep=0pt,minimum size=.1cm] (1) at (-4,0) {{}};
\node[circle,draw=black!100, fill=white!100, label=below:$2$,inner sep=0pt,minimum size=.1cm] (3) at (-3,0) {{}};
\node[circle,draw=black!100, fill=white!100, label=below:$3$,inner sep=0pt,minimum size=.1cm] (5) at (-2,0) {{}};

\node[circle,draw=black!100, fill=black!100, label=below:$2$,inner sep=0pt,minimum size=.1cm] (1) at (-4,-1) {{}};
\node[circle,draw=black!100, fill=white!100, label=below:$1$,inner sep=0pt,minimum size=.1cm] (3) at (-3,-1) {{}};
\node[circle,draw=black!100, fill=white!100, label=below:$3$,inner sep=0pt,minimum size=.1cm] (5) at (-2,-1) {{}};

\node[circle,draw=black!100, fill=black!100, label=below:$3$,inner sep=0pt,minimum size=.1cm] (3) at (-4,-2) {{}};

\node (4) at (-3,-3.5) {{\color{blue}{$<^1$}}};

\node (5) at (-1,-3) {{}};
\node (6) at (-1,0) {{}};
\path (5) edge[-] node {{}} (6);

\node[circle,draw=black!100, fill=black!100, label=below:$1_{}$,inner sep=0pt,minimum size=.1cm] (1) at (0,0) {{}};
\node[circle,draw=black!100, fill=white!100, label=below:$2$,inner sep=0pt,minimum size=.1cm] (3) at (1,0) {{}};
\node[circle,draw=black!100, fill=white!100, label=below:$3$,inner sep=0pt,minimum size=.1cm] (5) at (2,0) {{}};

\node[circle,draw=black!100, fill=black!100, label=below:$2$,inner sep=0pt,minimum size=.1cm] (1) at (0,-1) {{}};
\node[circle,draw=black!100, fill=white!100, label=below:$3$,inner sep=0pt,minimum size=.1cm] (5) at (1,-1) {{}};

\node[circle,draw=black!100, fill=black!100, label=below:$3$,inner sep=0pt,minimum size=.1cm] (3) at (0,-2) {{}};

\node (4) at (1,-3.5) {{\color{blue}{$<^2$}}};

\node (5) at (3,0) {{}};
\node (6) at (3,-3) {{}};
\path (5) edge[-] node {{}} (6);

\node[circle,draw=black!100, fill=black!100, label=below:$1_{}$,inner sep=0pt,minimum size=.1cm] (1) at (4,0) {{}};
\node[circle,draw=black!100, fill=white!100, label=below:$2$,inner sep=0pt,minimum size=.1cm] (3) at (5,0) {{}};
\node[circle,draw=black!100, fill=white!100, label=below:$3$,inner sep=0pt,minimum size=.1cm] (5) at (6,0) {{}};

\node[circle,draw=black!100, fill=black!100, label=below:$2$,inner sep=0pt,minimum size=.1cm] (1) at (4,-1) {{}};
\node[circle,draw=black!100, fill=white!100, label=below:$3$,inner sep=0pt,minimum size=.1cm] (3) at (5,-1) {{}};
\node[circle,draw=black!100, fill=white!100, label=below:$1$,inner sep=0pt,minimum size=.1cm] (5) at (6,-1) {{}};

\node[circle,draw=black!100, fill=black!100, label=below:$3$,inner sep=0pt,minimum size=.1cm] (3) at (4,-2) {{}};
\node[circle,draw=black!100, fill=white!100, label=below:$1$,inner sep=0pt,minimum size=.1cm] (3) at (5,-2) {{}};
\node[circle,draw=black!100, fill=white!100, label=below:$2$,inner sep=0pt,minimum size=.1cm] (5) at (6,-2) {{}};

\node (4) at (5,-3.5) {{\color{blue}{$<^3$}}};

\end{tikzpicture} 

\end{center}
\caption{Illustrations of a flat but not ancestral  frame $<^1$, and two ancestral frames $<^2, <^3$. In $<^1$ there is no way to get from 1 to 3 by taking successor steps, since you end up stuck in a loop between 1 and 2. 
}\label{flatnvinduct}
\end{figure}	

This isomorphism result both shows the interest of, and puts us in a position to prove, the following characterization result:

\begin{theorem} \vanf is valid on an order frame iff it is flat and ancestral.\label{seqchar}\end{theorem}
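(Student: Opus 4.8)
The plan is to prove the two directions separately, leaning on the isomorphism between flat ancestral frames and $\omega$-sequence frames established just above. The left-to-right reading is then essentially immediate: every flat ancestral frame is isomorphic to an $\omega$-sequence frame, validity of a schema on a frame is invariant under isomorphism, and $\vanf$ is sound for $\omega$-sequence frames (Flattening because they are flat, Sequentiality by the direct argument given when it was introduced), so $\vanf$ is valid on every flat ancestral frame.

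For the converse, flatness is free: Flattening is a theorem of $\vanf$, so by \cref{flatt} any frame validating $\vanf$ is flat. Ancestrality I would get by contraposition: from a flat but non-ancestral frame I will build a valuation and a world falsifying an instance of Sequentiality. The common skeleton: given a world $w_\ast$ whose reachable set $C$ is $<_{w_\ast}$-downward closed in $R(w_\ast)$ but not all of $R(w_\ast)$, put $V(r)=C$ and let $V(p),V(q)$ partition $C$. Then $p\vee q$ holds at $w_\ast$ while $\neg(p\vee q)$ first holds, in $<_{w_\ast}$, at the $<_{w_\ast}$-least world outside $C$, where $r$ is false, so the consequent of Sequentiality fails at $w_\ast$; and the two antecedent conjuncts hold provided the coloring makes, for each $c\in C$, the $<_c$-least world whose color is opposite to $c$'s lie in $C$ and $<_c$-precede every world outside $C$ (so that $r$ holds there). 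The task is thus to locate, in any flat non-ancestral frame, such $w_\ast$, $C$, and coloring.

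To that end I would first fix $w$ and $v\in R(w)$ with $v$ not reachable from $w$, taking $v$ $<_w$-least among non-reachable worlds of $R(w)$; a structural lemma — proved by induction on the number of successor steps, using flatness and the fact that $\operatorname{succ}(x)$ is the $<_x$-first world after $x$ — shows that the reachable set of $w$ is $<_w$-downward closed, equals $\{x\in R(w):x<_w v\}$, and is enumerated by the chain $w=w_0,\,w_1=\operatorname{succ}(w_0),\,w_2=\operatorname{succ}(w_1),\dots$. If $v$ has no $<_w$-immediate predecessor, this chain is strictly $<_w$-increasing of order type $\omega$, with $w_{n+1}$ the $<_{w_n}$-immediate successor of $w_n$; then $w_\ast=w$ with the alternating coloring $V(p)=\{w_{2k}\}$, $V(q)=\{w_{2k+1}\}$ works, since for $c=w_n$ the $<_c$-least opposite-colored world is just $w_{n+1}$. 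If $v$ does have an immediate predecessor, $C$ is finite (an injective infinite chain would have no maximum), the chain is eventually periodic, and I would pass to a world $w_\ast=w'$ lying on its cycle $C'$: then $C'$ is the whole reachable set of $w'$, so by the lemma applied at each of its members $C'$ is $<_c$-downward closed in $R(c)$ for every $c\in C'$ — whence every member of $C'$ $<_c$-precedes every non-member — and $C'$ has at least two elements (a one-element cycle would be a dead end $d$ with $R(d)=\{d\}$, contradicting $v\in R(d)$, which follows from semi-flatness), so \emph{any} two-coloring of $C'$ using both colors works.

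I expect the periodic case to be the main obstacle, precisely because a cycle of \emph{odd} length admits no two-coloring in which each world takes the color opposite to its successor; this is why the argument must use the full Sequentiality schema, exploiting the parameter $r$ — true exactly on the cycle — as the slack that Restricted Sequentiality lacks, and why it must first relocate to a base world sitting \emph{on} the cycle, so that the whole reachable set $<_c$-precedes the relevant ``bad'' world for every $c$. The rest is routine: checking that generated subframes of flat frames are flat and inherit non-ancestrality, that $\operatorname{succ}$ carries reachable worlds to reachable worlds, and the downward-closure lemma.
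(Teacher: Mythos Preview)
Your proposal is correct, and the $\Leftarrow$ direction and the reduction of flatness to \cref{flatt} match the paper exactly. For the $\Rightarrow$ direction (contraposing: flat but non-ancestral $\Rightarrow$ some instance of Sequentiality fails) you take a genuinely different route from the paper, and the comparison is instructive.

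The paper uses a single uniform construction at the original witness $w$: set $p$ true exactly at worlds whose \emph{rank from $w$} (least $n$ with $x = \alpha_w\elem{n}$) is even, $q$ at odd-rank worlds, and $r$ at all worlds reachable from $w$. This works without any case split on whether the reachable set is finite or whether it contains an odd cycle. The reason your worry about odd cycles evaporates is that the paper is not two-coloring \emph{along the cycle} (demanding that successors alternate colors); it is coloring by parity of \emph{first occurrence} in $\alpha_w$, which is well-defined for every reachable world regardless of cycle length. The verification that $\Box(p\to(\negate p>r))$ holds at $w$ then only needs, for each reachable $p$-world $y$, that \emph{some} reachable odd-rank world is $<_y$-accessible and $<_y$-precedes every unreachable world --- not that $\operatorname{succ}(y)$ itself has the opposite color. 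The paper handles the ``top of the reachable set'' case by observing that such a $y$ still has a proper successor (since $v\in R(y)$ by semi-flatness), that successor has strictly smaller rank, and from it one can reach an odd-rank world.

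Your approach is sound but buys its correctness with extra machinery: the downward-closure lemma, the case split on whether $v$ has a $<_w$-immediate predecessor, the relocation to a world on the cycle, and the observation that on a cycle of size $\geq 2$ \emph{any} bichromatic two-coloring works. All of this is avoidable once one sees that rank-parity is the right coloring. What your decomposition does buy is a slightly more transparent picture of the two failure modes (an $\omega$-chain below $v$ versus a finite loop), and it makes explicit the structural fact --- only implicit in the paper's ``by flatness, all worlds reachable from $y$ precede in $<_y$ any worlds unreachable from $w$'' --- that reachable sets are initial in the relevant orders.
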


\begin{proof}
Given the characterization result for \vflat (\cref{flatt}), it suffices to show that a flat order frame validates
Sequentiality iff it is also ancestral. 
\bi 
\item[$\Rightarrow$] Suppose $\seq{W,<}$ is flat but not ancestral, so for some $w$ and $v$, $w$ can access but not reach $v$. Set $p$ true at exactly the worlds whose rank from $w$ is even, $q$ true at exactly the worlds whose rank from $w$ is odd, and $r$ true at exactly the worlds reachable from $w$. Then:

\bi 
\item[(a)] $w$ verifies $\Box(p\to \neg p>r)$. Consider any $p$-world $y$ accessible from $w$; by our choice of valuation, $y$ is reachable from $w$. We claim that some world reachable from $y$ has odd rank from $w$. There are  two options. 

\bi
\item 
$y$ is not the highest-ranked world reachable from $w$, so $y$'s successor is also the next world in $\alpha_w$ and hence has odd rank. 

 \item 
$y$ is the highest-ranked world reachable from $w$. Since $v$ is accessible but unreachable from $w$, $v$ is accessible but unreachable from $y$, so $y$ has a successor $u$ other than itself.  $u$'s rank from $w$ is strictly less than $y$'s, and hence $u$ must be able to reach a world $t$ with odd rank from $w$, which must therefore be reachable from $y$.

\ei 
Since $\neg p$ is true at all worlds with odd rank from $w$, it follows that $y$ can reach a $\neg p$-world.  By flatness, all worlds reachable from $y$ precede in $<_y$ any worlds unreachable from $w$.  So the first $\neg p$ world in $<_y$ is reachable from $w$, meaning that $r$ is true at it.  Thus $y$ verifies $\neg p>r$.

\item[(b)]  $w$ verifies $\Box(q\to \neg q>r)$, by parallel reasoning.

\item[(c)] $w$ verifies $p\vee q$, since $w$ verifies $p$

\item[(d)] $w$ falsifies $\neg(p\vee q)>r$, since the first accessible $\neg(p\vee q)$ worlds in $<_w$ are not reachable from $w$ and hence falsify $r$.
\ei

\item[$\Leftarrow$] Suppose $\seq{W,<}$ is flat and ancestral. Then it is isomorphic to the corresponding successor sequence frame, and we have already shown that every $\omega$-sequence frame validates Sequentiality. \qedhere
\ei
\end{proof}

Since we showed above that an order frame is isomorphic to an $\omega$-sequence frame if it is flat and ancestral, \cref{seqchar} implies that any order frame which is not flat and ancestral is not isomorphic to an $\omega$-sequence frame. So a corollary is that an order frame validates \vanf iff it is isomorphic to an $\omega$-sequence frame.\footnote{Another corollary of this result is that \vanf is sound over frames $\seq{W,\prec}$ where $W$ is a set of $\omega$-sequences \emph{not necessarily closed under tailhood}; such frames are obviously still flat and ancestral, and hence by this result validate \vanf. This shows that the assumption that $\omega$-sequence frames are closed under tailhood, while convenient in our discussion and faithful to van Fraassen's exposition, is logically irrelevant.}

Turning to soundness and completeness:
the right-to-left direction of \cref{seqchar} is equivalent to the claim that \vanf is sound for $\omega$-sequence frames. As we emphasized in the discussion before \cref{flattcom}, characterization results do not always yield corresponding completeness results. However, once more, our characterization result does indeed point the way towards a completeness result.  In \cref{app:seqcompleteness}, we prove a stronger claim which implies that \vanf is complete for $\omega$-sequence frames:
\begin{restatable}[]{theorem}{goweakcomp}
    \label{weak}
    \vanf is weakly complete for finite $\omega$-sequence frames.
\end{restatable}
This immediately implies:
\begin{restatable}[]{theorem}{golancestralcomp}
\label{thm:golancestralcomp}
    \vanf is weakly complete for finite flat ancestral order frames.
\end{restatable}

As before, the restriction to finite frames (and the fact that the question whether a finite order frame is flat and ancestral is decidable) gives us the decidability of \vanf as a corollary.

One noteworthy application of these results is  that they let us exactly pin down the purely modal fragment of \vanf.  The  modal logic \textsf{S4.3.1} is the result of adding every instance of Dum  to \textsf{S4.3}:
\[
    \ourtag{Dum}     \Box(\Box(p \to \Box p) \to p) \to (\lozenge\Box p  \to p)
\]
\textsf{S4.3.1} is sound and weakly complete for (the singleton of) the Kripke frame $\seq{\mathbb{N},\leq}$ \citep{Segerberg:1970}.
We saw in \cref{incon} that every pointed $\omega$-sequence model is equivalent to a pointed model based on the linear frame whose worlds are $\mathbb{N}$ with the order function $j<_i k$ whenever $i\leq j<k$. This order function generates the accessibility relation $\leq$. Hence a modal sentence is true in some $\omega$-sequence model iff it is true in some model on $\mathbb{N}$ with accessibility relation $\leq$, which with our soundness and completeness results yields: 
\begin{theorem}
    When $ p $ is a modal sentence, $\vdash_{\vanf} p $ iff $\vdash_{\text{\textsf{S4.3}.1}} p $.  
\end{theorem}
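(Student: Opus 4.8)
The plan is to run the equivalence through the twin facts that \vanf is sound and complete for $\omega$-sequence frames and that \textsf{S4.3.1} is sound and weakly complete for the singleton of $\seq{\mathbb{N},\leq}$ \citep{Segerberg:1970}, using \cref{canon1} as the bridge between $\omega$-sequence models and Kripke models on $\seq{\mathbb{N},\leq}$. The first thing to record is the (already-noted) fact that the defined $\Box$ behaves in any order model exactly as the ordinary Kripke box does relative to the induced accessibility relation $R$: unwinding the clause for $>$, $w\Vdash\Box p$ iff $w\Vdash\neg p>p$ iff $R(w)\cap\sem{}{\neg p}=\varnothing$ --- the existential disjunct being vacuous since $\sem{}{\neg p\wedge p}=\varnothing$ --- iff $R(w)\subseteq\sem{}{p}$. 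By a routine induction, the truth value of any modal sentence at a world in an order model depends only on $R$ and on the valuation, not on the rest of the order structure. So for modal $p$, ``true in every pointed order model built on a frame with accessibility relation $R$'' coincides with ``true in every pointed Kripke model on $\seq{W,R}$''.

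Second, I would prove the bridge lemma: for a modal sentence $p$, $p$ is true in every pointed $\omega$-sequence model iff $p$ is true in every pointed Kripke model based on $\seq{\mathbb{N},\leq}$. For right-to-left, given a pointed Kripke model $\seq{n,\seq{\mathbb{N},\leq,V}}$, equip $\mathbb{N}$ with the order function $j<_ik$ iff $i\leq j<k$; as observed in \cref{incon} this order frame is isomorphic to the $\omega$-sequence frame $\seq{\Omega,\prec^\Omega}$, and its induced accessibility relation is exactly $\leq$, so by the first observation $p$ has the same truth value at $n$ in the two models, and the hypothesis applies. For left-to-right, given any pointed $\omega$-sequence model, \cref{canon1} makes it equivalent to its minimal representation $\mathcal{M}_\omega$, whose underlying frame is isomorphic to the order frame on $\mathbb{N}$ with $j<_ik$ iff $i\leq j<k$ and hence again has accessibility relation $\leq$; transporting the valuation across the isomorphism and invoking the first observation identifies the truth value of $p$ with its truth value at $0$ in a Kripke model on $\seq{\mathbb{N},\leq}$, and the hypothesis again applies.

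Third, I would assemble the biconditional. For the forward direction: if $\vdash_{\vanf}p$, then by the right-to-left direction of \cref{seqchar} (equivalently, soundness of \vanf for $\omega$-sequence frames) $p$ is true in every pointed $\omega$-sequence model, hence by the bridge lemma $p$ is valid on $\seq{\mathbb{N},\leq}$, hence by the weak completeness of \textsf{S4.3.1} for that frame, $\vdash_{\textsf{S4.3.1}}p$. For the converse: if $\vdash_{\textsf{S4.3.1}}p$, then by soundness of \textsf{S4.3.1} $p$ is valid on $\seq{\mathbb{N},\leq}$, so by the bridge lemma $p$ is true in every pointed $\omega$-sequence model, in particular in every \emph{finite} one, whence $\vdash_{\vanf}p$ by \cref{weak}.

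The only step carrying any real content is the bridge lemma, and within it the bookkeeping that the tail-order function $\prec^\Omega$ on the tails of $\seq{0,1,2,\dots}$ genuinely induces $\leq$ on $\mathbb{N}$ under the natural indexing of tails (the $n$th tail $\mapsto n$): one checks that $\vec{\mathbb{N}}^{[j:]}\prec^\Omega_{\vec{\mathbb{N}}^{[i:]}}\vec{\mathbb{N}}^{[k:]}$ holds exactly when $i\leq j<k$, so that $R$ on that frame is $\leq$. Everything else is a direct appeal to results already in hand; there is no serious obstacle beyond making sure, as in the first observation, that modal formulas are blind to everything in an order model except the induced accessibility relation --- which is precisely what licenses moving freely between $\omega$-sequence models and Kripke models on $\seq{\mathbb{N},\leq}$.
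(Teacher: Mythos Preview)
Your proof is correct and follows essentially the same route as the paper's: both arguments use \cref{canon1} to identify pointed $\omega$-sequence models with pointed Kripke models on $\seq{\mathbb{N},\leq}$ for modal sentences, and then invoke the soundness and completeness of \vanf for $\omega$-sequence frames together with Segerberg's result for \textsf{S4.3.1}. One presentational nit: the direction labels in your bridge lemma are swapped---the paragraph beginning ``given a pointed Kripke model'' establishes the left-to-right direction (from validity in all $\omega$-sequence models to validity on $\seq{\mathbb{N},\leq}$), and the paragraph beginning ``given any pointed $\omega$-sequence model'' establishes right-to-left---but both directions are correctly argued.
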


\subsection{Evaluating Sequentiality}

As with Flattening, we'd like to know whether Sequentiality is in fact valid for the natural language conditional connective. Here we will tentatively argue it is not,  highlighting some methodological difficulties as we go.

We'll focus on Restricted Sequentiality (repeated below), which, recall, is equivalent to Sequentiality modulo \vflat:
\[
    \ourtag{Restricted Sequentiality} 
    \Box(p \to (\neg p > q)) \wedge \Box(q \to (\neg q > p)) \to ((p\vee q) \to \Box(p \vee q))
\]
To assess an axiom schema like this, the standard methodology would be to find sentences of natural language that instantiate the schema under a plausible translation, and assess whether they strike reflective speakers as valid. But this is difficult to do in the present case. There are two options. We could unpack the $\Box$'s in Restricted Sequentiality into the left-nested conditionals that they abbreviate, and then consider corresponding translations into natural language. However, humans generally struggle to evaluate complex left-nested conditionals, and indeed, in this particular case, the resulting sentences of English are little more than word salad.

The other option is to try to find natural language expressions that express the target $\Box$ defined out of $>$. But it is controversial whether there are words of natural language that express this $\Box$. Even if there are, there is plausibly context-sensitivity \emph{both} in the interpretation of `if' \emph{and} in the interpretation of natural language necessity modals. To assess Restricted Sequentiality, we must find a natural language modal that is interpreted  \emph{throughout the particular context of our counterexample} in the target way.

There are various options to try. Here is an attempt using the adverbial phrase `in every case' as our necessity operator, which at least makes for a readable example. In particular,  we gloss $\Box(p\to q)$ as `in every case where $p$, $q$'. 

\ex.\label{jfk} \emph{The Sequential Assassin's Guild (SAG)  comprises two seasoned assassins, $p$ and $q$. They have a strict back-up policy in place: whenever $p$ kills someone, they make sure that $q$ would otherwise have carried out the assassination, and vice versa. The SAG is deciding whether to kill Nero, and will flip a fair coin to determine whether they in fact do so. Consider:}
\a. In every case where $p$ kills Nero, if $p$ hadn't done it, $q$ would have. \hspace*{\fill} $ \Box (p\to (\neg p>q))$\label{jfka}
\b. In every case where $q$ kills Nero, if $q$ hadn't done it, $p$ would have.\hspace*{\fill} $\Box (q\to (\neg q>p))$\label{jfkb}
\c.  $p$ or $q$ will kill Nero.\label{jfkc} \hfill $p\vee q$
\d. In every case, $p$ or $q$ kills Nero.\label{jfkd} \hfill $\Box (p\vee q)$

Given SAG's iron-clad back-up policies, we can be certain of \ref{jfka} and \ref{jfkb}.  \ref{jfkc} has .5 chance: it will be true just in case the fair coin lands heads. But \ref{jfkd} is certainly false, since killing Nero is a contingent matter: the coin flip could come up either way. 

But observe that \ref{jfka}--\ref{jfkc} together entail \ref{jfkd} via Restricted Sequentiality, so if Restricted Sequentiality were valid, we could not rationally be sure of \ref{jfka} and \ref{jfkb}, assign credence .5 to \ref{jfkc}, and assign credence less than .5 to \ref{jfkd}.
Hence if `in every case' and `if' are coordinated in the right way (so that `in every case, $p$' is true iff `if not $p$, then $p$' is true),  and have the same interpretations throughout \ref{jfk}, then Restricted Sequentiality is not valid. 

Similar cases are easy to multiply (we encourage readers to experiment with other modal phrases to capture the target notion of necessity).\footnote{An interesting question is whether conditional mood matters in these cases. One obstacle to assessing Sequentiality for indicatives is that material conditionals with the form $p\to\neg p>q$---or the corresponding disjunctions $\neg p \vee \neg p > q$---are generally infelicitous when $>$ is indicative, making the premises difficult to assess.}

However, it is hard to be sure that the interpretation of  `in every case' is \emph{univocally} tied to `if' in the right way throughout \ref{jfk}. An alternative hypothesis is that, in \ref{jfka} and \ref{jfkb}, we interpret `in every case' so that Nero being killed \emph{is} a necessity, given that he in fact is killed (since, had the person who killed him not done it, the other one would have), while in \ref{jfkd} we have a more expansive notion of necessity in mind. It is difficult to rule out this possibility. 

So, while (Restricted) Sequentiality has apparent counterexamples in natural language, there is plenty of room to resist them. However, it is hard to see how a clear argument could be formulated \emph{for} the validity of Sequentiality in natural language. And in general, considerations of parsimony suggest that there is no reason to adopt a logic that validates principles for which we lack a positive case. There are countless logical principles that are too complex for humans to assess; no one would advocate simply adopting them all on the grounds that there is no clear case against them.

Given the difficulties of asessing Sequentiality directly, it is natural to wonder whether there might be an axiomatization of \vanf which is easier to assess. While we cannot rule this out, we can rule out one possibility. 
As we have seen, part of what makes Sequentiality especially hard to assess is the left-nested conditionals (or, equivalently, $\Box$'s) that it contains. Could we axiomatize \vanf without left-nesting? 
The answer is no: for in the fragment of our language without left-nesting, the logic of $\omega$-sequence models is in fact \vflat. More carefully:
\begin{definition}
The \emph{Boolean language} $\mathcal{B}$ is the standard language of propositional logic:
\[  
 q ::= p_k\in At \mid \neg q \mid (q \wedge q)
\]
The \emph{Boolean-antecedent} language $\mathcal{L}_\mathcal{B>}$ is the language which adds conditionals to $\mathcal{B}$ exactly when the antecedent is conditional-free

\[  
    p   ::= p_k\in At \mid \neg p \mid (p \wedge p) \mid( q>p): q\in \mathcal{B}
\]
\end{definition}
\begin{theorem}
    \label{noleftnest}
    When $p\in \mathcal{L}_{\mathcal{B}>}$, $\vdash_\vflatp p$ iff $\vdash_\vanf p$.
\end{theorem}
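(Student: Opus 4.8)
The direction $\vdash_\vflat p \Rightarrow \vdash_\vanf p$ holds trivially, and for arbitrary $p$, since \vanf extends \vflat. So the content is the converse for $p \in \mathcal{L}_{BA}$, which I would prove by contraposition, combining the completeness of \vflat with the soundness of \vanf for $\omega$-sequence frames (the right-to-left direction of \cref{seqchar}). Suppose $\nvdash_\vflat p$. By \cref{flattcom} there is a finite flat order model with a world $w$ at which $p$ is false; alternatively, using the fact (established later in the paper) that \vflat is the logic of ordinal-sequence semantics, there is an ordinal-sequence model with a designated sequence $\sigma$ such that $\sigma \nVdash p$. In either case the plan is to manufacture an $\omega$-sequence countermodel to $p$, which then gives $\nvdash_\vanf p$ by soundness; so everything reduces to this model transformation.

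The transformation exploits the defining feature of $\mathcal{L}_{BA}$: every conditional antecedent is conditional-free, hence its truth at a world depends only on the atoms true there, and in fact only on the finitely many atoms that occur in $p$. Consequently, evaluating $p$ at the designated point inspects the model at only finitely many worlds (or tails): the point itself, and, recursively, at each conditional subformula $q>r$ reached in the evaluation, the first world where $q$ holds in the order induced at the world currently under inspection, at which $r$ is then evaluated. I would carry this out along the ordinal-sequence route, where the bookkeeping is cleanest. Let $0 = \gamma_0 < \gamma_1 < \dots < \gamma_n$ enumerate the ordinals $\gamma$ such that $\sigma^{[\gamma:]}$ is consulted in this recursion, and let $\tau$ be the $\omega$-sequence whose $i$-th tail satisfies, for each atom occurring in $p$, exactly what $\sigma^{[\gamma_i:]}$ does when $i \leq n$ and what $\sigma^{[\gamma_n:]}$ does when $i > n$, equipped with the categorical valuation this determines. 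One then shows by induction on subformula occurrences of $p$ that every occurrence has the same truth value at $\sigma^{[\gamma_i:]}$ as at $\tau^{[i:]}$, where $\gamma_i$ is the ordinal at which that occurrence is evaluated in the recursion. The Boolean cases are immediate from the choice of valuation for $\tau$; for a conditional $q>r$ evaluated at $\gamma_i$, the key points are that its first $q$-tail, if any, is $\sigma^{[\gamma_j:]}$ for some $j$ in the enumeration (because $r$ is then evaluated there, so $\gamma_j$ was among the ordinals collected) and that none of $\gamma_i, \dots, \gamma_{j-1}$ satisfies $q$ (otherwise there would be an earlier $q$-tail of $\sigma^{[\gamma_i:]}$), so that $j$ is likewise the first $q$-position of $\tau^{[i:]}$; the induction hypothesis applied to $r$ then closes the case, the vacuous case (no $q$-tail at all) being handled similarly since then none of $\gamma_i, \dots, \gamma_n$ satisfies $q$. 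Applying the claim to $p$ itself at $\gamma_0 = 0$ gives $\tau \nVdash p$.

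The main obstacle is getting this recursion and its bookkeeping precisely right: specifying which ordinal each subformula occurrence is evaluated at, verifying that the resulting set of ordinals is finite, and checking the closure property used in the conditional step. If one prefers to invoke only \cref{flattcom} and start from a finite flat order countermodel $w$, the same idea works but costs an extra step, because a finite flat model need not be ancestral: $<^1$ of \cref{flatnvinduct} is the minimal obstruction, since at a non-initial world $x$ the order $<_x$ may include worlds lying before $x$ in $<_w$, so a conditional evaluated at $x$ can genuinely ``jump back''. One therefore cannot simply delete such edges (this would change $\mathcal{L}_{BA}$-truths); instead one must use (semi-)flatness to show that the finitely many worlds consulted when evaluating $p$ at $w$ can be laid out along a single $\omega$-sequence reproducing all the relevant ``first $q$-world'' choices, after which the argument proceeds as above. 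Either way, once the $\omega$-sequence countermodel is in hand, soundness of \vanf for $\omega$-sequence frames completes the proof; and since $\tau$ may be taken eventually cyclic, the countermodel can even be chosen finite, in keeping with the paper's other completeness results and the attendant decidability corollaries.
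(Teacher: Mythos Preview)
Your proposal is correct and takes essentially the same approach as the paper: the paper defines a function $h(\sigma,p)$ that picks out exactly your ``consulted positions'' and proves, by induction on $p$, that restricting $\sigma$ to any superset of $h(\sigma,p)$ preserves the truth value of $p$, yielding a finite list-model (and hence an $\omega$-sequence model after padding). The paper's formulation is slightly cleaner in that it sidesteps tracking individual subformula \emph{occurrences} by proving the lemma for arbitrary supersets $X \supseteq h(\sigma,p)$, which makes the induction step for conditionals go through without worrying about positions inserted by other subformulas; but the underlying idea is identical, and your sketch of the conditional case is exactly the paper's argument.
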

\cref{noleftnest}  is proved in \cref{appendix:lba}, and suggests that the difficulties for evaluating Sequentiality which we have just encountered are ineliminable: the characterization of \vanf essentially involves left-nested conditionals (or their abbreviation in terms of the defined $\Box$).

A different, less linguistic argument against Sequentiality turns on the observation that, given the characterization result above, any world in any model  for \vanf can access at most countably many  worlds under the accessibility relation relevant to the $\Box$ defined out of $>$. If that interpretation of $\Box$ is taken seriously as a model of metaphysical or epistemic modality, or indeed any ordinary interpretation of natural language necessity modals, then this limitation to countable infinity seems implausible. Clearly, a dart can be thrown in such a way that it could hit any point on a dartboard, in any relevant sense of `could' (epistemic, metaphysical, circumstantial); but there are uncountably many points on a dartboard. And this limitation to countably many accessible worlds isn't just an artifact of the model theory. In a suitable enrichment of the language (for example, with the resources of higher order logic), one could formalize, and use Sequentiality to prove, the claim that there are only countably many points that could (in the sense of $\lozenge$) have been hit: namely, the point actually hit, the point that would have been hit if the actual one hadn't been, the one that would have been hit if neither of those had been hit, and so on.  

Together with the natural language argument given above, and the methodological case against adopting complex logical principles that lack a positive motivation, this constitutes at least a prima facie case  that Sequentiality is an undesirable artifact of $\omega$-sequence semantics---unlike Flattening, which is prima facie attractive.

\section{Ordinal sequence frames\label{generalized}}

It may be surprising that a semantics  as apparently natural as $\omega$-sequence semantics should give rise to such a peculiar logic as \vanfp, especially when the first step towards our axiomatization, namely the addition of Flattening to \stalp, is on its face much more compelling. 

On reflection, however, the class of $\omega$-sequence frames is restricted in a somewhat arbitrary way: namely, to sequences of length $\omega$. There is a natural generalization of this approach which bases the same kind of semantics on \emph{arbitrary ordinal sequences}, which turns out to be sound and complete for \vflat rather than \vanfp.  
\begin{definition}
    An \emph{ordinal sequence frame} is an order frame $\seq{W,\prec^W}$ where $W$ is a set of  (possibly transfinite, possibly finite) sequences, and $\prec^W$ is the tail order function on $W$.\footnote{Note that we do not require $W$ to be closed under nonempty tailhood, or to exclude the empty sequence---though we could impose either of these conditions without affecting the resulting logic.}
\end{definition}
\begin{theorem} \label{ordseqcompleteness}
    \vflat is sound and weakly complete for ordinal sequence models, and in fact, for ordinal sequence models which are finite, closed under non-empty tailhood, and in which every sequence has length less than $\omega^\omega$.
\end{theorem}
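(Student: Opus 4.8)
Every ordinal sequence frame is in fact an order frame: the rank of any tail of $\sigma$ is an ordinal, so the tail order $\prec^W$ well-orders each $R(\sigma)$, with $\sigma$ itself (rank $0$) as its minimum, and the side condition in the definition of an order frame is immediate. So \stal is sound for ordinal sequence models by its soundness for order models. And the argument in \cref{flattening1} that Flattening is valid on $\omega$-sequence frames applies verbatim, since it uses only the tailhood relation — any sequence with a $pq$-tail has a $p$-tail, and its first $pq$-tail coincides with the first $pq$-tail of its first $p$-tail — which is insensitive to the length of the underlying ordinal. Hence \vflat is sound for ordinal sequence models.

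\textbf{Reduction to finite flat frames.} By \cref{flattcom}, \vflat is weakly complete for finite flat order frames, so it suffices to show that every finite flat order frame is isomorphic to an ordinal sequence frame which is finite, closed under non-empty tailhood, and has every sequence of length $<\omega^\omega$; a countermodel to a non-theorem $p$ on a finite flat frame then transports along the isomorphism. Moreover, since generated subframes of flat frames are flat and a sentence false at $w$ remains false in the subframe generated by $w$, we may assume the given finite flat frame $\seq{W,<}$ is generated by some $w_0$, i.e.\ $W=R(w_0)$.

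\textbf{The construction.} Partition $W$ into the strongly connected components $C_0,\dots,C_m$ of the accessibility relation. Because $R$ is transitive and connected, these are linearly ordered; $w_0\in C_0$, and every world of $C_i$ accesses exactly $C_i\cup C_{i+1}\cup\cdots\cup C_m$, so each $C_i$ (with $<$ restricted) is itself a \emph{totally connected} flat frame, in which every world accesses every world. The plan is: (i) realize each $C_i$ on its own as a tail-closed family $\{\beta^{C_i}_w:w\in C_i\}$ of ordinal sequences over $C_i$, of a common length $<\omega^{|C_i|}$, with $w\mapsto\beta^{C_i}_w$ an isomorphism onto the corresponding tail-order frame; and (ii) for $w\in C_i$ set $\sigma_w := \beta^{C_i}_w \frown \sigma_{w_{i+1}}$, where $w_{i+1}$ is the $<_w$-least world of $C_{i+1}$ (independent of the choice of $w\in C_i$, by flatness), with $\sigma_w := \beta^{C_m}_w$ when $i=m$. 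The concatenation is harmless because no world of $C_{i+1}\cup\cdots\cup C_m$ accesses any world of $C_i$, so appending $\sigma_{w_{i+1}}$ creates no new tails among the $C_i$-worlds; one verifies that the distinct non-empty tails of $\sigma_w$ are exactly $\{\sigma_v:v\in R(w)\}$, ordered by $\prec$ just as $(R(w),<_w)$ is ordered by $<_w$. Since the $C_i$ partition $W$, every $\sigma_w$ has length $<\omega^{|W|+1}<\omega^\omega$; the family $\{\sigma_w:w\in W\}$ is finite and closed under non-empty tailhood; and taking $\sigma_w(0)=w$ makes $w\mapsto\sigma_w$ injective, hence an isomorphism. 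Transporting the valuation along it yields the required countermodel.

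\textbf{The main obstacle.} The technical heart is step (i), the realization of a totally connected flat frame $C$. When the successor map is a single cyclic permutation of $C$ — equivalently, every world's successor-walk exhausts $C$ — the generated frame is ancestral and the successor-sequence $\alpha_w$, an $\omega$-sequence, works, exactly as in the ancestral case of \cref{seqchar}. In general one builds, simultaneously for all $w\in C$, sequences of a single length formed by concatenating $\omega$-many \emph{re-entry blocks}: the first walks through $\mathrm{Reach}(w)$ — which, by flatness, is a $<_w$-initial segment of $C$, realized in order by the walk, as established in proving \cref{seqchar} — and the remaining blocks recursively realize the part of $C$ left over together with the reachable worlds it points back into; each level of this recursion decreases a well-founded measure (e.g.\ the number of worlds of $C$ not reachable from the current root) while multiplying the length by $\omega$, so it halts in fewer than $|C|$ levels with length $<\omega^{|C|}$. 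A naive single concatenation $\alpha_w \frown (\text{rest})$ fails here, because spurious tails appear — truncated copies of reachable worlds that see too few worlds — wrecking the tail-to-world bijection; this is exactly why $\omega$-many re-entry blocks are needed, and why lengths genuinely climb: already the three-world flat frame with orderings $1<_1 2<_1 3$, $2<_2 1<_2 3$, $3<_3 1<_3 2$ can only be realized by sequences of length $\omega^2$. The delicate points to check, all finite combinatorics of well-orders under the flatness inequality, are: that $\mathrm{Reach}(w)$ is a $<_w$-initial segment realized in order by the successor-walk; that the recursion measure strictly decreases on passing to the leftover; that stacking $\omega$ re-entry blocks in front of the leftover's realization yields a sequence with exactly the intended non-empty tails in the intended $\prec$-order, the length bound guaranteeing that a prefix-tail never accidentally equals a tail lying inside the leftover's block (so no spurious short tails arise); and that the between-component concatenations then cohere.
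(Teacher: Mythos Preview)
Your reduction has a fatal circularity. You invoke \cref{flattcom} (completeness of \vflat for finite flat order frames) as a lemma, but in this paper \cref{flattcom} is not proved independently: it is obtained \emph{as a corollary of} \cref{ordseqcompleteness}, by observing that ordinal sequence frames are flat. The paper says this explicitly just after stating \cref{ordseqcompleteness}. So your argument, as written, assumes what it is meant to establish. To rescue the strategy you would need a free-standing proof of \cref{flattcom}; you do not supply one, and the paper does not have one either.

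Even setting the circularity aside, the part you flag as ``the technical heart'' --- realizing an arbitrary finite \emph{totally connected} flat frame as a tail-closed family of ordinal sequences --- is only sketched. The description of the recursion on ``re-entry blocks'' does not actually specify the construction: it is unclear what frame the recursive call is applied to, why the measure you name strictly decreases (in your own three-world example, the ``leftover together with the reachable worlds it points back into'' is all of $C$ again), or why the resulting family has \emph{exactly} the intended set of non-empty tails with no spurious identifications. Your closing list of ``delicate points to check'' is precisely the content of the proof; none of them is checked.

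The paper's approach is entirely different and avoids both problems. It never tries to realize an arbitrary finite flat frame as an ordinal sequence frame. Instead it works syntactically: for each depth-$n$ state description $s=\makesentence{\tau\append{\bot}}$ it defines, by recursion on the length of orderly lists, an ordinal sequence $\makeseq{(\tau\append{\bot})}$ (\cref{makeseqdef}), distinguishing \emph{direct} lists (handled by finite concatenation) from \emph{circuitous} lists (handled by an $\omega$-fold concatenation, which is where transfinite lengths enter). A key lemma (\cref{mykeylem}), which is where Flattening is actually used, shows that consecutive elements of $\makeseq{\tau}$ agree on conditionals whose antecedents they all refute; this yields that $s$ is true at $\makeseq{(\tau\append{\bot})}$ in the obvious model. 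The length bound $<\omega^\omega$ and the finiteness of the set of tails are established separately (\cref{finitetails}). Completeness for flat frames then drops out for free.
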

For soundness, it suffices to show that all ordinal sequence models are flat, which is just as for the case of $\omega$-sequences.  The completeness result is proved in \cref{app:flatcompleteness}. In fact, it is by showing the completeness of \vflat for ordinal sequence models (and appealing to the fact that these models are flat) that we prove \cref{flattcom} (the completeness of \vflat for flat order models).

As an easy corollary of \cref{ordseqcompleteness}, we get the previously stated 
\linearcom* \noindent 
The soundness part follows, as discussed in \cref{flattening1}, from the fact that all linear order frames are flat.  To derive the completeness direction from \cref{ordseqcompleteness}, we can generalize the construction in \cref{incon} where we represented an arbitrary pointed $\omega$-sequence model by an equivalent linear one based on the frame $\seq{\vec{\omega},\tails{\vec{\omega}}}$.  
Given a pointed ordinal sequence model $\mathcal{M} = \seq{\sigma,W,V}$, where the domain (length) of $\sigma$ is $\alpha$, let $\mathcal{M}'$ be $\seq{\vec{\alpha},W', V'}$, where $\vec{\alpha}$ is the identity function on $\alpha$; $W'=\{\vec{\alpha}\slice{\beta:}: \sigma\slice{\beta:}\in W\}$; and $V'(\vec{\alpha}\slice{\beta:}) = V(\sigma\slice{\beta:})$ whenever $\sigma\slice{\beta:}\in W$.  $\mathcal{M}'$ is obviously linear, and a straightforward induction on complexity shows that it is equivalent to $\mathcal{M}$.

\subsection{Validity versus consequence}
Above we showed that \emph{ancestral} flat order frames are in fact isomorphic to $\omega$-sequence frames. This means that not only do these give rise to the same set of validities, but also to the same binary consequence relation (i.e., the same set of pairs $\Gamma,p$ such that $p$ is true in every pointed model in the relevant class where $\Gamma$ is). 
This is a substantive fact, since in general, two classes of model may agree on a set of validities while disagreeing on a binary consequence relation, in particular for pairs $\Gamma, p$ where $\Gamma$ is infinite.
And indeed, it turns out that flat order frames and ordinal sequence frames do \emph{not} give rise to the same binary consequence relation, despite giving rise to the same set of validities.

For an illustration of this, consider the frame in \cref{infiniteflat}.  The domain is $\mathbb{N}$, with $j <_i k$ iff (i) $i\leq j$, and $j<k$ or  $k < i $; or (ii) $k < j < i$.  This frame is flat but not isomorphic to any ordinal sequence frame.  And indeed, equipping it with a valuation that sets each atom $p_i$ true at exactly the world $i$ results in a set of sentences which is not true in any ordinal sequence model.

\begin{figure}
\begin{center}
\tikzset{every loop/.style={min distance=10mm,looseness=10}}
\begin{tikzpicture}[->,>=stealth',shorten >=1pt,shorten <=1pt, auto,node
distance=2.5cm,semithick]
\tikzstyle{every state}=[fill=gray!20,draw=none,text=black]

\node[circle,draw=black!100, fill=black!100, label=below:$0$,inner sep=0pt,minimum size=.1cm] (0) at (-1,0) {{}};
\node[circle,draw=black!100, fill=white!100, label=below:$1$,inner sep=0pt,minimum size=.1cm] (1) at (0,0) {{}};
\node[circle,draw=black!100, fill=white!100, label=below:$2$,inner sep=0pt,minimum size=.1cm] (2) at (1,0) {{}};
\node[circle,draw=black!100, fill=white!100, label=below:$3$,inner sep=0pt,minimum size=.1cm] (3) at (2,0) {{}};
\node[circle,draw=black!100, fill=white!100, label=below:$4$,inner sep=0pt,minimum size=.1cm] (4) at (3,0) {{}};
\node[label=center:$\dots$] (1dots) at (4.5,0) {{}};

\node[circle,draw=black!100, fill=black!100, label=below:$1$,inner sep=0pt,minimum size=.1cm] (1) at (-1,-1) {{}};
\node[circle,draw=black!100, fill=white!100, label=below:$2$,inner sep=0pt,minimum size=.1cm] (1) at (0,-1) {{}};
\node[circle,draw=black!100, fill=white!100, label=below:$3$,inner sep=0pt,minimum size=.1cm] (2) at (1,-1) {{}};
\node[circle,draw=black!100, fill=white!100, label=below:$4$,inner sep=0pt,minimum size=.1cm] (3) at (2,-1) {{}};
\node[label=center:$\dots$] (2dots) at (3.5,-1) {{}};
\node[circle,draw=black!100, fill=white!100, label=below:$0$,inner sep=0pt,minimum size=.1cm] (3) at (5,-1) {{}};

\node[circle,draw=black!100, fill=black!100, label=below:$2$,inner sep=0pt,minimum size=.1cm] (1) at (-1,-2) {{}};
\node[circle,draw=black!100, fill=white!100, label=below:$3$,inner sep=0pt,minimum size=.1cm] (2) at (0,-2) {{}};
\node[circle,draw=black!100, fill=white!100, label=below:$4$,inner sep=0pt,minimum size=.1cm] (3) at (1,-2) {{}};
\node[label=center:$\dots$] (2dots) at (2.5,-2) {{}};
\node[circle,draw=black!100, fill=white!100, label=below:$1$,inner sep=0pt,minimum size=.1cm] (3) at (4,-2) {{}};
\node[circle,draw=black!100, fill=white!100, label=below:$0$,inner sep=0pt,minimum size=.1cm] (3) at (5,-2) {{}};

\node[circle,draw=black!100, fill=black!100, label=below:$3$,inner sep=0pt,minimum size=.1cm] (1) at (-1,-3) {{}};
\node[circle,draw=black!100, fill=white!100, label=below:$4$,inner sep=0pt,minimum size=.1cm] (2) at (0,-3) {{}};
\node[label=center:$\dots$] (2dots) at (1.5,-3) {{}};
\node[circle,draw=black!100, fill=white!100, label=below:$2$,inner sep=0pt,minimum size=.1cm] (3) at (3,-3) {{}};
\node[circle,draw=black!100, fill=white!100, label=below:$1$,inner sep=0pt,minimum size=.1cm] (3) at (4,-3) {{}};
\node[circle,draw=black!100, fill=white!100, label=below:$0$,inner sep=0pt,minimum size=.1cm] (3) at (5,-3) {{}};

\node[circle,draw=black!100, fill=black!100, label=below:$4$,inner sep=0pt,minimum size=.1cm] (1) at (-1,-4) {{}};
\node[label=center:$\dots$] (2dots) at (0.5,-4) {{}};
\node[circle,draw=black!100, fill=white!100, label=below:$3$,inner sep=0pt,minimum size=.1cm] (3) at (2,-4) {{}};
\node[circle,draw=black!100, fill=white!100, label=below:$2$,inner sep=0pt,minimum size=.1cm] (3) at (3,-4) {{}};
\node[circle,draw=black!100, fill=white!100, label=below:$1$,inner sep=0pt,minimum size=.1cm] (3) at (4,-4) {{}};
\node[circle,draw=black!100, fill=white!100, label=below:$0$,inner sep=0pt,minimum size=.1cm] (3) at (5,-4) {{}};

\node[label=center:$\vdots$] (2dots) at (-1,-5) {{}};

\end{tikzpicture} 

\end{center}\caption{\small  A flat order frame not isomorphic to any ordinal sequence frame.}\label{infiniteflat}
\end{figure}
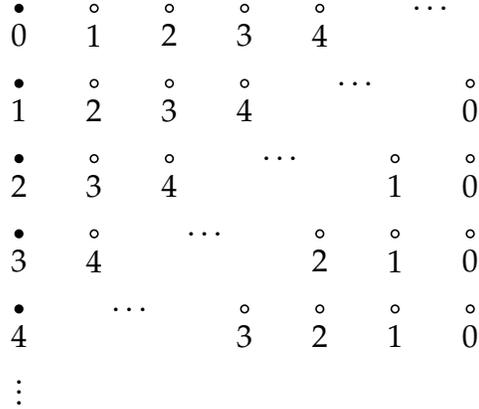
To see the obstacle that arises if we try to transform this frame into an ordinal sequence frame, think about how you would go about constructing a sequence corresponding to world 0. The natural idea would be to start with the natural numbers in order, followed by a copy of the natural numbers in reverse order. The problem, however, is that such a construction is not a sequence, since there is no ordinal with this order structure.

To make this thought precise, consider the following sets of sentences, whose union is true at 0 in this frame, given the valuation above:
\begin{align*}
\Gamma &\coloneqq  \{\neg((p_j\vee p_k)>p_k) \mid j < k\} 
\\
\Delta &\coloneqq \{p_i> \neg ((p_j \vee p_k) > p_j)\mid j < i< k \}
\\
K &\coloneqq \{p_i> \neg ((p_j \vee p_k) > p_j)\mid  j<k < i \}
\\
\Lambda &\coloneqq \{\neg\lozenge(p_i \wedge p_j)\mid i\neq j \}
\end{align*}
Now suppose for contradiction that  $\Gamma\cup\Delta\cup K\cup\Lambda$ is true at some ordinal sequence $\sigma$.
$\Lambda$ ensures no two atoms are ever true at the same tail of $\sigma$.
 $\Gamma$ ensures that some first $p_j$ tail precedes some first $p_k$ tail, whenever $j<k$.
  $\Delta$ ensures that when $i<k$, between the first $p_i$ tail and the first $p_k$ tail after it, there is no  $p_j$ tail when $j<i$. So, together, $\Gamma\cup\Delta\cup \Lambda$ ensure that $\sigma$ starts with a stretch of tails verifying just $p_0$, followed by a stretch verifying just $p_1$, followed by a stretch verifying just $p_2$, and so on (possibly interspersed with some stretches verifying no atoms at all). 
  
  Finally,  $K$ ensures that, after the first $p_i$ tail of $\sigma$, for any $j<k<i$, there is a first $p_k$ tail before any $p_j$ tail; in other words, we have a sequence that \emph{descends} towards $i$ of tails that first verify $p_{i-1}$, then $p_{i-2}$, then $p_{i-3}$, and so on, down to $p_1$ (again, possibly also with stretches verifying no atoms). But since $\Gamma\cup \Delta\cup\Lambda$ ensure that $\sigma$ starts with an \emph{ascending} sequence, so that the first $p_j$ tail always precedes the first $p_k$ tail when $j<k$, these descending sequences must come after every atom has appeared at least once.  Since the ordinals are well-founded, there is a least ordinal $\alpha$ such that some atom is true at $\sigma\slice{\alpha:}$ and every atom is true at some earlier tail in $\sigma$.  Let $p_k$ be the atom true at $\sigma\slice{\alpha:}$. Then there is no $p_{k+1}$-tail between the first $p_{k+2}$-tail and $\sigma\slice{\alpha:}$, so the conditional $p_{k+2} > \neg((p_{k}\vee p_{k+1})>p_k)$ is false.  But this is in $K$, and so our assumption that $\sigma$ verifies all these sentences leads to contradiction. 

In combination with our incompleteness result in \cref{notecomp} (which we proved for \stal, but which extends immediately to \vflat), this 
allows us to distinguish at least four, nested notions of consistency for sets of sentences, corresponding to four kinds of frame we have considered:
\begin{itemize}
    \item[(i)]
    Sets which hold in some finite ordinal sequence or flat order frame.  (A set like $\Gamma$ above is not consistent in sense (i), though it is consistent in senses (ii)--(iv).)
    \item[(ii)]
    Sets which hold in some ordinal sequence frame or linear order frame.
    \item[(iii)]
    Sets which hold in some flat order frame.
    \item[(iv)]
    Sets which hold in some \emph{generalized} flat order frame, or, equivalently, from which no contradiction can be derived in \vflat. 
\end{itemize}
All four notions of consistency correspond to \vflat and agree as regards finite sets.  

\subsection{List frames and successor-ordinal frames}

In this section we consider two interesting restrictions of ordinal sequence semantics: first, to ordinal sequence frames in which all sequences have finite length; second, to ordinal sequence frames whose sequences all have a \emph{final tail} in the domain (in a sense to be specified). 
The first class is of obvious interest for reasons of simplicity, and has been discussed in the recent literature \citep{KhooSantorio:2018,KhooBook}; the second, as we will see, is  related to the first as ordinal sequence frames are to $\omega$-sequence frames.

\begin{definition} 
    Given a non-empty set $P$, a \emph{list} over $P$ is a sequence over $P$ whose domain (or length) is a finite ordinal.  
\end{definition}
A \emph{list frame} is any ordinal sequence frame $\seq{W,\prec^W}$ in which $W$ is a set of lists closed under non-empty tailhood. 

The logic of list frames is at least as strong as \vanf, because any pointed list frame is equivalent to one whose domain is the set of tails of $\vec{n} = \seq{0,1,\ldots,n-1}$, and any such frame is 
equivalent to one whose domain is the set of tails of the $\omega$-sequence $\seq{0,1,\ldots,n-1,n-1,n-1,\ldots}$.  
In fact, the logic of lists is strictly stronger than \vanfp: it is the logic which strengthens \vanf with every instance of McKinsey:%
\footnote{Equivalently, we could add the Grzegorczyk Axiom (\emph{Grz}), $\Box(\Box(p\to\Box p)\to p)\to p$, 
a strengthening of Dum ($\Box(\Box(p\to\Box p)\to p)\to (\lozenge\Box p \to p)$), which as we pointed out is already in \vanf.  Any normal modal logic including McKinsey, Dum, and T includes Grz.   Suppose for contradiction that $\Box(\Box(p\to\Box p)\to p)$ but not $p$: then by Dum it must be that $\neg\lozenge\Box p$, i.e.\ $\Box\lozenge\neg p$, so by McKinsey $\lozenge\Box\neg p$, which by T entails
$\lozenge(\Box(p \to \Box p) \wedge \neg p)$, equivalent to the negation of $\Box(\Box(p\to\Box p)\to p)$.  Conversely, any normal modal logic including Grz and 4 includes McKinsey.  We will use the contrapositive scheme to Grz, Grz', namely $p\to \lozenge(p\wedge \Box(\neg p\to \Box\neg  p))$. 
Assume $p$ for proof by cases. Assume further $\Box\lozenge p$ for conditional proof, so by Grz' we have $\Box\lozenge p\wedge \lozenge\Box(\neg p\to\Box \neg p))$, which by 4 entails $\Box\Box \lozenge p\wedge\lozenge\Box (\neg p\to \Box\neg p)$, which entails $\lozenge(\Box\lozenge p\wedge\Box(\neg p\to \Box\neg p))$ and hence $\lozenge \Box(\lozenge p\wedge (\neg p\to \Box\neg p))$ and hence $\lozenge\Box p$.
For the second case  assume $\neg p$; by parallel reasoning we can derive  $\Box\lozenge\neg p\to \lozenge\Box\neg p$, which is equivalent to  $\Box\lozenge p\to \lozenge\Box  p$.}
\[
    \ourtag{McKinsey} 
    \Box\lozenge p\to \lozenge\Box p
\]
McKinsey is not valid in $\omega$-sequence frames: for instance, in the $\omega$-sequence model based on $\seq{1,2,1,2,\dots}$, where $p_0$ is true at $\seq{1,2,1,2,\dots}$ but false at $\seq{2,1,2,1\dots}$, $\Box\lozenge p_0$ is true at $\seq{1,2,1,2,\dots}$ while $\lozenge\Box p_0$ is false there. But McKinsey is valid in list frames, since for every list $\tau$ and set $X$ of lists containing $\tau$, $\tau$ has a shortest tail in $X$, which can have no tails in $X$ other than itself. If $\Box\lozenge p$ is true at $\tau$, every tail of $\tau$ in $X$ must have some $p$-tail in $X$, so the last  tail of $\tau$ which is in $X$ must be a $p$-tail which can only access itself. 

\begin{theorem} \label{FSMcomp}
Let \textsf{C2.FSM} be \vanf plus 
the McKinsey axiom schema. \textsf{C2.FSM} is sound and weakly complete with respect to list models.
\end{theorem}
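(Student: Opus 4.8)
The plan is to follow the template used for \vanf in \cref{weak}: isolate the order-frame condition matching \textsf{McKinsey} on top of flatness and ancestrality, observe that the frames satisfying all three conditions are exactly (up to isomorphism) the list frames, and then show that the finite-model construction behind \cref{weak} can be tuned so that, fed a \textsf{C2.FSM}-consistent input, it returns a model meeting that extra condition. Soundness is immediate from what is already established: since \textsf{C2.FSM} extends \vanf only by \textsf{McKinsey}, since \vanf is sound for $\omega$-sequence frames by the right-to-left direction of \cref{seqchar}, and since every list frame is isomorphic to an $\omega$-sequence frame (as noted just before the theorem), \vanf is sound for list models; and every instance of \textsf{McKinsey} is valid on list frames (also noted there). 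Hence every theorem of \textsf{C2.FSM} is valid on every list model.

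Call a finite flat ancestral order frame \emph{atomic} if every world can access a ``dead end'', i.e.\ a world $v$ with $R(v)=\{v\}$; given ancestrality this is equivalent to requiring every successor-sequence $\alpha_w$ to be eventually constant. I would first show that a finite flat ancestral frame validates every instance of \textsf{McKinsey} iff it is atomic. Left to right: if some $w$ accesses no dead end then its successor-chain runs into a cycle $C=\{v_0,\dots,v_{m-1}\}$ with $m\geq2$, and flatness together with ancestrality give $R(v)=C$ for $v\in C$ and $C\subseteq R(u)$ for every $u\in R(w)$; choosing a valuation that makes $p$ true on a nonempty proper subset of $C$ and false on the rest of $C$ (the valuation elsewhere is immaterial) then verifies $\Box\lozenge p$ but falsifies $\lozenge\Box p$ at $w$. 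Right to left is even shorter than the list-frame argument given above: if $w$ accesses a dead end $v$, then $\Box\lozenge p$ at $w$ forces $v$ to be a $p$-world, and $R(v)=\{v\}$ gives $\Box p$ at $v$, so $\lozenge\Box p$ holds at $w$. Finally, in a finite atomic flat ancestral frame each $\alpha_w$ is eventually constant and so determines a finite list $\ell_w$; the set $\{\ell_w:w\in W\}$ is closed under non-empty tailhood, and the isomorphism established above between a flat ancestral frame and its successor-sequence frame restricts to an isomorphism onto this list frame. So the finite order frames validating \textsf{C2.FSM} are exactly those isomorphic to finite list frames.

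For completeness it then suffices, given a \textsf{C2.FSM}-consistent sentence $\varphi$, to build a finite atomic flat ancestral model satisfying it, since by the previous paragraph this is isomorphic to a finite list model (this also yields decidability, atomicity being decidable for finite frames). The idea is to rerun the construction of \cref{app:seqcompleteness} behind \cref{weak} --- which turns a \vanf-consistent sentence into a finite flat ancestral model, equivalently a finite $\omega$-sequence model whose worldlines are eventually periodic --- now carrying out its consistency bookkeeping relative to \textsf{C2.FSM}, and to argue that \textsf{C2.FSM}-consistency forces every worldline in the output to be eventually \emph{constant} rather than merely eventually periodic. The driving observation is the one behind the frame characterization: a would-be successor-cycle of period $\geq2$ among the states of the construction is, via the refuting valuation above built on a fresh atom reserved for the purpose, incompatible with a suitable instance of \textsf{McKinsey}, hence \textsf{C2.FSM}-inconsistent, so no such cycle can arise and each worldline terminates in a dead end.

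Making that last step rigorous is where the real work lies, and it is genuinely delicate: \textsf{McKinsey} is not canonical, so one cannot simply add it to a canonical-model construction and read atomicity off; the argument has to be threaded through the particular combinatorics of \cref{app:seqcompleteness}. In particular one must check that that construction still runs with \textsf{C2.FSM}-consistency in place of \vanf-consistency, that it can be arranged to keep spare atoms available for the \textsf{McKinsey} instances used to exclude cycles, and that truncating each worldline at its first dead end leaves the truth value of $\varphi$ at the distinguished point untouched. The remaining ingredients --- soundness, the frame characterization, and the identification of atomic flat ancestral frames with list frames --- are routine given the machinery already in place.
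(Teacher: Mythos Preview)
Your soundness argument and the frame characterization of ``atomic'' flat ancestral frames are fine and give a pleasant model-theoretic picture.  But the completeness step, which you yourself flag as the real work, has a genuine gap, and the mechanism you sketch for closing it is the wrong one.

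The construction in \cref{app:seqcompleteness} is already parameterized by the background logic: all consistency checks are taken relative to \textsf{L}, and the output model's valuation is fully determined by the state descriptions that label the sequence elements.  There is no room to ``reserve a fresh atom'' and ``build a refuting valuation'' on a cycle after the fact---the atoms and valuation are fixed by the depth-$n$ state description $s$ you started from.  So the argument cannot proceed by exhibiting, for a putative cycle, a McKinsey instance in a new atom that the cycle violates.  What you actually need is a purely syntactic fact about \textsf{C2.FSM}-consistency that prevents the circuitous case of \cref{makeseqdef} from ever arising.

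The paper gets this directly.  The key observation is the schema $\mathsf{M}^*\colon \Box(p_1\vee\cdots\vee p_n)\to(\lozenge\Box p_1\vee\cdots\vee\lozenge\Box p_n)$, derivable from McKinsey in \textsf{S4}.  If $\tau$ is orderly and ends with $\bot$, then $\Box(\disj\tau\slice{:-1})$ is consistent, so by $\mathsf{M}^*$ some $\Box\tau\elem{i}$ is consistent, whence $\seq{\tau\elem{i},\bot}$ is orderly and $\tau$ is \emph{direct}, not circuitous.  Combined with the \vanf-lemma that every circuitous list ends with $\bot$, there are \emph{no} circuitous lists in \textsf{C2.FSM}, so the $\makeseq{}$ recursion always takes the direct branch and outputs a finite sequence.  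That is the whole completeness argument---no atomicity condition, no fresh atoms, no post-hoc truncation.
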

For completeness, see \cref{app:mckinsey}.

The reasoning that shows that McKinsey is sound for list models will generalize to any ordinal sequence frame where all the sequences have a \emph{final tail} in the domain, no matter how long they are. Hence we can validate McKinsey without also validating Sequentiality.
Let a \emph{final} sequence frame be an ordinal sequence frame $\seq{W,\prec^W}$ such that whenever $\sigma\in W$, there exists a $\beta$ such that $\sigma^{[\beta:]}\in W$ and whenever $\alpha\geq \beta$ and $\sigma^{[\alpha:]}$ is defined and in $ W$, $\sigma^{[\alpha:]}$ = $\sigma^{[\beta:]}$. Natural examples are ordinal sequence frames closed under non-empty tailhood where every sequence's domain is a successor ordinal (i.e., an ordinal with a final element), as well as ordinal sequence frames whose domain includes the empty sequence (which will be a trivial final tail of every sequence).
 \begin{theorem} \label{fmcomp}
Let \textsf{C2.FM} be \vflat plus 
McKinsey. \textsf{C2.FM} is sound and weakly complete with respect to final ordinal sequence frames.
\end{theorem}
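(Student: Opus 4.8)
The plan is to prove soundness and completeness separately, in each case piggybacking on the machinery already set up for \vflat, \vanf, and \textsf{C2.FSM}.

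\emph{Soundness} is quick: every final ordinal sequence frame is an ordinal sequence frame, so by \cref{ordseqcompleteness} every theorem of \vflat is already valid on it, and it remains only to check \textsf{McKinsey}. Suppose $\sigma\Vdash\Box\lozenge p$ in a final ordinal sequence model and let $\sigma^{[\beta:]}$ be a final tail of $\sigma$ in the underlying frame. Since $\sigma^{[\beta:]}\in R(\sigma)$ we get $\sigma^{[\beta:]}\Vdash\lozenge p$; but by finality the only tail of $\sigma^{[\beta:]}$ present in the frame is $\sigma^{[\beta:]}$ itself, so $R(\sigma^{[\beta:]})=\{\sigma^{[\beta:]}\}$ and hence $\sigma^{[\beta:]}\Vdash p$, whence $\sigma^{[\beta:]}\Vdash\Box p$ and so $\sigma\Vdash\lozenge\Box p$.

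\emph{Completeness} I would attack in three steps, mirroring the treatment of \vanf. First, characterize the order frames validating \textsf{C2.FM}, just as \cref{flatt} and \cref{seqchar} characterize those validating Flattening and Sequentiality: I claim they are the \emph{flat final} order frames, i.e.\ the flat frames in which, from every world, one can reach --- via iterated $\operatorname{succ}$ --- a world whose only accessible world is itself. The right-to-left half is the soundness argument above, reading ``reachable endpoint'' for ``final tail''; the left-to-right half mimics the corresponding half of \cref{seqchar}: if some world $w$ reaches no such endpoint, its successor chain is either non-eventually-constant or eventually cyclic with period at least $2$, and in either case a valuation of the kind used in the \cref{seqchar} proof --- $p$ true at the even-rank worlds reachable from $w$, $q$ at the odd-rank ones, and an auxiliary atom true exactly at the worlds reachable from $w$ --- falsifies \textsf{McKinsey} at $w$. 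Second, observe that a \emph{finite} flat final order frame is automatically ancestral: in a finite frame every successor chain is eventually periodic, a period of length $\geq 2$ would trap some world in a cycle containing no endpoint, so the chain is eventually constant, and a short induction using flatness (exactly as in the paragraphs preceding \cref{seqchar}) then gives $\operatorname{succ}(w_i)=w_{i+1}$ along the $<_w$-chain of $R(w)$, so every accessible world is reachable. Hence, by the isomorphism result established before \cref{seqchar}, every finite flat final order frame is isomorphic to the frame of its successor-sequences $\{\alpha_w\}$; since each $\alpha_w$ is eventually constant it has a final tail lying in that frame, so this is a finite final sequence frame, a fortiori a final ordinal sequence frame. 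Third, obtain completeness of \textsf{C2.FM} with respect to finite flat final order frames by adapting the construction in \cref{app:flatcompleteness} (used to prove \cref{flattcom}), incorporating the McKinsey-specific modification used in \cref{app:mckinsey} to prove \cref{FSMcomp}; chaining the three steps yields the theorem.

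The main obstacle is the third step: checking that the finite-model construction for \vflat can be carried out so that the McKinsey condition holds at every world. The relevant technique is already available from the proof of \cref{FSMcomp}, and the relation of \cref{fmcomp} to \cref{FSMcomp} is the same as that of \cref{ordseqcompleteness} to \cref{weak} --- lifting the bound on sequence length from finite to arbitrary ordinals --- so I expect the adaptation to be essentially routine; the real work is verifying that the two modifications (the one carrying the $\omega$-sequence construction to the ordinal-sequence construction, and the one enforcing \textsf{McKinsey}) compose cleanly. A secondary point needing care, in the first step, is the gap between accessibility and reachability in non-ancestral flat frames, which is exactly why the falsifying valuation must carry the auxiliary atom marking the reachable worlds.
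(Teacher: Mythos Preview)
Your soundness argument is fine and matches the paper's. The completeness plan, however, has a genuine error and is also needlessly circuitous.

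The error is in step~1. Your proposed characterization---that \textsf{C2.FM} is valid on an order frame iff it is flat and \emph{final} (every world can reach an endpoint via iterated $\operatorname{succ}$)---is false. The frame $<^1$ from \cref{flatnvinduct} is a counterexample: it is flat, and world~$1$ cannot reach the endpoint~$3$ (since $\operatorname{succ}(1)=2$ and $\operatorname{succ}(2)=1$), yet McKinsey is valid on it. For any valuation, if $p$ holds at~$3$ then $\Box p$ holds there and hence $\lozenge\Box p$ holds everywhere; if $p$ fails at~$3$ then $\lozenge p$ fails at~$3$, so $\Box\lozenge p$ fails everywhere. Your proposed falsifying valuation (atoms at even/odd ranks of reachable worlds) does not produce a McKinsey failure here, precisely because the unreachable endpoint~$3$ makes $\Box\lozenge p$ fail vacuously. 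The correct frame condition is that every world can \emph{access} an endpoint, not reach one.

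Even setting this aside, your architecture is inside-out. What you call step~3---carrying out the $\makeseq{}$ construction with the ``McKinsey-specific modification'' from \cref{app:mckinsey}---already delivers final ordinal sequence models directly, so steps~1 and~2 are redundant. The paper's argument is simply this: using M* (\cref{mstar}), one shows that in any logic containing \textsf{C2.FM}, no circuitous list ends with~$\bot$ (\cref{mckin}). A short induction then gives that whenever $\tau$ is orderly and ends with~$\bot$, $\makeseq{\tau}$ has successor-ordinal length (since $\tau$ must be direct, and the last summand $\makeseq{\rho}$ in the direct-case recursion also ends with~$\bot$). Hence $\mathcal{M}_s$ is a final ordinal sequence model, and \cref{truein} finishes. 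There is no detour through a frame characterization or through ancestrality.
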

Completeness is proved in the appendix.

We have now seen four logics which are sound and complete for different classes of ordinal sequence models: namely, \textsf{C2.F}, \textsf{C2.FS}, \textsf{C2.FM}, and \textsf{C2.FSM}. And we have only brushed the surface: for every class of ordinal sequence models, we can ask whether it corresponds to a logic, potentially revealing infinitely many new interesting conditional logics.%
\footnote{Benjamin Przybocki (p.c.) has reported interesting results along these lines on the axiomatization of classes of ordinal frames in which the length limit is some ordinal strictly between $\omega$ and $\omega^\omega$.} 

\section{Does Stalnaker's Thesis motivate \vanf?}\label{probsagain}
Since van Fraassen came up with a class of models that validates both Flatness and Sequentiality as a byproduct of trying to show the non-triviality of a restricted version of Stalnaker's Thesis, one might wonder whether there is some interesting argument from some version of that Thesis to Flattening and/or Sequentiality.%
\footnote{A more direct  kind of probabilistic argument for Flattening would follow from a probabilistic argument for the validity of IE (from which Flattening follows). However, while \citet{McGee:1989} develops a model for the probabilities of conditionals that validate IE, most have not followed McGee in taking a probabilistic argument for IE seriously, since the resulting construction requires a non-classical interpretation of conditional probability, as well as an unusual conditional logic. We are interested here in whether there is a more conservative probabilistic case for Flattening and/or Sequentiality.}

Here are some definitions which will help us to characterize the multiple versions of Stalnaker's Thesis which we will need to distinguish.

\begin{itemize}

\item  A \emph{probabilistic model} for a language $\mathcal{L}$ is a model for $\mathcal{L}$ equipped with a probability function on an algebra of the model's state space which is defined on the intension of every  $p\in \mathcal{L}$.\footnote{We write $\pi(p)$ as a shorthand for $\pi(\sem{p})$, and similarly for $\pi(q\mid p)$.}

\item In particular, a \emph{Boolean probability model}  is a probabilistic model for the Boolean language based on a classical model, and a \emph{probabilistic (sequence) order model} is a probabilistic model for the full conditional language $\mathcal{L}_>$ based on a (sequence) order model. 

    \item When $\mathcal{L}'\supset \mathcal{L}$, 
    a probabilistic model for $\mathcal{L}'$ \emph{extends} a probabilistic model for $\mathcal{L}$ iff the former agrees with the latter on the probability of every sentence of $\mathcal{L}$.
    \item
    Where $\mathcal{P},\mathcal{Q},\mathcal{R}$ are any sets of $\mathcal{L}_>$-sentences, a probabilistic order model is $(\mathcal{P}>\mathcal{Q}\mid\mathcal{R})$-Stalnaker iff for any $p\in\mathcal{P}$, $q\in\mathcal{Q}$, and $r\in\mathcal{R}$, if $\prob{p}>0$ and $\cprob{r}{p} = 1$, then $\cprob{p>q}{r} = \cprob{q}{p}$.
    \item
    When $\mathcal{P}$ and $\mathcal{Q}$ are sets of sentences and $\circ$ is a logical constant, $\mathcal{P}\circ \mathcal{Q}$ is $\{p\circ q\mid p\in\mathcal{P}, q\in\mathcal{Q}\}$.  
    \item 
    $\seq{\mathcal{P}}_\circ$ is the closure of $\mathcal{P}$ under the logical constant $\circ$.  
    
\end{itemize}
 $\mathcal{B}$ is again the set of all Boolean sentences, so $\mathcal{B}>\mathcal{B}$ is the set of ``first degree'' conditionals, with Boolean antecedents and consequents, and $\firstdegconj$ is the set of conjunctions all of whose conjuncts are first degree conditionals.

We abbreviate `$(\mathcal{P}>\mathcal{Q}\mid\{\top\})$-Stalnaker' as `$(\mathcal{P}>\mathcal{Q})$-Stalnaker'.  Discussions of Stalnaker's Thesis in the literature generally focus on this special case; however, \citet{bacon:2015} and \citet{DorrBook} discuss a range of facts about the usage of conditionals that might be explained by the extra strength that comes from allowing a non-trivial class $\mathcal{R}$ of `background conditions'.\footnote{Being $(\mathcal{P}>\mathcal{Q}\mid \mathcal{R})$-Stalnaker looks a bit like a property that \citealt{lewis:1976} showed to lead to triviality given very weak assumptions. Lewis showed that no non-trivial probabilistic model always has $\pi(p>q\mid r)=\pi(q\mid pr)$ whenever $\pi(pr)>0$, when $p\in\mathcal{P}, q\in \mathcal{Q}, r\in \mathcal{R}$, for essentially \emph{any} choice of $\mathcal{P,Q,R}$. Hence the condition that $\pi(r\mid p)=1$ in our definition of $(\mathcal{P}>\mathcal{Q}\mid \mathcal{R})$-Stalnaker is crucial for any of the properties we are interested in to be prima facie interesting.}

Using this terminology, van Fraassen's tenability result can be stated as follows:
\begin{fact}[van Fraassen] \label{vftenability}
    Every countably additive Boolean probability model can be extended to a probabilistic $\omega$-sequence model that is both $(\mathcal{B}>\firstdeg)$-Stalnaker and $(\firstdeg>\mathcal{B})$-Stalnaker.  
\end{fact}
The witnessing $\omega$-sequence model is based on the frame $P^\omega$, where $P$ is the set of worlds of the starting Boolean probability model.  The extended probability function is just the standard \emph{product measure}, corresponding to $\omega$ independent copies of the starting $\pi$.  Intuitively, this corresponds to treating each element of $\omega$ like a fresh draw of a member of $P$ from an urn, with the probabilities on each draw given by $\pi$.

We begin by showing that van Fraassen's result can be strengthened in a number of ways. 
First,  van Fraassen's proof of \autoref{vftenability} depends essentially on countable additivity, which is highly controversial as a constraint on rational credence \citep[see, e.g.][]{ArntzeniusElgaHawthorneBIDB}. So it is interesting to note that the first occurrence of `countably additive' in \cref{vftenability} can be deleted.  This is explained by the following fact:
\begin{fact} \label{finiteadd}
    Every Boolean probability model can be extended to a countably additive probability model.%
    \footnote{See \cite[62--63]{YosidaHewittFAM}. Thanks to Snow Zhang (p.c.) for telling us about, and explaining, this result.}
\end{fact}
The reason for this is that one can transfer a probability function on a given Boolean algebra to its Stone representation as an algebra of sets of ultrafilters on the original algebra, and then extend the result in a unique way to a countably additive probability function on the smallest $\sigma$-algebra containing this Stone representation.  Given \cref{finiteadd}, if we are given a merely finitely additive Boolean probability model, we can first extend it to be countably additive, and then apply \cref{vftenability} to the result.  

Van Fraassen's result can be further strengthened along several dimensions.  Here is the strongest version that we know to be true (we will presently discuss some limitations on possible further strengthenings): 
\begin{restatable}[]{theorem}{staltheorem}
\label{staltheorem}
    For any countably infinite ordinal $\alpha$, every Boolean probability model can be extended to a $(\firstdegconj>\mathcal{L}_>\mid\firstdegconj)$-Stalnaker order-model based on an $\alpha$-sequence frame.
\end{restatable}
\noindent 
This is a considerably more powerful form of Stalnaker's Thesis.  We have no restriction on the consequent of the conditional; allow any conjunction of first degree conditionals as antecedents; and also allow for a `background condition' subject to the same restriction as the antecedents.  

The generalization to sequence models with sequences longer than $\omega$ is not particularly interesting in itself, but serves to demonstrate that the feature of van Fraassen's models responsible for the validity of Sequentiality---namely, the fact that the length of the sequences is $\omega$ rather than a larger transfinite ordinal---is not doing any essential work in establishing the scope of Stalnaker's Thesis in the models, meaning that no argument for Sequentiality based on Stalnaker's Thesis is in view.  This should not seem surprising: van Fraassen's proof and ours turn on the fact that when the antecedent $p$ has positive probability and is of the right form, the set of all sequences for which $p$ is true at their $n$th tail for some finite $n$ has probability 1.  Given this, the distinctions introduced by allowing transfinite sequences can be ignored.%
\footnote{The move to transfinite sequences may however acquire new relevance if we move to a theory of primitive conditional probability like Popper's \parencite*{Popper:1959}, or allow for infinitesimal probabilities.}

This fact also shows that getting the nice behaviour described in \cref{staltheorem} will not require models that strictly validate Flattening.  For while \cref{staltheorem} involves sequence models, which are flat, we can modify the order function of one of these models so that it is no longer flat but remains $(\firstdegconj>\mathcal{L}_>\mid\firstdegconj)$-Stalnaker; indeed, we can do so in such a way that certain counterexamples to Flattening are not only true at some worlds, but have positive probability. 
Those failures, however, will have to involve conditionals with rather complex antecedents (for example, a conditional whose antecedent is an instance of Sequentiality) 
in order to force us out past the initial $\omega$-sequence of worlds in the relevant sequences.  So an argument from some restriction of Stalnaker's Thesis to the truth of some corresponding restriction of Flattening still seems like an open possibility; and if such an argument could be found, it is possible that it could provide the basis for a simplicity argument for the unrestricted validity of Flattening.%
\footnote{Note that in any $(\mathcal{P}>\mathcal{Q}\mid\mathcal{R})$-Stalnaker  model, the two sides of any instance of Flattening where $p\in \mathcal{P}\cap\mathcal{R}$, $pq \in \mathcal{P}$, $r \in \mathcal{Q}$, and $pq>r \in \mathcal{Q}$ must have the same probability, so long as $pq$ has positive probability.  For then $\prob{p > (pq > r)} = \cprob{pq > r}{p} = \cprob{r}{pq} = \prob{pq > r}$.  One might see this as securing a restricted kind of ``probabilistic validity'' for the single-premise inference rules corresponding to the two directions of Flattening.  However, there is no obvious route from any consistent restriction of Stalnaker's Thesis to the claim that the instances of Flattening have probability one, even when $p$, $q$, and $r$ are Boolean.  And indeed, the tree models introduced below show that we can have $(\mathcal{B}>\mathcal{L}_>\mid\mathcal{B})$-Stalnaker models where instances of Flattening with Boolean $p,q,r$ have probability less than one.}

As it turns out, however, there is a variant of van Fraassen's result using a different kind of model that does \emph{not} validate Flattening, or anything beyond \stal, and still delivers a pretty strong form of Stalnaker's Thesis, namely $(\mathcal{B}>\mathcal{L}_>\mid\mathcal{B})$.  The idea of the variant result is to build an order model whose worlds are not \emph{sequences} of protoworlds, but infinitely branching \emph{trees} of protoworlds---structures consisting of a ``root'' protoworld together with a countable infinity of ``branches'', each of which is itself a tree.  \Cref{fig:tree} depicts the abstract structure of such a tree: a particular tree will attach a protoworld to the root and to every branching point.  
\begin{figure}
\begin{center}
    \begin{tikzpicture}[scale=21,y=-1cm,
        label distance=-4pt,
        ]
\draw [thin, blue,decoration=arrows, decorate]
l-system [l-system={mynewtree,axiom=A
,order=10,angle=90,step=0.25cm}];
\begin{scope}[circle, inner sep=0pt,pin distance=2ex]
\node at (0,0)[pin=below right:{$\scriptstyle\langle\rangle$}] {};
\node at (0.25,0)[pin=below right:{$\scriptstyle\langle 0\rangle$}] {};
\node at (0.25,-0.15)[pin=above right:{$\scriptstyle\langle 0,0\rangle$}] {};
\node at (0.16,-0.15)[pin=above left:{$\scriptstyle\langle 0,0,0\rangle$}] {};
\node at (0.16,-0.096)[pin=below left:{$\scriptstyle\langle 0,0,0,0\rangle$}] {};
\node at (0.106,-0.15)[pin=above left:{$\scriptstyle\langle 0,0,1\rangle$}] {};
\node at (0.25,-0.24)[pin=above right:{$\scriptstyle\langle 0,1\rangle$}] {};
\node at (0.196,-0.24)[pin=157.5:{$\scriptstyle\langle 0,1,0\rangle$}] {};
\node at (0.25,-0.294)[pin=above right:{$\scriptstyle\langle 0,2\rangle$}] {};
\node at (0.4,0)[pin=below right:{$\scriptstyle\langle 1\rangle$}] {};
\node at (0.4,-0.09)[pin=above right:{$\scriptstyle\langle 1,0\rangle$}] {};
\node at (0.346,-0.09)[pin=157.5:{$\scriptstyle\langle 1,0,0\rangle$}] {};
\node at (0.4,-0.144)[pin=above right:{$\scriptstyle\langle 1,1\rangle$}] {};
\node at (0.49,0)[pin=below right:{$\scriptstyle\langle 2\rangle$}] {};
\node at (0.49,-0.054)[pin=above right:{$\scriptstyle\langle 2,0\rangle$}] {};
\node at (0.544,0)[pin=below right:{$\scriptstyle\langle 3\rangle$}] {};
\end{scope}
\end{tikzpicture}
\end{center}   
\caption{\small The set $\mathbb{N}^{<\omega}$ of lists of natural numbers, depicted as the domain of a tree.}
\label{fig:tree} 
\end{figure}
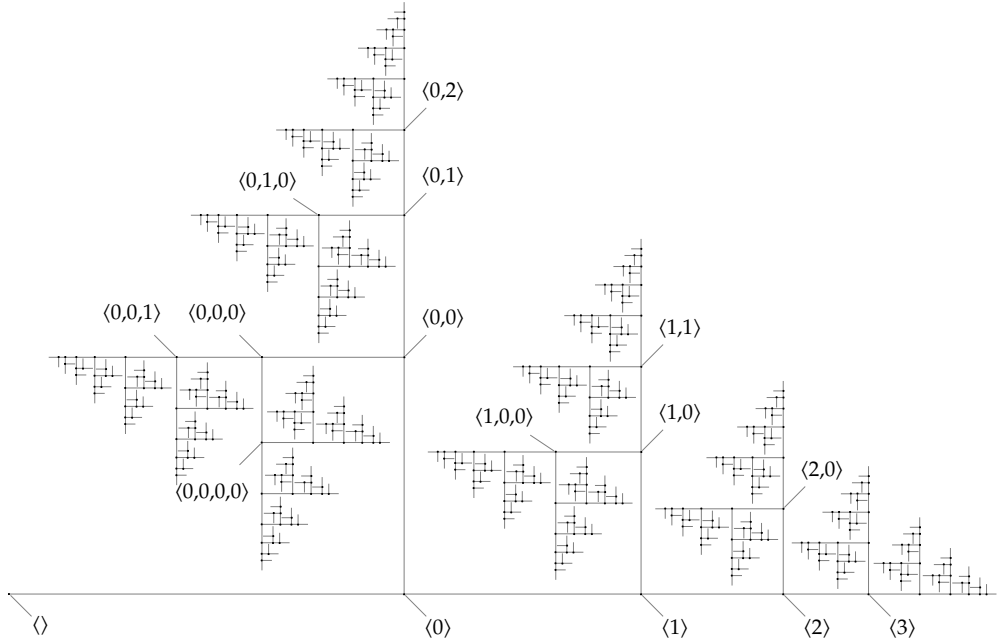

Formally, we can identify such trees with functions from \emph{lists} of natural numbers to protoworlds:
\begin{definition} \emph{Trees:}
    \begin{itemize}
        \item 
        For any set $X$, $X^{<\omega}$ is the set of lists over $X$.  
        \item 
        For any set $P$, the set of \emph{trees} over $P$ is $P^{\mathbb{N}^{<\omega}}$: the set of functions from lists of naturals to members of $P$.  
        \item        
        When $\xi$ is a tree over $P$, the \emph{root} of $\xi$ is $\xi(\seq{})$, and the \emph{$n$th branch} of $\xi$ (for any $n \in \mathbb{N}$) is the tree $\nu$ such that for any $\tau \in \mathbb{N}^{<\omega}$, $\nu(\tau) = \xi(\seq{n}+\tau)$ (where $+$ is sequence concatenation). 
        \item 
        There is a natural order function $\prec^T$ on any set $T$ of trees analogous to the tail order function on sequences: 
         $\nu \prec^T_{\xi} \omicron$ iff either  
         \begin{itemize} \item  $\nu=\xi$, $\omicron\ne\xi$, and $\omicron$ is the $n$th branch of $\xi$ for some $n$, 
         \item 
         or $\nu\ne\xi$, $\omicron\ne\xi$, and for some $n$ and $m$, $\nu$ is the $n$th branch of $\xi$,  $\omicron$ is the $m$th branch of $\xi$, and there is no $k\leq n$ such that $\omicron$ is the $k$th branch of $\xi$.  \end{itemize}
        \item 
        When $P$ is any non-empty set, a \emph{tree frame} over $P$ is an order frame whose domain is some set of trees over $P$, with the tree order function as above.
        \item 
        A \emph{full} tree frame is a tree frame whose domain consists of all trees over some $P$.
        \item
        A \emph{tree model} is an order model based on a tree frame.  
        \item 
        A \emph{categorical} tree model is a tree model where for any trees $\xi$ and $\nu$ such that $\xi(\seq{}) = \nu(\seq{})$, $\xi \in V(p)$ iff $\nu\in V(p)$ for any atom $p$.  
    \end{itemize}
\end{definition}

\begin{theorem} \label{c2trees}
    The logic of tree models is \stal.
\end{theorem}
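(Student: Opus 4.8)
The plan is to prove the two inclusions separately. Soundness of \stal for tree models is immediate: a tree frame is by definition an order frame, so every tree model is an order model, and \stal is sound for all order models. The substantive direction is completeness, and here it suffices---thanks to the completeness of \stal for finite pointed order models (\cref{c2completeness})---to show that every finite pointed order model is equivalent to some pointed tree model; contraposing \cref{c2completeness}, any non-theorem of \stal is then refuted on some pointed tree model.

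Given a finite pointed order model $\seq{w,\seq{W_0,<,V}}$, the idea is to build a tree model over the set of protoworlds $P := W_0$ itself. For each $x\in W_0$, enumerate $R(x)\setminus\{x\}$ in increasing $<_x$-order as $x_1 <_x \cdots <_x x_m$ (with $m$ finite, since the model is finite), and define, by recursion on the length of the list $\alpha\in\mathbb{N}^{<\omega}$, a world $x\ast\alpha\in W_0$: put $x\ast\seq{}=x$; put $x\ast(\seq{n}+\beta)= x_{n+1}\ast\beta$ when $n+1\le m$; put $x\ast(\seq{n}+\beta) = x_m \ast\beta$ when $m\ge 1$ and $n+1>m$; and put $x\ast(\seq{n}+\beta)=x\ast\beta$ when $m=0$. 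Let $\tau_x$ be the tree over $P$ with $\tau_x(\alpha)=x\ast\alpha$, let $W=\{\tau_x\mid x\in W_0\}$, and equip the tree frame $\seq{W,\prec^W}$ with the valuation $V'(p_k)=\{\tau_x\mid x\in V(p_k)\}$ (equivalently: $\tau\in V'(p_k)$ iff the root of $\tau$ lies in $V(p_k)$). Routine bookkeeping then shows: the $n$th branch of $\tau_x$ is $\tau_{x_{n+1}}$ (or $\tau_{x_m}$, or $\tau_x$), so $W$ is closed under taking branches and $\seq{W,\prec^W}$ is a genuine tree frame; since $\tau_x(\seq{})=x$, the map $x\mapsto\tau_x$ is injective, so no two worlds are merged by $\prec^W$; and consequently $R(\tau_x)=\{\tau_x,\tau_{x_1},\ldots,\tau_{x_m}\}$ with $\prec^W_{\tau_x}$ ordering these as $\tau_x\prec\tau_{x_1}\prec\cdots\prec\tau_{x_m}$, mirroring $<_x$ on $R(x)$ under the bijection $y\leftrightarrow\tau_y$.

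With the frame in hand, the remaining step is a straightforward induction on the structure of a formula $p$: for every $x\in W_0$, $x\Vdash p$ in the original model iff $\tau_x\Vdash p$ in the tree model. The atomic and Boolean cases are immediate from the definition of $V'$, and the case of $p>q$ at $x$ uses the order-preserving bijection between $R(x)$ and $R(\tau_x)$ together with the induction hypothesis for $p$ and $q$ (so that the first $p$-world in $<_x$ corresponds to the first $p$-tree in $\prec^W_{\tau_x}$, and the vacuous case matches as well). Taking $x=w$ shows $\seq{w,\seq{W,\prec^W,V'}}$ is equivalent to $\seq{w,\seq{W_0,<,V}}$, which completes the argument.

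The main obstacle is getting the tree construction right. Because accessibility in an order frame need not be transitive, and the order a world induces need bear no relation to the orders induced by the worlds it can see, the tree must re-encode at each node the entire local order from that node's point of view; infinite branching is precisely what makes this possible, and the construction remains well-defined even when accessibility is cyclic (as in models generated from sequences like $\seq{1,2,1,2,\dots}$, where the associated trees are perfectly good infinite objects). Two subtleties need care: padding the branches when $R(x)\setminus\{x\}$ is finite, so that $R(\tau_x)$ does not acquire spurious members, and injectivity of $x\mapsto\tau_x$, so that the tree order function does not collapse distinct worlds; both are handled by taking $P=W_0$ and padding with $\tau_{x_m}$ (or with $\tau_x$ when $R(x)=\{x\}$).
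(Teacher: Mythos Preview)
Your proposal is correct and follows essentially the same approach as the paper's own proof sketch: both use soundness plus completeness of \stal for finite order models, and then represent each world $x$ of a finite order frame by a tree $\tau_x$ over $P=W_0$ defined recursively so that the $n$th branch of $\tau_x$ is $\tau_{x_{n+1}}$ (padding with the last accessible world, or with $x$ itself when $R(x)=\{x\}$), yielding a frame isomorphism $x\mapsto\tau_x$. Your write-up is somewhat more explicit about the padding cases and spells out the induction on formulas rather than just invoking the isomorphism, but the underlying construction and argument are the same.
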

\begin{proof}[Proof sketch]
    Given a finite order frame $\seq{W, <}$, we generate an equivalent tree frame by recursively associating each $w\in W$ with a tree $\xi_w$ over $W$,  setting $\xi_w(\seq{}) = w$ and $\xi_w(\seq{n}+\tau) = \xi_v(\tau)$, where $v$ is the world $n+1$ steps out from $w$ in $<_w$ if there is one, and otherwise, the last world in $<_w$.
    Then we can show that $u<_w v$ iff $\xi_u \prec_{\xi_w} \xi_v$.
\end{proof}
Moreover, we will have tree models where counterexamples to Flattening---even with Boolean $p,q,r$---have non-zero probability. For instance, consider a tree where $p$ is false at the root and $q$ is false at the first $p$-branch; suppose $n$ is the number of that branch. The $p,q,r$-instance of Flattening will be false if the first branch of the $n^{th}$ branch where $pq$ is true and the first branch of the starting tree where $pq$ is true do not have the same valuation for $r$. But these valuations are fully independent, and so there is non-zero probability that they indeed do differ.\footnote{Note, however, that there are \emph{some} non-theorems of \stal that always have probability 1 in tree models with product-like measures, including all the theorems of \textsf{S5}. An interesting question, which we leave open here, is how to characterize the set of sentences that always have probability 1 in such models.  Could it simply be $\mathsf{C2+S5}$?}

We can now state our tree-frame analogue of \cref{staltheorem}:
\begin{restatable}[]{theorem}{treetheorem}
\label{treetheorem}
    Every Boolean probability model can be extended to a $(\mathcal{B}>\mathcal{L}_>\mid\mathcal{B})$-Stalnaker  tree model.
\end{restatable} \noindent
In the case where the starting Boolean model $\seq{P,V,\pi}$ is countably additive, the witnessing tree model is based on the full tree frame over $P$, with the product measure (treating each node of the tree as an independent draw of a member of $P$ from an urn described by $\pi$).  

The probabilistic tree models constructed in the proof of \cref{treetheorem} are not $(\firstdegconj>\mathcal{L}_>\mid\firstdegconj)$-Stalnaker, or even $(\firstdeg>\mathcal{B})$-Stalnaker.  For example, even though $\cprob{p}{p>q}) = \prob{p\wedge p>q}/\prob{p>q} = \prob{pq}/\cprob{q}{p} = \prob{p}$ (for Boolean $p$ and $q$), $\prob{(p>q)>p}$ can be strictly less than $\prob{p}$.  We omit the tedious algebraic proof, but to get a sense for why this is, it's helpful to compare the situation with the product measure on $\omega$-sequence models, where this equality does hold (since they are $(\firstdeg>\mathcal{B})$-Stalnaker).
The $pq$ indices and the $(p>q)\wedge\negate{p}$ indices get the same probability mass in the two constructions, so we need only compare the $\neg(p>q)$ indices. The key observation is that the information that the first $(p>q)$-tail of a sequence at which $(p>q)$ is false is also a $p$-tail tells us nothing at all about the tails strictly preceding that one, beyond what we already knew from them all being $\neg(p>q)$-tails. 
By contrast, finding out that the first $p>q$ branch of a tree at which $p>q$ is false is also a $p$-branch tells us that some $p\overline q$ branch must have preceded this one (since the root verifies $\neg(p>q)$), whereas if the the first $(p>q)$-branch is a $\neg p$-branch, then the first $p\overline q$ branch of the tree can come either before or after that branch. So there are fewer $\neg(p>q)$-trees where the first $(p>q)$-branch is a $p$-branch than there are such sequences.

The behavior we just described depends crucially on the failure of Flattening in tree models.  It is natural to wonder whether this is accidental: could we construct a probabilistic order model in which Flattening is valid, and the weaker version of Stalnaker's Thesis from \cref{treetheorem} holds, but the stronger version from \cref{staltheorem} does not hold?  It turns out the answer is no:
\begin{restatable}[]{theorem}{stalextension}
    \label{stalextension}
    Suppose $\mathcal{P}$ is a class of sentences closed under Boolean operations.  Then, if a probabilistic order model is flat and $(\mathcal{P}>\mathcal{L}_>)$-Stalnaker, it is also $(\seq{\mathcal{P}>\mathcal{P}}_\wedge >\mathcal{L}_>\mid \seq{\mathcal{P}>\mathcal{P}}_\wedge)$-Stalnaker. 
\end{restatable}
\noindent
This result ties the strength of the form of Stalnaker's Thesis that van Fraassen's models validate to the extra strength of the logic that these models validate.  If one thought there were good reasons to \emph{want} a version of Stalnaker's thesis with this extra strength, that might potentially yield an interesting abductive argument for Flattening.

But it is not clear that there are good reasons for wanting this.  
The probabilistic sequence models we constructed in proving \cref{staltheorem} do not, after all, sustain the full form of Stalnaker's Thesis---they are not $(\mathcal{L}_>>\mathcal{L}_>\mid\mathcal{L}_>)$- or even $(\mathcal{L}_>>\mathcal{L}_>)$-Stalnaker. And there are some rather stringent limits on much further one could go in this direction.
\begin{restatable}[]{theorem}{limitative}
    \label{limitative}\leavevmode
   \begin{itemize}
   \item[(a)]
    No non-trivial probabilistic model interpreting $\mathcal{L}_>$ is $(\mathcal{B}>\mathcal{B}\mid\mathcal{B}\vee\firstdeg)$-Stalnaker.
 \item[(b)] \citep{StalnakerBas}
    No non-trivial probabilistic order model is $((\mathcal{B}\vee \firstdeg)>\mathcal{B})$-Stalnaker.
   \item[(c)]
    No non-trivial probabilistic flat order-model is  $((\firstdeg>\mathcal{B})>\mathcal{B})$-Stalnaker.
   \item[(d)]
    No non-trivial probabilistic $\omega$-sequence model is $((\mathcal{B}>\firstdeg)>\mathcal{B}$)-Stalnaker.\end{itemize}
\end{restatable}\noindent
Part (b) of this result is from Stalnaker's \parencite*{StalnakerBas} letter to van Fraassen; the rest is new, as far as we know.%
\footnote{We do not know whether (c) remains true if we drop the word `flat', or whether (d) remains true if we weaken `$\omega$-sequence model' to `flat order model' or just `order model'.}

The news for fans of stronger forms of Stalnaker's Thesis is not all bad. \Textcite{Fraassen:1976} already showed that one can have non-trivial $(\mathcal{L}_>>\mathcal{L}_>)$-Stalnaker models with a very weak conditional logic, and \citet{bacon:2015} strengthens this by showing that one can have non-trivial $(\mathcal{L}_>>\mathcal{L}_>\mid\mathcal{B})$-Stalnaker models with a logic encompassing all the axioms of \stal except Reciprocity.  Nevertheless, everyone will need some strategy for explaining away any \emph{prima facie} appeal of stronger versions of Stalnaker's Thesis than their preferred logic allows.  Perhaps, for example, they will appeal to some special factors that influence the resolution of context-sensitivity in such a way that conditionals embedded in the antecedents of other conditionals tend to be interpreted in some special way, maybe differently from the conditional in which they are embedded (cf. \citealt{Kaufmann:2023}).
Whatever we end up saying in response to this challenge, it is a reasonable guess that it will generalize in such a way that it could \emph{also} explain away any remaining appeal of Stalnaker's Thesis for antecedents or background conditions that are in $\firstdegconj$, but not Boolean.  

Indeed, as \citet{Kaufmann:2023} argues, it is not at all clear that sequence models make the right predictions even about conditionals with antecedents in $\firstdegconj$. Suppose you think John is very likely to go to the party, but dislikes Liam so much that he is very unlikely to go if Liam is going. Then it seems that you could reasonably think that it is \emph{also} quite unlikely that John will go to the party if Liam will go if John goes. But as we saw above, 
$(p>q)>p$ must have the same probability as $p$ in any $(\firstdeg>\mathcal{B})$-Stalnaker model (for Boolean $p$ and $q$).  So one might even see the validation of Stalnaker's Thesis in this case as a \emph{drawback} of sequence models vis-\`a-vis tree models. 

Evaluating this argument raises tricky questions about context-sensitivity which we will not try to settle here.  Our tentative conclusion is 
that the extra strength of $(\firstdegconj>\mathcal{L}_>\mid\firstdegconj)$ over $(\mathcal{B}>\mathcal{L}_>\mid\mathcal{B})$ seems unlikely to form the basis for a compelling abductive argument for Flattening.

\section{Conclusion}

Setting aside the logics of material and strict conditionals, the study of classical conditional logic has focused almost exclusively on logics of which \stal is an extension.  But we have seen in this paper that van Fraassen's models point the way to a rich array of conditional logics which properly extend \stal, without collapsing into the material conditional. Although the original motivation for these models---namely, sustaining a version of Stalnaker's Thesis---seems to do little to support the features of the models responsible for their additional logical strength, the logics they generate are nevertheless quite interesting.  Moreover, at least one of these logics, namely \vflat, enjoys considerable \emph{prima facie} plausibility for conditionals in natural language.

\newpage 

\begin{appendices}

\section{Preliminaries}
\label{app:prelim}
We begin with some further definitions that we will use throughout the appendices. 

We continue to use \emph{sequence} for any function whose domain (or `length') is an ordinal, and \emph{list} for a sequence of finite (possibly zero) length.

\begin{itemize}
\item 
$\length{\sigma}$ is the length of $\sigma$.
\item 
For $\alpha<\length{\sigma}$, $\sigma\elem{\alpha}$ is the value of $\sigma$ on $\alpha$; so e.g. $\sigma\elem{0}, \sigma\elem{1}, \sigma\elem{2}$ are the first, second, and third element of $\sigma$.

\item For $\alpha\leq\length{\sigma}$, $\sigma\slice{\alpha:}$ is the $\alpha$-tail of $\sigma$: that is, $\sigma\slice{\alpha:}{}\elem{\beta} = \sigma\elem{\alpha+\beta}$ for $\beta <\length{\sigma}-\alpha$.  

\item For $\alpha\leq\length{\sigma}$, $\sigma\slice{:\alpha}$ is the initial segment of $\sigma$ of length $\alpha$, i.e.\ the restriction of $\sigma$ to $\alpha$. 

\item For $\alpha\leq\beta \leq \length{\sigma}$, $\sigma\slice{\alpha:\beta}$ is $\sigma\slice{\alpha:}{}\slice{:\beta}$: the length $\beta-\alpha$ segment of $\sigma$ that starts at $\sigma\elem{\alpha}$. %

\item  $\sigma\elem{-1}, \sigma\elem{-2}, \ldots$ are the last, second last, \ldots elements of $\sigma$: i.e., $\sigma\elem{-n} = \sigma\elem{\alpha}$ where $\length{\sigma}=\alpha+n$, if such an $\alpha$ exists.  (It may not exist, e.g. if the length of $\sigma$ is an ordinal like $\omega$ that doesn't have a last element).  Similarly, when $\length{\sigma} =\alpha+n$, $\sigma{\slice{:-n}}$ is $\sigma\slice{:\alpha}$.  

\item $\sigma+\rho$ is the result of concatenating $\sigma$ with $\rho$, that is:
\[
(\sigma+\rho)\elem{\alpha}=
\begin{cases} 
\sigma\elem{\alpha} &  \text{when }\alpha< \length{\sigma} \\ \rho\elem{\alpha-\length{\sigma}} & \text{when }\length{\sigma}\leq\alpha< \length{\sigma}+\length{\rho}\\ \text{undefined} & \text{ otherwise} 
\end{cases}
\]
\item 
$\sigma\append{x}$ is $\sigma+\seq{x}$.  
\item $\rho$ is a \emph{segment} of $\sigma$ iff $\rho = \sigma\slice{\alpha:\beta}$ for some $\alpha$ and $\beta$; a \emph{tail} of $\sigma$ iff $\rho = \sigma\slice{\alpha:}$ for some $\alpha$; and an \emph{initial segment} of $\sigma$ iff $\rho = \sigma\slice{:\alpha}$ for some $\alpha$.  
\end{itemize}
Recall some abbreviations for our conditional language:
\begin{align*}
    \bot &\coloneqq p_0\wedge\neg p_0
    \\
    \Box p &\coloneqq \neg p > p 
    \\
    \lozenge p &\coloneqq \neg \Box \neg p\quad &&\text{(equivalent in \stal to $\neg (p>\neg p)$)} 
    \\
    p\gg q &\coloneqq \neg (p>\neg q) \quad &&\text{(equivalent in \stal to $\lozenge p\wedge p>q$)}
\end{align*}

Fix a logic \textsf{L} containing \stal; talk of consistency, entailment, equivalence, and so on throughout the appendices are relative to \textsf{L} (we will make successively stronger assumptions about \textsf{L} as we go).  

For a list of sentences $\tau$, set $\bigwedge\tau = \tau\elem{0} \wedge \cdots \wedge \tau\elem{-1}$ and $\disj\tau = \tau\elem{0} \vee \cdots \vee \tau\elem{-1}$, with $\bigwedge\seq{} = \top$ and $\disj\seq{} = \bot$.  Fix a standard ordering on $\mathcal{L}_>$, so we can extend the use of $\bigwedge$ and $\disj$ from lists to finite sets.

Our key tool throughout Appendices \ref{app:prelim}--\ref{app:mckinsey} will be a function that takes a list of sentences $\tau$ and makes a single sentence $\makesentence{\tau}$.  We define this recursively:
\begin{definition}
    The function $\makesentence{\cdot}$ is the function from lists of sentences to sentences such that:
    \begin{align*}
        \makesentence{\seq{}} &\coloneqq\top \\
        \makesentence{\tau\append{p}} &\coloneqq
        \begin{cases}
            \makesentence{\tau}\wedge (\neg\disj\tau\gg p)  &\text{if $p \neq \bot$} \\
            \makesentence{\tau}\wedge (\neg\disj\tau>p) &\text{if $p = \bot$}
        \end{cases}
    \end{align*}

\end{definition}
\begin{example}
Suppose $p, q \ne \bot$. Then:

1. $\makesentence{\seq{p}}$ is $\top \wedge (\neg\bot \gg p)$, which is \stal-equivalent to $p$.  

2. $\makesentence{\seq{p,q}}$ is $\makesentence{\seq{p}} \wedge (\neg\disj\seq{p} \gg q)$, equivalent to $p \wedge (\negate{p} > q) \wedge \Diamond\negate{p}$.  

3. $\makesentence{\seq{p,q,\bot}}$ is $\makesentence{\seq{p,q}} \wedge (\neg\disj\seq{p,q}>\bot)$, equivalent to $p \wedge (\negate{p} > q) \wedge \Diamond\negate{p} \wedge \Box(p\vee q)$.
\end{example}

$\makesentence{\cdot}$ is obviously injective since  $\makesentence{\tau\append{p}}\ne\makesentence{\seq{}}$ and if $\makesentence{\tau\append{p}} = \makesentence{\rho\append{q}}$, $\makesentence{\tau}=\makesentence{\rho}$ and $p=q$.  
Note that for $\makesentence{\tau}$ to be consistent, no element other than $\bot$ can entail any earlier element, since if $\tau \elem{k}$ entailed $\tau \elem{j}$ for $j<k$, $\neg \disj\tau\slice{:k} \wedge \tau\elem{k}$ and hence $\neg \disj\tau\slice{:k} \gg \tau\elem{k}$ would be inconsistent.  This means that for $\makesentence{\tau}$ to be consistent, $\tau$ cannot contain any inconsistent sentences other than $\bot$.  And if it does include $\bot$ at some position, every subsequent element must also be $\bot$: if $p\ne\bot$, $\makesentence{\tau+\seq{\bot,p}}$ is equivalent to $\makesentence{\tau} \wedge (\neg\disj\tau>\bot) \wedge (\neg(\disj\tau \vee \bot)\gg p)$, which is inconsistent since the second conjunct entails $\neg\disj\tau>\neg p$ while the third is equivalent to $\neg(\neg\disj\tau > \neg p)$.

The interest of this list-to-sentence operation turns on the following basic facts.
\begin{lemma}\label{basicproperty}
    If $\tau\elem{k}$ entails $pq$ and every element of $\tau\slice{:k}$ entails $\negate{p}$, then $\makesentence{\tau}$ entails $p > q$; if moreover $\tau\elem{k} \ne \bot$, $\makesentence{\tau}$ entails $p \gg q$.   
\end{lemma}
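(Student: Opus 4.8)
The plan is to peel off from $\makesentence{\tau}$ the single conjunct ``built at stage $k$'' and run a short \stal derivation on it. Set $a \coloneqq \neg\disj\tau\slice{:k}$. An easy induction on $\length\tau - m$ shows that $\makesentence{\tau}$ entails $\makesentence{\tau\slice{:m}}$ for every $m \leq \length\tau$; applying this with $m = k+1$ and unfolding one step of the recursion, $\makesentence{\tau\slice{:k+1}} = \makesentence{\tau\slice{:k}} \wedge C_k$, where $C_k$ is $a \gg \tau\elem{k}$ when $\tau\elem{k} \neq \bot$ and $a > \tau\elem{k}$ when $\tau\elem{k} = \bot$. So $\makesentence{\tau}$ entails $C_k$, hence entails $a > \tau\elem{k}$ in both cases (using the \stal-equivalence of $a\gg b$ with $\lozenge a \wedge (a > b)$ in the first case), and when $\tau\elem{k}\neq\bot$ it additionally entails $\lozenge a$.

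Next I would feed in the two hypotheses. Since $\tau\elem{k}$ entails $p\wedge q$, monotonicity of $>$ in its consequent (an easy consequence of Normality) upgrades $a > \tau\elem{k}$ to each of $a > p$, $a > q$, and $a > (p\wedge q)$; note this absorbs the degenerate case, since when $\tau\elem{k} = \bot$ the sentence $a > \bot$ still entails $a > (p\wedge q)$. Since every element of $\tau\slice{:k}$ entails $\negate p$, so does $\disj\tau\slice{:k}$, whence $\vdash p \to a$. Cautious Monotonicity (a theorem of \stal) then turns $a > (p\wedge q)$ into $(a\wedge p) > q$, and since $\vdash p \to a$ the antecedent $a\wedge p$ is \stal-equivalent to $p$; substituting equivalents in antecedent position (a consequence of Reciprocity) yields $p > q$. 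That proves the first claim.

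For the strengthening to $p \gg q$ under the extra hypothesis $\tau\elem{k}\neq\bot$, I would additionally invoke the conjunct $\lozenge a$ of $C_k$: from $\lozenge a$ and $a > p$, the standard \stal fact that $\lozenge x$ together with $x > y$ entails $\lozenge(x\wedge y)$ gives $\lozenge(a\wedge p)$, hence $\lozenge p$. Combining $\lozenge p$ with the $p > q$ just obtained, and the equivalence of $p\gg q$ with $\lozenge p \wedge (p > q)$, finishes the proof.

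I do not anticipate a genuine obstacle. The only mildly delicate points are the bookkeeping about which conjunct of $\makesentence{\tau}$ is being used, and checking that the case $\tau\elem{k} = \bot$ is silently handled by the argument for $p > q$ (and rightly excluded from the $\gg$-claim, which would be false there since $\lozenge p$ fails). Every other ingredient --- Normality, Identity, Reciprocity / Cautious Monotonicity, and the $\lozenge$-transfer fact --- is standard \stal reasoning already in play in the excerpt.
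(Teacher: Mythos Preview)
Your proposal is correct and essentially the same as the paper's proof: both extract the conjunct $\neg\disj\tau\slice{:k} > \tau\elem{k}$ (or its $\gg$-variant) from $\makesentence{\tau}$, pass to $\neg\disj\tau\slice{:k} > pq$ by consequent-monotonicity, apply Cautious Monotonicity to get $(\neg\disj\tau\slice{:k}\wedge p) > q$, and then use the equivalence of $\neg\disj\tau\slice{:k}\wedge p$ with $p$ to finish. The only minor difference is in the $\gg$-clause: the paper runs the analogous argument directly with CMon${\gg}$ and LLE${\gg}$, whereas you derive $p>q$ first and then separately obtain $\lozenge p$ from $\lozenge a$ and $a>p$; both are perfectly fine.
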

\begin{proof}
    We use the following `Cautious Monotonicity' and `Left Logical Equivalence' properties of any logic including \stal:
    \begin{align*}
    \boldtag{CMon}
    &\vdash (p > qr) \to (pq > r)
    \\
    \boldtag{CMon${\gg}$}
    &\vdash (p \gg qr) \to (pq \gg r)
    \\
    &\boldtag{LLE}
    \text{If }\vdash p\leftrightarrow q\text{ then }\vdash (p>r) \leftrightarrow (q>r)
    \\
    &\boldtag{LLE${\gg}$}
    \text{If }\vdash p\leftrightarrow q\text{ then }\vdash (p\gg r) \leftrightarrow (q\gg r)
    \end{align*}
    For the first part, note that if $\tau\elem{k}$ entails $pq$, then since $\makesentence{\tau}$ entails $\neg\disj\tau\slice{:k} > \tau\elem{k}$, it also entails $\neg\disj\tau\slice{:k} > pq$.  So by CMon, it entails $(\neg\disj\tau\slice{:k} \wedge p) > q$.  But since every member of $\tau\slice{:k}$ entails $\negate{p}$, $\neg\disj\tau\slice{:k} \wedge p$ is equivalent to $p$.  So by LLE, $\makesentence{\tau}$ entails $p>q$.  The second part is similar using CMon${\gg}$ and LLE${\gg}$.  
\end{proof}

\begin{lemma}\label{buildingup}
    If $p$ is consistent with $\makesentence{\tau}$,     and $\vdash \disj{\tau} \vee q_1\vee\cdots\vee q_n$, then either $p$ is consistent with $\makesentence{\tau\append{q_i}}$ for some $q_i$, or $p$ is consistent with $\makesentence{\tau\append{\bot}}$.
\end{lemma}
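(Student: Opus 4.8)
The plan is to argue by contradiction: I assume the hypotheses of the lemma hold but that $p$ is inconsistent with each of $\makesentence{\tau\append{q_1}},\dots,\makesentence{\tau\append{q_n}}$ \emph{and} with $\makesentence{\tau\append{\bot}}$, and I aim to derive that $p$ is inconsistent with $\makesentence{\tau}$ itself, contradicting the first hypothesis. Before starting I would dispose of a harmless bookkeeping point: we may assume every $q_i$ is distinct from $\bot$, since if some $q_i=\bot$ then $\makesentence{\tau\append{q_i}}$ is literally $\makesentence{\tau\append{\bot}}$ (so that disjunct of the conclusion is already on the table), and $q_i$ can be deleted from the theorem $\disj\tau\vee q_1\vee\cdots\vee q_n$, leaving a theorem. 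Then, because $\tau$ does not end with $\bot$, unfolding the definition of $\makesentence{\cdot}$ by one step gives $\makesentence{\tau\append{q_i}}=\makesentence{\tau}\wedge(\neg\disj\tau\gg q_i)$ for each $i$, and $\makesentence{\tau\append{\bot}}=\makesentence{\tau}\wedge(\neg\disj\tau>\bot)$.

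Writing $s$ for $\neg\disj\tau$ and $\psi$ for $p\wedge\makesentence{\tau}$, the inconsistency assumptions become $\vdash\psi\to\neg(s\gg q_i)$ for each $i$ and $\vdash\psi\to\neg(s>\bot)$. By the abbreviation $s\gg q_i:=\neg(s>\neg q_i)$, the sentence $\neg(s\gg q_i)$ is just $s>\neg q_i$, so $\psi$ entails $s>\neg q_i$ for every $i$. Combining these with the instance $s>s$ of Identity, $n$ applications of Normality (each conjoining two consequents of conditionals that share the antecedent $s$) show that $\psi$ entails $s>(s\wedge\neg q_1\wedge\cdots\wedge\neg q_n)$. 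Now the hypothesis $\vdash\disj\tau\vee q_1\vee\cdots\vee q_n$ is, by PC, the same as $\vdash(s\wedge\neg q_1\wedge\cdots\wedge\neg q_n)\to\bot$; feeding this into Normality once more (with $\bot$ as the common consequent) gives $\vdash(s>(s\wedge\neg q_1\wedge\cdots\wedge\neg q_n))\to(s>\bot)$, so $\psi$ entails $s>\bot$. Since $\psi$ also entails $\neg(s>\bot)$, $\psi$ is inconsistent, i.e.\ $p$ is inconsistent with $\makesentence{\tau}$, which is the desired contradiction.

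I do not expect a real obstacle here: the whole thing is a short piece of equational reasoning inside \stal. The points that need a little care are reading off $\neg(s\gg q_i)$ as $s>\neg q_i$ from the definition of $\gg$; the bookkeeping in the iterated use of Normality (this is where the seed $s>s$ from Identity and the tautology $\disj\tau\vee q_1\vee\cdots\vee q_n$ get knitted together into $s>\bot$); and confirming the degenerate case $n=0$, where the hypothesis reads $\vdash\disj\tau$, hence $\vdash\neg s$ and so $\vdash s>\bot$ already — again contradicting $\vdash\psi\to\neg(s>\bot)$.
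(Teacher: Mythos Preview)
Your argument is correct. One small expository slip: the clause ``because $\tau$ does not end with $\bot$'' is not what licenses the unfolding $\makesentence{\tau\append{q_i}}=\makesentence{\tau}\wedge(\neg\disj\tau\gg q_i)$; that depends only on the \emph{appended} element $q_i$ being $\ne\bot$, which you already arranged. This does not affect the logic.

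The paper's proof reaches the same conclusion by the dual, direct route: from $\vdash\disj\tau\vee q_1\vee\cdots\vee q_n$ it gets the theorem $\neg\disj\tau>(q_1\vee\cdots\vee q_n)$, applies the derived \stal\ schema $\vee$-Distribution $(s>(q_1\vee\cdots\vee q_n))\to((s>q_1)\vee\cdots\vee(s>q_n))$ to conclude that $p\wedge\makesentence{\tau}$ is consistent with some $\neg\disj\tau>q_i$, and then splits on whether it is also consistent with $\lozenge\neg\disj\tau$ (giving the $q_i$ disjunct) or not (giving the $\bot$ disjunct). Your contrapositive version is arguably more elementary, since you invoke only Identity and iterated Normality rather than the CEM-dependent $\vee$-Distribution; the paper's version is a line shorter once that lemma is on the shelf. (Incidentally, the paper opens with ``by induction on the length of $\tau$'' but never uses an induction hypothesis, so don't let that phrase mislead you.)
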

\begin{proof}
    We use the following theorem of \stal:
    \begin{equation*}
        \boldtag{$\vee$-Distribution}
        \vdash (p > (q_1\vee\cdots\vee q_n))
        \to 
        ((p > q_1) \vee \cdots \vee (p > q_n))
    \end{equation*}
    If $\vdash \disj{\tau} \vee q_1\vee\cdots\vee q_n$, then $\vdash \neg\disj\tau > (q_1\vee\cdots\vee q_n)$, so by $\vee$-Distribution, $p \wedge \makesentence{\tau}$ must be consistent with $\neg\disj\tau > q_i$ for some $q_i$.  If it is moreover consistent with $\neg\disj\tau \gg q_i$, that means that $p$ is consistent with $\makesentence{\tau\append{q_i}}$; otherwise, $p$ is consistent with $\makesentence{\tau\append{\bot}}$.    
\end{proof}
\begin{lemma} \label{putinorder}
    Suppose $X$ is a finite set of consistent sentences and $p$ and $q$ are sentences such that either $q \ne \bot$ and $p$ is consistent with $\neg\disj X \gg q$, or $q = \bot$ and $p$ is consistent with $\neg\disj X > q$.  Then there is a list $\tau$ of elements of $X$ such that $p$ is consistent with $\makesentence{\tau\append{q}}$. 
\end{lemma}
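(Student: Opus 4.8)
The plan is to build the list $\tau$ greedily, calling on \cref{buildingup} once at each step. Write $p^{+}$ for $p\wedge(\neg\disj X\gg q)$ if $q\ne\bot$ and for $p\wedge(\neg\disj X>q)$ if $q=\bot$; by hypothesis $p^{+}$ is consistent, and when $q\ne\bot$ it entails $\lozenge\neg\disj X$ (recall $p\gg r$ is \stal-equivalent to $\lozenge p\wedge(p>r)$). I would keep a running list $\tau_i$ of \emph{distinct} elements of $X$ with $p^{+}$ consistent with $\makesentence{\tau_i}$, starting from $\tau_0=\seq{}$ (legitimate since $\makesentence{\seq{}}=\top$). Note that no such $\tau_i$ ends in $\bot$, since the entries of $\tau_i$ lie in the set $X$ of consistent sentences.

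At each stage I would let $x_1,\dots,x_k$ enumerate the members of $X$ not already in $\tau_i$ and feed \cref{buildingup} the base sentence $p^{+}$, the list $\tau_i$, and the disjuncts $x_1,\dots,x_k,\neg\disj X$. This is legitimate because $\disj\tau_i\vee x_1\vee\dots\vee x_k$ is equivalent to $\disj X$, so $\disj\tau_i\vee x_1\vee\dots\vee x_k\vee\neg\disj X$ is a theorem. The lemma then hands me one of three outcomes: (i) $p^{+}$ is consistent with $\makesentence{\tau_i\append{x_j}}$ for some $j\le k$; (ii) $p^{+}$ is consistent with $\makesentence{\tau_i\append{\neg\disj X}}$; or (iii) $p^{+}$ is consistent with $\makesentence{\tau_i\append{\bot}}$. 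In case (i) I would set $\tau_{i+1}\coloneqq\tau_i\append{x_j}$ and loop; since this strictly lengthens a repetition-free list drawn from the finite set $X$, case (i) can recur only finitely often, so the process must eventually halt in case (ii) or (iii).

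The heart of the argument is checking that outcomes (ii) and (iii) finish the job; the two cases $q\ne\bot$ and $q=\bot$ are mirror images, so take $q\ne\bot$. Then outcome (iii) cannot happen, since $\makesentence{\tau_i\append{\bot}}$ entails $\neg\disj\tau_i>\bot$, which is \stal-equivalent to $\neg\lozenge\neg\disj\tau_i$, whereas $p^{+}$ entails $\lozenge\neg\disj X$ and hence, using $\vdash\neg\disj X\to\neg\disj\tau_i$ (every entry of $\tau_i$ is in $X$), also $\lozenge\neg\disj\tau_i$. So the process halts at (ii), and I claim $\tau\coloneqq\tau_i$ works, i.e.\ that $p$ is consistent with $\makesentence{\tau_i\append{q}}=\makesentence{\tau_i}\wedge(\neg\disj\tau_i\gg q)$. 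Writing $A=\neg\disj\tau_i$ and $B=\neg\disj X$: from $\vdash B\to A$ I get $\vdash B>A$ (Necessitation plus the axiom $\Box(B\to A)\to(B>A)$); outcome (ii) says $p^{+}\wedge\makesentence{\tau_i}\wedge(A>B)\wedge\lozenge A$ is consistent; and $p^{+}$ entails $B>q$. Then Reciprocity, in the instance $(B>A)\wedge(A>B)\wedge(B>q)\to(A>q)$, delivers $A>q$, which with $\lozenge A$ is exactly $A\gg q=\neg\disj\tau_i\gg q$; so $p^{+}\wedge\makesentence{\tau_i}\wedge(\neg\disj\tau_i\gg q)$ is consistent, and \emph{a fortiori} so is $p\wedge\makesentence{\tau_i\append{q}}$. (When $q=\bot$ it is outcome (ii) that is impossible — now $\makesentence{\tau_i\append{\neg\disj X}}$ forces $\lozenge\neg\disj X$ while $p^{+}$ contains $\neg\disj X>\bot$ — so the process halts at (iii), which is already $\makesentence{\tau_i\append{\bot}}=\makesentence{\tau_i\append{q}}$.)

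The step I expect to be the main obstacle is settling on what to hand \cref{buildingup}: it is the extra disjunct $\neg\disj X$ that simultaneously keeps the disjunction a theorem (so the lemma applies) and, via the Reciprocity step, forces the first $\neg\disj\tau_i$-world to coincide with the first $\neg\disj X$-world, which is where $q$ is known to hold. Everything else — maintaining the invariant, the termination count, and the $\lozenge$-bookkeeping that rules out the irrelevant outcome — should be routine.
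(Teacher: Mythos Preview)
Your proof is correct and follows the same greedy strategy as the paper: maintain a list $\tau_i$ of distinct members of $X$ consistent with $p^{+}$, repeatedly invoke \cref{buildingup} to extend it, and use finiteness of $X$ to force termination. The one tactical difference is in the choice of disjuncts handed to \cref{buildingup}: the paper uses $X\cup\{q,\;\neg q\wedge\neg\disj X\}$, so that hitting $q$ finishes the argument immediately (while $\neg q\wedge\neg\disj X$ and $\bot$ are both shown inconsistent with $\neg\disj X\gg q$). You instead use $\neg\disj X$ as the terminal disjunct and then spend a Reciprocity step to pass from $\neg\disj\tau_i\gg\neg\disj X$ to $\neg\disj\tau_i\gg q$. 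This works, but it is precisely the step you flag as ``the main obstacle,'' and the paper's choice of disjuncts sidesteps it entirely---including $q$ itself among the disjuncts means the sought-for conclusion drops out directly rather than via an antecedent-swapping argument.
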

\begin{proof}
    We cover the case where $q\ne \bot$; the other case is similar.   We first show that for any list $\theta$ of members of $X$, if $p \wedge  (\neg\disj X \gg q) \wedge \makesentence{\theta}$ is consistent,
    then there is some $r$ in $X\cup\{q\}$ but not in $\theta$ such that $p \wedge  (\neg\disj X \gg q) \wedge \makesentence{\theta\append{r}}$ is consistent.  
    This follows from \cref{buildingup}, since $\bigvee(X \cup \{q, \neg q \wedge \neg\disj X\})$ is a theorem, and $\makesentence{\theta\append{r}}$ is inconsistent when $r$ is in $\theta$, while $\makesentence{\theta\append{(\neg q \wedge \neg\disj X)}}$ and $\makesentence{\theta\append{\bot}}$ are both inconsistent with $\neg\disj X\gg q$.  
    
    But if $p$ were not consistent with $\makesentence{\tau\append{q}}$ for any $\tau$ consisting entirely of members of $X$, the relevant $r$ could never be $q$, so it would have to be true that any list $\theta$ of elements of $X$ for which $p \wedge  (\neg\disj X \gg q) \wedge \makesentence{\theta}$ is consistent can be extended to a longer such list by adding some $r$ in $X$ but not in $\theta$.  This is obviously impossible, since there is at least one such list (namely the empty list), and $X$ is finite.
\end{proof}

Thanks to these nice properties, we can use our sentence-forming operation $\makesentence{\cdot}$ to define a hierarchy of `state descriptions' over a given set of atoms, where the state descriptions of a given depth $n$ consistently settle the truth value of all sentences of modal depth no greater than $n$ that can be built out of those atoms.  
\begin{definition}
    For a given logic \textsf{L} containing \stal and finite non-empty set of atoms $A$, the sets $Y_{\mathsf{L}}(A,n)$ (the ``depth-$n$ $\mathsf{L}$-state descriptions over $A$'') are defined as follows. 
    \begin{itemize}
        \item
        $Y_{\mathsf{L}}(A,0)$ is the set of all consistent conjunctions that include exactly one of $p$ and $\negate{p}$ for each atom $p\in A$.
        \item 
        $Y_{\mathsf{L}}(A,n+1)$ is the set of all consistent sentences of the form $\makesentence{\tau\append{\bot}}$, where $\tau$ is a list of elements of $Y_{\mathsf{L}}(A,n)$.
    \end{itemize}
\end{definition}
\begin{example}
    Let $p$ and $q$ be atoms.  Then $Y_{\stal}(\{p\},0)$ is $\{p,\negate{p}\}$, and $Y_{\stal}(\{p, q\},0)$ is $\{pq,p\negate{q},\negate{p}q,\negate{p}\negate{q}\}$.  
    
    $Y_{\stal}(\{p\},1)$ has four members, $\makesentence{\seq{p,\bot}}$, $\makesentence{\seq{\negate{p},\bot}}$, $\makesentence{\seq{p,\negate{p},\bot}}$, and  $\makesentence{\seq{\negate{p},p,\bot}}$, equivalent respectively to $\Box p$, $\Box\negate{p}$, $p\wedge\lozenge\negate{p}$, and $\negate{p}\wedge\lozenge p$.  
    
    $Y_{\stal}(\{p,q\},1)$ contains $\makesentence{\tau\append{\bot}}$ for each of the 64 non-empty, non-repeating lists $\tau$ of elements of $Y_{\stal}(\{p,q\},0)$.%
    \footnote{$24 = \frac{4!}{(4-4)!}$ of length 4, $24 = \frac{4!}{(4-3)!}$ of length 3, $12 = \frac{4!}{(4-2)!}$ of length 2, and $4 = \frac{4!}{(4-1)!}$ of length 1.}  
    Note that for certain logics $\mathsf{L}$ extending \stal, some of these elements would be inconsistent and hence absent from $Y_{\mathsf{L}}(\{p,q\},1)$: for example, if \textsf{L} included the axiom schema $p \to \Box p$, $Y_{\textsf{L}}(\{p,q\},1)$ would just contain the four sentences $\makesentence{\seq{pq,\bot}}$, $\makesentence{\seq{p\negate{q},\bot}}$, $\makesentence{\seq{\negate{p}q,\bot}}$, and $\makesentence{\seq{\negate{p}\negate{q},\bot}}$.

    Each member of $Y_{\stal}(\{p\},2)$ is of the form $\makesentence{\tau\append{\bot}}$ for some list $\tau$ of elements of $Y_\stal(\{p\}, 1)$; but not every non-repeating, non-empty $\tau$ can appear in this role.  If $\tau$ begins with $\makesentence{\seq{p,\bot}}$, it cannot contain any elements entailing $\negate{p}$, so the only other element that could appear is $\makesentence{\seq{p,\negate{p},\bot}}$; similarly, if $\tau$ begins with $\makesentence{\seq{\negate{p},\bot}}$, the only other element that can appear is $\makesentence{\seq{\negate{p},p,\bot}}$.  Meanwhile, if $\tau$ begins with $\makesentence{\seq{p,\negate{p},\bot}}$, at least one element entailing $\negate{p}$---either $\makesentence{\seq{\negate{p},\bot}}$ or $\makesentence{\seq{\negate{p}, p,\bot}}$---must appear later in $\tau$; similarly, if it begins with $\makesentence{\seq{\negate{p},p,\bot}}$, an element entailing $p$ must appear later.  32 lists meet these constraints.%
    \footnote{Two beginning with $\makesentence{\seq{p,\bot}}$, two beginning with $\makesentence{\seq{\negate{p},\bot}}$, 14 ($=\frac{3!}{(3-3)!} + \frac{3!}{(3-2)!} + 2$) beginning with $\makesentence{\seq{p,\negate{p},\bot}}$, and 14 beginning with $\makesentence{\seq{\negate{p},p,\bot}}$.}
        
    More generally: where $\rho = \seq{\makesentence{\tau_0},\ldots, \makesentence{\tau_n}}$ is a non-repeating list of elements of $Y_{\stal}(A,n)$, $\makesentence{\rho\append{\bot}}$ is consistent in \stal (and hence a member of $Y_{\stal}(A,n+1)$) only if $\tau_0$ is the result of deleting all but the first occurrence of each element in $\seq{\tau_0\elem{0},\ldots,\tau_n\elem{0}}$.%
    \footnote{In fact this is the only constraint.  Given disjoint pointed order models $\mathcal{M}_1\ldots \mathcal{M}_n$ for $\makesentence{\tau_1},\ldots,\makesentence{\tau_n}$, we can construct a new pointed order model by taking the union $\mathcal{M}_1\ldots \mathcal{M}_n$ together with one new world $w_0$---the distinguished world---where $<_{w_0}$ comprises $w_0$ followed by the distinguished worlds of $\mathcal{M}_1\ldots \mathcal{M}_n$, and atom $p_i$ is true at $w_0$ iff entailed by $\makesentence{\tau_0}$.  Then so long as $\rho$ obeys the given constraint, $\makesentence{\tau_0}$ and hence also $\makesentence{\rho\append{\bot}}$ are true at $w_0$.}
\end{example}
\begin{lemma} \label{statesarestates}
    If $s\in Y_{\mathsf{L}}(A,n)$ and $p$ is a sentence of modal depth $\leq n$ with atoms from $A$, then either $s$ entails $p$ in \stal or $s$ entails $\negate{p}$ in \stal. 
\end{lemma}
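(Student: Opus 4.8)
The plan is to argue by induction on $n$. Throughout, ``entails'' and ``decides'' (i.e.\ entails or entails the negation of) are meant relative to \stalp, and I will freely use that a sentence of modal depth at most $m$ is a truth-functional combination of atoms and of conditionals $q>r$ whose antecedent and consequent have modal depth at most $m-1$, so that if a consistent $s$ \stalp-decides each of these components then it \stalp-decides the whole sentence (by PC). The base case $n=0$ is immediate: a member $s$ of $Y_{\mathsf{L}}(A,0)$ contains, for every $a\in A$, exactly one of $a$ and $\negate{a}$ as a conjunct, so $s$ decides every atom of $A$ and hence every depth-$0$ sentence over $A$.

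For the inductive step, fix $s=\makesentence{\tau\append{\bot}}\in Y_{\mathsf{L}}(A,n+1)$ with $\tau=\seq{t_0,\dots,t_m}$ a list of elements of $Y_{\mathsf{L}}(A,n)$, and let $p$ have modal depth at most $n+1$ over $A$. First I note that $\tau$ is non-empty, since $\makesentence{\seq{\bot}}$ is \stalp-equivalent to $\Box\bot$ and hence inconsistent; and that $s$ entails $t_0$, because $\makesentence{\tau}$ has $\makesentence{\seq{t_0}}$ as a conjunct and $\makesentence{\seq{t_0}}$ is \stalp-equivalent to $t_0$. Since $t_0\in Y_{\mathsf{L}}(A,n)$, the induction hypothesis applied to each atom (of depth $0\le n$) gives that $t_0$, hence $s$, decides every atom of $A$. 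It remains to show that $s$ decides every conditional $q>r$ with $q,r$ of modal depth at most $n$ over $A$. By the induction hypothesis each $t_k$ decides both $q$ and $r$ (and no $t_k$ is $\bot$, since no member of $Y_{\mathsf{L}}(A,n)$ is). If no $t_k$ entails $q$, then each $t_k$ entails $\negate{q}$, so $\disj\tau$ entails $\negate{q}$; since $s$ entails the last conjunct $\neg\disj\tau>\bot$ of $\makesentence{\tau\append{\bot}}$, it entails $\Box\disj\tau$ (by Normality and the theorem $s'>\top$), hence $\Box\negate{q}$, hence $\Box(q\to r)$, hence $q>r$ (by the axiom $\Box(q\to r)\to q>r$). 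Otherwise let $t_k$ be the first element of $\tau$ entailing $q$, so every element of $\tau\slice{:k}$ entails $\negate{q}$ and $t_k\ne\bot$. If $t_k$ entails $r$, then $t_k$ entails $q\wedge r$, and Lemma \ref{basicproperty} gives that $\makesentence{\tau}$, hence $s$, entails $q>r$. If $t_k$ entails $\negate{r}$, then $t_k$ entails $q\wedge\negate{r}$, and the second clause of Lemma \ref{basicproperty} gives that $\makesentence{\tau}$, hence $s$, entails $q\gg\negate{r}$, which, unpacking the definition of $\gg$ and replacing the \stalp-equivalent $r$ for $\neg\negate{r}$ in the consequent, is $\neg(q>r)$. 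In every case $s$ decides $q>r$. Writing $p$ as a truth-functional combination of atoms of $A$ and such conditionals, PC now yields that $s$ decides $p$.

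The step I expect to be the main obstacle is the sub-case in which the first $t_k$ entailing $q$ entails $\negate{r}$: there it is not enough to observe that $s$ entails $q>\negate{r}$, since $q>\negate{r}$ and $q>r$ can both hold when $q$ is impossible; one must extract the \emph{strong} conditional $q\gg\negate{r}$, and this is available only because $t_k$ genuinely entails $q$ and is not $\bot$ --- precisely the extra content of the second clause of Lemma \ref{basicproperty}, and the point at which the asymmetric treatment of $\bot$ in the definition of $\makesentence{\cdot}$ earns its keep. The remaining ingredients are small \stalp-facts (that $\Box\bot$ is inconsistent, that $\makesentence{\seq{t_0}}$ is equivalent to $t_0$, that $\Box\negate{q}$ entails $q>r$, and that $q>\neg\neg r$ is equivalent to $q>r$), each following routinely from PC, MP, CEM, Normality and the $\Box$-axioms of \stalp, and I would dispatch these without fuss.
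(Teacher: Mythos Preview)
Your proof is correct and follows essentially the same inductive strategy as the paper's, invoking \cref{basicproperty} for both the positive and negative cases of the conditional. The only organizational difference is that the paper handles the vacuous case (no $t_k$ entails the antecedent) uniformly by applying \cref{basicproperty} to the whole list $\tau\append{\bot}$---with $\bot$ then serving as the first element entailing the antecedent, and trivially also the consequent---whereas you split that case off and argue separately via $\Box\negate{q}$; both routes are fine.
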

\begin{proof}
    By induction on $n$.  The base case holds since the elements of $Y_{\mathsf{L}}(A,0)$ settle the truth value of every atom in $A$, hence every Boolean combination of atoms in $A$.  For the induction step, it suffices to show that when $s \in Y_{\mathsf{L}}(A,n+1)$ and $p$ and $q$ have modal depth $\leq n$, $s$ entails one of $p>q$ and $\neg(p>q)$ in \stal.  Any such $s$ will be of the form $\makesentence{\tau\append{\bot}}$ where each element of $\tau$ is in $Y_{\mathsf{L}}(A,n)$.  Suppose that the first element of $\tau\append{\bot}$ that entails $p$ also entails $q$.  Then since no previous element entails $p$, all of them entail $\negate{p}$ by the induction hypothesis; so by \cref{basicproperty}, $\makesentence{\tau\append{\bot}}$ entails $p>q$.  Otherwise, the first element of $\tau\append{\bot}$ that entails $p$ does not entail $q$.  This element must be $\tau\elem{j}$ for some $j$, since $\bot$ does entail $q$.  By the induction hypothesis, $\tau\elem{j}$ entails $\negate{q}$ and all elements of $\tau\slice{:j}$ entail $\negate{p}$, so by \cref{basicproperty}, $\makesentence{\tau\append{\bot}}$ entails $p\gg \negate{q}$ and hence $\neg(p>q)$.  
\end{proof}
\begin{lemma} \label{exhaustivestates}
    If $p$ is consistent in $\mathsf{L}$, it is consistent with some element of $Y_{\mathsf{L}}(A,n)$ for every $A$ and $n$.
\end{lemma}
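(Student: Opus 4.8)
The plan is to fix a finite nonempty set of atoms $A$ and argue by induction on $n$, using throughout the observation that the lemma for a pair $(A,n)$ is equivalent to the statement $\vdash_{\mathsf{L}} \disj Y_{\mathsf{L}}(A,n)$: if that disjunction is a theorem then any consistent $p$ is consistent with it and hence with one of its disjuncts, and conversely, if every consistent sentence is consistent with some member of $Y_{\mathsf{L}}(A,n)$, then $\neg\disj Y_{\mathsf{L}}(A,n)$ --- being inconsistent with each of its own disjuncts --- cannot be consistent, so $\vdash_{\mathsf{L}} \disj Y_{\mathsf{L}}(A,n)$. For the base case $n=0$, I would note that the disjunction of all sign-assignments to the atoms of $A$ is a classical tautology, hence a theorem of $\mathsf{L}$; deleting the disjuncts that are inconsistent in $\mathsf{L}$ (each of which is then $\mathsf{L}$-refutable) leaves exactly $\disj Y_{\mathsf{L}}(A,0)$, which is therefore still an $\mathsf{L}$-theorem.

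For the induction step, assume $\vdash_{\mathsf{L}} \disj Y_{\mathsf{L}}(A,n)$, so that $s \coloneqq \neg\disj Y_{\mathsf{L}}(A,n)$ is inconsistent. The key move is to observe that any inconsistent $s$ satisfies $\vdash_{\mathsf{L}} s > \bot$: since $\vdash_{\mathsf{L}} s \leftrightarrow \bot$, substitution of logical equivalents in the antecedent (the rule LLE, available in \stal) gives $\vdash_{\mathsf{L}} (s>\bot)\leftrightarrow(\bot>\bot)$, and $\bot>\bot$ is an instance of Identity. So $\vdash_{\mathsf{L}} \neg\disj Y_{\mathsf{L}}(A,n) > \bot$. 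Now take an arbitrary consistent $p$. Since $Y_{\mathsf{L}}(A,n)$ is a finite set of consistent sentences and $p$ is trivially consistent with the theorem $\neg\disj Y_{\mathsf{L}}(A,n) > \bot$, I would apply \cref{putinorder} with $X = Y_{\mathsf{L}}(A,n)$ and $q = \bot$ to obtain a list $\tau$ of elements of $Y_{\mathsf{L}}(A,n)$ such that $p$ is consistent with $\makesentence{\tau\append{\bot}}$. That sentence is then itself consistent (being entailed by the consistent $p \wedge \makesentence{\tau\append{\bot}}$) and has exactly the form required for membership in $Y_{\mathsf{L}}(A,n+1)$; hence $p$ is consistent with an element of $Y_{\mathsf{L}}(A,n+1)$, completing the induction. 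The finiteness of $Y_{\mathsf{L}}(A,n)$ used here I would record by a trivial side induction: $\makesentence{\tau}$ is inconsistent whenever $\tau$ has a repeated entry, so $Y_{\mathsf{L}}(A,n+1)$ is built only from non-repeating lists over the finite set $Y_{\mathsf{L}}(A,n)$, of which there are finitely many.

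I do not expect any real obstacle; the whole content of the step is the bridge from the inductive hypothesis to the hypothesis of \cref{putinorder} --- namely that exhaustiveness of the depth-$n$ state descriptions is precisely what makes $\neg\disj Y_{\mathsf{L}}(A,n) > \bot$ a theorem, so that \cref{putinorder} can be fed a trivial input and made to extrude a depth-$(n+1)$ state description consistent with $p$. (Alternatively one could run the step using \cref{buildingup} to grow $\tau$ one entry at a time from the empty list, terminating because $Y_{\mathsf{L}}(A,n)$ is finite and $\makesentence{\cdot}$ kills repetitions; this just inlines the proof of \cref{putinorder}.)
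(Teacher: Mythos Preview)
Your proposal is correct and follows essentially the same approach as the paper: induct on $n$, use that $\disj Y_{\mathsf{L}}(A,n)$ is a theorem to make $\neg\disj Y_{\mathsf{L}}(A,n) > \bot$ a theorem, and then feed this to \cref{putinorder} with $q=\bot$ to extract a depth-$(n{+}1)$ state description consistent with $p$. You are in fact more careful than the paper on two points it glosses over---why $\disj Y_{\mathsf{L}}(A,0)$ remains an $\mathsf{L}$-theorem after dropping $\mathsf{L}$-inconsistent sign-assignments, and why $Y_{\mathsf{L}}(A,n)$ is finite---but the architecture is identical.
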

\begin{proof}
    By induction on $n$.  The base case holds since $\disj Y_{\mathsf{L}}(A,0)$ is a tautology.  For the induction step, we note that since $\disj Y_{\mathsf{L}}(A,n)$ is a theorem by the induction hypothesis, $p \wedge (\neg\disj Y_{\mathsf{L}}(A,n) > \bot)$ is consistent whenever $p$ is, so by \cref{putinorder}, there is a list $\tau$ of elements of $Y_{\mathsf{L}}(A,n)$ such that $p$ is consistent with $\makesentence{\tau\append{\bot}}$.  $\makesentence{\tau\append{\bot}}$ is our desired element of $Y_{\mathsf{L}}(A,n+1)$.  
\end{proof}

\Cref{statesarestates,exhaustivestates} together imply that if want to show that every \textsf{L}-consistent sentence has an order model of a certain sort, it suffices to show that every member of every $Y_{\mathsf{L}}(A,n)$ has a model of that sort.  For when $p$ has modal depth $n$ and atoms from $A$, it will be \textsf{L}-consistent with some $s\in Y_{\mathsf{L}}(A,n)$ by \cref{exhaustivestates}, hence entailed in \stal by this $s$ by \cref{statesarestates}, hence true in any model where $s$ is true (by the soundness of \stal for order models), and hence true in some model since $s$ is.

\section{\stal is weakly complete for finite order models}

We now turn to our first result: \stal is weakly complete for finite order models. While this result is not new (or at least, is part of the conditionals folklore), proving it now provides an opportunity to showcase the use of some of the definitions from the previous section, which will also be needed for the later completeness theorems for logics which strengthen \stal.

\begin{definition}
    For a given finite set of atoms $A$ and natural number $n$, we define an order model $\mathcal{M}_{A,n}=\seq{W,<,V}$: 
    \begin{itemize}
        \item 
        $W$ is $\bigcup_{m\leq n}Y_{\text{\stal}}(A,m)$: all of the \stal-state descriptions over $A$ of depth no greater than $n$.  
        \item 
        When $s$ is a depth-0 state description, $t<_su$ is never true (so $R(s) = \{s\}$).  When $s = \makesentence{\tau\append{\bot}}$ is a state description of positive depth,
        $t <_s u$ iff for some $i>0$, $u=\tau{\elem{i}}$, and either $t=s$ or $t$ is in $\tau\slice{1:i}$.
        \item  $V(p_i,w)=1$ iff $s$ \stal-entails $p_i$.  (Atoms not in $A$ are thus false everywhere.) 
    \end{itemize}
\end{definition}

\begin{lemma}\label{trueat}
    Each world in $\mathcal{M}_{A,n}$ is true at itself.  
\end{lemma}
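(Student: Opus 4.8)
The plan is to prove a stronger ``truth lemma'': for every world $s$ of $\mathcal{M}_{A,n}$ of depth $m$ (i.e.\ $s\in Y_{\text{\stal}}(A,m)$) and every sentence $p$ of modal depth at most $m$ whose atoms lie in $A$, $s\Vdash p$ iff $s\vdash_{\stal}p$. The lemma is then the special case $p\coloneqq s$, since $s$ has modal depth at most $m$ and $s\vdash_{\stal}s$ trivially. First I would record two routine preliminaries. First, $\mathcal{M}_{A,n}$ really is an order model: each $<_w$ is the strict total well-order that places $w$ first and then lists $t_0,\ldots,t_{k-1}$ when $w=\makesentence{\seq{t_0,\ldots,t_{k-1}}\append{\bot}}$, so \stal --- hence \textsf{MP}, \textsf{CEM}, Normality --- is sound in it. Second, for $s=\makesentence{\tau\append{\bot}}$ with $\tau=\seq{t_0,\ldots,t_{k-1}}$ we have $R(s)=\{s,t_0,\ldots,t_{k-1}\}$ with $s$ the $\leq_s$-least element, $s\vdash_{\stal}t_0$ (because $\makesentence{\tau\append{\bot}}$ entails $\makesentence{\seq{t_0}}$, which is \stal-equivalent to $t_0$), and $s\vdash_{\stal}\Box\disj\tau$ (the conjunct $\neg\disj\tau>\bot$ of $s$ being \stal-equivalent to it).

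I would then induct on $m$. For $m=0$, $s$ is a complete consistent conjunction of literals over $A$, $V$ is defined so that $s\in V(p_i)$ iff $s\vdash_{\stal}p_i$, and $s$ decides every atom of $A$; a trivial structural induction on the (Boolean) $p$ gives the claim. For the step, fix $s=\makesentence{\tau\append{\bot}}$ as above (with $\tau$ non-repeating and no $t_i$ equal to $\bot$, by consistency of $s$), and prove ``$s\Vdash p$ iff $s\vdash_{\stal}p$'' for all $p$ of modal depth $\leq m+1$ by a sub-induction on the structure of $p$. The atomic case (and the trivial case $p=\bot$, which is false at every world and \stal-underivable from any consistent $s$) is immediate; the $\neg$ and $\wedge$ cases follow because a depth-$(m{+}1)$ state description decides every sentence of modal depth $\leq m+1$ (\cref{statesarestates}); the only substantial case is $p=q>r$ with $q,r$ of modal depth $\leq m$.

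In that case I would read $s\Vdash q>r$ off the semantics using the order $s,t_0,\ldots,t_{k-1}$ on $R(s)$: whether $x\in R(s)$ satisfies $q$ or $r$ is settled --- for $x=s$ by the structural hypothesis, for $x=t_i$ by the outer hypothesis (equivalently, by $\stal$-derivability from $t_i$, since $t_i$ is a depth-$m$ state) --- so there is a well-defined first $q$-world in $R(s)$, if any. For $(\Leftarrow)$, assume $s\vdash_{\stal}q>r$: if $R(s)$ has no $q$-world we are done vacuously; if the first $q$-world is $s$, then $s\vdash_{\stal}q\wedge r$ by \textsf{MP}, so $s$ is also an $r$-world and $s\Vdash q>r$; if it is some $t_j$, then $s,t_0,\ldots,t_{j-1}$ all $\stal$-entail $\neg q$, so were $t_j$ to $\stal$-entail $\neg r$, \cref{basicproperty} (applied to the list $\tau\append{\bot}=\seq{t_0,\ldots,t_{k-1},\bot}$ at index $j$) would yield $s\vdash_{\stal}q\gg\neg r$, i.e.\ $s\vdash_{\stal}\neg(q>r)$, contradicting $s\vdash_{\stal}q>r$; hence $t_j\vdash_{\stal}r$ and again $s\Vdash q>r$. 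For $(\Rightarrow)$, suppose $s\Vdash q>r$ but $s\not\vdash_{\stal}q>r$; since $s$ decides $q>r$ we get $s\vdash_{\stal}\neg(q>r)$, i.e.\ $s\vdash_{\stal}q\gg\neg r$. From $s\vdash_{\stal}\Diamond q\wedge\Box\disj\tau$ one gets $s\vdash_{\stal}\bigvee_{i}\Diamond(q\wedge t_i)$, so some $t_i$ is $\stal$-consistent with $q$ and hence, being a state, $\stal$-entails $q$ --- so $R(s)$ does contain a $q$-world; but $s\vdash_{\stal}q>\neg r$ too, so by the $(\Leftarrow)$ reasoning just given (with $r$ replaced by $\neg r$) $s\Vdash q>\neg r$ as well, making the first $q$-world of $s$ simultaneously an $r$-world and a $\neg r$-world --- a contradiction. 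This closes the induction.

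I expect the main obstacle to be organizing the induction correctly: one is forced to prove the full biconditional (a one-directional ``$s\vdash_{\stal}p$ implies $s\Vdash p$'' is too weak to handle negated conditionals and the $(\Rightarrow)$ half of the conditional case), and the argument must be a nested induction --- on depth on the outside, on formula structure on the inside --- with the discipline that the inner hypothesis is invoked only on proper subformulas of the sentence at hand and the outer hypothesis only on the strictly lower-depth successor worlds $t_i$. Once that scaffolding is in place, the only genuinely model-specific input --- beyond soundness of \stal, \cref{statesarestates}, and \cref{basicproperty} --- is the explicit description of $R(s)$ and $<_s$, which reduces the conditional case to a short finite case analysis.
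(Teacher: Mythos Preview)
Your proof is correct and takes essentially the same approach as the paper: a truth lemma established by induction, with the conditional case handled via \cref{basicproperty}, \cref{statesarestates}, MP, and CEM. The only difference is organizational---the paper runs a single induction on formula complexity (with the claim quantified over \emph{all} state descriptions of depth at least the modal depth of the formula in question), which sidesteps your nested outer-on-depth/inner-on-structure induction and the bookkeeping about which hypothesis applies at $s$ versus at the $t_i$.
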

\begin{proof}
    We prove, by induction on complexity, that for every sentence $p$ with atoms in $A$, and every state description $s$ whose depth is at least that of $p$, $p$ is true at $s$ in $M_{A,n}$ iff $s$ entails $p$.  (The claim then follows as the special case where $p=s$.)

    \bi 
    \item
    \emph{Atoms:} immediate from the definition of $\mathcal{M}_{A,n}$.

   \item \emph{Conjunction:} obvious.

   \item \emph{Negation:} If $\neg p$ is true at $s$, then $p$ is not true at $s$, so by the induction hypothesis, $s$ does not entail $p$.  Since $p$'s modal depth is no greater than the depth of $s$, it follows by \cref{statesarestates} that $s$ entails $\neg p$. Conversely, if $s$ entails $\neg p$, then since $s$ is consistent, $s$ does not entail $p$, so $p$ is false at $s$ by the induction hypothesis, so $\neg p$ is true at $s$.  

  \item  \emph{Conditional:}  Suppose $s = \makesentence{\tau\append{\bot}}$ is depth $m$ state-description, and $p>q$ is a conditional of modal depth $\leq m$, meaning that $p$ and $q$ must have modal depth $<m$.

\bi \item[(i)]  First suppose $p>q$ is false at $s$.  Then there is some $u\in R(s)$ such that $p$ and $\neg q$ are true at $u$, while $\neg p$ is true at $t$ whenever $t <_s u$.  Since every member of $R(s)$ is a depth $m$ or depth $m-1$ state description, the induction hypothesis implies that $u$ entails $p \wedge  \neg q$ while every $t$ such that $t <_s u$ entails $\neg p$.  If $u=s$, $s$ does not entail $p>q$ (since if it did it would be inconsistent, by MP).  Otherwise, $u = \tau\elem{i}$ for some $i\geq 1$, and we have that $\tau\elem{j}$ entails $\neg p$ for all $j< i$---including $j=0$, since $\tau\elem{0}$ entails all the depth $< m$ sentences $s$ entails. So by \cref{basicproperty}, $s$ entails $p\gg \neg q$.  Since $s$ is consistent, it does not entail $p>q$.  

\item[(ii)] Next, suppose $p>q$ is true at $s$.  Then there are two cases: either $p>\neg q$ is false at $s$, or $\Box \neg p$ is true at $s$.  In the former case, by part (i), $s$ does not entail $p>\neg q$.  Since $p>\neg q$ has modal depth $\leq m$, it follows by \cref{statesarestates} that $s$ entails $\neg(p>\neg q)$ and hence also $p>q$ (by CEM).  In the latter case, $\neg p$ is true at every world in $R(s)$, so by the induction hypothesis, all of these worlds entail $\neg p$.  Hence every member of $\tau$ entails $\neg p$.  ($\tau\elem{0}$ does too because it agrees with $s$ on depth $< m$ sentences.)  But $s$ entails $\neg\bigvee\tau > \bot$, so by CMon, $s$ entails $p>\bot$ and hence also $p>q$.  \qedhere
   \ei
\ei

\end{proof}
Given Lemmas \ref{statesarestates} and~\ref{exhaustivestates}, this result immediately establishes: 

\golcompleteness*

\section{Completeness for \vflat}
\label{app:flatcompleteness}
The model construction in the previous section gave a procedure for turning a \stal-state description into a pointed order model in which that state description is true.  The order models generated by this description are very far from being flat: in fact, when $w\ne v$ and $v\in R(w)$, $R(w)$ and $R(v)$ have just one common member, namely $v$. In this section, we will prove a completeness theorem for \vflat, based on a procedure for turning a \vflat-state description into a pointed ordinal sequence model (a fortiori, a flat order model) in which it is true.  This will take a bit more work, though the basic intuitions are similar.
We build on our earlier definitions, now assuming that our underlying logic \textsf{L} includes $\vflat$:
\begin{definition} 
    Where $\tau$ is any list of sentences:
    \begin{itemize}
        \item 
        $\tau$ is \emph{orderly} iff $\makesentence{\tau}$ is consistent, $\tau$ is non-empty, any two elements of $\tau$ are jointly inconsistent, and $\bot$ never occurs in $\tau$ except possibly as the final element.
        \item 
        $\tau$ is \emph{direct} iff $\tau$ is orderly and there is an orderly list of elements of $\tau$ that has the same last element as $\tau$ and includes at least two elements of $\tau$, but does not include every element of $\tau$.  
        \item 
        $\tau$ is \emph{circuitous} iff $\tau$ is orderly, not direct, and has length at least 3.
    \end{itemize}
\end{definition}

\begin{example}\label{cirqexample}
Let \textsf{L} be \vflat, let $a\coloneqq pq, b\coloneqq \negate{p}q, c \coloneqq p\negate{q}$ (where $p$ and $q$ are atoms), and consider the following length-6 list:
\begin{align*}
    \eglist &\coloneqq \seq{\underset{\eglist\elem{0}}{\makesentence{\seq{a, b, c, \bot}}}, \underset{\eglist\elem{1}}{\makesentence{\seq{b, a, c, \bot}}}, \underset{\eglist\elem{2}}{\makesentence{\seq{c, a, b, \bot}}}, \underset{\eglist\elem{3}}{\makesentence{\seq{a, c, \bot}}}, \underset{\eglist\elem{4}}{\makesentence{\seq{c, \bot}}}, \underset{\eglist\elem{5}}{\vphantom{\makesentence{\seq{}}}\bot}}
\end{align*}
$\makesentence{\eglist}$ is consistent, as can be seen by considering the ordinal sequence model whose domain comprises the following length $\omega+5$ sequence over $\{a,b,c\}$ and its non-empty tails---seven sequences in all---with the valuation where an atom is true at a sequence iff it is entailed by its first element:
\[
\seq{a,b,a,b,\ldots,c,a,b,a,c}
\]
(In fact, $\makesentence{\eglist}$ is an element of $Y_{\text{\vflat}}(\{p,q\},2)$---a depth-2 state-description over those atoms.)  Since no two elements of $\eglist$ are consistent, and $\bot$ occurs only as the final element, $\tau$ is thus orderly.  In fact $\eglist$ is \emph{direct}, since $\seq{\eglist\elem{4},\bot}$ is consistent: $\makesentence{\seq{\eglist\elem{4},\bot}}$ is true at the final tail $\seq{c}$ in the above model.  

The initial segments of $\eglist$ are also orderly, since obviously any non-empty initial segment of an orderly list is orderly.  $\eglist\slice{:5}$ is direct, since $\seq{\eglist\elem{3},\eglist\elem{4}}$ is orderly (as witnessed by the tail $\seq{a,c}$).  $\eglist\slice{:4}$ is also direct, since $\seq{\eglist\elem{1},\eglist\elem{3}}$ is orderly (as witnessed by the tail $\seq{b,a,c}$).  $\eglist\slice{:3}$, by contrast, is circuitous, since neither $\seq{\eglist\elem{0},\eglist\elem{2}}$ nor $\seq{\eglist\elem{1},\eglist\elem{2}}$ is orderly. ($\eglist\elem{0}$ entails $\negate{a}\gg b$ and hence $\negate{\eglist\elem{0}}\gg b$ (by CMon), which is inconsistent with $\negate{\eglist\elem{0}}\gg \eglist\elem{2}$, and likewise $\eglist\elem{1}$ entails $\negate{b}\gg a$ and hence $\negate{\eglist\elem{1}}\gg a$ which is inconsistent with $\negate{\eglist\elem{1}}\gg \eglist\elem{2}$.)  Finally, $\eglist\slice{:2}$ and $\eglist\slice{:1}$ are orderly but neither direct nor circuitous, since their lengths are less than 3.

\end{example}

The key new facts secured by adding Flattening are given in the following three lemmas, all of which assume that the logic \textsf{L} includes \vflat.
\begin{lemma}\label{backflattening}
    Suppose $\tau$ is orderly, and every element of  $\tau\slice{:-1}$ entails $\negate{q}$.  Then if $\tau\elem{-1}$ entails $q>r$, every element of $\tau\slice{:-1}$ is consistent with $q>r$. And if $\tau\elem{-1}$ is consistent and entails $\neg(q>r)$, every element of $\tau\slice{:-1}$ is consistent with $\neg(q>r)$.
\end{lemma}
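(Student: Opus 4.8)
The plan is to reduce both halves of the lemma to a single uniform argument. Write $\tau = \seq{\tau\elem{0},\ldots,\tau\elem{n}}$, so $\tau\elem{-1} = \tau\elem{n}$ and $\tau\slice{:-1} = \seq{\tau\elem{0},\ldots,\tau\elem{n-1}}$; for $n = 0$ the claims are vacuous, so I assume $n \geq 1$. Set $D_k \coloneqq \disj\tau\slice{:k} = \tau\elem{0}\vee\cdots\vee\tau\elem{k-1}$, with $D_0 = \bot$. The one structural fact I will lean on throughout is elementary: since every element of $\tau\slice{:-1}$ entails $\negate q$, we have $\vdash D_k \to \negate q$, hence $\vdash q \to \neg D_k$, for every $k \leq n$. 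I will let $\chi$ abbreviate $q > r$ in the first half of the lemma and $\neg(q>r)$ in the second, recalling that $\neg(q>r)$ is \stal-equivalent to $q \gg \negate r$. With this notation it suffices to establish (a) $\makesentence\tau \vdash \chi$, and (b) $\makesentence\tau \vdash \lozenge(\tau\elem{j}\wedge\chi)$ for every $j < n$; for then, since $\makesentence\tau$ is consistent (by orderliness) and $\lozenge\bot$ is \stal-inconsistent, each $\tau\elem{j}\wedge\chi$ is consistent, which is exactly the required claim about the elements of $\tau\slice{:-1}$.

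For step (a) I would use the last conjunct of $\makesentence\tau$. If $\tau\elem{n}\neq\bot$ this conjunct is $\neg D_n\gg\tau\elem{n}$, so $\makesentence\tau$ entails $\neg D_n > \tau\elem{n}$ and $\lozenge\neg D_n$; since $\tau\elem{n}\vdash\chi$ by hypothesis, Normality gives $\makesentence\tau\vdash\neg D_n>\chi$. In the first half, the Flattening Rule (applied with $\vdash q\to\neg D_n$) makes $\neg D_n>(q>r)$ equivalent to $q>r$, so $\makesentence\tau\vdash\chi$. In the second half, from $\makesentence\tau\vdash\neg D_n>(q\gg\negate r)$ together with $\lozenge\neg D_n$ I get $\makesentence\tau\vdash\neg D_n\gg(q\gg\negate r)$, and the rule form of $\gg$-Flattening — obtained from $\gg$-Flattening exactly as the Flattening Rule is obtained from Flattening, again using $\vdash q\to\neg D_n$ — makes this equivalent to $q\gg\negate r = \chi$. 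If instead $\tau\elem{n}=\bot$, which can only occur in the first half, the last conjunct is $\neg D_n>\bot$, which is \stal-equivalent to $\Box D_n$; since $\vdash D_n\to\negate q$ this entails $\Box\negate q$, and $\Box\negate q$ \stal-entails $q>r = \chi$ vacuously. Either way, $\makesentence\tau\vdash\chi$.

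For step (b), fix $j<n$. By orderliness $\tau\elem{j}\neq\bot$, so the $j$th conjunct of $\makesentence\tau$ is $\neg D_j\gg\tau\elem{j}$ (degenerating to $\top\gg\tau\elem{0}$, which is \stal-equivalent to $\tau\elem{0}$, when $j=0$; the argument below still applies), whence $\makesentence\tau$ entails both $\neg D_j>\tau\elem{j}$ and $\lozenge\neg D_j$. From (a) and $\vdash q\to\neg D_j$ I also get $\makesentence\tau\vdash\neg D_j>\chi$: directly via the Flattening Rule in the first half, and in the second half because $\makesentence\tau\vdash q\gg\negate r$ entails $\neg D_j\gg(q\gg\negate r)$ by the rule form of $\gg$-Flattening, hence $\neg D_j>(q\gg\negate r)$. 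Normality then yields $\makesentence\tau\vdash\neg D_j>(\tau\elem{j}\wedge\chi)$; combining this with $\lozenge\neg D_j$ gives $\makesentence\tau\vdash\neg D_j\gg(\tau\elem{j}\wedge\chi)$, which \stal-entails $\lozenge(\neg D_j\wedge\tau\elem{j}\wedge\chi)$ — using that $\lozenge p$ and $p>q$ entail $\lozenge(p\wedge q)$ in \stal — and hence $\lozenge(\tau\elem{j}\wedge\chi)$, which is (b).

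The delicate point I expect is the second half, and specifically the need to make the modal information about $q$ travel backwards along the list. In an arbitrary flat frame, $\lozenge q$ holding at a later tail of a sequence need not transfer to an earlier one, so a naive version of the argument phrased with $>$ alone would only show $\tau\elem{j}$ consistent with $q>\negate r$, which is strictly weaker than consistency with $\neg(q>r) = q\gg\negate r$. Running the whole argument with $\gg$ in place of $>$, so that $\lozenge q$ is carried along inside $q\gg\negate r$, is what sidesteps this, and is precisely why the $\gg$-form of Flattening rather than Flattening as stated is the natural tool here; everything else is routine bookkeeping with Normality, the Flattening Rule, and standard \stal-equivalences such as $\neg X>\bot\leftrightarrow\Box X$ and $p\gg q\leftrightarrow\lozenge p\wedge(p>q)$.
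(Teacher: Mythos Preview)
Your proof is correct and follows essentially the same strategy as the paper: use the Flattening Rule (with $\vdash q \to \neg D_k$) to identify $\neg D_k > (q>r)$ with $q>r$ for all relevant $k$, then combine with the conjunct $\neg D_j \gg \tau\elem{j}$ to extract the consistency of $\tau\elem{j}\wedge\chi$. The only organizational difference is that you factor through an explicit intermediate step (a) establishing $\makesentence{\tau}\vdash\chi$ before pushing $\chi$ into each $\neg D_j>\cdot$, whereas the paper goes directly from $\neg D_n>(q>r)$ to $\neg D_k>(q>r)$ in one line; and for the second half you phrase the key move via $\gg$-Flattening applied to $q\gg\negate r$, whereas the paper applies ordinary Flattening inside the negation $\neg(\neg D_n>(q>r))$---logically the same manoeuvre.
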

\begin{proof}
    Suppose $k<\length{\tau}-1$, every member of $\tau\slice{:-1}$ entails $\negate{q}$, and $\tau\elem{-1}$ entails $q>r$.
    $\makesentence{\tau}$ entails 
    $\makesentence{\tau\slice{:k+1}}$ and $\neg\disj\tau\slice{:-1} > \tau\elem{-1}$, and hence $\makesentence{\tau\slice{:k+1}} \wedge (\neg\disj\tau\slice{:-1} > (q>r))$. Since $q$ entails $\neg\disj\tau\slice{:-1}$ and $\neg\disj\tau\slice{:k}$, $\neg\disj\tau\slice{:-1} > (q>r)$ and $\neg\disj\tau\slice{:k} > (q>r)$ are both equivalent to $q>r$ by the Flattening Rule, hence equivalent to each other.  Hence, $\makesentence{\tau\slice{:k+1}} \wedge (\neg\disj\tau\slice{:k} > (q>r))$ is consistent.  Since $\tau\elem{k}\ne\bot$, $\makesentence{\tau\slice{:k+1}}$ is $\makesentence{\tau\slice{:k}} \wedge (\neg\disj\tau\slice{:k} \gg \tau\elem{k})$.  So we can conclude that $\neg\disj\tau\slice{:k} \gg (\tau\elem{k} \wedge (q>r))$ is consistent.  But for this to be the case, $\tau\elem{k} \wedge (q>r)$ must also be consistent.

    The case where $\tau\elem{-1}$ is consistent and entails $\neg(q>r)$ is similar.  Then, $\makesentence{\tau}$ entails $\neg\disj\tau\slice{:-1}\gg \tau\elem{-1}$ and hence $\makesentence{\tau\slice{:k+1}} \wedge (\neg\disj\tau\slice{:-1}\gg \neg(q>r))$, i.e.\ $\makesentence{\tau\slice{:k+1}} \wedge \neg (\neg\disj\tau\slice{:-1}>(q>r))$.  By the same reasoning as above, this is equivalent to $\makesentence{\tau\slice{:k+1}} \wedge \neg (\neg\disj\tau\slice{:k}>(q>r))$, i.e.\  $\makesentence{\tau\slice{:k}}\wedge (\neg\disj\tau\slice{:k}\gg \tau\elem{k}) \wedge (\neg\disj\tau\slice{:k}\gg\neg (q>r))$.  This is consistent only if $\tau\elem{k} \wedge \neg(q>r)$ is.  
\end{proof}
\begin{lemma} \label{endapproach}
    Suppose $\tau$ is orderly.  Then for each element $\tau\elem{k} \ne \bot$, there is an orderly list $\rho$ of elements of $\tau$  such that $\rho\elem{0} = \tau\elem{k}$, $\rho\elem{-1} = \tau\elem{-1}$, and the elements of $\tau\slice{k+1:}$ all occur, in the same order, in $\rho$.
\end{lemma}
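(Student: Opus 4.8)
The plan is to build $\rho$ in the shape $\sigma\append{\tau\elem{-1}}$, where $\sigma$ is a non-repeating list of elements of $\tau\slice{:-1}$ that starts with $\tau\elem k$ and in which the elements of $\tau\slice{k+1:}$ other than $\tau\elem{-1}$ occur in their original relative order. Such a $\rho$ automatically satisfies $\rho\elem 0 = \tau\elem k$ and $\rho\elem{-1} = \tau\elem{-1}$, contains $\tau\slice{k+1:}$ in order, and is built from elements of $\tau$; and since its elements are pairwise inconsistent and $\bot$ can occur in it only as $\tau\elem{-1}$, it is orderly as soon as $\makesentence{\sigma\append{\tau\elem{-1}}}$ is consistent. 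So the whole task reduces to producing a suitable $\sigma$. The cases $k = 0$ (take $\rho = \tau$) and $k = \length\tau - 1$ (take $\rho = \seq{\tau\elem{-1}}$, using $\tau\elem{-1} = \tau\elem k \ne \bot$) are immediate, so I assume $0 < k < \length\tau - 1$.

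Write $e := \neg\disj\tau\slice{:k}$ and let $\Phi$ be the conjunction of $\tau\elem k$ with the conjuncts $\neg\disj\tau\slice{:j} \gg \tau\elem j$ of $\makesentence\tau$ for $k < j < \length\tau-1$, together with $\makesentence\tau$'s final conjunct (namely $\neg\disj\tau\slice{:-1}\gg\tau\elem{-1}$ if $\tau\elem{-1}\neq\bot$, and $\neg\disj\tau\slice{:-1} > \bot$ if $\tau\elem{-1}=\bot$). I would first show that $\makesentence\tau$ entails $e\gg\Phi$: since $\vdash\neg\disj\tau\slice{:j}\to e$ for all $j\geq k$, the Flattening Rule and its $\gg$-variant (obtained by substituting an equivalent for the antecedent in $\gg$-Flattening) rewrite each conjunct of $\makesentence\tau$ of index $>k$ as one of the form $e\gg(\cdots)$; the $k$-th conjunct $\neg\disj\tau\slice{:k}\gg\tau\elem k$ is already $e\gg\tau\elem k$ and supplies $\lozenge e$; and all of these can be pooled under a single $e\gg{}$ by Normality. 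As $\makesentence\tau$ is consistent, so is $e\gg\Phi$, hence so is $\Phi$ itself (if $\vdash\neg\Phi$ then $\vdash e>\neg\Phi$ by Necessitation and MOD, which with $e>\Phi$ and Normality gives $e>\bot$, contradicting $\lozenge e$). Note $\Phi$ entails $\tau\elem k$, and also entails $\neg\disj X\gg\tau\elem{-1}$ (respectively $\neg\disj X > \bot$) where $X := \{\tau\elem 0,\dots,\tau\elem{-2}\}$, since $\tau\slice{:-1}$ lists exactly the elements of $X$. I would then apply \cref{putinorder} to $\Phi$ with $q := \tau\elem{-1}$, obtaining a list $\sigma$ of elements of $X$ with $\Phi$ consistent with $\makesentence{\sigma\append{\tau\elem{-1}}}$; since $\Phi$ entails $\tau\elem k$ and distinct elements of $\tau$ are jointly inconsistent, $\sigma$ is non-empty with $\sigma\elem 0 = \tau\elem k$. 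Set $\rho := \sigma\append{\tau\elem{-1}}$.

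It remains to verify that $\sigma$ contains every element of $\tau\slice{k+1:}$ other than $\tau\elem{-1}$, and in the original relative order. I would obtain both facts by a single move: were either to fail, $\makesentence{\sigma\append{\tau\elem{-1}}}\wedge\Phi$ would have no order model at all, so by completeness of \stal for finite order models (\cref{c2completeness}) it would be \stal-inconsistent, hence \textsf{L}-inconsistent, contradicting its consistency. For order-faithfulness: in any order model of $\makesentence{\sigma\append{\tau\elem{-1}}}\wedge\Phi$ in which $\tau\elem i$ precedes $\tau\elem{i'}$ in $\sigma$ with $k<i<i'$, one checks — using only that $\tau$'s elements are pairwise exclusive — that the $\tau\elem i$-world picked out by the $i$-th conjunct of $\Phi$ (the first $\neg\disj\tau\slice{:i}$-world) coincides with the one picked out by the position of $\tau\elem i$ in $\makesentence\sigma$, and likewise for $\tau\elem{i'}$; but $\Phi$ forces the former to strictly precede the latter while $\makesentence\sigma$ forces the reverse. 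For exhaustiveness: if some $\tau\elem j$ ($k<j<\length\tau-1$) were absent from $\sigma$, then $\tau\elem j$ would be exclusive of $\disj(\sigma\append{\tau\elem{-1}})$, so the $\tau\elem j$-world demanded by the $j$-th conjunct of $\Phi$ would lie at or after the first $\neg\disj\sigma$-world, which $\makesentence{\sigma\append{\tau\elem{-1}}}$ identifies as a $\tau\elem{-1}$-world (or, when $\tau\elem{-1}=\bot$, excludes outright) — again clashing with the order $\Phi$ imposes on the first $\neg\disj\tau\slice{:j}$-world versus the first $\neg\disj\tau\slice{:-1}$-world. The dichotomy $\tau\elem{-1}=\bot$ versus $\tau\elem{-1}\neq\bot$ affects only which clause of \cref{putinorder} is invoked and these two endgame computations. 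The main obstacle is precisely this last step: with no canonical-model method available for \vflat, let alone for an arbitrary $\textsf{L}\supseteq\vflatp$, the exhaustiveness and order-faithfulness of $\sigma$ cannot simply be read off a model of $\makesentence\tau$ and must instead be forced indirectly, via the observation that their failure is already refuted in the well-behaved base logic \stalp.
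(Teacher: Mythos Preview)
Your proof is correct and follows essentially the same route as the paper: show (via the $\gg$-Flattening Rule) that the conjunction $\Phi = \tau\elem{k} \wedge (\neg\disj\tau\slice{:k+1} \gg \tau\elem{k+1}) \wedge \cdots$ is consistent, then apply \cref{putinorder} to extract a list $\rho$ ending in $\tau\elem{-1}$ such that $\makesentence{\rho}$ is consistent with $\Phi$, and finally argue that this consistency forces $\rho$ to begin with $\tau\elem{k}$ and to contain $\tau\slice{k+1:}$ in order. The paper compresses that last step to a single ``clearly'', whereas you spell it out semantically via \stal-completeness, noting that both $\Phi$ and $\makesentence{\rho}$ pin down the \emph{first} $\tau\elem{i}$-world in any order model (by pairwise inconsistency of the $\tau\elem{j}$), so any mismatch in order or any omission yields an order-model contradiction. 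That detour is a perfectly legitimate way to cash out the ``clearly''; a direct syntactic argument via \cref{basicproperty} would also work but is not shorter. One small slip: in your order-faithfulness paragraph you wrote ``$\tau\elem{i}$ precedes $\tau\elem{i'}$ in $\sigma$'' where you need the failure case, i.e.\ $\tau\elem{i'}$ preceding $\tau\elem{i}$ in $\sigma$ with $k<i<i'$; the rest of the sentence (``$\makesentence\sigma$ forces the reverse'') shows you had the right hypothesis in mind.
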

\begin{proof}
    Consider the case where $\tau\elem{-1}\ne\bot$.  Then for each $k<\length{\tau}$, the conjunction
    \[
    (\neg\disj \tau\slice{:k} \gg \tau\elem{k}) \wedge (\neg\disj \tau\slice{:k+1} \gg \tau\elem{k+1}) \wedge \cdots \wedge (\neg\disj\tau\slice{:-1} \gg \tau\elem{-1})
    \]
    is consistent, since all its conjuncts are conjuncts of $\makesentence{\tau}$.  Since $\neg\disj\tau\slice{:j} \vdash \neg\disj\tau\slice{:k}$ for all $j>k$, the $\gg$-Flattening Rule says that this is equivalent to
    \[
    \neg\disj \tau\slice{:k} \gg (\tau\elem{k} \wedge (\neg\disj\tau\slice{:k+1} \gg \tau\elem{k+1}) \wedge \cdots \wedge (\neg\disj\tau\slice{:-1} \gg \tau\elem{-1}))
    \]    
    which is thus also consistent.  Since $p\gg q$ is consistent only when $q$ is, we can conclude that 
    \[ \tag*{(*)} \label{thisisconsistent}
    \tau\elem{k} \wedge (\neg\disj\tau\slice{:k+1} \gg \tau\elem{k+1}) \wedge \cdots \wedge (\neg\disj\tau\slice{:-1} \gg \tau\elem{-1})
    \]    
    is consistent too.  But then, by \cref{putinorder} (setting $p$ in that lemma to be $ \tau\elem{k} \wedge (\neg\disj\tau\slice{:k+1} \gg \tau\elem{k+1}) \wedge \cdots \wedge (\neg\disj\tau\slice{:-2} \gg \tau\elem{-2})$, $q$ to be $\tau\elem{-1}$, and $X$ to be the set of elements of $\tau\slice{:-1}$),  there must be a list $\rho$ of elements of $\tau$, ending with $\tau\elem{-1}$, such that the conjunction of \ref{thisisconsistent} with $\makesentence{\rho}$ is consistent. Clearly, since the elements of $\tau$ are pairwise inconsistent, this conjunction can only be consistent if $\rho\elem{0}$ is $\tau\elem{k}$.  Moreover, the elements of $\tau\slice{k:}$ must occur in $\rho$ in the same order as in $\tau\slice{k:}$.  For suppose the first element of $\tau\slice{k+m:}$ in $\rho$ is $\tau\elem{k+m+j}$.  That means $\makesentence{\rho}$ has a conjunct of the form $p\gg\tau\elem{k+m+j}$, where $p$ is some disjunction of elements of $\tau\slice{:k+m}$.  By CMon, this entails $\neg\bigvee\tau\slice{:k+m}\gg\tau\elem{k+m+j}$, which is consistent with the conjunct  $\neg\bigvee\tau\slice{:k+m}\gg\tau\elem{k+m}$ of \ref{thisisconsistent} only if $j=0$.  
    
    The case where $\tau\elem{-1} = \bot$ is parallel.  
\end{proof}

\begin{lemma} \label{circuitous} 
    Suppose $\tau$ is circuitous and $t$ is in $\tau\slice{:-1}$.  Then there is an orderly list that begins with $t$, ends with $\tau\elem{-1}$, and contains all and only the elements of $\tau$.  
\end{lemma}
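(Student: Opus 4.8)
The plan is to read this off \cref{endapproach}. Since $t$ is an element of $\tau\slice{:-1}$, we can write $t = \tau\elem{k}$ for some $k$ with $k < \length{\tau}-1$; moreover $t \ne \bot$, since $\tau$ is orderly and hence $\bot$ occurs in $\tau$ at most as its final entry, which $t$ is not. So the hypothesis of \cref{endapproach} is met with this $k$, and it supplies an orderly list $\rho$ of elements of $\tau$ with $\rho\elem{0} = t$, $\rho\elem{-1} = \tau\elem{-1}$, in which all the entries of $\tau\slice{k+1:}$ occur in their original order.

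Next I would check that $\rho$ has at least two entries: it starts with $\tau\elem{k}$ and ends with $\tau\elem{-1}$, and these are distinct, since orderly lists have pairwise distinct entries (distinct positions carry jointly inconsistent, hence unequal, sentences, as no entry other than a possible trailing $\bot$ is self-inconsistent, and $\tau\elem{k}\ne\bot$). Then $\rho$ is an orderly list of elements of $\tau$ with the same last entry as $\tau$ and with at least two entries. Because $\tau$ is circuitous it is by definition not direct, so there is no orderly list of elements of $\tau$ with last entry $\tau\elem{-1}$ and at least two entries that omits some element of $\tau$; hence $\rho$ contains every element of $\tau$. Since $\rho$ is orderly it is repetition-free and all its entries belong to $\tau$, so $\rho$ contains all and only the elements of $\tau$. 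As it begins with $t$ and ends with $\tau\elem{-1}$, it is exactly the list we want.

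The only points needing any real care are verifying that $t\ne\bot$ (so that \cref{endapproach} applies) and that $\rho$ genuinely has length at least $2$ (so that the failure of directness forces $\rho$ to exhaust $\tau$); beyond that the argument is just unwinding the definitions.
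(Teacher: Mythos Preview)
Your argument is correct and follows exactly the same route as the paper's proof: apply \cref{endapproach} to obtain an orderly list $\rho$ of elements of $\tau$ starting at $t$ and ending at $\tau\elem{-1}$, then use the failure of directness to force $\rho$ to exhaust $\tau$. The paper's version is terser and leaves implicit the two checks you spell out (that $t\ne\bot$ so \cref{endapproach} applies, and that $\rho$ has length $\geq 2$ so the definition of directness bites), but these are exactly the right details to verify.
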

\begin{proof}
    By \cref{endapproach} there is an orderly list $\rho$ beginning with $t$, ending with $\tau\elem{-1}$, and containing only elements of $\tau$.  But since $\tau$ is circuitous, any such list must contain every element of $\tau$.
\end{proof}

We will now describe a function that takes any orderly list $\tau$ and returns a (possibly repeating, possibly transfinite) sequence $\makeseq{\tau}$, such that the elements of $\makeseq{\tau}$ are exactly the elements of $\tau\slice{:-1}$, and the order of their first occurrences in $\makeseq{\tau}$ is the same as their order in $\tau$.

\begin{definition}\label{makeseqdef}
    We define $\makeseq{\tau}$ recursively, based on the length of the orderly list $\tau$.  

    For the base cases, when the length of $\tau$ is 1 or 2, $\makeseq{\tau} \coloneqq\tau\slice{:-1}$: that is, $\seq{}$ if $\length{\tau}=1$ and $\seq{\tau\elem{0}}$ if $\length{\tau}=2$.
        
    For the recursion step, when $\length{\tau}>2$, there are two cases, depending on whether $\tau$ is direct or circuitous.
    \begin{itemize}
        \item 
        Case 1: $\tau$ is direct, so there is an orderly list of elements of $\tau$ with the same last element as $\tau$ and length strictly between 1 and $\length{\tau}$.  Let $j$ be the greatest number such that $\tau\elem{j}$ is the first element of such a list, and let $\rho$ be such a list beginning with $\tau\elem{j}$.  (If there are multiple appropriate lists beginning with $\tau\elem{j}$, choose $\rho$ to be the first one according to some fixed order on lists of sentences.)  Also, from \cref{endapproach}, we know that there is an orderly list of elements of $\tau$ that begins with $\tau\elem{-2}$ and ends with $\tau\elem{-1}$.  Let $\pi$ be such a list (the first one, according to our fixed order).  $\tau\elem{j}$ must occur somewhere in $\pi$, since if $j=\length{\tau}-2$ it is the first element of $\pi$, while if $j<\length{\tau}-2$, that means that $\pi$ must have the same length as $\tau$ and hence contain every element of $\tau$.  Let $\theta$ be the proper initial segment of $\pi$ up to and including the first occurrence of $\tau\elem{j}$.  Since $\tau\slice{:-1}$, $\theta$, and $\rho$ are all orderly and shorter than $\tau$, we may assume that $\makeseq{}$ is already defined on them, and define 
        \[
            \makeseq{\tau} \coloneqq\makeseq{\tau\slice{:-1}} + \makeseq{\theta} + \makeseq{\rho}
        \]
        Note that in the case where $j=\length{\tau}-2$, $\theta$ is just $\seq{\tau\elem{j}}$, so $\makeseq{\theta}$ is the empty list and $\makeseq{\tau} = \makeseq{\tau\slice{:-1}} + \makeseq{\rho}$.
        \item 
        Case 2: $\tau$ is circuitous.  Then by \cref{circuitous}, for each element $t$ of $\tau\slice{:-1}$, there is an orderly list that begins with $t$, ends with $\tau\elem{-1}$, and contains exactly the elements of $\tau$.  Define a function $\Psi$ such that for each $t$ in $\tau\slice{:-1}$, $\Psi(t)$ is such a list: $\tau$ itself if $t$ is $\tau\elem{0}$; otherwise, the first such list according to our fixed order.  Let $\chooselist(t) \coloneqq \Psi(t)\slice{:-1}$ (so each $\chooselist(t)$ is shorter than $\tau$), and $g(t) \coloneqq \Psi(t)\elem{-2}$.  Then define:
        \begin{equation*}
            \makeseq{\tau} \coloneqq
            \makeseq{\chooselist(\tau\elem{0})} 
            + \makeseq{\chooselist(g(\tau\elem{0}))}
            + \makeseq{\chooselist(g(g(\tau\elem{0})))}+ \makeseq{\chooselist(g(g(g(\tau\elem{0}))))}
            + \cdots
        \end{equation*}
    \end{itemize}
\end{definition}

\begin{example} 
    Fixing \textsf{L} as \vflat, let us compute $\makeseq{\eglist}$ where $\eglist$ is the list from \cref{cirqexample}:
    \[
        \eglist \coloneqq \seq{\underset{\eglist\elem{0}}{\makesentence{\seq{a, b, c, \bot}}}, \underset{\eglist\elem{1}}{\makesentence{\seq{b, a, c, \bot}}}, \underset{\eglist\elem{2}}{\makesentence{\seq{c, a, b, \bot}}}, \underset{\eglist\elem{3}}{\makesentence{\seq{a, c, \bot}}}, \underset{\eglist\elem{4}}{\makesentence{\seq{c, \bot}}}, \underset{\eglist\elem{5}}{\vphantom{\makesentence{\seq{}}}\bot}}
    \]
    As we noted in \cref{cirqexample}, $\eglist$ is direct, so we are in case 1.  Since $\seq{\eglist\elem{4},\bot}$ is orderly, $j=4=\length{\eglist}-2$ and $\rho = \seq{\eglist\elem{4},\bot}$  so 
    \[
        \makeseq{\eglist} = \makeseq{\eglist\slice{:5}} + \makeseq{\seq{\eglist\elem{4},\bot}}  = \makeseq{\eglist\slice{:5}} \append{\eglist\elem{4}}
    \]
    using the base case for length-2 lists for the second identity.  Proceeding to calculate $\makeseq{\eglist\slice{:5}}$, we note that since $\seq{\eglist\elem{3},\eglist\elem{4}}$ is orderly, $\eglist\slice{:5}$
    is also direct, $j=3=\length{\eglist\slice{:5}}-2$, $\rho = \seq{\eglist\elem{3},\eglist\elem{4}}$, so
    \[
        \makeseq{\eglist\slice{:5}} = \makeseq{\eglist\slice{:4}} + \makeseq{\seq{\eglist\elem{3},\eglist\elem{4}}} = \makeseq{\eglist\slice{:4}} \append{\eglist\elem{3}}
    \]
    Turning to $\eglist\slice{:4}$, we find that this is also direct, since $\seq{\eglist\elem{1},\eglist\elem{3}}$ is orderly.  There are no orderly lists of elements that begin with $\eglist\elem{2}$ and end with $\eglist\elem{3}$ and have length less than 4, since the only $i<4$ for which $\seq{\eglist\elem{2},\eglist\elem{i}}$ is orderly is 0, and the only $i<4$ for which $\seq{\eglist\elem{2},\eglist\elem{0}},\eglist\elem{i}$ is orderly is 1.  So in this case, $j=1$ and $\rho = \seq{\eglist\elem{1},\eglist\elem{3}}$. $\pi$ is the only orderly list of elements of $\eglist\slice{:4}$ beginning with $\eglist\elem{2}$ and ending with $\eglist\elem{3}$, namely $\seq{\eglist\elem{2},\eglist\elem{0},\eglist\elem{1},\eglist\elem{3}}$, and $\theta$ is thus its initial segment $\seq{\eglist\elem{2},\eglist\elem{0},\eglist\elem{1}}$.  So,
    \begin{align*}
        \makeseq{\eglist\slice{:4}} &= 
        \makeseq{\eglist\slice{:3}} + \makeseq{\seq{\eglist\elem{2},\eglist\elem{0},\eglist\elem{1}}} + \makeseq{\seq{\eglist\elem{1},\eglist\elem{3}}}
    \\
        &= 
        \makeseq{\eglist\slice{:3}} + \makeseq{\seq{\eglist\elem{2},\eglist\elem{0}}} + \makeseq{\seq{\eglist\elem{0},\eglist\elem{1}}} + \makeseq{\seq{\eglist\elem{1},\eglist\elem{3}}}
        \\
        &=
        \makeseq{\eglist\slice{:3}} + \seq{\eglist\elem{2},\eglist\elem{0},\eglist\elem{1}}
    \end{align*}
    Turning finally to computing $\makeseq{\eglist\slice{:3}}$, we already noted that $\eglist\slice{:3}$ is circuitous, so we will be in case 2.  
    The only function  meeting the requirements on $\Psi$ is as follows:
    \begin{align*}
        \Psi(\eglist\elem{0}) &= \seq{\eglist\elem{0},\eglist\elem{1},\eglist\elem{2}} \\
        \Psi(\eglist\elem{1}) &= \seq{\eglist\elem{1},\eglist\elem{0},\eglist\elem{2}}
    \end{align*}
    So, $\chooselist(\eglist\elem{0}) = \seq{\eglist\elem{0},\eglist\elem{1}}$, $\chooselist(\eglist\elem{1}) = \seq{\eglist\elem{1},\eglist\elem{0}}$, $g(\eglist\elem{0}) = \eglist\elem{1}$, $g(\eglist\elem{1}) = \eglist\elem{0}$, and 
    \begin{align*}
    \makeseq{\eglist\slice{:3}} &= \makeseq{\seq{\eglist\elem{0},\eglist\elem{1}}} + 
    \makeseq{\seq{\eglist\elem{1},\eglist\elem{0}}} + \makeseq{\seq{\eglist\elem{0},\eglist\elem{1}}} + \makeseq{\seq{\eglist\elem{1},\eglist\elem{0}}} + \cdots
    \\
    &= 
    \seq{\eglist\elem{0},\eglist\elem{1},\eglist\elem{0},\eglist\elem{1},\ldots} 
    \\\intertext{Combining all of the above, we have}
    \makeseq{\eglist} &= \seq{\eglist\elem{0},\eglist\elem{1},\eglist\elem{0},\eglist\elem{1},\ldots,\eglist\elem{2},\eglist\elem{0},\eglist\elem{1},\eglist\elem{3},\eglist\elem{4}}
    \end{align*}
    Note that if we replace each element of this $\omega+5$-sequence with the depth-0 state-description it entails, we get the sequence
    \[
    \seq{a,b,a,b,\ldots,c,a,b,a,c}
    \]
    which we used in \cref{cirqexample} as the basis for the ordinal sequence model verifying all the relevant consistency claims.  In particular, $\makesentence{\eglist}$ is true in the ordinal sequence model based on either of these sequences, with the obvious valuation.  This will be our general strategy: given a depth-$n$ state description $s =\makesentence{\tau\append{\bot}}$, we will turn $\makeseq{\tau}$ into an ordinal sequence model in the obvious way, and we will be able to show that $s$ is true in this model.
\end{example}

\begin{definition}
    Where \textsf{L} includes \vflat, $n>0$ and $s = \makesentence{\tau\append{\bot}} \in Y_{\mathsf{L}}(A,n)$ (the set of depth-$n$ state descriptions with atoms $A$), $\mathcal{M}_s$ is the ordinal sequence-model whose sequences are the non-empty tails of $\makeseq{(\tau\append{\bot})}$, with the natural valuation: atom $p_i$ is true at tail $\sigma$ iff $\sigma\elem{0}$ entails $p_i$.  
\end{definition}

The key thing we need to show is that $s$ is true in $\mathcal{M}_s$.  To get there, we will need a few more lemmas.  First, we check that $\makeseq{}$ behaves as advertised:
\begin{lemma} \label{correctorder}
    When $\tau$ is orderly, the elements of $\makeseq{\tau}$ are exactly those of $\tau\slice{:-1}$, and their first occurrences in $\makeseq{\tau}$ come in the same order as in $\tau\slice{:-1}$.  
\end{lemma}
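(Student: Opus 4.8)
The plan is to induct on $\length{\tau}$, following the recursion in \cref{makeseqdef}. The base cases $\length{\tau}\leq 2$ are immediate, since there $\makeseq{\tau}=\tau\slice{:-1}$. For the induction step I would first record the routine observation that every list fed to $\makeseq{}$ inside the recursion --- $\tau\slice{:-1}$, $\theta$ and $\rho$ in the direct case, and each $\chooselist(s)=\pi(s)\slice{:-1}$ in the circuitous case --- is orderly (an initial segment of an orderly list is orderly, since $\makesentence{\tau}$ entails $\makesentence{}$ of each of its initial segments, and pairwise inconsistency and the placement of $\bot$ are inherited) and has length strictly below $\length{\tau}$, so the induction hypothesis applies to all of them.

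In the direct case I would unpack $\makeseq{\tau}=\makeseq{\tau\slice{:-1}}+\makeseq{\theta}+\makeseq{\rho}$ (with $\makeseq{\theta}=\seq{}$ under the convention $\theta=\seq{\tau\elem{-2}}$ when $j=\length{\tau}-2$). By the induction hypothesis the first summand enumerates exactly the elements of $(\tau\slice{:-1})\slice{:-1}=\tau\slice{:-2}$ with first occurrences in their $\tau$-order, which is an initial segment of $\tau\slice{:-1}$; and by orderliness $\tau\elem{-2}$ does not occur in it. Next I would check that the first element introduced after that is $\tau\elem{-2}$: it is $\theta\elem{0}$ (the first element of $\makeseq{\theta}$, by the induction hypothesis) when $\length{\theta}\geq 2$, and it is $\rho\elem{0}=\tau\elem{j}=\tau\elem{-2}$ in the degenerate case. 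Finally I would argue that every element of $\theta$ and of $\rho$ is an element of $\tau$ distinct from $\tau\elem{-1}$ --- for $\theta$, because it is an initial segment of the orderly list $\theta^+$ ending at $\tau\elem{j}$, which precedes the unique occurrence of $\tau\elem{-1}$ in $\theta^+$; for $\rho$, because $\rho$ is orderly and ends with $\tau\elem{-1}$. Hence every element contributed by $\makeseq{\theta}$ and $\makeseq{\rho}$ lies in $\tau\slice{:-1}$, so once $\tau\slice{:-2}$ and $\tau\elem{-2}$ have been seen no further first occurrences arise. Thus $\makeseq{\tau}$ enumerates exactly the elements of $\tau\slice{:-1}$, with first occurrences in the order ``$\tau\slice{:-2}$ then $\tau\elem{-2}$'', which is the order of $\tau\slice{:-1}$.

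For the circuitous case I would run the same argument, the one new ingredient being that $\pi(\tau\elem{0})=\tau$ by the stipulation in \cref{makeseqdef}: hence the first summand is $\makeseq{\chooselist(\tau\elem{0})}=\makeseq{\tau\slice{:-1}}$, analysed exactly as above, while $g(\tau\elem{0})=\pi(\tau\elem{0})\elem{-2}=\tau\elem{-2}$, so the second summand $\makeseq{\chooselist(g(\tau\elem{0}))}$ begins with $\tau\elem{-2}$; and since each $\chooselist(s)=\pi(s)\slice{:-1}$ lists only elements of $\tau$ other than $\tau\elem{-1}$, all subsequent summands contribute only elements of $\tau\slice{:-1}$ already seen, so the conclusion follows as before. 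The one place where care is genuinely needed is the bookkeeping of first occurrences across these (possibly transfinite) concatenations and the handling of the degenerate sub-cases --- $\length{\tau}$ small, $\theta$ a one-element list, $\rho$ of length $2$ --- where one must verify directly that $\tau\elem{-1}$ never appears in $\makeseq{\tau}$ and that $\tau\elem{-2}$ appears exactly at the transition from the first summand to the rest.
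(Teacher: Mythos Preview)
Your proposal is correct and follows essentially the same approach as the paper's proof: induction on $\length{\tau}$, with the direct and circuitous cases handled by applying the induction hypothesis to the summands and checking that $\tau\elem{-2}$ is the first new element after $\makeseq{\tau\slice{:-1}}$ while no element outside $\tau\slice{:-1}$ ever appears. Your version is somewhat more explicit about why the recursive arguments are orderly and about the degenerate sub-cases, but the structure and key observations are the same.
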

\begin{proof}
    By induction on the length of $\tau $. 
    
    Base cases (1, 2): obvious.

    Induction step: If $\tau$ is direct, $\makeseq{\tau}$ is 
    $\makeseq{\tau\slice{:-1}} + \makeseq{\theta}+ \makeseq{\rho}$, where $\theta$ and $\rho$ are lists of elements of $\tau$ of length $<\length{\tau}$.  By the induction hypothesis, all elements of $\tau\slice{:-1}$ except $\tau\elem{-2}$ already occur in $\makeseq{\tau\slice{:-1}}$, in the same order in which they occur in $\tau\slice{:-1}$.  Moreover, $\tau\elem{-2}$ occurs later in $\makeseq{\tau}$, either as the first element of $\makeseq{\theta}$ (if $\makeseq{\theta}$ is non-empty) or else as the first element of $\makeseq{\rho}$.  And furthermore, neither $\makeseq{\theta}$ nor $\makeseq{\rho}$ has any elements not in $\theta\slice{:-1}$ or $\rho\slice{:-1}$ respectively, hence neither has any elements not in $\tau\slice{:-1}$.  So all the elements of $\tau\slice{:-1}$ occur in $\makeseq{\tau}$, in the right order.  

    If $\tau $ is circuitous, $\makeseq{\tau}$ is  
    \begin{equation*}
            \makeseq{\chooselist(\tau\elem{0})} 
            + \makeseq{\chooselist(g(\tau\elem{0}))}
            + \makeseq{\chooselist(g(g(\tau\elem{0})))}+ \makeseq{\chooselist(g(g(g(\tau\elem{0}))))}
            + \cdots
    \end{equation*}
    where $\chooselist$ and $g$ are as in \cref{makeseqdef}, and $\makeseq{\chooselist(\tau\elem{0})} = \makeseq{\tau\slice{:-1}}$.  By the induction hypothesis, $\makeseq{\tau\slice{:-1}}$ comprises exactly the elements of $\tau\slice{:-2}$, with the same order of first occurrence.  
    Meanwhile, $\tau\elem{-2}$ is $g(\tau\elem{0})$, which is the first element of $\chooselist(g(\tau\elem{0}))$ and hence of $\makeseq{\chooselist(g(\tau\elem{0}))}$, and thus also occurs in $\makeseq{\tau}$, after all elements of $\tau\slice{:-2}$.  
    And since each subsequent term in the infinite sum is derived by applying $\makeseq{}$ to a list of elements of $\tau\slice{:-1}$, nothing not in $\tau\slice{:-1}$ occurs in any of them.
\end{proof}

\begin{definition}
    For any orderly $\tau$, we define $\makelongseq{\tau}$ to be $(\makeseq{\tau}) \append{\tau\elem{-1}}$.
\end{definition}
\begin{lemma}\label{mykeylem}
    Suppose $\tau$ is an orderly list of elements of $Y_{\mathsf{L}}(A,n)$, $\sigma \append{s}$ is a segment of $\makelongseq{\tau}$, and $q, r$ are sentences of modal depth $<n$ such that every element of $\sigma$ entails $\neg q$.  Then if $s$ entails $q>r$, every element of $\sigma$ entails $q>r$; and if $s$ entails $\neg(q>r)$ and is consistent, every element of $\sigma$ entails $\neg(q>r)$.
\end{lemma}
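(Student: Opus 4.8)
The plan is to argue by induction on $\length{\tau}$, with \cref{backflattening} doing the real work and the recursive structure of $\makeseq{\cdot}$ (\cref{makeseqdef}) used to reduce to strictly shorter lists. Two preliminary remarks organize the argument. The first is a \emph{bridge} from consistency to entailment: by \cref{correctorder} every element of $\sigma$ belongs to $\tau\slice{:-1}\subseteq Y_{\mathsf{L}}(A,n)$, hence is a consistent depth-$n$ state description, and since $q>r$ has modal depth $\le n$, \cref{statesarestates} shows it entails one of $q>r$ and $\neg(q>r)$; so for these sentences "consistent with $q>r$" and "entails $q>r$" coincide, which upgrades the conclusions of \cref{backflattening} to the entailments we want. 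The second is a \emph{chaining} remark: if the lemma (or the inductive hypothesis) is known for a segment $\sigma_1\append{t}$ and for a segment $\seq{t}+\sigma_2\append{s}$, then it holds for the concatenated segment $\sigma_1+\seq{t}+\sigma_2\append{s}$ — assuming all of $\sigma_1+\seq{t}+\sigma_2$ entails $\neg q$ and $s\vdash q>r$, the second instance gives $t\vdash q>r$ and then the first gives the rest; the $\neg(q>r)$ half is identical, using that the splice point $t$, being a state description, is consistent. This clearly iterates to any finite number of pieces.

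The base case ($\length{\tau}\le 2$) is immediate: the only segment with $\sigma$ nonempty is $\seq{\tau\elem 0}\append{\tau\elem{-1}}$ when $\length{\tau}=2$, and here the claim is exactly \cref{backflattening} applied to $\tau$, upgraded by the bridge. For the induction step I would unwind \cref{makeseqdef} (using \cref{correctorder} to identify the first and last elements of the subsequences produced) to present $\makelongseq{\tau}$ as a \emph{glued chain} of sequences $\makelongseq{\tau'}$, where each $\tau'$ is an orderly list of elements of $Y_{\mathsf{L}}(A,n)$ strictly shorter than $\tau$, and consecutive links share their boundary element. When $\tau$ is direct this chain has three links, $\makelongseq{\tau} = \makelongseq{\tau\slice{:-1}}\ast\makelongseq{\theta}\ast\makelongseq{\rho}$, with the splices at $\tau\elem{-2}$ (the appended element of $\makelongseq{\tau\slice{:-1}}$, equal to the first element of $\makeseq{\theta}$) and at $\tau\elem{j}$ (the last element of $\theta$, equal to the first element of $\rho$) — the degenerate subcase $j=\length{\tau}-2$ just collapses $\theta$ to the singleton $\seq{\tau\elem{-2}}$. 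When $\tau$ is circuitous the chain is $\omega$-long, $\makelongseq{\tau}=\makelongseq{\chooselist(t_0)}\ast\makelongseq{\chooselist(t_1)}\ast\cdots$ with $t_0=\tau\elem 0$, $t_{i+1}=g(t_i)$, splicing at $t_{i+1}$, and then the single extra element $\tau\elem{-1}$ sitting at the limit position; each link is nonempty because $\length{\chooselist(t_i)}=\length{\tau}-1\ge 2$, so $\makeseq{\tau}$ itself has limit length. The inductive hypothesis applies to every segment of every link.

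Now given a segment $\sigma\append{s}$ of $\makelongseq{\tau}$: if it meets only finitely many splice positions, cut it there to write it as a concatenation of finitely many segments, each contained in a single link and each ending at a splice element — or, for the last piece, at $s$; apply the inductive hypothesis to each and chain. In the direct case there are at most two splice positions, so this always works. In the circuitous case it works whenever $s$ is not the terminal element $\tau\elem{-1}$: then $s$ lies in link $\makelongseq{\chooselist(t_j)}$ for a finite $j$ and $\sigma$ begins in some link $\makelongseq{\chooselist(t_i)}$ with $i\le j$ finite, so only $t_{i+1},\dots,t_j$ can be met. The sole remaining case is $\tau$ circuitous and $s=\tau\elem{-1}$, where $\sigma$ is a tail of the limit-length sequence $\makeseq{\tau}$; being cofinal in that limit, $\sigma$ contains two consecutive whole links $\makeseq{\chooselist(t_i)}$ and $\makeseq{\chooselist(t_{i+1})}$, the first of which contains every element of $\tau\slice{:-1}$ except $g(t_i)=t_{i+1}$ and the second of which has $t_{i+1}$ as its first element; hence $\sigma$ contains \emph{all} of $\tau\slice{:-1}$, so every element of $\tau\slice{:-1}$ entails $\neg q$, and \cref{backflattening} applied directly to $\tau$ (whose last element is $s$), upgraded by the bridge, gives that every element of $\tau\slice{:-1}$ — a fortiori every element of $\sigma$ — entails $q>r$ (resp.\ $\neg(q>r)$).

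The main obstacle I anticipate is the bookkeeping in the previous two paragraphs: verifying the glued-chain identities from \cref{makeseqdef}, keeping track of which sentence is the shared boundary of consecutive links (and handling the degenerate $j=\length{\tau}-2$ subcase), and correctly splitting a segment that straddles several links. The $s=\tau\elem{-1}$ circuitous case has to be singled out precisely because there the chain of links is infinite, so one cannot "walk backwards" through it by iterated chaining, and instead must exploit cofinality to see that $\sigma$ already contains every element of $\tau\slice{:-1}$.
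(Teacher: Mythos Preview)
Your proposal is correct and follows essentially the same approach as the paper's proof: induction on $\length{\tau}$, with \cref{backflattening} (upgraded via \cref{statesarestates}) handling the base case and the limit-position subcase, and the recursive decomposition of $\makeseq{\tau}$ reducing everything else to the inductive hypothesis. Your ``glued chain plus chaining'' packaging is a mild abstraction of the paper's explicit case-split (the paper enumerates the possible ways a segment can straddle the pieces, while you cut at splice points and iterate), but the substance---including the special treatment of the circuitous case with $s=\tau\elem{-1}$ via cofinality---is identical.
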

\begin{proof}
    By induction on the length of $\tau$.  
    The base cases for 0 and 1 are trivial.  Base case for 2: $\makelongseq{\tau} = \tau$, so the only non-trivial case is where $s = \tau\elem{1}$ and $\sigma = \seq{\tau\elem{0}}$. If $\tau\elem{1}$ entails $q>r$ and $\tau\elem{0}$ entails $\neg q$, \cref{backflattening} says that $\tau\elem{0}$ is consistent with $q>r$. Since $\tau\elem{0}$ is a depth $n$ state description and $q$ and $r$ are depth $<n$, it follows by \cref{statesarestates} that it \emph{entails} $q>r$.   Similarly, if $\tau\elem{1}$ is consistent and entails $\neg(q>r)$, \cref{backflattening} implies that $\neg(q>r)$ is consistent with, and hence entailed by, $\tau\elem{0}$.
    
    For the induction step, suppose the claim holds for lists of length $\leq k$, and suppose $\tau$ is orderly and of length $k+1$.  Let $\sigma\append{s}$ be a segment of $\makelongseq{\tau}$ such that $s$ either entails $q>r$, or is consistent and entails $\neg(q>r)$.

    Case 1: $\tau $ is direct, so 
          $ \makelongseq{\tau}$ has the form  
          \[
          \makeseq{\tau\slice{:-1}} + \makeseq{\theta}+ \makelongseq{\rho}
          \] 
          with $\theta$, $t$, and $\rho$ lists of length $\leq k$, as in Case 1 of \cref{makeseqdef}. 
          If $\sigma$ is a segment of $\makeseq{\tau\slice{:-1}}$ or $\makeseq{\theta}$ or $\makeseq{\rho}$, $\sigma \append{s}$ is a segment of $\makelongseq{\tau\slice{:-1}}$, $\makelongseq{\theta}$, or $\makelongseq{\rho}$, so the claim follows from the induction hypothesis.  If $\sigma$ is of the form $\sigma_1 + \sigma_2$ where $\sigma_1$ is a tail of $\makeseq{\theta}$ and $\sigma_2$ is an initial segment of $\makeseq{\rho}$, then we first appeal to the induction hypothesis for $\rho$ to show that every element of $\sigma_2$ agrees with $s$ on $q>r$.  In particular, $\sigma_2\elem{0}$ agrees with $s$ on $q>r$.  And $\sigma_1 \append{\sigma_2\elem{0}}$ is a segment of $\makelongseq{\theta}$, so by the induction hypothesis applied to $\theta$, every element of $\sigma_1$ also agrees with $s$ on $q>r$.  Finally, if $\sigma$ is of the form $\sigma_1 + \sigma_2$ where $\sigma_1$ is a tail of $\makeseq{\tau\slice{:-1}}$ and $\sigma_2$ is an initial segment of $\makeseq{\theta} + \makelongseq{\rho}$, then every element of $\sigma_2$ agrees with $s$ on $q>r$ by what we just showed.  But $\sigma_1 \append{\sigma_2\elem{0}}$ is a segment of $\makelongseq{\tau\slice{:-1}}$.  So by the induction hypothesis applied to $\tau\slice{:-1}$, every element of $\sigma_1$ also agrees with $s$ on $q>r$.

    Case 2: $\tau $ is circuitous.  Let the functions $\chooselist$ and $g$ be as in the definition of~$\makeseq{}$.  Then there are four possible subcases:
    \begin{itemize}
        \item[(i)]
    $\sigma$ is a segment of $\makeseq{\chooselist(t)}$ for some element $t$ of $\tau\slice{:-1}$.
    \item[(ii)] 
    $\sigma$ is of the form $\sigma_1 + \sigma_2$, where for some $t$ in $\tau\slice{:-1}$, $\sigma_1$ is a tail of $\makeseq{\chooselist(t)}$ and $\sigma_2$ is an initial segment of $\makeseq{\chooselist(g(t))}$.
    \item[(iii)] 
    $\sigma$ is of the form
    \[
        \sigma_1 + 
        \makeseq{\chooselist(g(t))} + \cdots +
        \makeseq{\chooselist(g^n(t))} + \sigma_2
    \]
    where for some $t$ in $\tau\slice{:-1}$, $\sigma_1$ is a tail of $\makeseq{\chooselist(t)}$ and $\sigma_2$ is an initial segment of $\makeseq{\chooselist(g^{n+1}(t))}$.
    \item[(iv)] 
    $s$ is $\tau\elem{-1}$ and $\sigma$ is of the form 
    \[
        \sigma_1 + 
        \makeseq{\chooselist(g(t))} + 
        \makeseq{\chooselist(g(g(t)))} + \cdots
    \]
    where for some $t$ in $\tau\slice{:-1}$, $\sigma_1$ is a tail of $\makeseq{\chooselist(t)}$.
    \end{itemize}
    In subcase (i), we can appeal directly to the induction hypothesis for $\chooselist(t)$.  In subcase (ii), we first use the induction hypothesis for $\chooselist(g(t))$ to show that every element of $\sigma_2$ agrees with $s$ on $q>r$, and then use the induction hypothesis for $\chooselist(t)$ and the fact that $\sigma_1 \append{\sigma_2\elem{0}}$ is a segment of $\makelongseq{\chooselist(t)}$ to show that every element of $\sigma_1$ also agrees with $s$ on $q>r$.  
    In subcase (iii), we first use the same method to show that every element of $\makeseq{\chooselist(g^n(t))} + \sigma_2$ agrees with $s$ on $q>r$.  Since every element of $\tau\slice{:-1}$ except $g^{n+1}(t)$ occurs in $\chooselist(g^n(t))$, and $g^{n+1}(t)$ is the first element of $\sigma_2$, and every element of $\sigma$ is in $\tau\slice{:-1}$, this is already enough to show that every element of $\sigma$ agrees with $s$ on $q>r$.  
    Finally, in subcase (iv), we first reason that since every element of $\tau\slice{:-1}$ other than $g(g(t))$ occurs in $\makeseq{\chooselist(g(t))}$, and $g(g(t))$ is the first element of $\makeseq{\chooselist(g(g(t)))}$, the elements of $\sigma$ are exactly the elements of $\tau\slice{:-1}$.  Thus every element of $\tau\slice{:-1}$ entails $\neg q$, and so by \cref{backflattening}, every element of $\tau\slice{:-1}$, and hence every element of $\sigma$, agrees with $\tau\elem{-1}$ ($=s$) on $q>r$.
\end{proof}

\begin{lemma} \label{tailtruth}
    When $s = \makesentence{\tau\append{\bot}} \in Y_{\mathsf{L}}(A,n+1)$, every sentence $p$ of depth $\leq n$
    is true at a tail $\sigma$ in the model $\mathcal{M}_s$ iff $p$ is entailed by $\sigma\elem{0}$.
\end{lemma}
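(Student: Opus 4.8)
The plan is to prove the biconditional by induction on the structure of $p$, carrying along the hypothesis that $p$ has modal depth $\le n$ (every proper subformula of a sentence of modal depth $\le n$ again has modal depth $\le n$, so this is a legitimate structural induction). The standing fact used throughout is that, by \cref{correctorder}, every element occurring in $\makeseq{(\tau\append{\bot})}$ lies in $\tau$; hence the head $\sigma\elem{0}$ of every non-empty tail $\sigma$ is a depth-$n$ state description in $Y_{\mathsf{L}}(A,n)$, so by \cref{statesarestates} it decides every sentence of modal depth $\le n$. The atomic case holds by the definition of the valuation of $\mathcal{M}_s$; conjunction is immediate from the induction hypothesis; and for $\neg q$: if $\neg q$ is true at $\sigma$ then $q$ is false there, so by the induction hypothesis $\sigma\elem{0}$ does not entail $q$, whence it entails $\neg q$ by \cref{statesarestates}; conversely, if $\sigma\elem{0}$ entails $\neg q$ then, being consistent, it fails to entail $q$, so $q$ is false at $\sigma$.

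The real work is the conditional $q>r$, where $q$ and $r$ necessarily have modal depth $<n$, so \cref{mykeylem} is available for them. The bridge to that lemma is the observation that $\makelongseq{(\tau\append{\bot})}=\makeseq{(\tau\append{\bot})}\append{\bot}$, so every non-empty tail of $\makeseq{(\tau\append{\bot})}$, and every initial segment of such a tail, is a segment of $\makelongseq{(\tau\append{\bot})}$; I will apply \cref{mykeylem} to the orderly list $\tau\append{\bot}$ itself (its terminal $\bot$ is harmless, since the $\neg(q>r)$-clause of that lemma is vacuous there as $\bot$ is inconsistent, while $\bot$ entails $q>r$ trivially). I will also use the \stal-theorem $q\wedge r\to(q>r)$, which follows from CEM and MP.

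For the forward direction, suppose $q>r$ is true at $\sigma$ but, for contradiction, $\sigma\elem{0}$ does not entail $q>r$; then it entails $\neg(q>r)$ by \cref{statesarestates}. If some tail of $\sigma$ satisfies $q$, let $\mu$ be the first such tail; since $q>r$ is true at $\sigma$, $\mu$ satisfies $q\wedge r$, so by the induction hypothesis $\mu\elem{0}$ entails $q\wedge r$ and hence $q>r$. Each element of $\sigma$ strictly before $\mu$ is the head of a tail not satisfying $q$, hence entails $\neg q$ by the induction hypothesis and \cref{statesarestates}; applying \cref{mykeylem} to the segment of $\makelongseq{(\tau\append{\bot})}$ running from $\sigma\elem{0}$ up to and including $\mu\elem{0}$ then forces every earlier element, in particular $\sigma\elem{0}$, to entail $q>r$ --- a contradiction. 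If instead no tail of $\sigma$ satisfies $q$, then every element occurring in $\sigma$ entails $\neg q$, and \cref{mykeylem} applied to the segment $\sigma\append{\bot}$ of $\makelongseq{(\tau\append{\bot})}$ again forces $\sigma\elem{0}$ to entail $q>r$ --- again a contradiction. For the backward direction, suppose $\sigma\elem{0}$ entails $q>r$. If no tail of $\sigma$ satisfies $q$, then $q>r$ is vacuously true at $\sigma$. Otherwise, letting $\mu$ be the first $q$-satisfying tail, \cref{statesarestates} gives that $\mu\elem{0}$ entails $q>r$ or $\neg(q>r)$; the latter is impossible, since the elements of $\sigma$ before $\mu$ entail $\neg q$, so \cref{mykeylem} would force $\sigma\elem{0}$ to entail $\neg(q>r)$, contradicting consistency. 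Hence $\mu\elem{0}$ entails $q>r$, and since it also entails $q$ it entails $r$ by MP, so $r$ is true at $\mu$ by the induction hypothesis and therefore $q>r$ is true at $\sigma$.

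I expect the conditional case to be the only genuine obstacle, and within it the delicate part is the interface with \cref{mykeylem}: isolating the correct segments of $\makeseq{(\tau\append{\bot})}$, respectively of $\makelongseq{(\tau\append{\bot})}$, to feed it, and checking that appending a terminal $\bot$ to the list does not disrupt that lemma's proof. Everything else reduces to routine propositional and modal manipulation together with the state-description bookkeeping already in place.
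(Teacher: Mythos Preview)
Your proof is correct and follows essentially the same approach as the paper's: induction on formula complexity, with the conditional case handled via \cref{mykeylem} applied to suitable segments of $\makelongseq{(\tau\append{\bot})}$, including the terminal-$\bot$ segment in the vacuous case. The only organizational difference is that the paper proves the contrapositive of one direction first (if $q>r$ is false at $\sigma$ then $\sigma\elem{0}$ entails $\neg(q>r)$) and then uses CEM to reduce the remaining non-vacuous case to what was just shown with $\neg r$ in place of $r$, whereas you argue both directions separately using the \stal-theorem $q\wedge r\to(q>r)$ in one place and MP in the other; this is a matter of taste, not substance.
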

\begin{proof}
    By induction on the complexity of $p$.  
    \begin{itemize}
    \item 
    \emph{Atoms:} given by the valuation.  
    \item
    \emph{Conjunction:} obvious. 
    \item
    \emph{Negation:} by \cref{statesarestates} and the fact that $\sigma\elem{0}$ is a depth-$n$ state description.
    \item
    \emph{Conditional:}  Suppose $p = q>r$; then since $p$ is of depth $\leq n$, $q$ and $r$ are of depth $<n$.  
    
    Suppose first $q > r$ is \emph{not} true at $\sigma$.  Then for some $\beta$, $q \wedge  \neg r$ is true at $\sigma \slice{\beta:}$, while $q$ is not true at $\sigma \slice{\alpha:}$ for any $\alpha<\beta$.  By the induction hypothesis, $\sigma\elem{\beta}$ entails $q \wedge  \neg r$, and $\sigma\elem{\alpha}$ entails $\neg q$ for all $\alpha<\beta$.  By MP, $\sigma\elem{\beta}$ also entails $\neg(q > r)$.  Since $\sigma\elem{\beta}$ is also consistent, by \cref{mykeylem} $\sigma\elem{\alpha}$ entails $\neg(q> r)$ for all $\alpha<\beta$.  So in particular $\sigma\elem{0}$ entails $\neg(q>r)$.  And since it is consistent, that means it does not entail $q>r$.    

    Meanwhile, if $q>r$ is true at $\sigma$, there are two cases.  In the first case, $q>\neg r$ is not true at $\sigma$, in which case $\sigma\elem{0}$ entails $\neg(q>\neg r)$ by what we just proved; by CEM, it also entails $q>r$.  In the second case, $q>\bot$ is true at $\sigma$, meaning that $q$ is false at every tail of $\sigma$.  By the induction hypothesis, every element of $\sigma$ entails $\neg q$.  Since $\sigma \append{\bot}$ is a tail of of $\makelongseq{(\tau\append{\bot})}$ (i.e.\ $\makeseq(\tau\append{\bot})\append{\bot}$), and $\bot$ entails $q>r$, we can apply the first part of \cref{mykeylem} to conclude that every element of $\sigma$, and in particular $\sigma\elem{0}$, entails $q>r$. \qedhere
    \end{itemize}
\end{proof}

\begin{lemma} \label{truein}
    $s$ is true in $\mathcal{M}_s$.  
\end{lemma}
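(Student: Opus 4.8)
\textbf{Proof proposal for Lemma \ref{truein}.}

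The plan is to write $s=\makesentence{\tau\append{\bot}}$ (with $\tau$ an orderly list of members of $Y_{\mathsf{L}}(A,n)$, so $\mathcal{M}_s$ is built from the non-empty tails of $\Sigma\coloneqq\makeseq{(\tau\append{\bot})}$), unpack the recursive definition of $\makesentence{\cdot}$ into an explicit conjunction, and verify each conjunct at the ``root'' sequence $\Sigma$, which by \cref{correctorder} is non-empty and hence a world of $\mathcal{M}_s$. First I would record that $\tau\append{\bot}$ is orderly: its elements are pairwise inconsistent (the elements of $\tau$ being distinct $\stal$-state descriptions, with $\bot$ inconsistent with all of them), $\bot$ occurs only at the end, $\makesentence{\tau\append{\bot}}$ is consistent since $s\in Y_{\mathsf{L}}(A,n+1)$, and $\tau$ is non-empty (otherwise $\makesentence{\seq{\bot}}$, which is $\stal$-equivalent to $\Box\bot$, would be consistent, contradicting MP). So $\makeseq{}$ is defined on $\tau\append{\bot}$ and \cref{correctorder} applies. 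Writing $\tau=\seq{t_0,\dots,t_{m-1}}$, the recursion for $\makesentence{\cdot}$ makes $\makesentence{\tau\append{\bot}}$ $\stal$-equivalent to the conjunction of $t_0$, of the sentences $\neg\disj\tau\slice{:k}\gg t_k$ for $1\le k\le m-1$, and of $\neg\disj\tau>\bot$; since truth at a world distributes over conjunction and $\stal$ is sound for order models, it suffices to verify each of these conjuncts at $\Sigma$.

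For this I would lean on the immediately preceding lemma (the truth lemma for $\mathcal{M}_s$, namely that a depth-$\le n$ sentence is true at a tail $\sigma$ of $\mathcal{M}_s$ iff it is entailed by $\sigma\elem{0}$), together with \cref{correctorder}, which tells us the elements occurring in $\Sigma$ are exactly $t_0,\dots,t_{m-1}$, in that order of first occurrence; in particular $\Sigma\elem{0}=t_0$, so the conjunct $t_0$ is true at $\Sigma$. For a conjunct $\neg\disj\tau\slice{:k}\gg t_k$: since each $t_i$ is a depth-$n$ state description and any two of them are jointly inconsistent, \cref{statesarestates} shows that a tail $\sigma$ of $\Sigma$ satisfies $\neg\disj\tau\slice{:k}$ iff $\sigma\elem{0}\notin\{t_0,\dots,t_{k-1}\}$; by the first-occurrence ordering, every tail of $\Sigma$ strictly before $\Sigma$'s first $t_k$-tail starts with one of $t_0,\dots,t_{k-1}$, whereas that first $t_k$-tail (which exists, as $k\le m-1$) starts with $t_k\notin\{t_0,\dots,t_{k-1}\}$. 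Hence $\Sigma$ has a $\neg\disj\tau\slice{:k}$-tail, and the first one is its first $t_k$-tail, which satisfies $t_k$ by the truth lemma; so $\neg\disj\tau\slice{:k}>\neg t_k$ is false at $\Sigma$, i.e.\ $\neg\disj\tau\slice{:k}\gg t_k$ is true there. Finally, since every element of $\Sigma$ is one of the $t_i$, no tail of $\Sigma$ satisfies $\neg\disj\tau$, so $\neg\disj\tau>\bot$ holds vacuously at $\Sigma$. All conjuncts being true at $\Sigma$, $s$ is true at $\Sigma$, hence in $\mathcal{M}_s$.

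I do not anticipate a genuine obstacle: the substance has already been front-loaded into \cref{correctorder} (which synchronizes the order of first occurrences in $\makeseq{(\tau\append{\bot})}$ with the nesting order of the conditionals inside $\makesentence{\tau\append{\bot}}$) and into the preceding truth lemma for $\mathcal{M}_s$. The only point needing mild care is identifying, for each $k$, the first tail of $\Sigma$ on which $\neg\disj\tau\slice{:k}$ is true, and this is immediate once one observes that before $\Sigma$'s first $t_k$-tail every tail starts with one of the earlier $t_j$'s. (One could equally cast the whole argument as a short induction on $m$ mirroring the recursion in $\makesentence{\cdot}$, but the direct verification above seems cleaner.)
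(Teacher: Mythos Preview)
Your proposal is correct and follows essentially the same route as the paper: both arguments reduce the truth of $s=\makesentence{\tau\append{\bot}}$ at $\Sigma$ to the facts supplied by the preceding truth lemma together with \cref{correctorder}. The paper compresses your conjunct-by-conjunct verification into the single observation that $\makesentence{\tau\append{\bot}}$ holds at a sequence exactly when the depth-$n$ state descriptions true at its tails are precisely the elements of $\tau$, with matching order of first occurrence; your unpacking of that observation (and your explicit check that $\tau\append{\bot}$ is orderly) is a faithful elaboration of the same idea.
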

\begin{proof}
    When $s\in Y_{\mathsf{L}}(A,n+1)$, $s = \makesentence{\tau\append{\bot}}$ for some orderly list $\tau$ of elements of $Y_{\mathsf{L}}(A,n)$.  Looking back at the definition of $\makesentence{\cdot}$, it is easy to see that $s$ is true at a sequence in an ordinal sequence model iff the depth-$n$ state-descriptions true at tails of that sequence are exactly those that occur in $\tau$, and the order of their first occurrences is their order in $\tau$.  Given \cref{tailtruth}, this means it is true in $\mathcal{M}_s$ so long as the elements of $\makeseq{(\tau\append{\bot})}$ are exactly those of $\tau$ and their first occurrences are in the same order as in $\tau$.  But we already proved that this is the case, as \cref{correctorder}.
\end{proof}

This is already enough for the basic result that \vanf is complete for ordinal sequence models.  The claim we stated as \cref{ordseqcompleteness} was a bit stronger than this in two ways: we required the models to be \emph{finite}, and we also imposed a limit on the (transfinite) length of the sequences in its domain.  To secure these refinements, we can appeal to the following lemma:
\begin{lemma} \label{finitetails}
    For any orderly $\tau$ of length $k$: if $k\leq 2$ then $\makeseq{\tau}$ has length $k-1$ and $k-1$ nonempty tails; and if $k\geq 3$, then $\makeseq{\tau}$ has length at most $\omega^{k-2}$ and has at most $\frac{3}{2}(k-1)!$ non-empty tails.  
\end{lemma}

\begin{proof}
    By induction on $k$.

    Base cases: when $\tau$ has length 1, $\makeseq{\tau}$ is empty and hence has no nonempty tails; when $\tau$ has length 2, $\makeseq{\tau}$ has length 1 and is its own only non-empty tail.

    Induction step: Suppose $\tau $ is of length $k+1$ (where $k\geq 2$).  If it is direct, then $\makeseq{\tau}$ is $\makeseq{\tau \slice{:-1}} + \makeseq{\theta} + \makeseq{\rho}$, where $\tau \slice{:-1}$, $\theta $, and $\rho$ are lists of length $\leq k$.  By the induction hypothesis, each of $\makeseq{\tau \slice{:-1}}$, $\makeseq{\theta}$, and $\makeseq{\rho}$ is of length at most $\omega^{k-2}$ with at most $\frac{3}{2}(k-1)!$ non-empty tails.
    Thus the length of $\makeseq{\tau}$ is at most $\omega^{k-2}\cdot 3 \leq \omega^{k-1}$.  Also, every non-empty tail of $\makeseq{\tau}$ is either (i) a non-empty tail of $\makeseq{\rho}$, or (ii) of the form $\sigma  + \makeseq{\rho}$, where $\sigma$ is a non-empty tail of $\makeseq{\theta}$, or (iii) of the form $\sigma  + \makeseq{\theta} + \makeseq{\rho}$, where $\sigma$ is a non-empty tail of $\makeseq{\tau\slice{:-1}}$.  So if $k>2$, the number of such tails is at most $\frac{9}{2}(k-1)! \leq \frac{3}{2}k!$, while if $k=2$, the number of such tails is at most $3 = \frac{3}{2}k!$.  

    Meanwhile, if $\tau$ is circuitous, $\makeseq{\tau}$ is of the form
    \begin{equation*}
        \makeseq{\chooselist(\tau\elem{0})} + \makeseq{\chooselist(g(\tau\elem{0}))} + \makeseq{\chooselist(g(g(\tau\elem{0})))} + \cdots
    \end{equation*}
    where $g$ is a function that maps elements of $\tau\slice{:-1}$ to other elements of $\tau\slice{:-1}$, and $\chooselist$ is a function that maps each element $t$ of $\tau\slice{:-1}$ to an orderly list of length $k$.  By the induction hypothesis, each $\makeseq{\chooselist(t)}$ is of length at most $\omega^{k-2}$, so $\makeseq{\tau}$ has length at most $\omega^{k-1}$.  Also, every non-empty tail of $\makeseq{\tau}$ is of the form 
    \begin{equation*}
        \theta + \makeseq{\chooselist(g(t))} + \makeseq{\chooselist(g(g(t)))} + \cdots
    \end{equation*}
    where $\theta$ is a tail of $\makeseq{\chooselist(t)}$ for some $t$ in $\tau\slice{:-1}$.  There are only $k$ such elements $t$, and by the induction hypothesis, each  $\makeseq{\chooselist(t)}$ has at most $\frac{3}{2}(k-1)!$ tails.  So $\makeseq{\tau}$ has at most   $\frac{3}{2}k!$ tails.
\end{proof}
Since the domain of our model $\mathcal{M}_s$ comprises the nonempty tails of an orderly sequence, this gives us the refined completeness result we wanted:
\begin{theorem}[= the completeness half of \cref{ordseqcompleteness}]
    Every consistent sentence of \vflat is true in some finite ordinal sequence model, closed under non-empty tailhood, in which every sequence has length strictly between 0 and $\omega^\omega$.
\end{theorem}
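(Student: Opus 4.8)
The plan is to derive this completeness result by assembling the lemmas already in hand, with essentially no new work. First I would reduce the claim about arbitrary consistent sentences to a claim about state descriptions, exactly as in the remark following \cref{exhaustivestates}: given a consistent sentence $p$ of \vflat, fix a finite set $A$ of atoms containing those of $p$ and a bound $n\ge 1$ on its modal depth (taking $n=1$ if $p$ happens to be Boolean). By \cref{exhaustivestates}, $p$ is consistent with some $s\in Y_{\mathsf{L}}(A,n)$, and by \cref{statesarestates}, since $s$ has depth $n$, $s$ entails $p$ in \stal and hence in \vflat. Since \stal is sound for order models, it then suffices to produce, for this $s$, a finite ordinal sequence model closed under non-empty tailhood, all of whose sequences have length below $\omega^\omega$, on which $s$ is true; every sentence $s$ entails, in particular $p$, will then be true there as well.

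Next I would unpack $s$. Because $n\ge 1$, we have $s=\makesentence{\tau\append{\bot}}$ for some list $\tau$ of elements of $Y_{\mathsf{L}}(A,n-1)$, and the consistency of $s$ makes $\tau\append{\bot}$ orderly. The claimed model is then just $\mathcal{M}_s$, whose domain is the set of non-empty tails of $\makeseq{(\tau\append{\bot})}$ with the obvious valuation. \cref{truein} already tells us $s$ is true in $\mathcal{M}_s$. That $\mathcal{M}_s$ is an ordinal sequence model is immediate from its definition (the order function is the tail order function on a set of ordinal sequences), and it is closed under non-empty tailhood because its domain is the set of \emph{all} non-empty tails of a single sequence, and every non-empty tail of a tail is again a non-empty tail. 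Finiteness and the length bound come directly from \cref{finitetails}: writing $k=\length{\tau\append{\bot}}$, the sequence $\makeseq{(\tau\append{\bot})}$ has length at most $\omega^{k-2}<\omega^{\omega}$ and at most $\tfrac{3}{2}(k-1)!$ non-empty tails, so $\mathcal{M}_s$ is finite and every sequence in it is shorter than $\omega^{\omega}$. Hence $\mathcal{M}_s$ is a model of exactly the required kind, and $p$ is true in it.

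Since all of this is a short chain of citations, there is no real obstacle left at this stage: the genuinely hard part is already over, living in the recursive construction of $\makeseq{\cdot}$ (\cref{makeseqdef}), the verification that it lists the elements of $\tau$ in the right order (\cref{correctorder}), the Flattening-powered coherence lemma \cref{mykeylem} (itself resting on \cref{backflattening,endapproach,circuitous}), and the truth lemma \cref{truein} that ties them together. If I were to flag any point needing care in writing this up, it would only be the bookkeeping in the reduction step — choosing the depth bound $n$ large enough (and at least $1$) so that \cref{statesarestates,exhaustivestates} both apply and $s$ genuinely has the form $\makesentence{\tau\append{\bot}}$ that \cref{truein} requires.
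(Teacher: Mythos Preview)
Your proposal is correct and follows essentially the same route as the paper's own proof: reduce to a state description $s$ via \cref{statesarestates,exhaustivestates}, invoke \cref{truein} to get $s$ true in $\mathcal{M}_s$, and cite \cref{finitetails} for the finiteness and length bounds, then use soundness to transfer truth from $s$ to $p$. Your write-up is in fact slightly more careful than the paper's in two respects: you explicitly force $n\ge 1$ so that $\mathcal{M}_s$ is defined (the paper's definition of $\mathcal{M}_s$ requires positive depth), and you spell out why closure under non-empty tailhood holds and why the length bound on $\makeseq{(\tau\append{\bot})}$ propagates to all of its tails.
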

\begin{proof}
    Given a consistent sentence $p$ of modal depth $n$ with atoms from $A$, by \cref{statesarestates,exhaustivestates}, there must be a depth-$n$ state description $s \in Y_{\vflat}(A,n)$ that entails $p$ in \stal.  By \cref{truein}, $s$ will be true in $\mathcal{M}_s$, which by \cref{finitetails} is finite and obeys the specified length limit.  By the soundness of \stal for order models, $p$ is true in $\mathcal{M}_s$.\end{proof}

Since ordinal sequence models are also flat order models, it follows that:
\begin{theorem}[=the completeness half of \cref{flattcom}]
    Every consistent sentence of \vflat is true in some finite flat order model.
\end{theorem}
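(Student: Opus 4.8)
The plan is to deduce this immediately from the preceding theorem (the completeness half of \cref{ordseqcompleteness}) together with the observation, already relied on for soundness, that every ordinal sequence model \emph{is} a flat order model. Given a \vflat-consistent sentence $p$, the preceding theorem hands us a finite ordinal sequence model $\mathcal{M} = \seq{W,\prec^W,V}$, closed under non-empty tailhood and with every sequence of length less than $\omega^\omega$, in which $p$ is true. So the only thing left to check is that any such $\mathcal{M}$ is a finite flat order model; finiteness is inherited directly, so the content is flatness.

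To see that the ordinal sequence frame $\seq{W,\prec^W}$ is flat, first note that its accessibility relation $R$ sends each $\sigma$ to $\{\sigma\}$ together with the set of its non-empty tails, which is transitive since a tail of a tail of $\sigma$ is a tail of $\sigma$ (and hence $R(\tau)\subseteq R(\sigma)$ whenever $\tau\in R(\sigma)$). For semi-flatness, transitivity means the clause involving $z\in R(x)\setminus R(w)$ never arises, so it suffices to show that $x<_w y\leq_w z$ implies $y\leq_x z$, where $<_w$ is $\prec^W_w$. Writing $x,y,z$ as the tails of $w$ of ranks $a<b\le c$ respectively, $y$ and $z$ are also tails of $x$, and one checks that the rank of $y$ in $x$ does not exceed the rank of $z$ in $x$. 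Rather than pushing through this bookkeeping (which is mildly delicate for eventually cyclic sequences, where the rank function on tails need not be additive), I would instead appeal to \cref{flatt}: Flattening is valid on a frame exactly when the frame is flat, and the argument of \cref{flattening1} that Flattening holds under sequence semantics turns only on the structure of the tailhood relation and so goes through whatever the domain of the underlying sequences. Since \vflat proves every instance of Flattening, every ordinal sequence frame validates Flattening and is therefore flat.

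Combining these, $\mathcal{M}$ is a finite flat order model in which $p$ is true, as required. I do not expect any genuine obstacle here: the one place where care is needed is the rank computation in the direct semi-flatness check, and routing through \cref{flatt} avoids it entirely.
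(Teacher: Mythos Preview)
Your proposal is correct and essentially identical to the paper's approach: the paper also derives this theorem as an immediate corollary of the preceding one, relying on the already-established fact (from the soundness half of \cref{ordseqcompleteness}) that all ordinal sequence models are flat. Your extra discussion of how to verify flatness---either directly or by routing through \cref{flatt}---is sound but more than the paper bothers to say at this point, since it had already discharged that obligation earlier.
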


\section{Completeness for \vanf}
\label{app:seqcompleteness}
We turn next to the stronger logic \vanf, defined as the result of adding all instances of the following schema to \vflat:
\begin{equation}
    \ourtag{Sequentiality}
    \Box(p \to \negate p > r) \wedge \Box(q \to \negate q > r) \to 
    ((p\vee q) \to \neg (p\vee q) > r).
\end{equation}
Here is the key new fact about this logic:
\begin{lemma} \label{sequentialitylemma}
    In any logic \textsf{L} including \vanf, every circuitous list ends with~$\bot$.  
\end{lemma}
\begin{proof}
Suppose for contradiction that $\tau$ is circuitous and $\tau\elem{-1} \ne 
\bot$.  Then
\begin{align} \label{consistentthing}
    \tau\elem{0} \wedge (\neg\disj\tau\slice{:-1} \gg \tau\elem{-1})
\end{align}
is consistent, since both conjuncts are equivalent to conjuncts of $\makesentence{\tau}$, which is consistent.  But
\begin{align} \label{shortinconsistent}
    \tau\elem{0} \wedge (\neg\tau\elem{0} \gg \tau\elem{-1})
\end{align}
is inconsistent, since it is equivalent to $\makesentence{\seq{\tau\elem{0},\tau\elem{-1}}}$, and the circuitousness of $\tau$ means that $\seq{\tau\elem{0},\tau\elem{-1}}$ (being a list comprising some but not all elements of $\tau$ and ending with $\tau\elem{-1}$) is inconsistent.  Likewise,
\begin{align} \label{longinconsistent}
    \disj\tau\slice{1:-1} \wedge (\neg\disj\tau\slice{1:-1} \gg \tau\elem{-1})
\end{align}
must also be inconsistent.  For supposing it were consistent, then by \cref{putinorder}, there would have to be some list $\tau'$ of elements of $\tau\slice{1:-1}$ such that $\bigvee\tau\slice{1:-1}$ is consistent with $\makesentence{\tau'\append{\tau\elem{-1}}}$.  Since  $\bigvee\tau\slice{1:-1}$ is not consistent with $\tau\elem{-1}$, this $\tau'$ must have length at least 1.  Thus, $\tau'\append{\tau\elem{-1}}$ would be an orderly list of elements of $\tau$, of length at least 2, with the same last element as $\tau$, but excluding $\tau\elem{0}$.  But the circuitousness of $\tau$ means there can be no such list.

The inconsistency of \cref{shortinconsistent} means that $\vdash \tau\elem{0} \to (\neg\tau\elem{0} > 
\neg\tau\elem{-1})$, and the inconsistency of \cref{longinconsistent} means that $\vdash \disj\tau\slice{1:-1} \to (\neg\disj\tau\slice{1:-1} > \neg\tau\elem{-1})$.  Since the logic is closed under necessitation, it follows that $\vdash \Box (\tau\elem{0} \to (\neg\tau\elem{0} > 
\neg\tau\elem{-1}))$ and $\vdash \Box(\disj\tau\slice{1:-1} \to (\neg\disj\tau\slice{1:-1} > \neg\tau\elem{-1}))$.  Noting that $\tau\elem{0} \vee \disj\tau\slice{1:-1}$ is just $\disj\tau\slice{:-1}$, we can apply Sequentiality (with $p\coloneqq \tau\elem{0}$, $q\coloneqq \disj\tau\slice{1:-1}$, $r\coloneqq \neg\tau\elem{-1}$) to conclude that $\vdash 
\disj\tau\slice{:-1} \to (\neg\disj\tau\slice{:-1} > \neg\tau\elem{-1})$.   This implies $\vdash 
\tau\elem{0} \to (\neg\disj\tau\slice{:-1} > \neg\tau\elem{-1})$, contradicting the consistency of \cref{consistentthing}.
\end{proof}

Since the definition of $\makeseq{}$ outputs a sequence of infinite length only when the input sequence is circuitous, this implies
\begin{lemma}
    In any logic including \vanf, whenever $\tau $ is orderly and does not end with $\bot$, $\makeseq{\tau}$ has finite length.
\end{lemma}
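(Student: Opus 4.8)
The plan is to argue by strong induction on $\length{\tau}$, with the lemma just established — that in any logic including \vanf every circuitous list ends with $\bot$ — doing all the real work. The base cases $\length{\tau}\in\{1,2\}$ are immediate, since there $\makeseq{\tau}$ is $\seq{}$ or $\seq{\tau\elem{0}}$ by definition, hence finite. (Orderly lists are non-empty, so there is nothing to check for $\length{\tau}=0$.) So I would fix $\tau$ orderly of length $\geq 3$ not ending with $\bot$, and assume the claim for all shorter orderly lists not ending with $\bot$.

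The key step is to observe that such a $\tau$ cannot be circuitous: by the lemma a circuitous list ends with $\bot$, contradicting the hypothesis. Since $\tau$ is orderly, has length at least $3$, and is not circuitous, it must be direct, so \cref{makeseqdef} puts us in Case 1: $\makeseq{\tau} = \makeseq{\tau\slice{:-1}} + \makeseq{\theta} + \makeseq{\rho}$ for the orderly lists $\theta$ and $\rho$ described there (with $\makeseq{\theta}=\seq{}$ in the degenerate subcase $j=\length{\tau}-2$). Each of $\tau\slice{:-1}$, $\theta$, $\rho$ is orderly — as noted in \cref{makeseqdef}, using that initial segments of orderly lists are orderly — and has length strictly less than $\length{\tau}$: this is clear for $\tau\slice{:-1}$; $\rho$ by construction has length strictly between $1$ and $\length{\tau}$; and $\theta$ is either $\seq{\tau\elem{-2}}$ or a proper initial segment of the length-$\length{\tau}$ list $\theta^+$.

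The one point needing a moment's care — and the closest thing to an obstacle — is verifying that none of $\tau\slice{:-1}$, $\theta$, $\rho$ ends with $\bot$, so that the induction hypothesis actually applies to each. This is straightforward from orderliness, under which $\bot$ can occur in a list only as its final element: $\tau\slice{:-1}$ ends with $\tau\elem{-2}$, which is therefore not $\bot$; $\rho$ ends with $\tau\elem{-1}$, which is not $\bot$ by hypothesis; and $\theta$ ends with $\tau\elem{j}$, which is the first element of an orderly list of length at least $2$ and so cannot be $\bot$. Applying the induction hypothesis to each of the three shorter lists then yields that $\makeseq{\tau\slice{:-1}}$, $\makeseq{\theta}$, and $\makeseq{\rho}$ are all finite, whence their concatenation $\makeseq{\tau}$ is finite, completing the induction.
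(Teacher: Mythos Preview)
Your proof is correct and is precisely the ``straightforward induction on the length of $\tau$'' that the paper invokes without spelling out; you have filled in exactly the details the paper omits, including the check that none of $\tau\slice{:-1}$, $\theta$, $\rho$ ends with $\bot$.
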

\begin{proof}
    A straightforward induction on the length of $\tau$.
\end{proof}

\begin{lemma}
    In any logic including \vanf, whenever $\tau $ is orderly, $\makeseq{\tau}$ is at most of length $\omega$.
\end{lemma}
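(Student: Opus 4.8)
The plan is to argue by induction on $\length{\tau}$, leaning on the lemma just proved — that $\makeseq{\rho}$ is finite whenever $\rho$ is orderly and does not end with $\bot$ — together with elementary ordinal arithmetic. The base cases $\length{\tau}\le 2$ are immediate, since there $\makeseq{\tau}=\tau\slice{:-1}$ has length at most $1$. For the inductive step with $\length{\tau}>2$, I would split on whether $\tau$ is direct or circuitous.

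In the direct case, recall $\makeseq{\tau}=\makeseq{\tau\slice{:-1}}+\makeseq{\theta}+\makeseq{\rho}$. The first two summands are finite: $\tau\slice{:-1}$ is an initial segment of an orderly list, hence orderly, and cannot end with $\bot$ (in an orderly list $\bot$ occurs only as the final element, and the last element $\tau\elem{-2}$ of $\tau\slice{:-1}$ is not the last element of $\tau$); similarly $\theta$ ends with $\tau\elem{j}$ — or is the one-element list $\seq{\tau\elem{-2}}$ — which again is not $\bot$. So the preceding lemma applies to both and yields finiteness. The third summand $\makeseq{\rho}$ has length $\le\omega$ by the induction hypothesis, since $\rho$ is orderly and strictly shorter than $\tau$. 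Since $m+n+\alpha\le\omega$ whenever $m,n<\omega$ and $\alpha\le\omega$, we conclude $\makeseq{\tau}$ has length at most $\omega$; the point to notice is that the possibly-infinite summand $\makeseq{\rho}$ comes \emph{last} in the concatenation, so it is not shifted upward by a trailing finite block.

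In the circuitous case, $\makeseq{\tau}$ is the $\omega$-indexed concatenation $\makeseq{\chooselist(\tau\elem{0})}+\makeseq{\chooselist(g(\tau\elem{0}))}+\makeseq{\chooselist(g(g(\tau\elem{0})))}+\cdots$, where each $\chooselist(t)$ is $\pi(t)\slice{:-1}$. Each $\pi(t)$ is orderly, so its only occurrence of $\bot$ (if any) is its final element, which is deleted in passing to $\chooselist(t)$; hence $\chooselist(t)$ is orderly and contains no $\bot$, so $\makeseq{\chooselist(t)}$ is finite by the preceding lemma. A countable concatenation of finite sequences has order type at most $\omega$, so $\makeseq{\tau}$ again has length at most $\omega$. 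There is no deep obstacle here: the only delicate points are the bookkeeping about where $\bot$ can occur — so that the ``does not end with $\bot$'' lemma applies to $\tau\slice{:-1}$, to $\theta$, and to each $\chooselist(t)$ — and the observation that in the direct case the sole summand that can reach length $\omega$ sits last, so no ordinal of the form ``$\omega+k$'' ever arises.
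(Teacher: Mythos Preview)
Your proof is correct and follows essentially the same route as the paper's: induction on $\length{\tau}$, with the direct case handled by noting that $\tau\slice{:-1}$ and $\theta$ cannot end with $\bot$ (so their $\makeseq{}$-images are finite by the preceding lemma) while $\makeseq{\rho}$ is bounded by the induction hypothesis, and the circuitous case handled by observing that each $\chooselist(t)$ is orderly and $\bot$-free so that the $\omega$-indexed concatenation is a sum of finite blocks. The only cosmetic difference is that the paper first reduces to the case where $\tau$ ends with $\bot$ before running the induction, whereas you treat all orderly $\tau$ uniformly; this is harmless.
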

\begin{proof}
    Given the previous lemma, it suffices to prove the result when the last element of $\tau$ is $\bot$.  We do so by induction on the length of $\tau $.  The base cases (1 and 2) are trivial.  For the first part of the induction step, suppose $\tau $ is direct, so that $\makeseq{\tau}$ is of the form $\makeseq{\tau\slice{:-1}}+\makeseq{\theta}+\makeseq{\rho}$.  Since neither $\tau\slice{:-1}$ nor $\theta$ ends with $\bot$, the first two summands are finite by the previous lemma, and the third summand is at most of length $\omega$ by the induction hypothesis, so $\makeseq{\tau}$ is at most of length $\omega$.  For the second part of the induction step, suppose $\tau $ is circuitous. Then $\makeseq{\tau}$ is  
    \begin{equation*}
        \makeseq{\chooselist(\tau\elem{0})} + \makeseq{\chooselist(g(\tau\elem{0}))} + \makeseq{\chooselist(g(g(\tau\elem{0})))} + \cdots
    \end{equation*}
    where each $\chooselist(t)$ is a non-empty list not ending in $\bot$.  By the previous lemma, all these sequences are finite; so their concatenation has order type $\omega$.  
\end{proof}

Combining this with \cref{truein}, we have:
\begin{theorem} \label{csfscompleteness}
    \vanf is complete for finite ordinal sequence models closed under non-empty tailhood in which all sequences have order-type greater than 0 and no greater than $\omega$. 
\end{theorem}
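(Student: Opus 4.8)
The plan is to assemble the theorem from the machinery already in place in this appendix, following the same template used for \cref{c2completeness} and for the completeness halves of \cref{flattcom} and \cref{ordseqcompleteness}. The key reduction, recorded just after \cref{exhaustivestates}, is that to show every \vanf-consistent sentence has a model of the desired type it suffices to show that every depth-$n$ state description $s\in Y_{\vanf}(A,n)$ has one. So, given \vanf-consistent $p$ of modal depth $n$ with atoms from $A$: \cref{exhaustivestates} yields a depth-$n$ state description $s$ over $A$ that is consistent with $p$, and \cref{statesarestates} strengthens this to $s\vdash_{\stal}p$; the only remaining task is to exhibit a suitable model of $s$.

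The model to use is $\mathcal{M}_s$, where $s=\makesentence{\tau\append{\bot}}$ for some orderly list $\tau$ of depth-$(n-1)$ state descriptions, and $\mathcal{M}_s$ has as its sequences the non-empty tails of $\makeseq{(\tau\append{\bot})}$ with the natural valuation. By \cref{truein}, $s$ is true in $\mathcal{M}_s$; this is the substantive point, and it already holds for every logic containing \vflat. Two further facts pin $\mathcal{M}_s$ into the target class. First, \cref{finitetails} bounds the number of non-empty tails of $\makeseq{(\tau\append{\bot})}$ by $\frac{3}{2}(\length{\tau\append{\bot}}-1)!$, so $\mathcal{M}_s$ is finite. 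Second, the last of the three lemmas proved immediately above shows that when the ambient logic contains \vanf, $\makeseq{(\tau\append{\bot})}$ has order-type at most $\omega$; and any tail of a sequence of order-type $\leq\omega$ again has order-type $\leq\omega$, so every sequence of $\mathcal{M}_s$ does too. Hence $\mathcal{M}_s$ is a finite ordinal sequence model, closed under non-empty tailhood, all of whose sequences have order-type at most $\omega$.

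To finish, I would pass from $s$ to $p$ by soundness: $\mathcal{M}_s$ is an ordinal sequence model, hence flat, and since it is built from sequences of order-type at most $\omega$ it is also ancestral, so by \cref{seqchar} it validates \vanf; as $s\vdash_{\stal}p$ and $\stal\subseteq\vanf$, soundness of \stal for order models gives $p$ true at every world of $\mathcal{M}_s$. No obstacle of substance remains at this point: all the difficulty has been front-loaded into the construction of $\makeseq{\cdot}$ and the verification lemmas \cref{mykeylem} and \cref{truein}, while the genuinely new content relative to the \vflat case is just the order-type bound, which itself rests on the \vanf-specific lemma that circuitous lists must end in $\bot$ (obtained from Sequentiality via the necessitation argument in that lemma's proof). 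The one point needing mild care in the write-up is that the target class is meant to include finite-length lists, not only $\omega$-sequences, since $\makeseq{(\tau\append{\bot})}$ may itself be a genuine finite list---and that ``ordinal sequence model closed under non-empty tailhood'' is precisely the notion handed to us by the definition of $\mathcal{M}_s$.
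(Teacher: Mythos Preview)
Your proposal is correct and follows essentially the same route as the paper, which simply says ``Combining this with \cref{truein}'' and leaves the reader to unpack exactly the chain you spell out: state-description reduction via \cref{statesarestates} and \cref{exhaustivestates}, truth of $s$ in $\mathcal{M}_s$ via \cref{truein}, finiteness via \cref{finitetails}, and the $\leq\omega$ length bound from the immediately preceding lemma. One small slip: you only need $p$ true at the designated sequence of $\mathcal{M}_s$, not at every world, and for that the soundness of \stal for order models (together with $s\vdash_{\stal}p$) already suffices---the detour through \cref{seqchar} is harmless but unnecessary.
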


As a corollary, we have:
\goweakcomp*
\begin{proof}
    
    Let  $p$ be a sentence consistent in \vanf; then by
    \cref{csfscompleteness} it is true in a finite ordinal sequence model $\seq{\sigma,W,V}$ closed under non-empty tailhood, in which all sequences have length more than 0 and at most $\omega$.  We can assume that $W=\tails{\sigma}$ since excluding elements not accessible from the designated sequence gives an equivalent model.  If $\length{\sigma}=\omega$ then this is our desired $\omega$-sequence model. Otherwise, we note that the sequences produced via \cref{makeseqdef} never end with copies of the same element (i.e. for no such sequence $\sigma$ is $\sigma\elem{-1}=\sigma\elem{-2}$; we leave the proof to the reader).  So we can assume that $\sigma$ is a list whose last two elements are distinct.  
    Then consider the model derived from $\seq{\sigma,\tails{\sigma},V}$ by replacing each list $\rho$ in $\tails{\sigma}$ with the result of extending $\rho$ to an $\omega$-sequence by repeating its last element $\omega$ times, which we denote as $(\rho)_\omega$.  Since $\sigma$ does not end with copies, $(\cdot)_\omega$ is a bijection from $\tails{\sigma}$ to $\tails{(\sigma)_\omega}$.; Moreover, a little reflection shows that $\tau\preceq_\sigma\rho$ iff $(\tau)_\omega\preceq_{(\sigma)_\omega}(\rho)_\omega$. Where  $V'((\rho)_\omega)=V(\rho)$ for all $\rho\in\tails{\sigma}$,   $\seq{(\sigma)_\omega,\tails{(\sigma)_\omega},V'}$  is thus equivalent  to $\seq{\sigma,\tails{\sigma},V}$, and hence is our desired $\omega$-sequence model of $p$. 
    
\end{proof}
Note too that every ordinal sequence model whose sequences have length at most $\omega$ is \emph{ancestral}: every tail of every sequence can be reached by successively deleting the initial element. So we can also draw the following corollary from \cref{csfscompleteness}:
\golancestralcomp*

\section{Adding the McKinsey axiom}
\label{app:mckinsey}
In this section we consider logics \textsf{L} that extend \textsf{C2.FM}, the result of adding the McKinsey axiom to \vflat:
\begin{equation}
    \ourtag{McKinsey}
    \lozenge \Box p \vee  \lozenge \Box \neg p
\end{equation}

We'll use the following consequence of McKinsey in the context of \textsf{S4} (which \vflat includes):
\begin{equation}
   \ourtag{M*}
    \vdash \Box (p_1 \vee  \cdots \vee  p_n) \to  (\lozenge \Box p_1 \vee  \cdots \vee  \lozenge \Box p_n)
\end{equation}
\begin{lemma}\label{mstar}
    Every instance of the schema M* is a theorem of the modal logic \textsf{S4.M} (the result of adding every instance of McKinsey to \textsf{S4}).
\end{lemma}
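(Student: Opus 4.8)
The plan is to induct on $n$. The base case $n=1$ is the claim $\vdash_{\textsf{S4.M}} \Box p_1 \to \lozenge\Box p_1$, which already holds in \textsf{S4}: it is the instance of the \textsf{T}-derived schema $\varphi \to \lozenge\varphi$ (itself obtained by contraposing $\Box\neg\varphi \to \neg\varphi$) with $\varphi \coloneqq \Box p_1$, so no use of McKinsey is needed here.

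For the inductive step, suppose M* holds for $n$ and write $\varphi \coloneqq p_1 \vee \cdots \vee p_{n+1}$. The McKinsey instance for $p_{n+1}$ gives $\vdash \lozenge\Box p_{n+1} \vee \lozenge\Box\neg p_{n+1}$ outright, so by a propositional case split it suffices to show $\vdash \Box\varphi \wedge \lozenge\Box\neg p_{n+1} \to (\lozenge\Box p_1 \vee \cdots \vee \lozenge\Box p_n)$; conjoining this with the trivial $\lozenge\Box p_{n+1} \to \lozenge\Box p_{n+1}$ then yields $\Box\varphi \to (\lozenge\Box p_1 \vee \cdots \vee \lozenge\Box p_{n+1})$, as wanted.

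To establish that implication I would chain together standard normal-modal-logic steps. From $\Box\varphi$ get $\Box\Box\varphi$ by \textsf{4}; combine with $\lozenge\Box\neg p_{n+1}$ using the \textsf{K}-theorem $\Box\psi \wedge \lozenge\chi \to \lozenge(\psi\wedge\chi)$ (with $\psi\coloneqq\Box\varphi$, $\chi\coloneqq\Box\neg p_{n+1}$) to get $\lozenge(\Box\varphi\wedge\Box\neg p_{n+1})$; since $\Box\varphi\wedge\Box\neg p_{n+1} \to \Box(\varphi\wedge\neg p_{n+1}) \to \Box(p_1\vee\cdots\vee p_n)$, monotonicity of $\lozenge$ gives $\lozenge\Box(p_1\vee\cdots\vee p_n)$. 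Now feed this into the induction hypothesis: necessitating $\Box(p_1\vee\cdots\vee p_n)\to(\lozenge\Box p_1\vee\cdots\vee\lozenge\Box p_n)$ and pushing a $\lozenge$ through it gives $\lozenge\Box(p_1\vee\cdots\vee p_n) \to \lozenge(\lozenge\Box p_1\vee\cdots\vee\lozenge\Box p_n)$; distributing $\lozenge$ over the disjunction and then collapsing each $\lozenge\lozenge$ to $\lozenge$ by \textsf{4} yields exactly $\lozenge\Box p_1 \vee \cdots \vee \lozenge\Box p_n$.

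Every step lives inside normal modal logic and uses only \textsf{K}, \textsf{T}, \textsf{4}, and the single McKinsey instance for $p_{n+1}$, so there is no genuine obstacle; the only thing to watch is the bookkeeping of the auxiliary \textsf{K}-facts ($\Box\psi\wedge\lozenge\chi\to\lozenge(\psi\wedge\chi)$, monotonicity of $\lozenge$, distribution of $\lozenge$ over $\vee$, and $\lozenge\lozenge\chi\to\lozenge\chi$), together with making sure the induction hypothesis is invoked under a $\lozenge$ by first necessitating it.
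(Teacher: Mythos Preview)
Your proof is correct and follows essentially the same route as the paper's: induct on $n$, split on the McKinsey disjunction for the last disjunct, in the $\lozenge\Box\neg p_{n+1}$ branch combine with $\Box\Box\varphi$ (via \textsf{4}) to obtain $\lozenge\Box(p_1\vee\cdots\vee p_n)$, push the induction hypothesis under the $\lozenge$, distribute, and collapse $\lozenge\lozenge$ with \textsf{4}. The paper's version is more compressed but identical in substance; your explicit bookkeeping of the auxiliary \textsf{K}-facts is a welcome clarification.
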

\begin{proof}
    By induction on $n$.  Base case trivial.  Induction step: suppose $\Box (p_1 \vee  \cdots \vee  p_n)$. By McKinsey we have $\lozenge \Box \neg p_n \vee  \lozenge \Box p_{n}$, hence $\lozenge \Box (p_1 \vee  \cdots \vee  p_{n-1}) \vee  \lozenge \Box p_{n}$.  By the induction hypothesis, $\lozenge \lozenge \Box p_1 \vee  \cdots \vee  \lozenge \lozenge \Box p_{n-1} \vee  \lozenge \Box p_{n}$.  So by 4, $\lozenge \Box p_1 \vee  \cdots \vee  \lozenge \Box p_{n-1} \vee  \lozenge \Box p_{n}$.  
\end{proof}

McKinsey gives us the following counterpart to the main result (\cref{sequentialitylemma}) concerning Sequentiality:
\begin{lemma} \label{mckin}
    In any logic \textsf{L} including \textsf{C2.FM}, no  circuitous list ends with $\bot$.
\end{lemma}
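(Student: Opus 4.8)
The plan is to argue by contradiction, exploiting the fact that when $\tau$ ends with $\bot$ the final conjunct of $\makesentence{\tau}$ is, in \stal, a boxed disjunction $\Box(\disj\tau\slice{:-1})$, which the "distributed" form \textsf{M*} of McKinsey forces to localize to a single $\lozenge\Box$-disjunct. So suppose $\tau$ is circuitous with $\tau\elem{-1} = \bot$. Since $\tau$ has length at least $3$, write $\tau = \sigma\append{\bot}$ with $\sigma = \tau\slice{:-1} = \seq{t_0,\ldots,t_{k-1}}$ and $k \geq 2$; orderliness of $\tau$ makes the $t_i$ consistent, pairwise inconsistent, and distinct from $\bot$ and from one another.

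First I would show that $\makesentence{\seq{t_i,\bot}}$ is inconsistent for each $i$. The list $\seq{t_i,\bot}$ consists of exactly two of the (at least three) elements of $\tau$ and has the same last element $\bot$ as $\tau$; its non-emptiness, the pairwise inconsistency of its elements, and the restriction of $\bot$ to its final position are all immediate, so if $\makesentence{\seq{t_i,\bot}}$ were consistent this list would be orderly and would witness that $\tau$ is direct, contradicting circuitousness. Hence $\makesentence{\seq{t_i,\bot}}$ is inconsistent. But $\makesentence{\seq{t_i,\bot}}$ is \stal-equivalent to $t_i \wedge \Box t_i$ (using $\top \gg t_i \equiv t_i$ and $\neg t_i > \bot \equiv \Box t_i$), so $\vdash \neg(t_i \wedge \Box t_i)$; combining this with the \textsf{T} axiom $\Box t_i \to t_i$ gives $\vdash \neg\Box t_i$, and Necessitation then yields $\vdash \neg\lozenge\Box t_i$.

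Next I would derive the contradiction. Since $\tau = \sigma\append{\bot}$, $\makesentence{\tau}$ entails its final conjunct $\neg\disj\sigma > \bot$, which is \stal-equivalent to $\Box(\disj\sigma) = \Box(t_0 \vee \cdots \vee t_{k-1})$. Every instance of \textsf{M*} is a theorem of \textsf{L}: by \cref{mstar} it is a theorem of \textsf{S4.M}, and \textsf{C2.FM} (hence \textsf{L}) contains \textsf{S4} for the defined $\Box$ --- \textsf{K} and Necessitation from \stal, \textsf{T} from \textsf{MP}, \textsf{4} from Flattening --- together with McKinsey. So $\makesentence{\tau}$ entails $\lozenge\Box t_0 \vee \cdots \vee \lozenge\Box t_{k-1}$. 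But every disjunct here is inconsistent by the previous step, so $\makesentence{\tau}$ is inconsistent, contradicting the orderliness of $\tau$. Hence no circuitous list ends with $\bot$.

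The hard part will not be any single step --- this lemma is the exact dual of the Sequentiality lemma, and once one notices that the terminal-$\bot$ conjunct is $\Box(\disj\sigma)$ and that \textsf{M*} must then hand back some $\lozenge\Box t_i$, the argument is short. The only real care needed is bookkeeping: verifying the \stal-equivalence $\makesentence{\seq{t_i,\bot}} \equiv t_i \wedge \Box t_i$, and checking that the structural clauses in the definition of orderliness are genuinely inherited by $\seq{t_i,\bot}$, so that its sole possible defect is the inconsistency of $\makesentence{\cdot}$ --- which is precisely what converts the failure of directness into the syntactic fact $\vdash \neg\Box t_i$.
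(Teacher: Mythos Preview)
Your proof is correct and takes essentially the same approach as the paper's: both use \textsf{M*} on the terminal conjunct $\Box(\disj\tau\slice{:-1})$ to force some $\lozenge\Box t_i$ to be consistent, and then observe that this makes $\seq{t_i,\bot}$ orderly and hence a witness to directness. The paper runs the argument in the forward direction (orderly ending in $\bot$ implies some $\seq{t_i,\bot}$ is orderly, hence not circuitous), while you run the contrapositive (circuitous implies every $\seq{t_i,\bot}$ fails, hence $\makesentence{\tau}$ is inconsistent), but the logical content is the same.
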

\begin{proof}
    Suppose that $\tau$ is orderly and ends with $\bot$; then $\neg (\disj\tau \slice{:-1}) > \bot $, i.e.\ $\Box (\tau\elem{0} \vee \cdots \vee \tau\elem{-2})$, is consistent.  So by \cref{mstar}, $\lozenge \Box \tau\elem{0}\vee \cdots\vee \lozenge \Box \tau \elem{-2}$ is consistent, so there must be some $p$ in $\tau\slice{:-1}$ such that $\lozenge \Box p$, and hence also $\Box p$, is consistent.  In that case $\seq{p,\bot}$ is orderly, so $\tau $ is not circuitous.  
\end{proof}

Using this, we can show:
\begin{lemma}
    In any logic including \textsf{C2.FM}, whenever $\tau $ is orderly and ends with $\bot$, the length of $\makeseq{\tau}$ is a successor ordinal.
\end{lemma}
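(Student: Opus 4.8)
The plan is to argue by induction on $\length{\tau}$, with the real work done by \cref{mckin}: since in any logic including \textsf{C2.FM} no circuitous list ends with $\bot$, an orderly list of length $\ge 3$ that ends with $\bot$ must be \emph{direct}, so $\makeseq{}$ unwinds it via Case 1 of \cref{makeseqdef} rather than Case 2. The payoff of being in Case 1 is that $\makeseq{\tau}$ is then a concatenation whose \emph{last} summand is $\makeseq{\rho}$ for a strictly shorter orderly list $\rho$ that still ends with $\bot$; the inductive hypothesis applied to $\rho$ gives that $\makeseq{\rho}$ has a last element, and prepending arbitrary sequence material on the left cannot destroy that — i.e.\ concatenating a sequence on the left of a sequence of successor length again yields successor length.

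For the base case, orderly lists are non-empty, and the only length-$1$ list ending with $\bot$ is $\seq{\bot}$, which is not orderly since $\makesentence{\seq{\bot}}$ is $\top \wedge (\neg\bot > \bot)$, which entails $\bot$ by MP; so an orderly list ending with $\bot$ has length at least $2$, and when $\length{\tau} = 2$ the definition gives $\makeseq{\tau} = \tau\slice{:-1} = \seq{\tau\elem{0}}$, of length $1$, a successor ordinal. For the inductive step, let $k = \length{\tau} \ge 3$ and assume the claim for all shorter orderly lists ending with $\bot$. As $\tau$ is orderly of length $\ge 3$ it is direct or circuitous, and by \cref{mckin} it is not circuitous, hence direct. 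Using the definition's convention that $\theta = \seq{\tau\elem{-2}}$ (so $\makeseq{\theta} = \seq{}$) when $j = k-2$, Case 1 of \cref{makeseqdef} gives in both subcases $\makeseq{\tau} = \makeseq{\tau\slice{:-1}} + \makeseq{\theta} + \makeseq{\rho}$, where $\rho$ is an orderly list of elements of $\tau$ with last element $\tau\elem{-1} = \bot$ and length strictly between $1$ and $k$, so $2 \le \length{\rho} < k$. By the inductive hypothesis $\length{\makeseq{\rho}}$ is a successor ordinal; in particular $\makeseq{\rho}$ is non-empty and has a last element. Since $\makeseq{\tau}$ is a concatenation ending in $\makeseq{\rho}$, it has a last element, i.e.\ $\length{\makeseq{\tau}}$ is a successor ordinal.

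Once \cref{mckin} is in hand the argument is essentially bookkeeping, so I do not expect a genuine obstacle; the only point needing care is organisational, namely checking that the two subcases of Case 1 ($j = k-2$ versus $j < k-2$) are handled uniformly — which the definition's own remark about setting $\theta = \seq{\tau\elem{-2}}$ already arranges — and recording the trivial ordinal fact used above about concatenation preserving successor length. One can also note in passing that $\makeseq{\tau\slice{:-1}}$ and $\makeseq{\theta}$ are in fact \emph{finite}, since $\tau\slice{:-1}$ and $\theta$ do not end with $\bot$, but this extra information is not needed for the conclusion.
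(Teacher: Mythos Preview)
Your proposal is correct and follows essentially the same approach as the paper's proof: induction on $\length{\tau}$, invoking \cref{mckin} to rule out the circuitous case so that Case~1 of \cref{makeseqdef} applies, and then applying the induction hypothesis to $\rho$ (which is shorter and still ends with $\bot$). Your version is simply more detailed on the base case and on the uniform handling of the two subcases of Case~1, points the paper leaves implicit.
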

\begin{proof}
    Induction on the length of $\tau$.  The base cases (1 and 2) are trivial.  For the induction step, suppose $\tau$ is orderly and ends with $\bot$: then it is direct by \cref{mckin}, so $\makeseq{\tau}$ is of the form $\makeseq{\tau\slice{:-1}}+\makeseq{\theta}+\makeseq{\rho}$, where $\rho$ is shorter than $\tau$ and also ends with $\bot$; by the induction hypothesis, the length of $\makeseq{\rho}$ is a successor, so the length of $\makeseq{\tau}$ is a successor too.
\end{proof}

Combining this with \cref{truein}, we have:
\begin{theorem}[the completeness half of \cref{fmcomp}]
   \textsf{C2.FM} is complete for finite ordinal sequence models closed under non-empty tailhood in which the domains of all sequences are successor ordinals.
\end{theorem}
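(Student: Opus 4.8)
The plan is to obtain this completeness result by the same packaging argument used for the other completeness theorems in this appendix: feed the immediately preceding lemma into \cref{truein}. First I would fix a \textsf{C2.FM}-consistent sentence $p$, with atoms in some finite set $A$ and modal depth at most $n$, where we may take $n \ge 1$ since enlarging $n$ is harmless (every $\disj Y_{\mathsf{L}}(A,m)$ is a theorem). By \cref{statesarestates,exhaustivestates} there is a depth-$n$ state description $s \in Y_{\mathsf{L}}(A,n)$ consistent with $p$, and this $s$ in fact entails $p$ in \stal. Since $n \ge 1$ we may write $s = \makesentence{\tau \append{\bot}}$ for an orderly list $\tau$ of depth-$(n-1)$ state descriptions (as in the setup of \cref{truein}); then the list $\tau\append{\bot}$ is itself orderly and, of course, ends with $\bot$.

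Next I would apply \cref{truein}: $s$ is true in the ordinal sequence model $\mathcal{M}_s$ whose domain is the set of non-empty tails of $\makeseq{(\tau\append{\bot})}$, with the natural valuation. Since $s$ entails $p$ and \stal is sound for order models, $p$ is true in $\mathcal{M}_s$ as well. Finiteness of $\mathcal{M}_s$ is immediate from \cref{finitetails} (the number of non-empty tails of $\makeseq{(\tau\append{\bot})}$ is at most $\frac{3}{2}\,\length{\tau}!$), and the domain is closed under non-empty tailhood by construction. So everything reduces to checking that every sequence in $\mathcal{M}_s$ — that is, every non-empty tail of $\makeseq{(\tau\append{\bot})}$ — has length a successor ordinal.

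This is precisely where the McKinsey axiom does its work, and it has already been isolated: the preceding lemma (which passes through \cref{mckin}, and hence through the consequence \textsf{M*} of McKinsey established in \cref{mstar}) tells us that, because $\tau\append{\bot}$ is orderly and ends with $\bot$, the length of $\makeseq{(\tau\append{\bot})}$ is itself a successor ordinal. To finish, I would only add the elementary remark that a non-empty tail of any successor-length sequence again has successor length: if $\length{\sigma} = \alpha + 1$ and $\beta \le \alpha$, then the length $\gamma$ of the tail $\sigma\slice{\beta:}$ is the unique ordinal with $\beta + \gamma = \alpha + 1$; here $\gamma \ne 0$ since the tail is non-empty, and $\gamma$ cannot be a limit ordinal, for otherwise $\beta + \gamma$ would be a limit ordinal, contradicting $\length{\sigma} = \alpha + 1$. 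Hence $\mathcal{M}_s$ is a finite ordinal sequence model, closed under non-empty tailhood, all of whose sequences have successor-ordinal length, in which $p$ is true, and we are done. I do not anticipate a genuine obstacle here: all the substance has been discharged in \cref{truein}, \cref{finitetails}, and the successor-length lemma (ultimately in \cref{mckin} and \cref{mstar}), and what remains is assembly together with the one-line ordinal-arithmetic observation about tails.
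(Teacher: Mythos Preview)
Your proposal is correct and follows essentially the same approach as the paper, which simply says ``Combining this with \cref{truein}'' and states the theorem. You spell out the packaging in more detail and, usefully, make explicit the ordinal-arithmetic observation that every non-empty tail of a successor-length sequence again has successor length---a step the paper leaves tacit but which is indeed needed to pass from ``$\makeseq{(\tau\append{\bot})}$ has successor length'' to ``every sequence in $\mathcal{M}_s$ has successor length''.
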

And putting together \cref{sequentialitylemma,mckin,truein}, we have
\begin{theorem}[the completeness half of \cref{FSMcomp}]
    \textsf{C2.FSM} is complete for finite list-models (i.e., ordinal sequence models closed under non-empty tailhood whose domain consists of finitely many ordinal sequences, each of finite length).
\end{theorem}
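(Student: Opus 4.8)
The plan is to run the general construction from the previous sections with the underlying logic $\textsf{L}$ taken to be $\textsf{C2.FSM}$, and then simply combine the two length bounds on $\makeseq{\tau}$ that have already been established — one coming from Sequentiality, one from McKinsey. Since $\textsf{C2.FSM}$ extends both $\vanf$ and $\textsf{C2.FM}$, every lemma in the appendix proved ``for any logic including $\vanf$'' and ``for any logic including $\textsf{C2.FM}$'' applies verbatim with $\textsf{L} = \textsf{C2.FSM}$. In particular, for any orderly list $\tau$, the $\vanf$ lemma gives that $\makeseq{\tau}$ has order type at most $\omega$, while the $\textsf{C2.FM}$ lemma gives that, when $\tau$ additionally ends with $\bot$, the order type of $\makeseq{\tau}$ is a successor ordinal. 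The only ordinals that are both $\le\omega$ and successors are the finite ones (as $\omega$ is a limit), so $\makeseq{(\tau\append{\bot})}$ is a finite list whenever $\tau$ is orderly.

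The next step is to package this into the model $\mathcal{M}_s$. Given $s = \makesentence{\tau\append{\bot}} \in Y_{\textsf{C2.FSM}}(A,n+1)$, the domain of $\mathcal{M}_s$ is by construction the set of non-empty tails of $\makeseq{(\tau\append{\bot})}$; by the paragraph above this is a set of finite lists, it is (trivially) closed under non-empty tailhood, and by \cref{finitetails} it is finite. So $\mathcal{M}_s$ is a finite list-model in exactly the sense the theorem requires. By \cref{truein}, $s$ is true in $\mathcal{M}_s$.

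The wrap-up is then the standard one. Given a $\textsf{C2.FSM}$-consistent sentence $p$ of modal depth $n$ with atoms from a finite set $A$, \cref{statesarestates,exhaustivestates} supply a depth-$n$ state description $s$ over $A$ (relative to $\textsf{C2.FSM}$) that $\stal$-entails $p$; since $s$ is true in the finite list-model $\mathcal{M}_s$ and $\stal$ is sound for order models, $p$ is true there too. Together with the routine soundness half of \cref{FSMcomp} — list-models are flat, ancestral, and validate McKinsey by the argument given in the main text — this establishes that $\textsf{C2.FSM}$ is sound and complete for finite list-models.

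I do not anticipate a genuine obstacle: all the real work was done in the two preceding lemmas, and the only new move is the trivial ordinal-arithmetic observation that ``$\le\omega$ and a successor'' forces finiteness. The one thing to be careful about is bookkeeping — confirming that the $\omega$-bound from $\vanf$ is not disturbed by adding the McKinsey schema (it is not, because $\textsf{C2.FSM}$ is just $\vanf$ plus extra axioms, so the relevant lemma applies unchanged), and checking that $\mathcal{M}_s$ literally satisfies every clause in the definition of a list-model, which is immediate from \cref{finitetails} and the shape of the domain.
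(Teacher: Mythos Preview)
Your proposal is correct and follows exactly the paper's approach: the paper's proof is a one-line ``putting together this theorem with \cref{csfscompleteness}'', and you have spelled out precisely that combination, namely that since $\textsf{C2.FSM}$ extends both $\vanf$ and $\textsf{C2.FM}$, the $\leq\omega$ bound and the successor-ordinal property both apply to $\makeseq{(\tau\append{\bot})}$, forcing it to be a finite list.
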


\section{Languages without left-nesting\label{appendix:lba}}
In this section, we show that all theorems of $\mathsf{C2.FSM}$ in the language $\mathcal{L}_{\mathcal{B}>}$ (in which conditionals must have Boolean antecedents) are already theorems of $\mathsf{C2.F}$. 

Given a valuation $V$ on a set $P$ (of ``protoworlds''), let $\mathcal{M}_{V,\alpha}$ be the ordinal sequence model whose domain is the set of all non-empty sequences over $P$ with length $\leq\alpha$, with the valuation given by applying $V$ to the first element of each sequence.  We define a function $h$ that takes a sequence $\sigma$ in this model's domain and a $\mathcal{L}_{\mathcal{B}>}$-sentence $p$, and returns a set $h(\sigma,p)$ of ordinals in the domain of $\sigma$---intuitively, the elements of $\sigma$ that are ``relevant'' for the truth value of $p$ at $\sigma$.  Here is the definition:
\begin{align*}
    h(\sigma,p_i) &:=  \{0\} \text{ for }p_i \text{ an atom} \\
    h(\sigma,\neg p) &:=  h(\sigma,p) \\
    h(\sigma,p\wedge q) &:=  h(\sigma,p) \cup  h(\sigma,q) \\
    h(\sigma,p>q) &:=  
    \begin{cases}
        \{0\} \cup  \{\alpha + \beta \mid \beta\in h(\sigma\slice{\alpha:},q)\} &\text{if $\sigma$ has a first $p$-tail, $\sigma\slice{\alpha:}$} \\
        \{0\} &\text{otherwise}
    \end{cases}
\end{align*}
Obviously $h(\sigma,p)$ is always a finite set of ordinals containing 0.  

Any set $X$ of ordinals is well-ordered by $\leq $, and hence there is an order-preserving bijection $f_X$ from $X$ to some ordinal $\alpha_X$.  Thus, for any sequence $\sigma$ and set $X$ of ordinals in its domain, we can construct a new sequence $\restrict{\sigma}{X}$ of length $\alpha_X$, defined by $(\restrict{\sigma}{X})\elem{\alpha}$ = $\sigma\elem{f_X^{-1}(\alpha)}$.  Note that when $\alpha \in  X$, we have $\restrict{\sigma\slice{\alpha:}}{\set{\beta:\alpha+\beta\in X}}$ = $(\restrict{\sigma}{X})\slice{f_X(\alpha):}$.  Since $\restrict{\sigma}{X}$ cannot be longer than $\sigma$ and must be nonempty, it is guaranteed to be in the domain of $\mathcal{M}_{V,\alpha}$ if $\sigma$ is. 
\begin{lemma}
    Suppose $X$ is a set of ordinals such that $h(\sigma,p)\subseteq X$.   Then, in $\mathcal{M}_{V,\alpha}$, the truth value of $p$ at $\sigma$  is the same as the truth value of $p$ at $\restrict{\sigma}{X}$.
\end{lemma}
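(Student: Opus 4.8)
The plan is to argue by induction on the structure of the $\mathcal{L}_{BA}$-sentence $p$, proving the statement simultaneously for all sequences $\sigma$ in the domain of $\mathcal{M}_{V,\alpha}$ and all sets $X$ of ordinals in the domain of $\sigma$ with $h(\sigma,p)\subseteq X$. The one preliminary observation to record is that, since the valuation of $\mathcal{M}_{V,\alpha}$ assigns an atom to a sequence according to that sequence's first element alone, the truth value of any \emph{Boolean} sentence at a sequence depends only on the $V$-value of its first element; in particular, for Boolean $p$ a tail $\sigma\slice{\delta:}$ is a ``$p$-tail'' exactly when its first element $\sigma\elem{\delta}$ satisfies $p$. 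Since $0\in h(\sigma,p)$ always, we have $0\in X$, hence $f_X^{-1}(0)=0$ and $(\restrict{\sigma}{X})\elem{0}=\sigma\elem{0}$, so $\sigma$ and $\restrict{\sigma}{X}$ always share a first element; this settles the atomic base case, and the cases of negation and conjunction are immediate from the induction hypothesis together with $h(\sigma,\neg p)=h(\sigma,p)$ and $h(\sigma,p\wedge q)=h(\sigma,p)\cup h(\sigma,q)$.

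The substantive case is $p>q$ with $p$ Boolean. The first step is to show that $\sigma$ has a first $p$-tail iff $\restrict{\sigma}{X}$ does, and that their ranks correspond under $f_X$: if $\sigma\slice{\gamma:}$ is the first $p$-tail of $\sigma$ then $(\restrict{\sigma}{X})\slice{f_X(\gamma):}$ is the first $p$-tail of $\restrict{\sigma}{X}$. Here I would use that, by the definition of $h$ in the conditional case (taking the witness $\beta=0\in h(\sigma\slice{\gamma:},q)$), we get $\gamma\in h(\sigma,p>q)\subseteq X$; and that for Boolean $p$ the $p$-tails of $\restrict{\sigma}{X}$ are precisely the $(\restrict{\sigma}{X})\slice{\delta:}$ for which $(\restrict{\sigma}{X})\elem{\delta}=\sigma\elem{f_X^{-1}(\delta)}$ satisfies $p$ — that is, the $f_X$-images of the $X$-indexed $p$-tail ranks of $\sigma$. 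Since $f_X$ is an order isomorphism and $\gamma\in X$ is the least $p$-tail rank of $\sigma$, $f_X(\gamma)$ is the least $p$-tail rank of $\restrict{\sigma}{X}$; and if $\sigma$ has no $p$-tail at all, neither does $\restrict{\sigma}{X}$.

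With this in hand the conditional case closes. If $\sigma$ (equivalently $\restrict{\sigma}{X}$) has no $p$-tail, $p>q$ is vacuously true at both. Otherwise, $p>q$ is true at $\sigma$ iff $q$ is true at $\sigma\slice{\gamma:}$, and true at $\restrict{\sigma}{X}$ iff $q$ is true at $(\restrict{\sigma}{X})\slice{f_X(\gamma):}$. Setting $X'=\{\beta:\gamma+\beta\in X\}$, the definition of $h$ gives $h(\sigma\slice{\gamma:},q)\subseteq X'$, and the identity $\restrict{\sigma\slice{\gamma:}}{X'}=(\restrict{\sigma}{X})\slice{f_X(\gamma):}$ recorded just before the lemma, combined with the induction hypothesis applied to $q$ and the sequence $\sigma\slice{\gamma:}$ (which lies in the domain, as do its restrictions, since tails and restrictions never increase length), yields that $q$ has the same truth value at $\sigma\slice{\gamma:}$ and at $(\restrict{\sigma}{X})\slice{f_X(\gamma):}$. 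This completes the induction. The step I expect to require the most care is the first step of the conditional case — checking that ``having a first $p$-tail'' and the rank of that tail are transported correctly by $f_X$ — and this is precisely where the Boolean-antecedent restriction does its work, since it guarantees that being a $p$-tail is a property of the first element alone, so that $h$ needed only to record the single index $\gamma$.
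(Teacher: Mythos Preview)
Your proof is correct and follows essentially the same route as the paper's: induction on the complexity of $p$, with the conditional case handled by showing that the first $p$-tail position $\gamma$ of $\sigma$ lies in $X$ and that $f_X(\gamma)$ is the first $p$-tail position of $\restrict{\sigma}{X}$, then invoking the identity $\restrict{\sigma\slice{\gamma:}}{\{\beta:\gamma+\beta\in X\}}=(\restrict{\sigma}{X})\slice{f_X(\gamma):}$ and the induction hypothesis for $q$. You are in fact slightly more explicit than the paper in noting that $\gamma\in X$ follows from $0\in h(\sigma\slice{\gamma:},q)$, which the paper leaves tacit.
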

\begin{proof}
    By induction on the complexity of $p$.  For atoms, this follows from the fact that restricting any sequence by a set of ordinals that includes 0 yields a sequence with the same first element.  For negation and conjunction it is obvious.  

    For a conditional $p>q$ (where $p$ is Boolean), note first that if no protoworld where $p$ is true occurs in $\sigma$, this will also be true of $\restrict{\sigma}{X}$.  So suppose that a $p$-protoworld occurs for the first time at position $\alpha$ in $\sigma$.  Since no $p$-protoworlds occur in $\sigma$ before position $\alpha$, none occur in $\restrict{\sigma}{X}$ before position $f_X(\alpha)$.  And of course $(\restrict{\sigma}{X})\elem{f_X(\alpha)} = \sigma\elem{\alpha}$, which is a $p$-protoworld; and so the truth value of $p>q$ at $\restrict{\sigma}{X}$ is the same as the truth value of $q$ at $(\restrict{\sigma}{X})\slice{f_X(\alpha):}$, i.e.\ at $\restrict{\sigma\slice{\alpha:}}{\set{\beta:\alpha+\beta \in  X}}$.  But $h(\sigma,p>q)$ = $\{0\} \cup \set{\alpha+\beta:\beta\in h(\sigma\slice{\alpha:},q)} \subseteq X$, so $h(\sigma\slice{\alpha:},q) \subseteq \set{\beta:\alpha+\beta \in  X}$.  So by the induction hypothesis, the truth value of $q$ at $\restrict{\sigma\slice{\alpha:}}{\set{\beta:\alpha+\beta \in  X}}$ is identical to the truth value of $q$ at $\sigma\slice{\alpha:}$, which is in turn identical to the truth value of $p>q$ at $\sigma$.
\end{proof}

Taking $X = h(\sigma,p)$, we have:
\begin{corollary}
    The truth value of any $\mathcal{L}_{\mathcal{B}>}$-sentence $p$ at $\sigma$ in $\mathcal{M}_{V,\alpha}$ is the same as its truth value at the list $\restrict{\sigma}{h(\sigma,p)}$.  
\end{corollary}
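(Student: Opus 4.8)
The plan is to read this off directly from the preceding lemma by instantiating its free parameter $X$. Concretely, I would take $X = h(\sigma,p)$. Then $X$ trivially satisfies the lemma's hypothesis — it contains every member of $h(\sigma,p)$, since it \emph{is} that set — so the lemma yields at once that the truth value of $p$ at $\sigma$ in $\mathcal{M}_{V,\alpha}$ equals the truth value of $p$ at $\restrict{\sigma}{h(\sigma,p)}$, which is exactly the asserted claim. No further computation is needed.

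The one point worth spelling out is that $\restrict{\sigma}{h(\sigma,p)}$ really is a \emph{list}, i.e.\ a sequence of finite length, so that the corollary genuinely says something about lists. This is immediate from the observation recorded right after the definition of $h$, namely that $h(\sigma,p)$ is always a finite set of ordinals (and contains $0$). A finite set of ordinals is order-isomorphic, via the order-preserving bijection $f_{h(\sigma,p)}$ used to define $\restrict{\cdot}{\cdot}$, to a finite ordinal; hence $\restrict{\sigma}{h(\sigma,p)}$ has finite length. Moreover its length cannot exceed $\length{\sigma}$, so it lies in the domain of $\mathcal{M}_{V,\alpha}$ whenever $\sigma$ does, which makes the phrase ``its truth value at the list $\restrict{\sigma}{h(\sigma,p)}$'' well-posed.

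There is essentially no obstacle: all the substantive work has already been carried out in proving the lemma — the induction on the complexity of $p$, and in particular the conditional case, where the Booleanness of the antecedent guarantees that whether, and at which position, a $p$-protoworld first occurs is preserved under restriction by any set of ordinals containing the relevant indices. The corollary merely packages that lemma into a statement about a single canonical list canonically associated with $\sigma$ and $p$, so I would present it as a one-line consequence together with the brief finiteness remark above.
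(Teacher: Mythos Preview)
Your proposal is correct and matches the paper's approach exactly: the paper simply writes ``Taking $X = h(\sigma,p)$, we have:'' before stating the corollary. Your additional remark that $\restrict{\sigma}{h(\sigma,p)}$ is a list because $h(\sigma,p)$ is finite is a welcome elaboration that the paper leaves implicit.
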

Finally, since throwing all sequences other than the list $\restrict{\sigma}{h(\sigma,p)}$ and its non-empty tails out of the domain of $\mathcal{M}_{V,\alpha}$ will not affect the truth value of any sentence at $\restrict{\sigma}{h(\sigma,p)}$, we can derive:
\begin{lemma}
    Every $\mathcal{L}_{\mathcal{B}>}$ sentence that is consistent in \vflat is true in some finite list-model.  
\end{lemma}
Given the soundness of \textsf{C2.FSM} for list models, this immediately implies
\begin{theorem}
    When $p \in \mathcal{L}_{\mathcal{B}>}$, $\vdash_{\vflat}p$ iff $\vdash_{\textsf{C2.FSM}}p$.
\end{theorem}
Since \textsf{C2.FSM} includes \vanf, this immediately implies \cref{noleftnest} (which was the same but with \vanf in place of \vflatp).

\section{Equivalence of different axiomatizations of \vanf} 
\setcounter{equation}{0}
\label{app:restrictedseq}
In this section, we show that the following axiom schemas are equivalent over \vflat. The third was not mentioned in the main text, but is useful for proving the equivalence of the other two, and yields an alternate axiomatization which is simpler, as measured by number of symbols in the official language, albeit somewhat harder to grok.
\begin{align}
    \ourtag{Sequentiality}
    \Box(p \to (\negate p > r)) \wedge \Box(q \to (\negate q > r)) &\to ((p\vee q) \to (\neg(p \vee q) > r))
    \\
    \ourtag{Restricted Sequentiality}
    \Box(p \to (\negate p > q)) \wedge \Box(q \to (\negate q > p)) &\to ((p\vee q) \to \Box(p\vee q))
    \\ 
    \ourtag{Conditional Sequentiality}
    ((\negate p > \negate q) > \negate p) \wedge ((\negate q > \negate p) > \negate q) &\to ((p\vee q) \to \Box(p \vee q))
\end{align}

\begin{theorem}
    For any logic \textsf{L} containing \vflat, the results of adding Sequentiality, Restricted Sequentiality, and Conditional Sequentiality are all identical.
\end{theorem}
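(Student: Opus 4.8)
I would prove the theorem by exhibiting a cycle of \emph{derivations} among the three schemas over $\vflat$ — say Sequentiality $\vdash$ Restricted Sequentiality, Restricted Sequentiality $\dashv\vdash$ Conditional Sequentiality, and Conditional Sequentiality $\vdash$ Sequentiality. Since each step will be an explicit derivation from $\vflat$ together with instances of the appropriate schema, the argument transfers verbatim to any logic $\textsf{L}$ containing $\vflat$, which is what the statement requires. The trivial-looking direction is Sequentiality $\vdash$ Restricted Sequentiality: substitute $r:=p\vee q$ into Sequentiality. Using CEM and Normality one checks that $\negate p>(p\vee q)$ is $\stal$-equivalent to $\negate p>q$ (and symmetrically for $q$), and that $\neg(p\vee q)>(p\vee q)$ is by definition $\Box(p\vee q)$; so the substituted instance collapses, modulo $\stal$-equivalence, to exactly Restricted Sequentiality.

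\textbf{The RS $\dashv\vdash$ CS step.} Restricted Sequentiality and Conditional Sequentiality share the consequent $(p\vee q)\to\Box(p\vee q)$, so all the work lies in relating the two antecedents. The point of introducing Conditional Sequentiality is that $((\negate p>\negate q)>\negate p)$ is a purely conditional reformulation — via the definition of the strong conditional $\gg$ and CEM one sees $\negate p>\negate q$ is $\neg(\negate p\gg q)$ — that is easier to manipulate than the $\Box$-prefixed conditionals $\Box(p\to(\negate p>q))$ appearing in Restricted Sequentiality. The subtlety is that these antecedents are \emph{not} literally interchangeable even over $\vflat$: one can build a (flat, hence $\vflat$-sound) three-world order model in which $\Box(p\to(\negate p>q))$ holds at a world while $((\negate p>\negate q)>\negate p)$ fails there. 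So this step must be carried out as careful $\vflat$-derivations in both directions, using Flattening — specifically the $4$ axiom $\Box p\to\Box\Box p$ that it yields — together with Normality and the unfolding $\Box p:=\negate p>p$ to push necessity through the nested conditionals. I expect this to be bookkeeping-heavy but not conceptually hard.

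\textbf{The main obstacle: from a restricted-consequent schema back to full Sequentiality.} Deriving Sequentiality with \emph{arbitrary} $r$ from Conditional (or Restricted) Sequentiality, whose consequents only ever mention $\Box$ of a disjunction, is the crux. The plan is to feed a substitution instance into the restricted schema that absorbs $r$ into the disjuncts — e.g.\ $P:=p\vee(\neg(p\vee q)\wedge r)$ and $Q:=q\vee(\neg(p\vee q)\wedge r)$, so that $P\vee Q$ is $\stal$-equivalent to $(p\vee q)\vee r$; then $\Box(P\vee Q)$ forces the first $\neg(p\vee q)$-world to satisfy $r$, which is precisely $\neg(p\vee q)>r$, and the Sequentiality consequent follows once $p\vee q$ holds. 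What must then be verified is that the Sequentiality-antecedent $\Box(p\to(\negate p>r))\wedge\Box(q\to(\negate q>r))$ entails the restricted-schema's antecedent for $P,Q$; this is where Flattening is genuinely indispensable, because $r$ may itself be a conditional, so the substitution produces left-nested conditionals (an occurrence of $r$ inside the consequent of $\negate P>Q$, under a further conditional) that only simplify in $\vflat$. I anticipate the naive choice of $P,Q$ above needs fine-tuning — it leaves a residual case when the relevant world verifies $\neg p\wedge\neg q\wedge\neg r$ — so identifying the right substitution (perhaps iterating the absorption, or drawing on both antecedent conjuncts together) is the real technical work. With steps (i)–(iii) in place, the three extensions of $\textsf{L}$ coincide.
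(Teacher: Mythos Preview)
Your cycle structure and the trivial direction (Sequentiality $\Rightarrow$ Restricted Sequentiality by substituting $r\coloneqq p\vee q$) are fine and match the paper.  But both of the non-trivial links are missing their key idea.

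\textbf{RS $\Rightarrow$ CS is not bookkeeping.}  You correctly observe that the RS-antecedent $\Box(p\to(\negate p>q))$ and the CS-antecedent $(\negate p>\negate q)>\negate p$ are not interderivable over \vflat, and then propose to ``push necessity through nested conditionals'' using 4 and Normality.  That will not close the gap: the two formulas simply do not entail each other in \vflat, so no amount of normal-modal manipulation of one yields the other.  The paper's move is a substitution you do not have: writing $s\coloneqq(\negate p>\negate q)>\negate p$ and $t\coloneqq(\negate q>\negate p)>\negate q$, it applies RS \emph{not to $p,q$ but to $pst,\,qst$}.  One then only needs $\vdash pst\to(\neg(pst)>qst)$ (and the symmetric claim), which does follow in \vflat by a CEM/Reciprocity/Flattening argument.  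RS then gives $(pst\vee qst)\to\Box(pst\vee qst)$, i.e.\ $st\to((p\vee q)\to\Box(p\vee q))$, which is exactly CS.

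\textbf{CS $\Rightarrow$ Sequentiality does not go via your absorption substitution.}  Your diagnosis of the residual case is accurate: with $P\coloneqq p\vee(\negate p\,\negate q\wedge r)$ and $Q\coloneqq q\vee(\negate p\,\negate q\wedge r)$, a first $\neg P$-world can verify $\negate p\,\negate q\,\negate r$, at which $Q$ fails, and the Sequentiality antecedent gives you no purchase there.  The paper's route is entirely different and avoids any such substitution.  Assume the Sequentiality antecedent and $p\vee q$; if $\Box(p\vee q)$ we are done, so suppose $\lozenge(\negate p\,\negate q)$.  Then the \emph{contrapositive} of CS yields $\neg s$ or $\neg t$; say $\neg s$, so by CEM $(\negate p>\negate q)>p$.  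MOD and $\Box(p\to(\negate p>r))$ give $(\negate p>\negate q)>(\negate p>r)$, hence $(\negate p>\negate q)>(\negate p>\negate q r)$, hence by CMon $(\negate p>\negate q)>(\negate p\,\negate q>r)$.  Now the inner antecedent $\negate p\,\negate q$ entails the outer antecedent $\negate p>\negate q$, so the Flattening Rule (Cautious Importation form) strips the outer conditional, yielding $\negate p\,\negate q>r$ directly.  No absorption of $r$ into the disjuncts is needed.
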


\begin{proof}
    For the purposes of this proof, let $s \coloneqq (\negate{p} > \negate{q}) > \negate{p}$ and $t\coloneqq (\negate{q} > \negate{p}) > \negate{q}$, so Conditional Sequentiality is $st \to ((p\vee q) \to \Box(p\vee q))$.    

    (a) To derive Restricted Sequentiality from Sequentiality, just let $r\coloneqq p\vee q$ in Sequentiality.

    (b) To derive Conditional Sequentiality from Restricted Sequentiality, it suffices to derive
    \begin{align}
        & \vdash pst \to \neg pst > qst 
        \label{thingweneed}\\
        & \vdash qst \to \neg qst > pst
        \label{parallelthing}
    \end{align}
    For given these, we can apply necessitation and then Restricted Sequentiality to derive
    \begin{align}
        & \vdash (pst \vee qst) \to \Box(pst \vee qst)
    \\\intertext{which implies}
        & \vdash st \to ((p \vee q) \to \Box(p\vee q))
    \end{align}
So, let's see how to establish \cref{thingweneed}; the proof of \cref{parallelthing} will be parallel.   
    For contradiction, assume $p$, $s$, $t$, and $\neg(\neg pst > qst)$.  By CEM we have $\neg pst > \neg qst$ and hence (i) $\neg pst > (st\to\negate{p}\snegate{q})$.  The converse (ii) $(st\to \negate{p}\snegate{q}) > \neg pst$ is trivially true.  Noting that the antecedents $\negate{p}>\negate{q}$ and $\negate{q}>\negate{p}$ of $s$ and $t$ each entail $st\to \negate{p}\snegate{q}$, we can apply the Flattening (or Cautious Exportation) Rule to our assumptions $s$ and $t$ to derive $(st\to\negate{p}\snegate{q}) > s$ and $(st\to \negate{p}\snegate{q}) > t$, and hence (iii) $(st\to \negate{p}\snegate{q}) > \negate{p}\snegate{q}$. 
    By Reciprocity, (i)--(iii) jointly imply $\neg pst > \negate{p}\snegate{q}$, and hence $\negate{p} > \negate{q}$ by CMon.  So by $s$ and MP we have $\negate{p}$, contradicting our assumption $p$.

    (c) To derive Sequentiality from Conditional Sequentiality, assume $\Box(p \to (\negate p > r))$, $\Box(q \to (\negate q > r))$, and $p\vee q$; we want to show $\negate{p}\snegate{q} > r$.  If $\Box(p\vee q)$ this is vacuously true, so we can further suppose $\Diamond(\negate{p}\,\negate{q})$.  We can then apply Conditional Sequentiality to conclude that at least one of $s$ and $t$ is false.  Suppose without loss of generality that it's $s$.  Then by CEM, $(\negate p > \negate q) > p$.  So by MOD and the first assumption, $(\negate p > \negate q) > (\negate p > r)$, hence $(\negate p > \negate q) > (\negate p > \negate{q}r)$.  By CMon, this implies $(\negate p > \negate q) > (\negate p\, \negate{q} > r)$.  Since the antecedent of the right-nested conditional entails the first antecedent $(\negate{p}>\negate{q})$, we can apply the Flattening (or Cautious Importation) Rule to infer that $\negate p\,\negate{q} > r$.
\end{proof}

\section{Probabilities}
This appendix proves facts stated in \cref{probsagain}.  We are interested in the question whether non-trivial probabilistic order models can be $(\mathcal{P}>\mathcal{Q}\mid\mathcal{R})$-Stalnaker, for various classes of sentences $\mathcal{P},\mathcal{Q},\mathcal{R}$ (always containing $\top$ and $\bot$). It will be useful to have several equivalent ways of formulating these properties:
\begin{lemma}
    \label{stalequivalents}
     Properties (1--8) of a probabilistic order model are equivalent for any $\mathcal{P,Q,R}$ containing $\top$ and $\bot$:
    \begin{itemize}
        \item[(1)]
        The model is $(\mathcal{P}>\mathcal{Q}\mid\mathcal{R})$-Stalnaker: that is, $\cprob{p>q}{r} = \cprob{q}{p}$ when $p\in\mathcal{P},q\in\mathcal{Q}, r\in\mathcal{R}$, $\prob{p}>0$, and $\cprob{r}{p} = 1$.  
        \item[(2)]
        $\cprob{p\gg q}{r} = \cprob{q}{p}$, under the same condition.
        \item[(3)]
        $\cprob{p>q}{r} = \prob{p>q}$, weakening the condition $r\in \mathcal{R}$ to $r\in\mathcal{R}\cup\{p\}$.
        \item[(4)]
        $\cprob{p\gg q}{r} = \prob{p \gg q}$ under the same condition.
        \item[(5)] $\cprob{p> q}{\negate{r}}$, with the further condition that $\prob{r}<1$.
        \item[(6)] $\cprob{p> q}{\negate{r}}$, under the same conditions.
        \item[(7)] $\cprob{p> q}{\negate{r}} = \prob{p>q}$, under the same conditions.
        \item[(8)] $\cprob{p\gg q}{\negate{r}} = \prob{p\gg q}$, under the same condition.
    \end{itemize}
    Moreover, provided that $\mathcal{P}\wedge\mathcal{R} \subseteq \mathcal{P}$ (the conjunction of any element of $\mathcal{P}$ with an element of $\mathcal{R}$ is in $\mathcal{P}$), (1--8) are equivalent to (1$'$--8$'$), where these are the same as (1--8) but with the condition `$\cprob{r}{p} = 1$' replaced by `$\sem{p}\subseteq\sem{r}$'.
\end{lemma}
\begin{proof}
    $1\Leftrightarrow 2$: $\cprob{p> q}{r} = \cprob{(p\gg q)\vee (p>\bot)}{r} = \cprob{p\gg q}{r} + \cprob{p>\bot}{r} = \cprob{p\gg q}{r} + \cprob{\bot}{p} = \cprob{p\gg q}{r}$.

    $1\Rightarrow 3$: $\prob{p>q} = \cprob{p>q}{\top} = \cprob{q}{p} = \cprob{p>q}{r}$ for all $r\in \mathcal{R}$; also $\cprob{q}{p} = \cprob{p>q}{p}$ by MP and And-to-if. 

    $3\Rightarrow 1$: $\cprob{q}{p} = \cprob{p>q}{p}$ (by MP and And-to-if) ${} = \prob{p>q}$ (since $p\in\mathcal{R}\cup\set{p}$) $= \cprob{p>q}{r}$.  

    $1\Rightarrow 5$: 
    Given $\prob{r}<1$, $\cprob{q}{p} = \cprob{p>q}{\top} = \prob{p>q} = \cprob{p>q}{r}\prob{r} + \cprob{p>q}{\negate{r}}\pi(\negate{r}) = \cprob{q}{p}\prob{r} + \cprob{p>q}{\negate{r}}\pi(\negate{r})$, ando $\cprob{q}{p}(1-\prob{r)} = \cprob{p>q}{\negate{r}}\pi(\negate{r})$. But $\prob{\negate{r}} = 1-\prob{r} \mathrel{>} 0$, and hence $\cprob{q}{p} = \cprob{p>q}{\negate{r}}$.

    $5\Rightarrow 1$: 
    Since we require $\bot\in\mathcal{R}$, $\prob{p>q} = \cprob{p>q}{\neg\bot} = \cprob{q}{p}$.  If $\prob{r} = 1$, then $\cprob{p>q}{r} = \prob{p>q} = \cprob{q}{p}$.  Otherwise, $\cprob{q}{p} = \prob{p>q} = \cprob{p>q}{r}\prob{r}+\cprob{p>q}{\negate{r}}\pi(\negate{r}) = \cprob{p>q}{r}\prob{r}+\cprob{q}{p}\pi(\negate{r})$, and so $\cprob{q}{p}(1-\prob{\negate{r}}) = \cprob{p>q}{r}\prob{r}$.  But $\prob{r} = 1-\prob{\negate{r}} \mathrel{>} 0$, and hence $\cprob{q}{p} = \cprob{p>q}{r}$.  

    $n+4 \Leftrightarrow m+4$: same as $n\Leftrightarrow m$.  

    $1 \Rightarrow 1'$: 
    Trivial, since if $\sem{p}\subseteq\sem{r}$, $\cprob{r}{p} = 1$.  

    $1'\Rightarrow 1$: Suppose $\cprob{r}{p} = 1$.  Then $\cprob{p>r}{\top} = 1$, so $\prob{p>pr \wedge pr>p} = 1$, so by Reciprocity, $p$ and $pr$ are substitutable in the antecedents of conditionals, with probability 1.  So $\cprob{p>q}{r} = \cprob{pr>q}{r} = \cprob{q}{pr}$ (since $pr \in \mathcal{P}$)${}= \cprob{qr}{p}/\cprob{r}{p} = \cprob{q}{p}$.  

    $m' \Leftrightarrow n'$: Same as $m\Leftrightarrow n$.
\end{proof}

Before turning to our `tenability' results proper, we will prove a result specific to \emph{flat} probabilistic order-models, taking us from weaker to stronger forms of Stalnaker's Thesis.  We start with an easy result:
\begin{lemma} \label{flatbackground}
    Suppose $\mathcal{P}$ is a set of sentences closed under conjunction.  If a probabilistic order model is flat and $(\mathcal{P}>\mathcal{L}_>)$-Stalnaker, it is $(\mathcal{P}>\mathcal{L}_>\mid\mathcal{P})$-Stalnaker.
\end{lemma}
\begin{proof}
    By \cref{stalequivalents} (3$'$), it suffices to show that when $p,r\in \mathcal{P}$ and $\sem{p}\subseteq\sem{r}$,
    $\cprob{p>q}{r} = \prob{p>q}$.  Since the model is $(\mathcal{P}>\mathcal{L}_>)$-Stalnaker, $\cprob{p>q}{r} = \prob{r>(p>q)}$.  But since the model is flat and $\sem{p} = \sem{pr}$, $\prob{r>(p>q)} = \prob{r>(pr>q)} = \prob{pr>q} = \prob{p>q}$.  
\end{proof}
Using this, we can establish
\stalextension*
\begin{proof}
    Fix a flat, $(\mathcal{P}>\mathcal{L}_>)$-Stalnaker order model $\seq{W, \prec, V, \pi}$.  By \cref{flatbackground}, we will be done if we can show that the model is $(\seq{\mathcal{P}>\mathcal{P}}_{\wedge}>\mathcal{L}_>)$-Stalnaker: that is, for any sentences $p$ and $q$ such that $p\in \seq{\mathcal{P}>\mathcal{P}}_{\wedge}$ and $\prob{p}>0$, $\pi(p>q) = \pi(q\mid p)$.  Any such $p$ is of the form
    \begin{align*}
        p &\coloneqq \bigwedge_{i\in I} (r_i>s_i)
    \end{align*}
    where $I = \{k:k < n\}$ (for $n>0$), and each $r_i$ and $s_i$ is in $\mathcal{P}$.  
    
    \newcommand{\alltrue}[1]{t_{#1}}
    \newcommand{\allfalse}[1]{f_{#1}}
    \newcommand{\sometrue}[1]{\negate{f_{#1}}}
    For any $X\subseteq I$, define:
    \begin{align*}
        \alltrue{X} &\coloneqq \bigwedge_{i\in X} r_i
        &
        \allfalse{X} &\coloneqq 
        \bigwedge_{i\in X} \negate{r_i}
        &
        b_X &\coloneqq 
        \bigwedge_{i\in X}r_i\negate{s_i} 
        &
        c_X &\coloneqq
        \bigwedge_{i\in X}(r_i\gg \negate{s_i})
    \end{align*}

    Here is a useful general property of probabilistic independence: when $p$ is probabilistically independent of $q$, $r$, and $qr$, it is also probabilistically independent of $q\vee r$.%
    \footnote{$\pi(p \wedge (q\vee r)) = \pi(pq \vee pr) 
    = \pi(pq) + \pi(pr) - \pi(pqr) = \pi(p)\pi(q) + \pi(p)\pi(r) - \pi(p)\pi(qr) = \pi(p)(\pi(q) + \pi(r) - \pi(qr)) = \pi(p)\pi(q\vee r)$.}  
    This straightforwardly generalizes to show that when a proposition is probabilistically independent of all the members of some finite family of propositions closed under conjunction, it is also probabilistically independent of the disjunction of all those propositions.  It is easy to see that $\negate{p}$ is equivalent to the disjunction of the set of propositions $c_X$ where $X$ is nonempty (as well as to the disjunction of all such propositions where $|X|=1$).  Since this set is closed under conjunction, we can show that $p>q$ is probabilistically independent of $\negate{p}$ by showing that it is probabilistically independent of each $c_X$.  That will give us what we want, since anything independent of $\negate{p}$ is independent of $p$, and $\pi(p>q\mid p) = \pi(q\mid p)$ (by MP and And-to-if).

    Each $c_X$ is in turn equivalent to the disjunction of the pairwise inconsistent propositions $c_X\alltrue{X\setminus Y}\allfalse{Y}$, or equivalently $b_{X\setminus Y}c_{Y}\allfalse{Y}$, for all $Y\subseteq X$.  So we can show that $p>q$ is probabilistically independent of each $c_X$ by showing that it is probabilistically independent of $b_{X\setminus Y}c_{Y}\allfalse{Y}$ whenever $Y\subseteq X \subseteq I$ and $X$ is nonempty.  
    
    We will prove this by induction on the size of $Y$.  
    
    \emph{Base case:} $Y = \varnothing$, so what we need to show here is that $p>q$ is probabilistically independent of $b_X$ for all nonempty $X$.  Since $p\vdash \negate{b_X}$, Flattening implies $p>q \dashv\vdash \negate{b_X} > (p > q)$.  But $\negate{b_X}\in\mathcal{P}$ and has positive probability, and the model is $(\mathcal{P}>\mathcal{L}_>)$-Stalnaker; so $\prob{p>q} = \prob{\negate{b_X} > (p > q)} = \cprob{p > q}{\negate{b_X}}$.

    \emph{Induction step:} suppose the independence claim holds whenever $Y$ has $n$ elements or fewer, and consider some $X$ and  $Y$ where $|Y| = n+1$ and $Y\subseteq X$.  We can assume $\prob{b_{X\setminus Y}c_Y\allfalse{Y}}>0$, since otherwise independence holds trivially.  Note that (by $\vee$-Distribution), $\bigvee_{Z\subsetneq Y}(\sometrue{Y} \gg \alltrue{Y\setminus Z}\allfalse{Z})$ is equivalent to $\Diamond\sometrue{Y}$, which follows from $c_Y$.  So $b_{X\setminus Y}c_Y\allfalse{Y}$ is equivalent to the disjunction of $b_{X\setminus Y}c_Y\allfalse{Y} \wedge \sometrue{Y} \gg \alltrue{Y\setminus Z}\allfalse{Z}$ for all proper subsets $Z$ of $Y$.  We will show that $p>q$ is probabilistically  independent of the former by showing that it is probabilistically independent of each of the latter.%
    \footnote{Since a proposition is probabilistically independent of a disjunction of pairwise incompatible propositions whenever it is probabilistically independent of each of them.  Since everything is trivially probabilistically independent of $\bot$, this is a special case of the previously noted generalization about families closed under conjunction.}
    Since $r_i \vdash \sometrue{Y}$ for all $i\in Y$, each $r_i\gg \negate{s_i}$ is equivalent to $\sometrue{Y} \gg (r_i \gg \negate{s_i})$ by $\gg$-Flattening, and hence
    \setcounter{equation}{0}
    \renewcommand{\theequation}{\roman{equation}}
    \begin{align} \label{firstequiv}
        c_Y &\dashv\vdash \sometrue{Y} \gg c_Y
        \\
        \intertext{We can similarly show in \vflat that}
        p>q \wedge c_Y &\dashv\vdash \sometrue{Y} \gg (p>q \wedge c_Y) \label{secondequiv}
    \end{align}
    For since $p\vdash \sometrue{Y} \vee \negate{c_Y}$, we have $p>q \dashv\vdash (\sometrue{Y} \vee \negate{c_Y}) > (p > q)$ by Flattening; likewise, since $r_i\vdash \sometrue{Y} \vee \negate{c_Y}$ for all $i\in Y$, we have $c_Y \dashv\vdash (\sometrue{Y} \vee \negate{c_Y}) \gg c_Y$, or equivalently  $c_Y \dashv\vdash (\sometrue{Y} \vee \negate{c_Y}) \gg \sometrue{Y}$.  By Reciprocity, this implies that $p>q$ is equivalent, modulo $c_Y$, to $\sometrue{Y} \gg (p > q)$.      

    Since $c_Y\alltrue{Y\setminus Z}\allfalse{Z}$ is equivalent to $b_{Y\setminus Z}c_{Z}\allfalse{Z}$,
    \eqref{firstequiv} and \eqref{secondequiv} respectively imply:
    \begin{align} \label{thirdequiv}
            c_Y \wedge \sometrue{Y} \gg \alltrue{Y\setminus Z}\allfalse{Z} &\dashv\vdash \sometrue{Y} \gg b_{Y\setminus Z}c_Z\allfalse{Z}
            \\ \label{fourthequiv}
            p>q  \wedge c_Y \wedge \sometrue{Y} \gg \alltrue{Y\setminus Z}\allfalse{Z} &\dashv\vdash \sometrue{Y} \gg (p>q \wedge b_{Y\setminus Z}c_Z\allfalse{Z})
    \end{align}
    We can then reason equationally:
    \begin{align*}
        \prob{p>q \wedge b_{X\setminus Y}c_Y\allfalse{Y} \wedge \sometrue{Y} \gg \alltrue{Y\setminus Z}\allfalse{Z}}\hspace{-8em}& \\
        &= 
        \cprob{p>q \wedge c_Y \wedge \sometrue{Y} \gg \alltrue{Y\setminus Z}\allfalse{Z}}{b_{X\setminus Y}\allfalse{Y}}\prob{b_{X\setminus Y}\allfalse{Y}}
        \\
        &=
        \cprob{\sometrue{Y} \gg (p>q \wedge b_{Y\setminus Z}c_Z\allfalse{Z})}{b_{X\setminus Y}\allfalse{Y}}\prob{b_{X\setminus Y}\allfalse{Y}} 
        \\
        \intertext{by equivalence \eqref{fourthequiv}}
        &=
        \cprob{p>q \wedge b_{Y\setminus Z}c_Z\allfalse{Z}}{\sometrue{Y}}\prob{b_{X\setminus Y}\allfalse{Y}} 
        \\
        \intertext{by \cref{stalequivalents} (4$'$), since the model is $(\mathcal{P}>\mathcal{L}_>\mid\mathcal{P})$-Stalnaker (as we know from \cref{flatbackground}), $b_{X\setminus Y}f_Y$ and $\negate{f_Y}$ are incompatible and in $\mathcal{P}$, with $\prob{\negate{f_Y}}>0$}
        &=
        \cprob{p>q}{b_{Y\setminus Z}c_Z\allfalse{Z}}\cprob{b_{Y\setminus Z}c_Z\allfalse{Z}}{\sometrue{Y}}\prob{b_{X\setminus Y}\allfalse{Y}} \\
        \intertext{since $b_{Y\setminus Z}\allfalse{Z}$ entails $\negate{f_Y}$}
        &=
        \cprob{p>q}{b_{Y\setminus Z}c_Z\allfalse{Z}}\cprob{\sometrue{Y} \gg b_{Y\setminus Z}c_Z\allfalse{Z}}{b_{X\setminus Y}\allfalse{Y}}\prob{b_{X\setminus Y}\allfalse{Y}} 
        \\
        \intertext{by the same reasoning 
        }
        &=
        \prob{p>q}\cprob{\sometrue{Y} \gg b_{Y\setminus Z}c_Z\allfalse{Z}}{b_{X\setminus Y}\allfalse{Y}}\prob{b_{X\setminus Y}\allfalse{Y}} 
        \\ 
        \intertext{by the induction hypothesis, since $Z\subsetneq Y$ and hence $|Z| < |Y|$}
        &=
        \prob{p>q}\cprob{c_Y \wedge \sometrue{Y} \gg \alltrue{Y\setminus Z}\allfalse{Z}}{b_{X\setminus Y}\allfalse{Y}}\prob{b_{X\setminus Y}\allfalse{Y}}
        \\
        \intertext{by equivalence \eqref{thirdequiv}}
        &=
        \prob{p>q}\prob{b_{X\setminus Y}c_Y\allfalse{Y} \wedge \sometrue{Y} \gg \alltrue{Y\setminus Z}\allfalse{Z}}.
    \end{align*}
Since the equation we just established holds for all $Z\subsetneq Y$, we can infer
    \begin{align*}
        \prob{p>q \wedge b_{X\setminus Y}c_Y\allfalse{Y}} = \prob{p>q}\prob{b_{X\setminus Y}c_Y\allfalse{Y}}
    \end{align*}
    which concludes the induction.
\end{proof}
We can now proceed to the tenability results for sequence and tree models.  Our basic tool will be the standard concept of \emph{product measure} (see \cite[\S 3.5]{BogachevMT} for definitions and proofs).  When $\pi$ is a countably additive probability function on a $\sigma$-algebra of subsets of $P$, and $I$ is an arbitrary index set, the product $\pi^I$ is a countably additive probability function on a $\sigma$-algebra of subsets of the function space $P^I$, which agrees with $\pi$ in the sense that when $\pi$ is defined on $X\subseteq P$ and $i\in I$, $\pi^I(\{\xi: \xi(i)\in X\})$ is defined and equal to $\pi(X)$.  The product measure also has two other key properties of \emph{independence} and \emph{invariance}, which we define as follows:

\begin{itemize}
    \item
    $A\subseteq P^I$ \emph{supervenes} on $X\subseteq I$ iff whenever $\xi$ and $\nu$ agree on $X$ (i.e.\ $\xi(i) = \nu(i)$ for all $i\in X$)  
    $\xi\in A$ iff $\nu \in A$.

    \item 
    $A$ and $B$ are \emph{orthogonal} iff for some $X\subseteq I$, $A$ supervenes on $X$ and $B$ supervenes on $I\setminus X$.
    \item 
    When $f:I\to I$, the \emph{lift} $f^*$ is the function $\mathcal{P}(P^I)\to \mathcal{P}(P^I)$ defined by $f^*(A) = \set{\xi \mid \xi\circ f \in A}$.  
    \item 
    Where $\pi$ is a probability function on $P^I$, $\pi$ has the \emph{independence property} iff for any $X,Y$ in the domain of $\pi$, if $X$ and $Y$ are orthogonal, then $\pi(X\cap Y) = \pi(X)\pi(Y)$.  
    \item
    $\pi$ has the \emph{invariance property} iff for any injective $f:I\to I$, $\pi(f^*(X)) = \pi(X)$, if the latter is defined.    
    \item
    $\pi$ is \emph{product-like} iff it has both the independence and invariance properties.  
\end{itemize}
If we pick a particular $@\in I$, we can use the product measure to turn a Boolean probability model $\seq{P,\pi,V}$ into a Boolean probability model $\seq{P^I, \pi^I, V_@}$ that extends it, by defining $V_@(\xi) \coloneqq V(\xi(@))$.  The two cases of this extension operation that matter for our purposes are (i) $I$ is some countably infinite ordinal, with $@=0$, and (ii) $I = \mathbb{N}^{<\omega}$, with $@=\seq{}$.  In both cases, we can turn the Boolean probability model $\seq{P^I, \pi^I, V_@}$ into a probabilistic order model, by endowing $P^I$ with the standard sequence or tree order function.  

To show that these are in fact probabilistic order models, we must confirm that the domain of $\pi^I$ contains the denotations of all sentences.  For this, it is sufficient to show that the domain is closed under the conditional-forming operation.  But this follows from its being closed under countable unions and intersections. We can express this uniformly for both sequence and tree models by disjunctively defining a family of functions $s_\beta$ on $I$.  When $I$ is a countably infiniteordinal $\alpha$ and $\beta$ has the property that $\beta+\gamma < \alpha$ whenever $\gamma<\alpha$, let $s_\beta(\gamma) \coloneqq \beta+\gamma$ (for all $\gamma < \alpha$).%
\footnote{This property of $\beta$ is equivalent to its being less than the first term in the Cantor Normal Form of $\alpha$, and to its being the case that whenever $\sigma$ is a sequence of length $\alpha$, $\sigma\slice{\beta:}$ is too.} 
When $I$ is $\mathbb{N}^{<\omega}$ and $n<\omega$, define $s_n$ by $s_0(\tau) \coloneqq \tau$ and $s_{n+1}(\tau) \coloneqq \seq{n}+\tau$.  Thus, when $X$ is a set of $\alpha$-sequences, $\shift{\beta}(X)$ is the set of $\alpha$-sequences $\sigma$ whose $\beta$th tail $\sigma\slice{\beta:}$ is in $X$.  And when $X$ is a set of trees, $\shift{n+1}(X)$ is the set of trees whose $n$th branch is in $X$, while $\shift{0}(X)=X$.  In each case, the conditional-forming operation can be defined as follows (with complementation  relative to $P^I$):
\begin{align*}
    A\setcond B \coloneqq 
    \bigcap_\alpha \shift{\alpha}(\complement{A})
    \cup
    \bigcup_\alpha \big(\shift{\alpha}(A\cap B) \cap \bigcap_{\beta<\alpha}\shift{\beta}(\complement{A})\big)
\end{align*}
Since all unions and intersections are countable, the domain of $\pi^I$ contains $A\setcond B$ whenever it contains $A$ and $B$.

We will start with the tenability result for sequence models.
\begin{lemma} \label{sequencefact}
    If $\seq{P^\alpha, \prec, V, \pi}$ is a full, categorical, probabilistic sequence model and $\pi$ is product-like, then $\seq{P^\alpha, \prec, V, \pi}$ is $(\firstdegconj>\mathcal{L}_>\mid\firstdegconj)$-Stalnaker.  
\end{lemma}
\begin{proof}
    By \cref{stalextension}, it is sufficient to show that any such model is $(\mathcal{B}>\mathcal{L}_>)$-Stalnaker.

    By \cref{stalequivalents} (7), this is equivalent to the claim that when $p$ is Boolean  and $\prob{p}<1$, $\cprob{p>q}{\negate{p}} = \prob{p>q}$.%
    \footnote{Since whenever $r\in\set{\top}\cup\set{p}$ and $\prob{r}<1$, $r=p$.}
    
    By the definition of $\setcond$, we have
    \begin{align*}
        \sem{p>q} &= \sem{pq} \cup (\sem{\negate{p}}\cap \shift{1}\sem{p>q})
        \\
    \intertext{and hence}
        \cprob{p>q}{\negate{p}}
        &=
        \pi(\shift{1}\sem{p>q}\mid \negate{p}) 
    \end{align*}
    But by the independence property of $\pi$, we have
    \begin{align*}
        \pi(\shift{1}\sem{p>q} \mid \negate{p}) = \pi(\shift{1}\sem{p>q})
    \end{align*}
    since $\sem{\negate{p}}$ supervenes on $\set{0}$, while $\shift{1}\sem{p>q}$
    supervenes on $\{\beta:0<\beta<\alpha\}$.  

    And by the invariance property of $\pi$, we have 
    \begin{gather*}
        \pi(\shift{1}\sem{p>q}) = \prob{p>q}
    \end{gather*}
    Combining the last three equations gives $\cprob{p>q}{\negate{p}} = \prob{p>q}$.
\end{proof}
As an immediate consequence of this, we can derive
\staltheorem*
\begin{proof}
    If the given Boolean probability model is not already countably additive, first extend it to a countably additive $\seq{P,\pi,V}$.  Then take the product over $\alpha$ to form the probabilistic sequence-model $\seq{P^{\alpha}, \prec\,, \pi^{\alpha}, V_0}$.  This extends $\seq{P,\pi,V}$, is full and categorical, and $\pi^{\alpha}$ is product-like.  So by \cref{sequencefact}, it is $(\firstdegconj>\mathcal{L}_>\mid\firstdegconj)$-Stalnaker.  
\end{proof}
    
The proof for tree models is similar, though a bit more complex, because there is no `shift' operation $f$ such that $\sem{p>q}$ agrees with $f^*\sem{p>q}$ when $p$ is false. To get around this, we will focus initially on a variant conditional-like operation $\setcond^*$, where $\zeta \in  A\setcond^* B$ iff either there is no \emph{positive} $n$ such that $\zeta\in \shift{n}(A)$, or $\zeta \in \shift{n}(B)$ where $n$ is the least such number.
That is:
\begin{align*}
    A\setcond^* B \coloneqq 
    \bigcap_{n>0} \shift{n}(\complement{A})
    \cup
    \bigcup_{n>0} \big(\shift{n}(A\cap B) \cap \bigcap_{\mathclap{\scriptstyle 0<m<n}}\shift{m}(\complement{A})\big)
\end{align*}
For $\setcond^*$, there \emph{is} an appropriate shift function.  Let $t:\mathbb{N}^{<\omega}\to\mathbb{N}^{<\omega}$ be the function that maps the empty list to $\seq{0}$ and adds 1 to the first element of any nonempty list of numbers: 
\begin{align*}
    t(\seq{}) &\coloneqq \seq{0} 
    \\t(\seq{n}+\tau) &\coloneqq \seq{n+1}+\tau
\end{align*}
Looking at \cref{fig:tree}, $t$ maps the whole fractal structure to the smaller copy extending rightwards from $\seq{0}$.  Thus, when $X$ is a set of trees, tree $\xi$ belongs to $t^*(X)$ just in case chopping off its root and its zeroth branch turns $\xi$ into a member of $X$.  This operation interacts appropriately with $\setcond^*$:
\[
    A\setcond^* C = \shift{1}(A\cap C) \cup (\shift{1}(\complement{A}) \cap t^*(A\setcond^* C))
\]
That is: a tree whose zeroth branch is in $A$ is in $A\setcond^* C$ iff its zeroth branch is in $C$, while a tree whose zeroth branch is not in $A$ is in $A\setcond^* C$ iff the result of chopping off its root and zeroth branch is in $A\setcond^* C$.  

We can use this to first prove an analogue of \autoref{sequencefact} for $\setcond^*$, which we can then carry back to $\setcond$, using the fact that $\sem{p > q}$ coincides with $\sem{p}\setcond^*\sem{q}$ in the case where $p$ is false.

\begin{lemma} \label{treefact}
    If $\seq{P^{\mathbb{N}^{<\omega}}, \prec, V, \pi}$ is a categorical, probabilistic tree model and $\pi$ is product-like, $\seq{P^\alpha, \prec, V, \pi}$ is $(\mathcal{B}>\mathcal{L}_>\mid\mathcal{B})$-Stalnaker.  
\end{lemma}
\begin{proof}
    For any $p$ and $q$, we have
    \begin{align*}
        \pi(\sem{p}\setcond^* \sem{q})
        &= 
        \pi(\shift{1}\sem{pq}) +\pi(\shift{1}\sem{\negate{p}}\cap t^*(\sem{p}\setcond^* \sem{q}))
        \\
        \intertext{by the equation above;}
        &= 
        \pi(\shift{1}\sem{pq}) +
        \pi(\shift{1}\sem{\negate{p}})
        \pi(t^*(\sem{p}\setcond^* \sem{q}))
        \\
        \intertext{by independence, since $\shift{1}\sem{\negate{p}}$ supervenes on $\{\tau:\tau\elem{0} = 0\}$ (branch 0), while $t^*(\sem{p}\setcond^* \sem{q})$ supervenes on $\{\tau:\tau\ne\seq{}\text{ and }\tau\elem{0}\ne 0\}$, since $\sem{p}\setcond^*\sem{q}$ supervenes on $\{\tau:\tau\ne\seq{}\}$;}
        &=
        \prob{pq} + \prob{\negate{p}}\pi(\sem{p}\setcond^* \sem{q})
    \end{align*}
    by invariance, since $s_1$ and $t$ are both injective.
    
    Rearranging the terms in this equation, we get
    \begin{align*}
        \prob{pq} = \pi(\sem{p}\setcond^* \sem{q})(1 - \prob{\negate{p}}) = \pi(\sem{p}\setcond^* \sem{q})\prob{p}
    \end{align*}
    and thus when $\pi(p)>0$,
    \begin{align*}
        \pi(\sem{p}\setcond^* \sem{q}) = \cprob{q}{p}.
    \end{align*}

    As we noted above, the ordinary conditional-forming operation $\setcond$ can be defined in terms of $\setcond^*$: $\sem{p>q} = \sem{pq}\cup (\sem{\negate{p}}\cap (\sem{p} \setcond^* \sem{q}))$.  Thus, when $\sem{p}\subseteq \sem{r}$, $\sem{p>q} \cap \sem{\negate{r}} = (\sem{p}\setcond^* \sem{q})\cap\sem{\negate{r}}$.  So, when $\sem{p}\subseteq \sem{r}$ and $r$ is Boolean:
    \begin{align*}
        \cprob{p>q}{\negate{r}}
        &=
        \pi(\sem{p}\setcond^* \sem{q} \mid \negate{r}) 
        \\
        &=
        \pi(\sem{p}\setcond^* \sem{q}) 
        \\
        &=
        \cprob{q}{p}
    \end{align*}
    Here the final identity is justified by what we just showed. The previous identity is justified by independence: since $r$ is Boolean and the model is categorical, $\sem{\negate{r}}$ supervenes on $\{\seq{}\}$, while $\sem{p}\setcond^* \sem{q}$ supervenes on its complement.

    By \cref{stalequivalents} (5$'$), this suffices for the model to be  $(\mathcal{B}>\mathcal{L}_>\mid\mathcal{B})$-Stalnaker.
\end{proof}

As an immediate consequence of \cref{treefact}, we can prove
\treetheorem*
\begin{proof}
    If the given Boolean probability model is not already countably additive, extend it to a countably additive $\seq{P,\pi,V}$.  Then take the product over $\mathbb{N}^{<\omega}$ to form the probabilistic tree-model $\seq{P^{\mathbb{N}^{<\omega}}, \prec\,, \pi^{\mathbb{N}^{<\omega}}, V_{\seq{}}}$, which extends $\seq{P,\pi,V}$.  Since this tree-model is categorical and $\pi^{\mathbb{N}^{<\omega}}$ is product-like, by \cref{treefact}, it is $(\mathcal{B}>\mathcal{L}_>\mid\mathcal{B})$-Stalnaker.  
\end{proof}

Finally, we prove a suite of limitative results.  Here, a `non-trivial' probability model is one with three pairwise inconsistent, positive probability Boolean sentences, one of which has probability greater than $1/2$.%
\footnote{Part (c) also goes through on a weaker definition of `nontrivial' that merely requires one of the sentences to have probability greater than $1/3$; parts (b) and (d) go through on an even weaker definition that requires only that all three have positive probability.}
\limitative*
\begin{proof}
    (a)
    Given non-triviality, we could find Boolean $p$ and $q$ with $0 \mathrel{<} \prob{p} \mathrel{<} 1/2$ and $0 < \prob{q\mid p} \leq 1/2$.  But if the model were $(\mathcal{B}>\mathcal{B}\mid\mathcal{B}\vee(\mathcal{B}>\mathcal{B})$-Stalnaker, we would have $\prob{p>q} = \cprob{q}{p} = \cprob{p>q}{p \vee (p>q)} = \prob{p>q}/\prob{p\vee (p>q)}$.  Given that $\prob{p>q} = \cprob{q}{p} > 0$, this implies $\prob{p\vee (p>q)} = 1$, hence $\prob{p>q} \geq 1-\prob{p}$, hence $\cprob{q}{p} \geq \prob{\negate{p}}$, contradicting our assumptions.

    For parts (b--d), we use the following fact about \stal (and other logics containing And-to-If and Modus Ponens): if $p>q$ entails $p$, then $p>q$ is equivalent to $pq$, and hence $\prob{p>q}=\cprob{q}{p}$ only if $\prob{pq}=\cprob{q}{p}$, which can happen only if either $\prob{pq}=0$ or $\prob{p}=1$.  
    (Similarly if $p\gg q$ entails $p$.)
    
    (b) By CMon, $(p\vee(p>q))> pq$ \stal-entails $p>q$ and hence its own antecedent $p\vee(p>q)$.  But given non-triviality, there would be Boolean $p$ and $q$ with $\prob{pq}$, $\prob{p\negate{q}}$, and $\prob{\negate{p}}$ all positive. Then $\prob{(p\vee(p>q)) \wedge pq} = \prob{pq} > 0$.  And also, $\cprob{p>q}{\negate{p}} = \cprob{q}{p} < 1$, which implies that $\prob{(p>q)\wedge \negate{p}} \mathrel{<} \prob{\negate{p}}$, and hence    $\prob{p\vee (p>q)} = \prob{p} + \prob{(p>q) \wedge \negate{p}} < \prob{p} + \prob{\negate{p}} = 1$.

    (c) In \vflat, $((p> \negate{q})>p) \gg pq$ entails its antecedent.  This can be seen by considering sequence models (appealing to \cref{ordseqcompleteness}). 
    Suppose the antecedent is true for the first time at $\sigma\slice{\alpha:}$, and $pq$ is also true there.  Then $p\negate{q}$ cannot be true at $\sigma\slice{\beta:}$ for any $\beta<\alpha$, since it entails the antecedent.  Hence, if $p>\negate{q}$ is true at any $\sigma\slice{\gamma:}$, the least such $\gamma$ is greater than $\alpha$.  But in that case, $p$ must be true at $\sigma\slice{\gamma:}$, implying that $(p>\negate{q})>p$ is true at $\sigma\slice{\beta:}$ for all $\beta<\alpha$. Since by assumption it is true for the first time at $\sigma\slice{\alpha:}$, $\alpha=0$.
    
    Given non-triviality, we could find Boolean $p$ and $q$ with $0 < \prob{\negate{p}} \leq \prob{p\negate{q}} \leq \prob{pq}$ and $\prob{pq}>1/3$.  If the model is $((\firstdeg>\mathcal{B})>\mathcal{B})$-Stalnaker, it is also $(\firstdeg>\mathcal{B})$- and $(\mathcal{B}>\mathcal{B})$-Stalnaker, so 
    $\prob{(p>\negate{q})>p} = \cprob{p}{p>\negate{q}} = \prob{p\negate{q}}/\prob{p>\negate{q}} = \prob{p\negate{q}}/\cprob{\negate{q}}{p} = \prob{p}$, which rules out the possibility that  $\prob{(p>\negate{q})>p} = 1$.  And since $\prob{p}\geq 2/3$ and $\prob{pq}\mathrel{>} 1/3$, $\prob{(p>\negate{q})>p} + \prob{pq} = \prob{p} + \prob{pq} \mathrel{>} 1$, which rules out the possibility that $\prob{(p>\negate{q})>p \wedge pq} = 0$.

    (d)
    $((pq\vee\negate{p})>(p>q))>pq$ \vanf-entails its antecedent.  For suppose $(pq\vee\negate{p})>(p>q)$ is true for the first time at $\sigma\slice{n+1:}$, and $pq$ is also true there.  Then $(pq\vee\negate{p})>(p>q)$ must be false at $\sigma\slice{n:}$.  Since $pq$ is true at $\sigma\slice{n+1:}$, that could only happen if $pq\vee\negate{p}$ is true at $\sigma\slice{n:}$ while $p>q$ is false there.  But for $p>q$ to be false there, $p\negate{q}$ must be true there: contradiction.  
    
    Given non-triviality, we could find Boolean $p$ and $q$ with $\prob{pq}$, $\prob{p\negate{q}}$, and $\prob{\negate{p}}$ all positive. Then $0 < \prob{pq} = \prob{(pq\vee\negate{p}) > (p>q))\wedge pq}$.  
    Moreover, since $\cprob{q}{p} < 1$,  $1 > \cprob{pq}{pq\vee\negate{p}} + \cprob{q}{p}\cprob{\negate{p}}{pq\vee\negate{p}}) = \cprob{p>q}{pq}\cprob{pq}{pq\vee\negate{p}} + \cprob{p>q}{\negate{p}}\cprob{\negate{p}}{pq\vee\negate{p}} = \cprob{p>q}{pq\vee\negate{p}} = \prob{(pq\vee\negate{p})>(p>q)}$.%
   
    \footnote{Note that while the entailment in part (d) is not valid in \vflat, the corresponding material conditional will still receive probability 1 (for Boolean $p$ and $q$) in any categorical sequence model equipped with a product measure. So if it is possible for a non-trivial flat probabilistic order model to be $((\mathcal{B}>\firstdeg)>\mathcal{B})$-Stalnaker, showing this will require a construction quite different from van Fraassen's.}

\end{proof}

\end{appendices}

\clearpage

\printbibliography
\end{document}